\numberwithin{equation}{section}
\newtheorem{lemma}{Lemma}[section]
\newtheorem{theorem}{Theorem}
\newtheorem{proposition}[lemma]{Proposition}
\newtheorem{corollary}[lemma]{Corollary}
\newtheorem{definition}[lemma]{Definition}
\newtheorem{remark}[lemma]{Remark}
\newtheorem{problem}{Problem}
\newtheorem{condition}[lemma]{Condition}
\newtheorem*{note}{Note}
\newtheorem*{notation}{Notations}
\newtheorem*{proclaim}{Proclamation}
\theoremstyle{definition}
\def\beq#1\eeq{\begin{equation}#1\end{equation}}
\def\balign #1 #2 \ealign{\begin{aligned} #1 #2  \end{aligned} }
\def\Div{{\rm div}}
\def\sgn{{\rm sgn}}
\def\bu{\mathbf{u}}
\def\msE{\mathscr{E}}
\def \msB {\mathscr{B}}
\newcommand \alp{\alpha}
\newcommand \eps{\varepsilon}
\newcommand \vphi{\varphi}
\newcommand \Gam{\Gamma}
\newcommand \gam{\gamma}
\newcommand \om{\omega}
\newcommand \tx{\text}
\newcommand \R{\mathbb{R}}
\newcommand \til{\tilde}
\newcommand \wtil{\widetilde}
\newcommand \der{\partial}
\newcommand \mcl{\mathcal}
\newcommand \ol{\overline}
\newcommand \Om{\Omega}
\newcommand \Gamen{\Gamma_0}
\newcommand \Gamex{\Gamma_L}
\newcommand \Gamw{\Gamma_w}
\newcommand \rx{{\rm{x}}}
\newcommand \tpsi{\til{\psi}}
\newcommand \tPsi{\til{\Psi}}
\def \msB {\mathscr{B}}
\def\Div{{\rm div}}
\def\sgn{{\rm sgn}}
\newcommand \bx{\mathbf{x}}
\newcommand \us{u_s}
\newcommand{\mfrak}{\mathfrak}
\newcommand \gs{g_{\rm s}}
\newcommand \ls{l_{\rm s}}
\newcommand \barrhoi{\bar{\rho}_I}
\newcommand \barui{\bar u_{I}}
\newcommand \fsonic{\mathfrak{f}_{\rm sn}}
\newcommand \Le{L_{\rm e}}
\newcommand \Tac{\mcl{T}_{\rm acc}}
\newcommand \umax{u_{\rm max}}
\newcommand \lmax{l_{\rm max}}
\newcommand \tphi{\tilde{\phi}}
\newcommand \tilT{\tilde{T}}
\newcommand \iterV{\mcl{V}}
\newcommand \iterT{\mcl{T}}
\newcommand \iterP{\mcl{P}}
\newcommand \iterseta{\iterV(r_2)\times \iterP(r_3)}
\newcommand \bJ{\bar{J}}
\newcommand \ubJ{\underbar{J}}
\newcommand \Veps{V_{\eps}}
\begin{document}
\title[Accelerating flows with transonic $C^1$-transitions]
{The steady Euler-Poisson system and accelerating flows with transonic $C^1$-transitions }

\author{Myoungjean Bae}
\address{}
\email{mjbae@kaist.ac.kr}
\author{Ben Duan}
\address{}
\email{bduan@dlut.edu.cn}

\author{Chunjing Xie}
\address{
}
\email{cjxie@sjtu.edu.cn}

\begin{abstract}
In this paper, we prove the existence of two-dimensional solutions to the steady Euler-Poisson system with {\emph{continuous transonic transitions}} across {\emph{sonic interfaces}} of codimension 1. First, we establish the well-posedness of a boundary value problem for a linear second order system that consists of an {\emph{elliptic-hyperbolic}} mixed type equation with a degeneracy occurring on an interface of codimension 1, and an elliptic equation weakly coupled together. Then we apply the Schauder fixed point theorem to prove the existence of two-dimensional solutions to the potential flow model of the steady Euler-Poisson system with continuous transonic transitions across sonic interfaces. With the aid of Helmholtz decomposition, established in \cite{BDX3}, we extend the existence result to the full Euler-Poisson system for the case of nonzero vorticity. Most importantly, the solutions constructed in this paper are classical solutions to Euler-Poisson system, thus their sonic interfaces are not weak discontinuities in the sense that all the flow variables are $C^1$ across the interfaces. 

\end{abstract}

\keywords{accelerating flow, $C^1$-transonic transition, elliptic-hyperbolic mixed equation with a degeneracy, Euler-Poisson system, Helmholtz decomposition, singular perturbation, sonic interface}
\subjclass[2010]{
35J47, 35J57, 35J66, 35M10, 76N10}

\date{\today}

\maketitle

\tableofcontents

\section{Introduction and Main theorems}
\subsection{Introduction}
The Euler-Poisson system
\begin{equation}\label{UnsteadyEP}
\left\{
\begin{aligned}
& \rho_t+ \Div_{\rx} (\rho \bu)=0, \\
& (\rho \bu)_t+\Div_{\rx} (\rho \bu \otimes \bu) +\nabla_{\bx} p=\rho \nabla_{\rx} \Phi, \\
& (\rho \msE)_t +\Div_{\rx}(\rho\msE \bu +p\bu)=\rho \bu\cdot \nabla_{\bx}\Phi,\\
& \Delta_{\rx} \Phi=\rho-\barrhoi,%
\end{aligned}%
\right.
\end{equation}
describes the motion of plasma or electrons in semiconductor devices
where $\rho$, ${\bf u}$, $p$ and $\msE$ represent the density, velocity, pressure and the energy density of electrons, respectively. In particular, this system describes an electron fluid dynamics in a background of ions with a constant density and a constant charge (\cite{Guo99} and \cite{MarkRSbook}). The Poisson equation $\tx{\eqref{UnsteadyEP}}_4$ describes how the two fluids of electrons and ions interact through the electric field $\nabla \Phi$. In this paper, we consider ideal polytropic gas, where $p$ and $\msE$ are given by
\begin{equation}
p=S\rho^{\gam}\quad\tx{and}\quad \msE=\frac{|\bu|^2}{2}+\frac{p}{(\gam-1)\rho},
\end{equation}
respectively, for a function $S>0$ and a constant $\gam>1$.
The function $\ln S$ represents {\emph{the physical entropy}}, and the constant $\gam>1$ is called the {\emph{adiabatic exponent}}. For the Mach number $M$, defined by
$$M:=\frac{|{\bf u}|}{\sqrt{\gam p/\rho}},$$
if $M<1$, then the flow corresponding to $(\rho, {\bf u}, p, \Phi)$ is said to be subsonic. On the other hand, if $M>1$, then the flow is said to be supersonic. Finally, if $M=1$, then the flow is said to be sonic.

The goal of this work is to prove the existence of a steady classical solution to the system \eqref{UnsteadyEP} that yields a flow accelerating from a subsonic state to a supersonic state in a two-dimensional flat nozzle of a finite length. The main feature of the solution constructed in this paper is that the sonic interface on which the subsonic-supersonic transition occurs has codimension 1. Most importantly, the sonic interface is not treated as a free boundary as opposed to the case of transonic shocks (cf. \cite{bae2011transonic,  chen2004steady, chen2008trans, liu2009global, xin2009transonic, XinYin_CPAM, xin2008transonic} and references therein).

The system \eqref{UnsteadyEP} consists of two parts, a compressible Euler system with source terms and a Poisson equation, weakly coupled in a nonlinear way. One of longstanding open problems related to the compressible Euler system is to prove the existence of a solution that consists of two types of transonic flows, an accelerating transonic flow with a sonic interface and a transonic shock in a convergent-divergent nozzle, called a {\emph{de Laval nozzle}}. For the last several decades, there have been extensive studies on transonic shocks in a nozzle, see  \cite{bae2011transonic, ChenSXnozzle, LiXinYin_CMP, LiXinYin_ARMA, XinYin_CPAM, xin2008transonic}. On the other hands, there are very few known results on continuous transonic flows with sonic interfaces as it involves an analysis of degenerate PDEs. In the pioneering works \cite{Morawetz1, Morawetz2, Morawetz3, Morawetz64} by Morawetz, it is shown that two-dimensional smooth transonic potential flows around an airfoil are unstable in general. The smooth transonic solutions for transonic small disturbance equations was analyzed in \cite{KZ}. Recently, the transonic solutions for the Euler system with sonic interfaces are studied, see \cite{WangXin1, WangXin2, WangXin, WengXin}, etc. Another examples of weak solutions to the unsteady Euler system with sonic interfaces, which appear as circular arcs, are given in \cite{bae2013prandtl, CF2, CFbook, elling2008supersonic}. In \cite{BCF}, it is shown that the sonic interfaces studied in \cite{bae2013prandtl, CF2, CFbook, elling2008supersonic} are weak discontinuities, in the sense that the velocity potentials across the sonic arcs are $C^{1,1}$ but not $C^2$. This means that the flow velocity fields are continuous but their derivatives are discontinuous  across the sonic arcs. For more studies on regular shock reflection for other models, one may refer to \cite{JegdicKeyfitz, KeyfitzTesdall, Zhengregular} and references therein.

In this paper, we prove the existence of two dimensional solutions to the steady Euler-Poisson system with {\emph{continuous transonic transitions}} across {\emph{sonic interfaces}} of codimension 1. More importantly, we show that the solutions are classical solutions, thus their sonic interfaces are not weak discontinuities. Differently from the Euler system, the steady Euler-Poisson system has one-dimensional continuous transonic solutions, and this can be found by a phase plane analysis \cite{Markphase, LuoXin}. According to \cite{LuoXin}, there are two types of continuous transonic solutions to the steady Euler-Poisson system under the condition of \eqref{condition for zeta0}, an accelerating transonic solution and a decelerating solution. In this paper, we show that the one-dimensional accelerating transonic solution is in fact $C^{\infty}$ smooth, see Lemma \ref{lemma-1d-full EP}. And, we prove the existence of two-dimensional solutions to the potential flow model of the steady Euler-Poisson system with continuous transonic transitions across sonic interfaces. Furthermore, with the aid of Helmholtz decomposition, established in \cite{BDX3}, we extend the existence result to the full Euler-Poisson system for the case of nonzero vorticity. Prior to this work, the authors of this paper studied the multi-dimensional subsonic solutions and supersonic solutions for the Euler-Poisson system, see \cite{BCF2, BDX3, BDX}. The ultimate goal of this work and the works in \cite{BCF2, BDX3, BDX} is to establish the existence of a multi-dimensional solution to the steady Euler-Poisson system with a continuous transonic-transonic shock configuration in a flat nozzle. For other results on continuous transonic solutions to the one-dimensional Euler-Poisson system, one can refer to \cite{Gamba, GambaMorawetz, ChenMeiZhangZhang, LiMeiZhangZhang2,WeiMeiZhangZhang} and the references therein.
\smallskip

As we seek for a two-dimensional steady solution to \eqref{UnsteadyEP}, let us set the velocity vector field ${\bf u}$ as ${\bf u}(\rx):=u_1(\rx){\bf e}_1+u_2(\rx){\bf e}_2$ for ${\rx}=(x_1, x_2)\in \R^2$. Here, ${\bf e}_j(j=1,2)$ represents the unit vector in the positive $x_j$-direction. Next, we define the vorticity function $\om$ by
$$\om(\rx):=\der_{x_1}u_2-\der_{x_2}u_1.$$
If the solution satisfies $\rho>0$ and $u_1>0$, then $(\rho, {\bf u}, S, \Phi)$ solves the following nonlinear system:
\begin{equation}\label{steadyEP}
  \left\{
\begin{aligned}
& \nabla\cdot (\rho \bu)=0 \\
&\om=\frac{1}{u_1}\left(\frac{\rho^{\gam-1}\der_{x_2} S}{(\gam-1)}-\der_{x_2}(\msB-\Phi)\right) \\
& {\bf m}\cdot \nabla S=0\\
& {\bf m}\cdot \nabla(\msB-\Phi)=0\\
& \Delta\Phi=\rho-\barrhoi
\end{aligned}%
\right.
\end{equation}
where ${\bf m}$ and $\msB$ are given by
$$
{\bf m}:=\rho{\bf u},\quad\tx{and}\quad
\msB:=\frac{|{\bf u}|^2}{2}+\frac{\gam S\rho^{\gam-1}}{\gam-1}.
$$

For one-dimensional solutions to \eqref{steadyEP}, we  solve the ODE system
\begin{equation*}
  \left\{
\begin{aligned}
& \bar{\rho}\bar u_1=J \\
&\bar S=S_0\\
&\frac{\bar u_1^2}{2}+\frac{\gam \bar S\bar{\rho}^{\gam-1}}{\gam-1}-\bar{\Phi}=\kappa_0\\
& \bar{\Phi}''=\bar{\rho}-\barrhoi
\end{aligned}%
\right.
\end{equation*}
for some constants $J>0$, $S_0>0$ and $\kappa_0\in \R$.
By setting as $\bar E:=\bar{\Phi}'$, one can rewrite the above system as
\begin{equation*}
\bar{\rho}=\frac{J}{\bar u_1},\quad
   \bar S=S_0,\quad
  \begin{cases}\bar u_1'=\frac{\bar E \bar u_1^{\gam}}{\bar u_1^{\gam+1}-\gam S_0J^{\gam-1}},\\
  \bar E'=\frac{J}{\bar u_1}-\barrhoi.
  \end{cases}
\end{equation*}

Hereafter, the constants $\gam >1$, $J>0$ and $S_0>0$ are fixed.
Let us define two constants $\barui$ and $\us$ by
\begin{equation*}
  \barui:=\frac{J}{\barrhoi},\quad\tx{and}\quad \us:=\left(\gam S_0J^{\gam-1}\right)^{\frac{1}{\gam+1}}.
\end{equation*}
For later use, we shall define a parameter $\zeta_0$ by
\begin{equation}\label{definition of zeta0}
  \zeta_0:=\frac{\barui}{\us}.
\end{equation}
And, assume that
\begin{equation}
\label{condition for zeta0}
\zeta_0>1.
\end{equation}

Under this assumption, suppose that $(\bar u_1, \bar E)$ is a $C^1$-solution to the system
\begin{equation}
\label{EP-1d-reduced}
   \begin{cases}
  \bar u_1'=\frac{\bar E \bar u_1^{\gam}}{\bar u_1^{\gam+1}-\us^{\gam+1}},\\
  \bar E'=\frac{J}{\bar u_1}-\barrhoi.
  \end{cases}
\end{equation}
In addition, suppose that
\begin{equation*}
(\bar u_1, \bar{E})(\ls)=(\us, 0)
\end{equation*}
for some constant $\ls>0$. Next, we define a function $H:(0,\infty)\rightarrow \R$ by
\begin{equation}\label{definition of H}
  H(u)=\int_{\us}^{u} \frac{J}{\barui t^{\gam+1}}(t^{\gam+1}-\us^{\gam+1})
\left(\barui-t\right)dt.
\end{equation}
Also, define a two-variable function $\mfrak{H}:(0,\infty)\times \R\rightarrow \R$ by
\begin{equation*}
\mfrak{H}(u, E):=\frac{1}{2}E^2-H(u).
\end{equation*}
Straightforward computations show that the solution $(\bar u_1, \bar E)$ satisfies
\begin{equation}
\label{1d-Hamiltonian}
  \mfrak{H}(\bar u_1, \bar E)\equiv 0
\end{equation}
as long as it exists.
\smallskip

Define
\begin{equation*}
\mcl{T}:=\{(u, E):\mfrak{H}(u, E)=0\}.
\end{equation*}
The set $\mcl{T}$ is called the {\emph{critical trajectory}} of the system \eqref{EP-1d-reduced}(see Figure \ref{figure1}).
We define a subset $\mcl{T}_{\rm{acc}}$ by
\begin{equation}
\label{definition of T and Tpm}
  \mcl{T}_{\rm{acc}}=\{(u, E)\in\mcl{T}:  (u-u_s) E \ge 0 \}.
\end{equation}

\begin{psfrags}
\begin{figure}[htp]
\centering
\psfrag{A}[cc][][0.8][0]{$\phantom{aaaa}(u_0, E_0)$}
\psfrag{B}[cc][][0.8][0]{$u_s$}
\psfrag{C}[cc][][0.8][0]{$\barui$}
\psfrag{u}[cc][][0.8][0]{$u$}
\psfrag{E}[cc][][0.8][0]{$E$}
\includegraphics[scale=0.5]{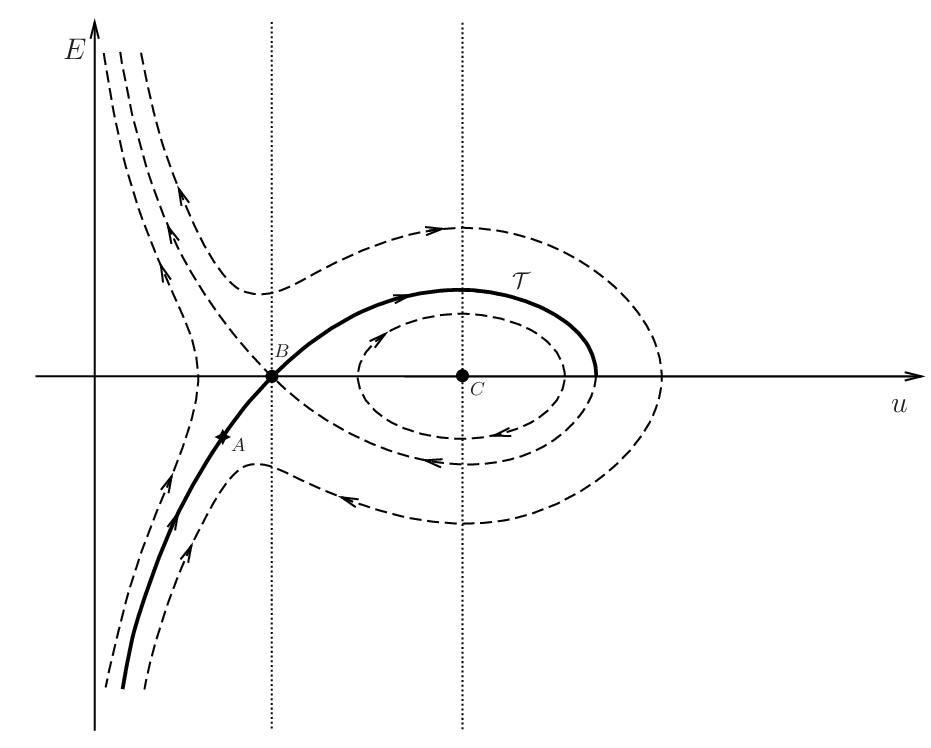}
\caption{The critical trajectory}\label{figure1}
\end{figure}
\end{psfrags}

\begin{lemma}
\label{lemma-1d-full EP}
Given constants $\gam> 1$, $J>0$ and $S_0>0$, assume that the condition \eqref{condition for zeta0} holds.
Then, the set $\mcl{T}_{\rm{acc}}$ represents the trajectory of an accelerating smooth transonic solution to \eqref{EP-1d-reduced} in the following sense: for any fixed $(u_0, E_0)\in \mcl{T}_{\rm{acc}}$ with $u_0<\us$, there exists a finite constant $l_{\rm max}>0$ depending only on $(\gam, J, S_0, \barui, u_0)$ so that the initial value problem
\begin{equation}
\label{1d-ivp}
\begin{split}
   &\begin{cases}
  \bar u_1'=\frac{\bar E \bar u_1^{\gam}}{\bar u_1^{\gam+1}-\us^{\gam+1}}\\
  \bar E'=\frac{J}{\bar u_1}-\barrhoi
  \end{cases}
\tx{for $x_1>0$},\\
&(\bar u_1, \bar E)(0)=(u_0, E_0)
\end{split}
\end{equation}
has a unique smooth solution for $x_1\in[0, l_{\rm max})$ with satisfying the following properties:
\begin{itemize}
\item[(i)] $\displaystyle{\bar u_1'(x_1)>0\,\,\tx{on $[0, l_{\rm max})$}}$.
\item[(ii)] $\displaystyle{\lim_{x_1\to l_{\rm max}-}\bar u_1'(x_1)=0}$.
\item[(iii)] $\displaystyle{\mcl{T}_{\rm acc}\cap\{(\bar u_1, \bar E)(x_1):0\le x_1\le l_{\rm max}\}=\mcl{T}_{\rm acc}\cap \{(u, E):u\ge u_0\}}$.
\item[(iv)] There exists a unique constant $\ls\in(0, l_{\rm max})$ depending only on $(\gam, J, S_0, \barui, u_0)$ so that $\bar u_1$ satisfies
    \begin{equation}
    \label{definition:ls}
      \bar u_1(x_1)\begin{cases}
      <\us\quad &\mbox{for $x_1<\ls$ {\emph{(subsonic)}}},\\
      =\us\quad &\mbox{at $x_1=\ls$ {\emph{(sonic)}}},\\
      >\us\quad &\mbox{for $x_1>\ls$ {\emph{(supersonic)}}}.
      \end{cases}
    \end{equation}
\end{itemize}

\begin{proof}
Define a function $F(t)$ by
\begin{equation}\label{definition of F in ode}
  F(t):=\frac{t^{\gam}\sqrt{2H(t)}}{|t^{\gam+1}-\us^{\gam+1}|}\quad\tx{for $t\neq \us$}
\end{equation}
where $H(u)$ is given by \eqref{definition of H}.

Clearly, $F(t)$ is smooth for $t\in(0,\infty)\setminus \{\us\}$ as long as $H(t)\ge 0$.

Note that $H(\us)=H'(\us)=0$. It follows from the assumption \eqref{condition for zeta0} that one has
$$H''(\us)=(\gam+1)J\left(\frac{1}{\us}-\frac{1}{\barui}\right)>0.$$
By applying L'H\^{o}pital's rule, it is obtained
\begin{equation*}
\lim_{t\to \us}\frac{2H(t)}{(t^{\gam+1}-\us^{\gam+1})^2}=
\lim_{t\to \us} \frac{H''(t)}{(\gam+1)^2t^{2\gam}}=\frac{J}{(\gam+1)\us^{2\gam}}
\left(\frac{1}{\us}-\frac{1}{\barui}\right)>0
\end{equation*}
so one can extend the definition of $F(t)$ up to $t=\us$ as
\begin{equation*}
  F(\us):=
  \sqrt{\frac{J}{\gam+1}\left(\frac{1}{\us}-\frac{1}{\barui}\right)}.
\end{equation*}
Therefore, $F(t)$ is continuous for all $t>0$ as long as $H(t)\ge 0$ holds.

By Taylor's theorem, there exist two smooth functions $P(t)$ and $Q(t)$ satisfying
\begin{equation*}
\begin{split}
  &H(t)=\frac 12 H''(\us)(t-\us)^2+(t-\us)^3 P(t),\\
  &t^{\gam+1}-\us^{\gam+1}=(\gam+1)\us^{\gam} (t-\us)+(t-\us)^2Q(t),
\end{split}
\end{equation*}
so that the function $F(t)$ is written as
\begin{equation*}
  F(t)=\frac{t^{\gam}\sqrt{H''(\us)+2(t-\us)P(t)}}{(\gam+1)\us^{\gam}+(t-\us)Q(t)}.
\end{equation*}
This shows that $F(t)$ is smooth and positive for all $t>0$ as long as the inequality $H(t)\ge 0$ holds.

By the definition \eqref{definition of T and Tpm}, one has
\begin{equation*}
  \Tac=\{E=\sgn (u-\us)\sqrt{H(u)}: H(u)\ge 0\}.
\end{equation*}
If $(\bar u_1, \bar E)(x_1)$ is a $C^1$-solution to \eqref{1d-ivp}, then it also solves the initial value problem:
\begin{equation}
\label{ivp-accelerating}
  \begin{cases}
  \bar u_1'=F(\bar u_1)\\
  \bar u_1(0)=u_0
  \end{cases}\quad\tx{and}\quad \bar E=\sgn(\bar u_1-\us)\sqrt{2H(\bar u_1)}.
\end{equation}

One can directly check from \eqref{definition of H} that there exists a constant $u_{\rm max}\in(\barui, \infty)$ satisfying that
\begin{equation*}
  H(t)\begin{cases}
  >0\quad&\mbox{for $0<t<u_{\rm max}$},\\
  =0\quad&\mbox{at $t=u_{\rm max}$},\\
  <0\quad&\mbox{for $t>u_{\rm max}$}.
  \end{cases}
\end{equation*}
Hence it follows from the unique existence theorem of ODEs and the method of continuation that there exists a finite constant $l_{\rm max}>0$ so that the initial value problem \eqref{ivp-accelerating} has a unique smooth solution for $0\le x_1\le l_{\rm max}$ with $(\bar u, \bar E)(l_{\rm max})=(u_{\rm max},0)$. Note that the inequality
\begin{equation*}
 0<u_0<\us<\barui<u_{\rm max}
\end{equation*}
holds. Since $F(t)>0$ for $0<t\le u_{\rm max}$, the velocity function $\bar u_1$ strictly increases with respect to $x_1\in[0, l_{\rm max})$. Therefore, there exists a unique constant $\ls\in(0, l_{\rm max})$ that satisfies \eqref{definition:ls}.

\end{proof}
\end{lemma}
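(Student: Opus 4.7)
My plan is to exploit the Hamiltonian identity \eqref{1d-Hamiltonian} to decouple the two-dimensional ODE system \eqref{1d-ivp} into a single autonomous scalar equation for $\bar u_1$, and then recover $\bar E$ algebraically. Since $(u_0,E_0)\in\Tac$ and $u_0<\us$, the initial datum satisfies $E_0=-\sqrt{2H(u_0)}\le 0$, so one expects $\bar u_1(x_1)<\us$ (and hence $\bar E(x_1)<0$) on an initial interval. On the critical trajectory $\Tac$ one may solve for $\bar E$ in terms of $\bar u_1$ as $\bar E=\sgn(\bar u_1-\us)\sqrt{2H(\bar u_1)}$, and substituting into the first equation of \eqref{1d-ivp} yields exactly $\bar u_1'=F(\bar u_1)$ with $F$ as in \eqref{definition of F in ode}. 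Conversely, any $C^1$ solution to \eqref{ivp-accelerating} gives back a solution to \eqref{1d-ivp} after differentiating the relation $\bar E^2=2H(\bar u_1)$ and using $F>0$. Thus the lemma reduces to analyzing the scalar initial value problem \eqref{ivp-accelerating}.

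The heart of the argument, and the main obstacle, is that the formal expression for $F$ has a $0/0$ singularity at the sonic point $\us$ because the first equation in \eqref{EP-1d-reduced} is itself singular there. I would resolve this by a Taylor expansion at $\us$: from \eqref{definition of H} one checks $H(\us)=H'(\us)=0$, while the assumption \eqref{condition for zeta0}, i.e.\ $\barui>\us$, yields
\begin{equation*}
H''(\us)=(\gam+1)J\Bigl(\tfrac{1}{\us}-\tfrac{1}{\barui}\Bigr)>0.
\end{equation*}
Writing $H(t)=\tfrac12 H''(\us)(t-\us)^2+(t-\us)^3 P(t)$ and $t^{\gam+1}-\us^{\gam+1}=(\gam+1)\us^\gam(t-\us)+(t-\us)^2 Q(t)$ with smooth $P,Q$, the factor $(t-\us)$ cancels from numerator and denominator of $F$, producing a smooth, strictly positive extension on the whole open interval $\{t>0:H(t)\ge 0\}$, with explicit value $F(\us)=\sqrt{\frac{J}{\gam+1}(\tfrac{1}{\us}-\tfrac{1}{\barui})}$. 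This removes the apparent degeneracy.

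With $F$ smooth and positive, the standard Picard–Lindel\"of theorem gives a unique local smooth solution through $u_0$, and the method of continuation extends it as long as $\bar u_1$ stays in the set where $F$ is defined and bounded. I would then analyze $H$ directly from \eqref{definition of H}: the integrand changes sign only at $t=\us$ and $t=\barui$, and is negative for $t>\barui$, so there is a unique $\umax\in(\barui,\infty)$ with $H(\umax)=0$ and $H>0$ on $(0,\umax)$, $H<0$ on $(\umax,\infty)$. Since $F>0$ on $(0,\umax]$, the solution $\bar u_1$ strictly increases, so property (i) holds, and it must reach $\umax$ in finite time $\lmax$ because $F$ is bounded below by a positive constant on any compact subinterval of $[u_0,\umax]$ away from $\umax$, while near $\umax$ one has $F(t)\sim c\sqrt{\umax-t}$, whose reciprocal is integrable. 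At $x_1=\lmax$ one obtains $\bar u_1(\lmax)=\umax$, hence $\bar E(\lmax)=0$ and $\bar u_1'(\lmax-)=F(\umax)=0$, giving (ii).

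Properties (iii) and (iv) then follow from monotonicity and the intermediate value theorem. Since $\bar u_1:[0,\lmax)\to[u_0,\umax)$ is a strict homeomorphism and $\bar E=\sgn(\bar u_1-\us)\sqrt{2H(\bar u_1)}$, the image of $x_1\mapsto(\bar u_1,\bar E)(x_1)$ is precisely the portion of $\Tac$ with $u\ge u_0$, which is (iii). Finally, since $u_0<\us<\umax$ and $\bar u_1$ is strictly increasing and continuous, there is a unique $\ls\in(0,\lmax)$ with $\bar u_1(\ls)=\us$; the trichotomy in \eqref{definition:ls} is immediate from strict monotonicity, and dependence of $\ls$ on $(\gam,J,S_0,\barui,u_0)$ is inherited from the ODE. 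This completes the plan; the only delicate point is the smoothness of $F$ across the sonic value, which is handled by the Taylor expansion above.
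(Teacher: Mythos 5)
Your proposal is correct and follows essentially the same route as the paper: both reduce the system to the scalar autonomous ODE $\bar u_1'=F(\bar u_1)$ via the Hamiltonian, resolve the $0/0$ degeneracy of $F$ at $\us$ by the same Taylor expansion of $H$ and $t^{\gam+1}-\us^{\gam+1}$, locate $\umax$ from the sign structure of $H$, and then read off (i)--(iv) from monotonicity. Your added remark that $\lmax<\infty$ because $1/F(t)\sim c(\umax-t)^{-1/2}$ is integrable near $\umax$ is a nice explicit justification of a step the paper leaves implicit, but it does not change the structure of the argument.
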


\subsection{Main theorems}
\label{subsection-the main result}
The main goal of this work is to construct a two-dimensional solution of the system \eqref{steadyEP} when prescribing the boundary conditions as small perturbations of a one-dimensional accelerating smooth transonic solution introduced in Lemma \ref{lemma-1d-full EP}.

In this paper, we assume that
\begin{equation*}
  \msB-\Phi= 0\quad\tx{({\emph{the pseudo Bernoulli's law}}  )}
\end{equation*}
for technical simplicity.
Then, we can rewrite \eqref{steadyEP} as follows:
\begin{equation}\label{full EP rewritten}
  \left\{
\begin{aligned}
& \nabla\cdot {\rho{\bf u}}=0 \\
&\nabla\times {\bf u}=\frac{\rho^{\gam-1}\der_{x_2} S}{(\gam-1)u_1}\\
&  {\rho{\bf u}}\cdot \nabla S=0\\
&\frac 12|{\bf u}|^2+\frac{\gam S \rho^{\gam-1}}{\gam-1}=\Phi\\
& \Delta \Phi=\rho-\barrhoi.
\end{aligned}%
\right.
\end{equation}

For a constant $L>0$, let us define
\begin{equation}
\label{definition of Omeage L}
  \Om_L:=\{\rx=(x_1, x_2)\in \R^2: 0<x_1<L, |x_2|<1\}.
\end{equation}
The boundary $\der \Om_L$ consists of three parts: the entrance $\Gamen$, the wall $\Gamw$ and the exit $\Gamex$ given by
\begin{equation}
\label{definition of der Omeage L}
  \Gamen:=\{0\}\times [-1,1],\quad \Gamw:=(0,L)\times\{-1,1\},\quad \Gamex:=\{L\}\times [-1,1],
\end{equation}
respectively.

\begin{notation}\begin{itemize}
\item[(1)] Throughout this paper, we let ${\bf e}_j$ represent the unit vector in the positive direction of the $x_j$-axis, that is,
\begin{equation*}
 {\bf e}_1={(1,0)}\quad\tx{and}\quad  {\bf e}_2={(0,1)}.\\
\end{equation*}
\item[(2)]  For $i=1,2$, the symbols $\der_i$ and $\der_{ij}$ represent $\frac{\der}{\der x_i}$ and $\der_i\der_j$, respectively. And, any partial derivative of a higher order is denoted similarly.
\end{itemize}
\end{notation}

\begin{definition}[Background solutions]
\label{definition of background solution-full EP}
Given constants $\gam > 1$, $S_0>0$, $J>0$ and $\zeta_0>1$(see \eqref{definition of zeta0} for the definition), fix a point $(u_0, E_0)\in \Tac$ with $E_0 < 0$. Then Lemma \ref{lemma-1d-full EP} implies that the initial value problem \eqref{EP-1d-reduced} with
$$(\bar u_1, \bar E)(0)=(u_0,E_0)$$
has a unique smooth solution on the interval $[0, l_{\rm max})$. Furthermore, the flow governed by the solution is accelerating. In other words, it holds that
$$
\frac{d\bar u_1}{dx_1}>0\quad\tx{on $[0, l_{\rm max})$.}
$$
For ${\rx}=(x_1, x_2)\in \R^2$ with $x_1\in[0, l_{\rm max})$, let us define $(\bar{\rho}, {\bar{\bf u}}, \bar{\Phi})$ by
\begin{equation*}
\begin{split}
& \bar{\rho}(\rx):=\frac{J}{\bar u_1(x_1)},\quad
\bar{\bf u}(\rx):=\bar u_1(x_1){\bf e}_{1},\quad
\bar{\Phi}(\rx):=\int_0^{x_1}\bar E(t)\,dt+\frac{u_0^2}{2}+\frac{\gam S_0}{\gam-1}\left(\frac{J}{u_0}\right)^{\gam-1}.
  \end{split}
\end{equation*}
Then, $(\rho, {\bf u},S, \Phi)=(\bar{\rho}, \bar{\bf u}, S_0, \bar{\Phi})$ solves the system \eqref{full EP rewritten} in $\Om_{l_{\rm max}}$. We call $(\bar{\rho}, \bar{\bf u}, S_0, \bar{\Phi})$ the background smooth transonic solution to \eqref{full EP rewritten} associated with $(\gam, \zeta_0, J, S_0, E_0)$. Note that the initial value $u_0$ of $\bar u_1$ is uniquely determined depending on $E_0$ as the point $(u_0, E_0)$ lies on the curve $\Tac$, defined by \eqref{definition of T and Tpm}, and the function $\sgn (u-\us)\sqrt{H(u)}$ is monotone for $u\in (0, u_{\rm max}]$. Then, it follows from the inequality $E_0<0$ that $0<u<\us$.
\end{definition}

The main goal of this paper is to solve the following problem:
\begin{problem}
\label{problem-full EP} Given constants $\gam>1$, $\zeta_0>1$, $J>0$ and $S_0>0$, fix a point $(u_0, E_0)\in \Tac$ with $E_0<0$. Given $(S_{\rm en}, E_{\rm en}, \om_{\rm en}):[-1,1]\rightarrow \R^3$, define
\begin{equation}
\label{definition-perturbation of bd}
      \mcl P(S_{\rm en}, E_{\rm en}, w_{\rm en}):=\|S_{\rm en}-S_0\|_{C^4([-1,1])}+\|E_{\rm en}-E_0\|_{C^4([-1,1])}
    +\|w_{\rm en}\|_{C^5([-1,1])}.
    \end{equation}
Assuming that $\mcl P(S_{\rm en}, E_{\rm en}, w_{\rm en})$ is sufficiently small, find a solution $(\rho, {\bf u}, S, \Phi)$ to the nonlinear system \eqref{full EP rewritten} in $\Om_L$ with
the boundary conditions:
  \begin{equation}\label{BC-full EP}
  \begin{split}
  {\bf u}\cdot {\bf e}_{2}=w_{\rm en},\quad S=S_{\rm en},\quad \tx{and}
  \quad \der_{x_1}\Phi=E_{\rm en}\quad&\tx{on $\Gamen$}\\
  {\bf u}\cdot {\bf n}=0\quad\tx{and}\quad \der_{\bf n}\Phi=0\quad&\mbox{on $\Gamw$}\\
  \Phi=\bar{\Phi}\quad&\mbox{on $\Gamex$}
  \end{split}
\end{equation}
where ${\bf n}$ is the inward unit normal vector on $\Gamw$.
\end{problem}

By fixing the constants $\gam$, $\zeta_0$, $J$, and $S_0$, the term $\barrhoi$ from the equation
$\eqref{full EP rewritten}_5$ can be expressed as
\begin{equation}
    \label{barrhoi-expression-n}
      \barrhoi=\frac{J}{\zeta_0(\gam S_0J^{\gam-1})^{\frac{1}{\gam+1}}}.
    \end{equation}
    We call the system \eqref{full EP rewritten} with the expression of \eqref{barrhoi-expression-n} {\emph{the simplified steady Euler-Poisson system}} associated with $(\gam, \zeta_0, J, S_0)$. This expression indicates our choice of a background solution (in the sense of Definition \ref{definition of background solution-full EP}) from which we shall build a two-dimensional solution to \eqref{full EP rewritten} with a transonic $C^1$-transition across an interface of codimension 1.

Before stating the main theorem, we list compatibility conditions required for $(S_{\rm en}, E_{\rm en}, w_{\rm en})$.
\begin{condition}
\label{conditon:1}
\begin{itemize}
\item[(i)] For $k=1,3$,
\begin{equation*}
\frac{d^k S_{\rm en}}{dx_2^k}(x_2)=
                  \frac{d^k E_{\rm en}}{dx_2^k}(x_2)=0\quad\tx{at $|x_2|=1$};
\end{equation*}

    \item[(ii)] For $l=0,2,4$
    \begin{equation*}
     \frac{d^l w_{\rm en}}{dx_2^l}(x_2)=0 \quad\tx{at $|x_2|=1$}.
    \end{equation*}
\end{itemize}
\end{condition}

\begin{theorem}
\label{theorem-smooth transonic-full EP}
Let us fix constants $(\gam, \zeta_0, J, S_0, E_0)$ satisfying
\begin{equation*}
\gam>1,\quad \zeta_0>1,\quad  S_0>0.
\end{equation*}
Suppose that $(u_0, E_0)\in \Tac$ with $E_0<0$, which is equivalent to $0<u_0<\us$.
And, let $(\bar{\rho}, {\bar{\bf u}}, \bar{\Phi})$ be the background smooth transonic solution to \eqref{full EP rewritten} associated with $(\gam, \zeta_0, J, S_0, E_0)$. In addition, suppose that the boundary data $(S_{\rm en}, E_{\rm en}, w_{\rm en})$ satisfy Condition \ref{conditon:1}. Then one can fix two constants $\bJ$ and $\ubJ$ depending only on $(\gam, \zeta_0, S_0)$, which satisfy
\begin{equation*}
  0<\bJ<1<\ubJ<\infty
\end{equation*}
so that whenever the background momentum density $J(=\bar{\rho}\bar u_1)$ satisfies
\begin{center}
either $0<J\le \bJ$ or $\ubJ\le J<\infty$,
\end{center}
there exists a constant $d\in(0,1)$ depending on $(\gam, \zeta_0, S_0,J)$ so that if the two constants $u_0$ and $L$ satisfy the condition
\begin{equation}
\label{almost sonic condition1 full EP}
  1-d\le \frac{u_0}{\us}<1 <\frac{\bar u_1(L)}{\us}\le 1+d,
\end{equation}
then $(\bar{\rho}, \bar{\bf u},\bar{\Phi})$ is structurally stable in the following sense: one can fix a constant $\bar{\sigma}>0$ sufficiently small depending only on $(\gam, \zeta_0, S_0, E_0, J, L)$ so that if the inequality
\begin{equation}
\label{smallness of bd}
  \mcl P(S_{\rm en}, E_{\rm en}, w_{\rm en})\le \bar{\sigma}
\end{equation}
holds, then Problem \ref{problem-full EP} has a unique solution $(\rho, {\bf u}, S, \Phi)\in H^3(\Om_L)\times H^3(\Om_L;\R^2)\times H^4(\Om_L)\times H^4(\Om_L)$ that satisfies the estimate
    \begin{equation}
    \label{solution estimate full EP}
    \begin{split}
    &\|\rho-\bar{\rho}\|_{H^3(\Om_L)}+\|{\bf u}-\bar{\bf u}\|_{H^3(\Om_L)}+\|S-S_0\|_{H^4(\Om_L)}+\|\Phi-\bar{\Phi}\|_{H^4(\Om_L)}\\
    &\le
    C\mcl P(S_{\rm en}, E_{\rm en}, w_{\rm en})
    \end{split}
    \end{equation}
    for some constant $C>0$ depending only on $(\gam, \zeta_0, S_0, E_0, J, L)$.
Moreover, there exists a function $\fsonic:[-1,1]\rightarrow (0, L)$ satisfying that
    \begin{equation}
    \label{sonic boundary is a graph pt}
  M\begin{cases}
  <1\quad&\mbox{for $x_1<\fsonic(x_2)$}\\
  =1\quad&\mbox{for $x_1=\fsonic(x_2)$}\\
  >1\quad&\mbox{for $x_1>\fsonic(x_2)$}
  \end{cases}
    \end{equation}
where the {\emph{Mach number}} $M$ of the system \eqref{full EP rewritten} is defined by
    \begin{equation*}
      M:=\frac{|{\bf u}|}{\sqrt{\gam S\rho^{\gam-1}}}.
    \end{equation*}
  And, the function $\fsonic$ satisfies the estimate
    \begin{equation}
    \label{estimate of sonic boundary pt}
      \|\fsonic-\ls\|_{H^2((-1,1))}+\|\fsonic-\ls\|_{C^1([-1,1])}\le C\mcl P(S_{\rm en}, E_{\rm en}, w_{\rm en})
    \end{equation}
    for some constant $C>0$ depending on $(\gam, \zeta_0, S_0, E_0, J, L)$.

\end{theorem}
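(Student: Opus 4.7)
The plan is to construct the solution through a fixed-point iteration built on top of a linear theory for a mixed elliptic-hyperbolic system that degenerates on a codimension-one interface, and to treat the nonzero vorticity via the Helmholtz decomposition from \cite{BDX3}.

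First I would reduce the full system \eqref{full EP rewritten} to a ``quasi-potential'' formulation. Because $S$ is transported along streamlines by $\eqref{full EP rewritten}_3$ and $\om$ is driven by $\der_{x_2}S$ through $\eqref{full EP rewritten}_2$, both $S-S_0$ and the rotational part of ${\bf u}$ can be recovered from a frozen velocity iterate together with the boundary data $(S_{\rm en}, w_{\rm en})$ by integration along (approximate) streamlines. Writing ${\bf u}=\nabla\vphi+{\bf W}$ where ${\bf W}$ carries the rotational part, and using the pseudo-Bernoulli law $\tfrac12|{\bf u}|^2+\tfrac{\gam S\rho^{\gam-1}}{\gam-1}=\Phi$ to express $\rho$ implicitly as a smooth function of $(|{\bf u}|,S,\Phi)$ (single-valued across sonic thanks to the use of $\vphi$ rather than $\rho$ as unknown), the continuity and Poisson equations become a second-order system for $(\vphi,\Phi)$. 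Setting $\vphi=\bar{\vphi}+\psi$ and $\Phi=\bar{\Phi}+\tpsi$ and linearizing around the background, the principal part of the equation for $\psi$ takes the form
\begin{equation*}
  (\bar c^2-\bar u_1^2)\der_{11}\psi+\bar c^2\der_{22}\psi+\tx{(l.o.t.)}=\tx{forcing},
\end{equation*}
coupled weakly with $\Delta\tpsi=(\tx{linearized source})$. Since Lemma \ref{lemma-1d-full EP} gives $\bar u_1'(\ls)>0$, the coefficient $\bar c^2-\bar u_1^2$ changes sign transversally at $x_1=\ls$, so the $\psi$-equation is elliptic for $x_1<\ls$, hyperbolic for $x_1>\ls$, and non-degenerately mixed across the interface.

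The central analytic step is to prove well-posedness in $H^3$ for this linear mixed-type system with the boundary conditions induced by \eqref{BC-full EP}. My plan is to use a singular perturbation: add an artificial elliptic regularization, say $-\eps\der_{11}\psi$ on the supersonic side, solve the resulting uniformly elliptic problem, and derive $\eps$-uniform a priori estimates by testing against the multiplier $\der_1\psi$ plus a suitable lower-order correction. The weighted energy identity produces the quadratic form $(\bar c^2-\bar u_1^2)|\der_1\psi|^2+\bar c^2|\der_2\psi|^2$ with a favorable sign modulo controllable errors, \emph{provided} the Poisson coupling term enters with a cooperative sign; this is precisely where the dichotomy on $J$ (either $0<J\le\bJ$ or $\ubJ\le J$) and the almost-sonic condition \eqref{almost sonic condition1 full EP} are used, since these regimes make the coupling term either a small perturbation of or aligned with the degenerate energy. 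Higher-order $H^3/H^4$ estimates then follow by differentiating tangentially along $x_2$, using Condition \ref{conditon:1} to obtain even/odd extensions across $\Gamw$, and exploiting the smoothness of $(\bar u_1,\bar E)$ at $\ls$ provided by Lemma \ref{lemma-1d-full EP}.

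With the linear theory in hand, the argument is closed via Schauder's fixed point theorem: define an iteration map $\mfT$ on a closed convex subset of perturbations satisfying \eqref{solution estimate full EP} with a large constant $C_*$; the linear estimates show $\mfT$ is an invariant self-map provided $\bar\sigma$ is small, and the gain from $H^4$-ellipticity on the Poisson side yields compactness. Uniqueness follows from a contraction-type energy estimate for the difference of two solutions using the same weighted identity. The sonic interface is recovered a posteriori: $\der_{x_1}(|{\bf u}|^2-\gam S\rho^{\gam-1})>0$ near $\{x_1=\ls\}$ for the background by Lemma \ref{lemma-1d-full EP}, and hence also for the perturbed solution by \eqref{solution estimate full EP}, so the implicit function theorem yields $\fsonic$ together with \eqref{sonic boundary is a graph pt} and \eqref{estimate of sonic boundary pt}. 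The main obstacle is the linear theory: producing $H^3$ solutions to a PDE that degenerates on an interior hypersurface and couples to an elliptic equation at the same differential order, since standard Tricomi/Keldysh theory only yields $H^1$ weak solutions, and $H^3$ is the minimum regularity needed to close the nonlinear iteration.
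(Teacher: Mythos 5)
Your overall architecture — Helmholtz decomposition, reduction to a weakly coupled mixed-type/elliptic system, weighted energy multiplier $\der_1\psi$ for the $H^1$ estimate with the $J$-dichotomy and the almost-sonic condition absorbing the coupling terms, Schauder fixed point, and implicit-function-theorem recovery of $\fsonic$ — is exactly the route the paper takes, and the reduction from the full Euler--Poisson system to Problem \ref{problem-HD} is precisely the proof the paper supplies for Theorem \ref{theorem-smooth transonic-full EP}. The genuine gap is in your singular perturbation. You propose adding $\pm\eps\der_{11}\psi$ to make the regularized problem ``uniformly elliptic'' and then taking $\eps\to 0$. This does not work: on the supersonic side $a_{11}<0$, so $-\eps\der_{11}$ worsens the coefficient, and $+\eps\der_{11}$ would only become elliptic once $\eps>\sup|a_{11}|$, i.e.\ for non-small $\eps$, at which point the smallness needed for an $\eps$-uniform estimate is gone. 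The paper instead follows the Kuz'min--Zheng construction and adds a \emph{third-order} term $\eps\der_{111}v$, keeping the mixed principal part intact, together with two extra boundary conditions ($\der_1v=0$ on $\Gamen$, $\der_{11}v=0$ on $\Gamex$). This makes the regularized problem solvable (via Galerkin and Fredholm alternative) and, crucially, the boundary contributions from integrating $\eps\int G\der_{111}v\,\der_1v$ by parts either vanish or carry a favorable sign, so the $G\der_1v$-energy identity is $\eps$-uniform.

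A second, smaller gap concerns the higher-order estimates. Differentiating tangentially in $x_2$ alone does not close the $H^3$/$H^4$ bound across the sonic curve; one also needs $\der_1$-derivative bounds up to the boundary, and at $\Gamex$ the equation is hyperbolic and no boundary condition for $v$ is posed there. The paper handles this by extending the coefficients past $x_1=L$ onto a larger rectangle $\Om_{L_*}$ so that the extended operator becomes uniformly elliptic near the new exit while preserving the algebraic inequality $-2\alp^P-(2m-1)\der_1\alp_{11}^P\ge\lambda_{L_*}/2$ ($m=0,\dots,3$), which is exactly what the repeated $\der_1$-commutator estimates require. Without some such device (or an equivalent), the claim that ``higher-order estimates follow by differentiating tangentially along $x_2$'' leaves the harder part of the regularity theory unaddressed.
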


\begin{definition}[Sonic interface]
For the function $\fsonic$ given in the above theorem, we shall call the curve
\begin{equation*}
\{x_1=\fsonic(x_1):-1\le x_2\le 1\}
\end{equation*}
{\emph{the sonic interface}} of the flow governed by $(\rho, {\bf u}, S, \Phi)$.
\end{definition}

\begin{remark}
\label{remark about pt main theorem-1}
It directly follows from a standard extension method of functions in Sobolev spaces and the generalized Sobolev inequality that $\rho$, ${\bf u}$ and $S$ are in $C^1$, and $\Phi$ is in $C^2$ up to the boundary. This implies that Theorem \ref{theorem-smooth transonic-full EP} yields a classical solution to Problem \ref{problem-full EP}.
\end{remark}

\begin{remark}
\label{remark about pt main theorem-2}
The property \eqref{sonic boundary is a graph pt}
implies that the solution $(\rho, {\bf u}, S, \Phi)$ given in Theorem \ref{theorem-smooth transonic-full EP}  yields a transonic  $C^1$-transition across the sonic interface $x_1=\fsonic(x_2)$.

\end{remark}

There is another example of a sonic interface with a different feature. According to \cite[Theorem 4.1]{BCF}, the function $D{\bf u}$ is discontinuous on the sonic boundaries occurring in the self-similar regular shock reflection or in the self-similar Prandtl-Meyer reflection of potential flows(see \cite{bae2013prandtl, BCF2, CF2, CF3}). The sonic interface across which $D{\bf u}$ is discontinuous is called a {\emph{weak discontinuity}}. From Theorem \ref{theorem-smooth transonic-full EP} in this paper, and \cite[Theorem 4.1]{BCF}, we are given with two types of sonic interfaces: (i) a regular interface which is succeedingly given as a level set of a function defined in terms of a classical solution, and (ii) a weak discontinuity which is determined by only one side of a flow. This leads to the following question:
    \begin{center}
    \emph{What is a general criterion to determine the type of a sonic interface?
    }\end{center}
    A general classification of sonic interfaces should be a fascinating subject to be investigated in the future.

\begin{remark}
\label{remark about pt main theorem-3}
In Theorem \ref{theorem-smooth transonic-full EP}, we require for the background momentum density $J(=\bar{\rho}\bar u_1)$ to be either small or large, and for the background Mach number $\bar{M}$ to be close to 1 (see \eqref{almost sonic condition1 full EP}) to establish the well-posedness of Problem \ref{problem-full EP}. As we shall see later in Section \ref{section:essential part}, the biggest challenge is to establish a priori $H^1$-estimate for a solution  to a linear system consisting of {\emph{a degenerately elliptic-hyperbolic mixed type PDE of second order}} and a second order elliptic equation, which are weakly coupled together via lower order derivative terms. To the best of our knowledge, this is the first paper to deal with a boundary value problem of such a PDE system.
\end{remark}

Suppose that $(\rho, {\bf u}, S, \Phi)\in H^3(\Om_L)\times H^3(\Om_L;\R^2)\times H^4(\Om_L)\times H^4(\Om_L)$ is a solution to Problem \ref{problem-full EP}. Since ${\bf u}$ is in $C^1(\ol{\Om_L})$, there exists a unique $C^2$-function $\phi:\ol{\Om_L}\rightarrow \R$ that solves the linear boundary value problem:
\begin{equation}
\label{bvp for phi}
\begin{cases}
  -\Delta\phi=\nabla\times{\bf u}\quad&\mbox{in $\Om_L$},\\
  \der_{x_1}\phi=0\quad&\mbox{on $\Gam_0$},\\
  \phi=0\quad&\mbox{on $\Gam_w\cup\Gam_L$}.
  \end{cases}
\end{equation}

Setting as
\begin{equation*}
\nabla^{\perp}:={\bf e}_{1}\der_{x_2}-{\bf e}_{2}\der_{x_1},
\end{equation*}
one has $\nabla\times (\nabla^{\perp}\phi)=-\Delta \phi$, and this yields that $\nabla\times({\bf u}-\nabla^{\perp}\phi)=0$ in $\Om_L$. So there exists a function $\vphi:\ol{\Om_L}\rightarrow \R$ such that
\begin{equation}
\label{H-decomposition}
  {\bf u}=\nabla\vphi+\nabla^{\perp}\phi\quad\tx{in $\ol{\Om_L}$}.
\end{equation}
This yields a {\emph{Helmholtz decomposition}} of a two-dimensional vector field ${\bf u}$. The representation \eqref{H-decomposition} indicates that $\vphi$ and $\phi$ are concerned with the compressibility and the vorticity of a flow, respectively, in the sense that $\nabla\cdot {\bf u}=\Delta \vphi$ and $\nabla\times {\bf u}=-\Delta \phi$. In \cite{BDXX}--\cite{BDX}, it is shown that if ${\bf u}\cdot{\bf e}_1=\vphi_{x_1}+\phi_{x_2}\neq 0$, then \eqref{full EP rewritten} can be rewritten in
terms of $(\vphi, \Phi, \phi, S)$ as follows:
\begin{align}
\label{new system with H-decomp2}
&
{\rm div}\left(\varrho(S, \Phi, \nabla\vphi, \nabla^{\perp}\phi)(\nabla\vphi+\nabla^{\perp}\phi)\right)=0,\\
\label{new system with H-decomp3}
&\Delta\Phi=\varrho(S, \Phi,\nabla\vphi, \nabla^{\perp}\phi)-\barrhoi,\\
\label{new system with H-decomp4}
&\Delta\phi=-\frac{ \varrho^{\gam-1}(S, \Phi,\nabla\vphi, \nabla^{\perp}\phi)S_{x_2}}{(\gam-1)(\vphi_{x_1}+\phi_{x_2})},\\
\label{new system with H-decomp5}
&\varrho(S, \Phi,\nabla\vphi, \nabla^{\perp}\phi) (\nabla\vphi+\nabla^{\perp}\phi)\cdot \nabla S=0
\end{align}
for
\begin{equation}
  \label{new system with H-decomp1}
\varrho(S, \Phi,\nabla\vphi, \nabla^{\perp}\phi):=
\left(\frac{\gam-1}{\gam S}\left(\Phi-\frac 12|\nabla\vphi+\nabla^{\perp}\phi|^2\right)\right)^{\frac{1}{\gam-1}}.
\end{equation}

So we can restate Problem \ref{problem-full EP} in terms of $(\vphi, \Phi, \phi, S)$ as follows:
\begin{problem}
\label{problem-HD} Given constants $\gam>1$, $\zeta_0>1$, $J>0$, $S_0>0$, and $(u_0, E_0)\in \Tac$ with $E_0<0$, fix a constant $L\in(0, l_{\rm max})$. Suppose that functions $S_{\rm en}:[-1,1]\rightarrow \R$, $E_{\rm en}:[-1,1]\rightarrow \R$ and $w_{\rm en}:[-1,1]\rightarrow \R$ are fixed so that $(S_{\rm en}, E_{\rm en}, w_{\rm en})$ is sufficiently close to $(S_0, E_0, 0)$ in the sense that $\mcl{P}(S_{\rm en}, E_{\rm en}, w_{\rm en})$, given by \eqref{definition-perturbation of bd}, is small. Find a solution $(\vphi, \phi, \Phi, S)$ to the nonlinear system \eqref{new system with H-decomp2}--\eqref{new system with H-decomp5} in $\Om_L$ with satisfying the properties
\begin{equation*}
 (\vphi_{x_1}+\phi_{x_2})>0\quad\tx{and}\quad  \varrho(S, \Phi, \nabla\vphi, \nabla^{\perp}\phi)>0 \quad\tx{in $\ol{\Om_L}$},
\end{equation*}
and the boundary conditions:
  \begin{equation}
\label{BC for new system with H-decomp}
  \begin{split}
  \der_{x_2}\vphi=w_{\rm en},\quad  \der_{x_1}\phi=0,\quad S=S_{\rm en},\quad
  \der_{x_1}\Phi=E_{\rm en}\quad&\tx{on $\Gamen$},\\
  \der_{x_2}\vphi=0,\quad \phi=0,\quad \der_{x_2}\Phi=0\quad&\mbox{on $\Gamw$},\\
  \phi=0,\quad \Phi=\bar{\Phi}\quad&\mbox{on $\Gamex$}.
  \end{split}
\end{equation}

\end{problem}

\begin{definition}\label{definition of background solution-HD}
For the background solution $(\bar{\rho}, \bar{\bf u}, \bar{\Phi})$ associated with $(\gam, \zeta_0, J, S_0, E_0)$ in the sense of Definition \ref{definition of background solution-full EP}, define a function $\bar{\vphi}:[0, l_{\rm max}]\times [-1,1]\rightarrow \R$ by
\begin{equation*}
  \bar{\vphi}(x_1, x_2):=\int_{0}^{x_1}\bar u_1(t)\,dt.
\end{equation*}
Then, $(\vphi, \phi, \Phi, S)=(\bar{\vphi}, 0, \bar{\Phi}, S_0)$ solves Problem \ref{problem-HD} for $(S_{\rm en}, E_{\rm en}, w_{\rm en})=(S_0, E_0, 0)$. Later on, we call $(\bar{\vphi}, 0, \bar{\Phi}, S_0)$ the {\emph{background solution}} to the system \eqref{new system with H-decomp2}--\eqref{new system with H-decomp5} associated with $(\gam, \zeta_0, J, S_0, E_0)$.

\end{definition}

\begin{notation} Let $\|\cdot\|_X$ be a norm in a linear space $X$. Given a vector field ${\bf V}=(V_1,\cdots, V_n)$ with $V_j\in X$ for all $j=1,\cdots, n$, we define $\|{\bf V}\|_X$ by
\begin{equation*}
  \|{\bf V}\|_X:=\sum_{j=1}^n \|V_j\|_{X}.
\end{equation*}
\end{notation}

\begin{theorem}
\label{theorem-HD}
Given constants $(\gam, \zeta_0, J, S_0, E_0)$ satisfying
\begin{equation*}
\gam>1,\quad \zeta_0>1,\quad S_0>0,\quad E_0<0,
\end{equation*}
suppose that $(u_0, E_0)\in \Tac$, which is equivalent to $0<u_0<\us$.
Let $(\bar{\vphi}, 0, \bar{\Phi}, S_0)$ be the background solution to the system \eqref{new system with H-decomp2}--\eqref{new system with H-decomp5} associated with $(\gam, \zeta_0, J, S_0, E_0)$. In addition, suppose that the boundary data $(S_{\rm en}, E_{\rm en}, w_{\rm en})$ satisfy {\emph{Condition \ref{conditon:1}}}.
Then one can fix two constants $\bJ$ and $\ubJ$ depending only on $(\gam, \zeta_0, S_0)$ with satisfying that
\begin{equation*}
  0<\bJ<1<\ubJ<\infty
\end{equation*}
so that whenever the background momentum density $J(=\bar{\rho}\bar u_1)$ satisfies
\begin{center}
either $0<J\le \bJ$ or $\ubJ\le J<\infty$,
\end{center}
there exists a constant $d\in(0,1)$ depending on $(\gam, \zeta_0, S_0,J)$ so that if
$(E_0, L)$ are fixed to satisfy the condition \eqref{almost sonic condition1 full EP}, then $(\bar{\vphi}, 0, \bar{\Phi}, S_0)$ is structurally stable in the following sense: one can fix a constant $\bar{\sigma}>0$ sufficiently small depending only on $(\gam, \zeta_0, S_0, E_0, J, L)$ so that if the inequality \eqref{smallness of bd} holds, then Problem \ref{problem-HD} has a unique solution $(\vphi, \phi, \Phi, S )$ that satisfies the estimate
    \begin{equation}
    \label{solution estimate HD}
    \begin{split}
    \|(\vphi, \phi, \Phi, S)-(\bar{\vphi},0, \bar{\Phi}, S_0)\|_{H^4(\Om_L)}
    \le
    C\mcl P(S_{\rm en}, E_{\rm en}, w_{\rm en})
    \end{split}
    \end{equation}
    for some constant $C>0$ depending on $(\gam, \zeta_0, S_0, E_0, J, L)$.
Furthermore, there exists a function $\fsonic:[-1,1]\rightarrow (0, L)$ satisfying the property \eqref{sonic boundary is a graph pt} with ${\bf u}=\nabla\vphi+\nabla^{\perp}\phi$ and $\rho=\varrho(S, \Phi, \nabla\vphi, \nabla\phi)$. Furthermore, the function $\fsonic$ satisfies the estimate \eqref{estimate of sonic boundary pt}
    for some constant $C>0$ depending on $(\gam, \zeta_0, S_0, E_0, J, L)$.

\end{theorem}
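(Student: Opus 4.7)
The plan is to apply the Schauder fixed point theorem to an iteration scheme that decouples the nonlinear system \eqref{new system with H-decomp2}--\eqref{new system with H-decomp5} into (i) a weakly coupled linear subsystem for the perturbations of $(\vphi, \Phi)$ consisting of a degenerate elliptic-hyperbolic mixed equation together with a Poisson equation, and (ii) two supplementary linear problems for $\phi$ and $S$. Set $\sigma := \mcl{P}(S_{\rm en}, E_{\rm en}, w_{\rm en})$ and consider the closed convex set $\mcl{K}$ of pairs $(\tilde\phi, \tilde S) \in H^4(\Om_L)\times H^4(\Om_L)$ with $\|\tilde\phi\|_{H^4(\Om_L)} + \|\tilde S - S_0\|_{H^4(\Om_L)} \le M_*\sigma$, inheriting the boundary compatibility dictated by Condition \ref{conditon:1}.

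Given $(\tilde\phi, \tilde S) \in \mcl{K}$, substitute into the density expression \eqref{new system with H-decomp1} and linearize \eqref{new system with H-decomp2}--\eqref{new system with H-decomp3} about $(\bvphi, \bPhi)$. Writing $\vphi = \bvphi + \psi$, $\Phi = \bPhi + \Psi$, the resulting coupled linear system has the schematic form: the principal part for $\psi$ is $(\us^{\gam+1}-\bar u_1^{\gam+1})\psi_{x_1x_1} + \us^{\gam+1}\psi_{x_2x_2}$ plus lower-order terms, hence degenerate and sign-changing across the background sonic interface $x_1 = \ls$, while $\Psi$ satisfies a uniformly elliptic equation $\Delta\Psi = \text{(linearized density)} + f$, with the two equations coupled through zeroth-order terms involving $\psi, \Psi$ and their first derivatives. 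Establishing the well-posedness of this degenerate mixed-type system, first in $H^1$ through a weighted multiplier adapted to the strict acceleration $\bar u_1'(x_1) > 0$ from Lemma \ref{lemma-1d-full EP}, and then upgrading regularity to $H^4$ by tangential differentiation plus elliptic-regularity-on-one-side and energy estimates on the other, is the essential technical content announced in Remark \ref{remark about pt main theorem-3}. The restriction on $J$ being either small or large, together with the near-sonic assumption \eqref{almost sonic condition1 full EP}, is used precisely to dominate the mixed lower-order coupling and close the energy identity uniformly up to $x_1 = \ls$.

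Once $(\psi, \Psi)$ is produced, update $\phi$ and $S$ as follows. Form ${\bf u} := \nabla(\bvphi+\psi) + \nabla^\perp\tilde\phi$, which has $u_1 > 0$ on $\ol{\Om_L}$ by smallness. Solve the linear transport equation \eqref{new system with H-decomp5} for $S$ with inlet datum $S_{\rm en}$: since ${\bf u}\cdot{\bf n} = 0$ on $\Gamw$ and $u_1 > 0$, the characteristics sweep $\ol{\Om_L}$ from $\Gamen$ to $\Gamex$ and carry the inlet data, giving $\|S - S_0\|_{H^4(\Om_L)} \le C\sigma$. Then solve the Poisson equation \eqref{new system with H-decomp4} with the mixed Dirichlet-Neumann conditions in \eqref{BC for new system with H-decomp}: since the right-hand side is proportional to $S_{x_2} = O(\sigma)$ in $H^3$, standard elliptic regularity and corner compatibility yield $\|\phi\|_{H^4(\Om_L)} \le C\sigma$. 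This defines the iteration map $\iter(\tilde\phi, \tilde S) := (\phi, S)$; for $M_*$ large and $\sigma$ small it maps $\mcl{K}$ into itself and is continuous in the weaker $H^{4-\delta}$-topology by compactness, so Schauder's theorem produces a fixed point, hence a solution $(\vphi, \phi, \Phi, S)$ with the estimate \eqref{solution estimate HD}.

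For the sonic interface, consider the function $g := |\nabla\vphi + \nabla^\perp\phi|^2 - \gam S\,\vrho^{\gam-1}$ with $\vrho$ as in \eqref{new system with H-decomp1}. Evaluated on the background it reduces to $\bar u_1^2 - \us^{\gam+1}/\bar u_1^{\gam-1}$, which vanishes at $x_1 = \ls$ with strictly positive $x_1$-derivative by the acceleration in Lemma \ref{lemma-1d-full EP}. The perturbation of $g$ is $O(\sigma)$ in $C^1$, so the implicit function theorem yields $\fsonic \in C^1([-1,1]) \cap H^2((-1,1))$ satisfying \eqref{sonic boundary is a graph pt} and \eqref{estimate of sonic boundary pt}. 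Uniqueness is obtained by subtracting two solutions and reapplying the linear estimates from Step 2--3 to the difference; the smallness threshold $\bar\sigma$ forces a contraction. The main obstacle throughout is unambiguously the linear well-posedness in Step 2: neither the elliptic-hyperbolic degeneracy on a codimension-one interface nor the weak coupling to an elliptic equation is available in the literature, and overcoming this is what dictates both the structural size assumption on $J$ and the uniform-near-sonic window \eqref{almost sonic condition1 full EP}.
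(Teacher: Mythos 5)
Your analytic core --- the weighted $H^1$ multiplier exploiting $\bar u_1'>0$, the singular perturbation, the bootstrap to $H^4$, and the implicit function theorem for the sonic interface --- matches the paper, but the iteration scheme wrapping it has a genuine gap. The iteration set $\mcl{K}$ contains only $(\tilde\phi, \tilde S)$, yet the mixed-type equation for $\psi$ is \emph{quasilinear} in $(\psi, \Psi)$: the coefficients $a_{ij}^P$, $a^P$, $b_1^P$, $b_0^P$ of Definition \ref{definition:coefficints-iter} depend on $\nabla\psi$ and $\Psi$, and the degeneracy line $x_1=\gs^P(x_2)$ of Lemma \ref{lemma on L_1}(g) moves with those arguments. ``Linearize about $(\bvphi,\bPhi)$ and solve'' produces a solution of the fixed background-coefficient problem, which does not satisfy the nonlinear equations \eqref{new system with H-decomp2}--\eqref{new system with H-decomp3}; a fixed point of your map over $(\tilde\phi,\tilde S)$ therefore would not solve Problem \ref{problem-HD}. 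You must carry iterates $(\tilde\psi,\tilde\Psi)$ to freeze the quasilinear coefficients, which is precisely what the paper does: Definition \ref{definition: iteration sets} introduces $\iterT(r_1)\times\iterV(r_2)\times\iterP(r_3)$ and the proof runs a nested Schauder fixed point --- inner over $(\tilde\phi,\tilde\psi,\tilde\Psi)$ solving the frozen-coefficient linear problem \eqref{lbvp main} for each fixed $\tilT$, outer over $\tilT$ via the transport equation. Keeping the approximated sonic interface $\gs^P$ inside the window \eqref{boundes of gs} across iterates is what lets the energy identity close uniformly, and this control requires the extra iterates.

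Two further points you gloss over. You drive transport by ${\bf u}$, while the paper uses the divergence-free momentum field ${\bf m}$ of Definition \ref{definition:coefficients-nonlinear}(4); the solenoidal identity \eqref{div-free} yields the stream function $\vartheta$ and Lagrangian map $\mathscr{L}$ of \eqref{definition:vtheta}, which the compressible ${\bf u}$ does not supply. And the bound $\|S-S_0\|_{H^4(\Om_L)}\le C\sigma$ does not follow from ``characteristics carry the data'' alone, since $\psi\in H^4$ gives ${\bf u}\in H^3$ only: the paper recovers $H^4$ regularity for $T$ by exploiting the uniform ellipticity of $\mfrak{L}_1^P$ near $\Gamen$ (Lemma \ref{lemma on L_1}(g)) to upgrade the momentum field to $H^4$ in a neighborhood of the entrance, followed by a change-of-variables argument through $\mathscr{L}$ (the Claim and Fubini computation in the proof of Lemma \ref{lemma:wp for ivp, entropy}). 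Your sketch elides this necessary step.
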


Similarly to the works in \cite{BDXX, BDX3}, Theorem \ref{theorem-smooth transonic-full EP} can easily follow from Theorem \ref{theorem-HD}.
\begin{proof}
[Proof of Theorem \ref{theorem-smooth transonic-full EP}]
If $(\vphi, \phi, \Phi, S)$ solves Problem \ref{problem-HD} and satisfies the estimate \eqref{solution estimate HD}, then $(\rho, {\bf u}, S, \Phi)$ with $\rho=\varrho(S, \Phi, \nabla\vphi, \nabla^{\perp}\phi)$ and ${\bf u}=\nabla\vphi+\nabla^{\perp}\phi$ solves Problem \ref{problem-full EP} and satisfies the estimate \eqref{solution estimate full EP}.
\medskip

Suppose that $(\rho, {\bf u}, \Phi)$ is a solution to Problem \ref{problem-full EP}, and that it satisfies the estimate \eqref{solution estimate full EP}. First of all, we obtain $\phi$ as a solution to the boundary value problem:
\begin{equation*}
  \begin{cases}
  -\Delta \phi=\nabla\times {\bf u}\quad&\mbox{in $\Om_L$}\\
  \der_{x_1}\phi=0\quad&\mbox{on $\Gamen$}\\
  \phi=0\quad&\mbox{on $\Gam_w\cup\Gamex$}.
  \end{cases}
\end{equation*}
Next, we find $\vphi$ to satisfy
\begin{equation}
\label{recovery of vphi}
\begin{cases}
\nabla\vphi={\bf u}-\nabla^{\perp}\phi\quad&\mbox{in $\Om_L$}\\
\der_{x_2}\vphi=w_{\rm en}\quad&\mbox{on $\Gamen$}.
\end{cases}
\end{equation}
We define
\begin{equation*}
  \vphi(x_1, x_2):=\int_{0}^{x_1}({\bf u}\cdot {\bf e}_1-\der_{x_2}\phi)(t,x_2)\,dt+g(x_2)\quad\tx{in $\Om_L$}
\end{equation*}
for a function $g:[-1,1]\rightarrow \R$ to be determined. By a straightforward computation with using the equation $-\Delta\phi=\nabla \times {\bf u}$, we get
\begin{equation*}
  \der_{x_2}\vphi=(\der_{x_1}\phi+u_2)(x_1, x_2)-u_2(0, x_2)+g'(x_2).
\end{equation*}
The function $\vphi$ satisfies \eqref{recovery of vphi} if we choose the function $g$ to be
\begin{equation*}
  g(x_2)=\int_0^{x_2} w_{\rm en}(s)\,ds.
\end{equation*}
Clearly, ${\bf V}:=(\vphi, \phi, \Phi, S)$ solves Problem \ref{problem-HD}. Furthermore, it follows from the estimate \eqref{solution estimate full EP} given in Theorem \ref{theorem-smooth transonic-full EP} that ${\bf V}$ satisfies
\begin{equation}
\label{estimate from full to HD}
  \|(\vphi, \phi, \Phi, S)-(\bar{\vphi},0, \bar{\Phi}, S_0)\|_{H^4(\Om_L)}\le C_*\mcl{P}(S_{\rm en}, E_{\rm en}, \om_{\rm en})
\end{equation}
for some constant $C_*>0$ depending on $(\gam, \zeta_0, S_0, E_0, J, L)$.

 Suppose that Problem \ref{problem-full EP} has two solutions ${\bf U}^{(i)}=(\rho^{(i)}, {\bf u}^{(i)}, S^{(i)}, \Phi^{(i)})\,(i=1,2)$ that satisfy the estimate \eqref{solution estimate full EP}. For each $j=1,2$, let ${\bf{V}}^{(j)}=(\vphi^{(j)}, \phi^{(j)}, \Phi^{(j)}, S^{(j)})$ be given by the procedure described in the above. Then it follows from Theorem \ref{theorem-HD} that ${\bf V}^{(1)}={\bf V}^{(2)}$ in $\Om_L$, which yields that ${\bf U}^{(1)}={\bf U}^{(2)}$ in $\Om_L$.
\end{proof}

The rest of the paper is devoted to prove Theorem \ref{theorem-HD}.

\section{Preparations to prove Theorem \ref{theorem-HD}}

Fix constants $\gam>1$, $\zeta_0>1$, $S_0>0$ and $E_0<0$. Let $u_0\in(0, \us)$ be fixed  such that $(u_0, E_0)\in \Tac$, and fix a constant $L\in(0, l_{\rm max})$. Let $(\bar{\vphi}, 0, {\bar{\Phi}}, S_0)$ be the background solution to the system \eqref{new system with H-decomp2}--\eqref{new system with H-decomp5} associated with $(\gam, \zeta_0, J, S_0, E_0)$ in the sense of Definition \ref{definition of background solution-HD}.

\subsection{Iteration sets and linear boundary value problems}
\label{subsection:framework}
\begin{definition}
\label{definition:coefficints-iter}

(1) For $z\in \R$ and ${\bf p}, {\bf q}\in \R^2$, denote
\begin{equation}
\label{definition of v}
  {\bf v}(\rx, {\bf p}, {\bf q} ):=\nabla\bar{\vphi}(\rx)+{\bf p}+{\bf q}.\end{equation}
For ${\bf v}={\bf v}(\rx, {\bf p}, {\bf q})$, define
\begin{equation}
\label{definition of pre coeff}
  A_{ij}(\rx, z, {\bf p}, {\bf q} ):=(\gam-1)
      \left(\bar{\Phi}(\rx)+z-\frac 12|{\bf v}|^2\right)\delta_{ij}-
       {\bf v}\cdot {\bf e}_i {\bf v}\cdot {\bf e}_j.
\end{equation}
For $s\in\R$ satisfying $|s|\le \frac{S_0}{2}$, define
\begin{equation}
        \label{definition of rho tilde}
          \til{\rho}(\rx, s, z, {\bf p}, {\bf q}):=\left(\frac{\gam-1}{\gam(S_0+s)}\left(\bar{\Phi}(\rx)+z-\frac 12|{\bf v(\rx, {\bf p}, {\bf q})}|^2\right)\right)^{\frac{1}{\gam-1}}.
        \end{equation}
\quad\\
(2) For $\rx=(x_1, x_2)\in \ol{\Om_L}$, ${\bf p}=(p_1,p_2),\, {\bf q}=(q_1,q_2)$, let us regard ${\bf v}={\bf v}(\rx, {\bf p}, {\bf q})$ as a $2 \times 1$ matrix. For ${\bf r}\in \R^2$ and $\mathbb{M}=(m_{ij})_{i,j=1}^2\in \R^{2\times 2}$, define
\begin{equation}
\begin{split}
      Q_1(\rx, {\bf p}, {\bf q},  {\bf r})&:=\frac{\gam+1}{2}\bar u_1'(x_1)\left(p_1+q_1\right)^2
-{\bf r}\cdot({\bf p}+{\bf q}),\\
      R_1(\rx, {\bf p}, {\bf q}, \mathbb{M})&:={\bf v}^{T}{\mathbb{M}}{\bf v}-\left(\bar E(x_1)-(\gam+1)\bar u_1(x_1)\bar u_1'(x_1)\right)q_1.
      \end{split}
\end{equation}

\end{definition}

Let us set
\begin{equation*}
  \umax:=\bar{u}_1(\lmax).
\end{equation*}
By Lemma \ref{lemma-1d-full EP}, it holds that $\umax>\bar u_1(x_1)$ for $x_1\in[0, \lmax)$. Since $(\umax,0)\in \Tac$, it holds that $H(\umax)=0$  thus we have $u_0<\umax<\infty$, for the function $H$ defined by \eqref{definition of H}. This implies that $ 0<\bar u_1(x_1)<\infty$ for all $x_1\in [0,L]$. Therefore, we have
\begin{equation*}
A_{22}(\rx, 0, {\bf 0}, {\bf 0})=(\gam-1)\left(\bar{\Phi}-\frac 12|\nabla\bar{\vphi}|^2\right)=\frac{\gam S_0J^{\gam-1}}{\bar u_1^{\gam-1}(x_1)}>0\quad\tx{in $\ol{\Om_L}$.}
\end{equation*}
Since $\bar u_1$ is smooth on $[0,L]$, and $A_{22}(\rx, z, {\bf p}, {\bf q})$ is smooth for $(\rx,z, {\bf p}, {\bf q})\in \ol{\Om_L}\times \R\times \R^2\times \R^2$, one can fix a small constant $d_0>0$ depending only on $(\gam, \zeta_0, J, S_0, E_0)$ such that if
\begin{equation}
\label{small zpq condition}
  \max\{|z|, |{\bf p}|, |{\bf q}|\}\le d_0\quad\tx{with $|{\bf p}|:=\sqrt{p_1^2+p_2^2}$,\quad $|{\bf q}|:=\sqrt{q_1^2+q_2^2}$},
\end{equation}
then it holds that
\begin{equation}
\label{strict positivity of A22}
  A_{22}(\rx, z, {\bf p}, {\bf q})\ge \frac{\gam S_0J^{\gam-1}}{2\umax^{\gam-1}}\quad\tx{in $\ol{\Om_L}$}.
\end{equation}

\begin{definition} Let us set
\begin{equation*}
  \mcl{R}_{d_0}:=\{(z, {\bf p}, {\bf q})\in \R\times \R^2\times \R^2: {\tx{the condition \eqref{small zpq condition} holds.}}\}.
\end{equation*}
Let $(\rx, z, {\bf p}, {\bf q})\in \ol{\Om_L}\times \mcl{R}_{d_0}$ with $\rx=(x_1,x_2)$. We define coefficients $\til a_{ij}(i,j=1,2)$, $\til a$, $\til b_1$, and $\til b_0$ by
    \begin{equation}
\label{coefficients of L_1}
\begin{split}
&\til a_{ij}(\rx, z,{\bf p}, {\bf q}):=\frac{A_{ij}(\rx, z,{\bf p}, {\bf q})}{A_{22}(\rx, z,{\bf p}, {\bf q})},\\
&\til a(\rx, z,{\bf p}, {\bf q}):=\frac{\bar E(x_1)-(\gam+1)\bar u_1'(x_1)\bar u_1(x_1)}{A_{22}(\rx, z,{\bf p}, {\bf q})},\\
&\til b_1(\rx, z,{\bf p}, {\bf q}):=\frac{\bar u_1(x_1)}{A_{22}(\rx, z,{\bf p}, {\bf q})},\quad \til b_0(\rx, z,{\bf p}, {\bf q}):=\frac{(\gam-1)\bar u_1'(x_1)}{A_{22}(\rx, z,{\bf p}, {\bf q})}.
\end{split}
\end{equation}
\end{definition}

\medskip

Suppose that $(\vphi, \phi, \Phi, S)$ solves Problem \ref{problem-HD}.
And, let us set $(\psi, \Psi, T)$ as
\begin{equation*}
  (\psi, \Psi, T):=(\vphi-\bar{\vphi}, \Phi-\bar{\Phi}, S-S_0).
\end{equation*}
\begin{definition}
\label{definition:coefficients-nonlinear}
(1) Suppose that $(\psi, \Psi, \phi)$ satisfies
\begin{equation}
\label{smallness condition-1}
\max_{\ol{\Om_L}} \{|\Psi|, |D\psi|, |D\phi|\}\le d_0,
\end{equation}
and define
\begin{equation*}
  \begin{split}
  a_{ij}^{(\psi, \phi, \Psi)}(\rx)&:=\til a_{ij}(\rx,\Psi(\rx), \nabla\psi(\rx), \nabla^{\perp}\phi(\rx)),\\
   a^{(\psi, \phi, \Psi)}(\rx)&:=\til a(\rx,\Psi(\rx), \nabla\psi(\rx), \nabla^{\perp}\phi(\rx)),\\
   b_1^{(\psi, \phi, \Psi)}(\rx)&:=\til b_1(\rx,\Psi(\rx), \nabla\psi(\rx), \nabla^{\perp}\phi(\rx)),\\
  b_0^{(\psi, \phi, \Psi)}(\rx)&:=\til b_0(\rx,\Psi(\rx), \nabla\psi(\rx), \nabla^{\perp}\phi(\rx)),\\
 {f}_1^{(\psi, \phi, \Psi)}(\rx)&:=\frac{Q_1(\rx, \nabla\psi(\rx), \nabla^{\perp}\phi(\rx), \nabla\Psi(\rx))+R_1(\rx, \nabla\psi(\rx), \nabla^{\perp}\phi(\rx), D(\nabla^{\perp}\phi)(\rx))}{ A_{22}(\rx, \Psi(\rx), \nabla\psi(\rx), \nabla^{\perp}\phi(\rx))},
  \end{split}
\end{equation*}
for
\begin{equation*}
D(\nabla^{\perp}\phi):=\begin{pmatrix}\der_1\\ \der_2\end{pmatrix}\begin{pmatrix}\der_2&-\der_1\end{pmatrix}\phi=
\begin{pmatrix}\der_{12}\phi&-\der_{11}\phi\\
\der_{22}\phi &-\der_{21}\phi\end{pmatrix}.
\end{equation*}
\quad\\
(2) In addition to \eqref{smallness condition-1}, suppose that $T$ satisfies
\begin{equation}
\label{smallness condition-2}
  \max_{\Om_L}|T|\le \frac{S_0}{2}.
\end{equation}
Let us define
\begin{equation*}
\begin{split}
  &c_0(\rx):=\der_z\til{\rho}(\rx, 0,0, {\bf 0}, {\bf 0}),\quad c_1(\rx):=\der_{\bf p}\til{\rho}(\rx, 0,0, {\bf 0}, {\bf 0})\cdot{\bf e}_1,\\
 &{f}_2^{(T,\psi, \phi, \Psi)}(\rx):= \til{\rho}(\rx, T(\rx), \Psi(\rx), \nabla\psi(\rx), \nabla^{\perp}\phi(\rx))-\til{\rho}(\rx,0, 0, {\bf 0}, {\bf 0})-\mfrak c_0\Psi(\rx)
  -\mfrak c_1 \der_1\psi(\rx).
 \end{split}
\end{equation*}
A direct computation yields
\begin{equation*}
c_0(\rx)=\frac{1}{\gam S_0\bar{\rho}^{\gam-2}(\rx)},\quad
c_1(\rx)=\frac{-\bar u_1(\rx)}{\gam S_0\bar{\rho}^{\gam-2}(\rx)}.
\end{equation*}

\quad\\
(3) Under the condition
\begin{equation}
\label{smallness condition-3}
 \min_{\ol{\Om_L}} {\bf v}(\rx, \nabla\psi(\rx), \nabla^{\perp}\phi(\rx))\cdot {\bf e}_1\ge \frac{u_0}{2},
\end{equation}
define
\begin{equation*}
      {f}_3^{(T,\psi, \phi, \Psi)}(\rx):=\frac{\left(\bar{\Phi}+\Psi(\rx)-\frac 12|{\bf v}(\rx, \nabla\psi(\rx), \nabla^{\perp}\phi(\rx))|^2\right)\der_2T(\rx)}{\gam(S_0+T(\rx)){\bf v}(\rx, \nabla\psi, \nabla^{\perp}\phi)\cdot {\bf e}_1}.
    \end{equation*}

\quad\\
(4) Finally, let us define a pseudo momentum density field ${\bf m}^{(\psi, \phi, \Psi)}$ by
    \begin{equation}
    \label{definition: vector field m}
      {\bf m}^{(\psi, \phi, \Psi)}(\rx):=\left(\bar{\Phi}(\rx)+\Psi(\rx)-\frac 12|{\bf v(\rx, \nabla\psi(\rx), \nabla^{\perp}\phi(\rx))}|^2\right)^{\frac{1}{\gam-1}}\bf v(\rx, \nabla\psi(\rx), \nabla^{\perp}\phi(\rx)).
    \end{equation}
\end{definition}

If all the conditions of \eqref{smallness condition-1}--\eqref{smallness condition-3} are satisfied, the nonlinear boundary value problem consisting of \eqref{new system with H-decomp2}--\eqref{new system with H-decomp5} and the boundary conditions stated in \eqref{BC for new system with H-decomp} is equivalent to the following problem for $(\psi, \phi, \Psi, T)$:
\begin{itemize}
\item[(i)] {\emph{Equations for $(\psi, \phi, \Psi, T)$ in $\Om_L$}}
\begin{align}
\label{equation for psi}
 \sum_{i,j=1}^2 a_{ij}^{(\psi, \phi, \Psi)}\der_{ij}\psi+  a^{(\psi, \phi, \Psi)}\der_1\psi+
    b_1^{(\psi, \phi, \Psi)}\der_1\Psi
    +b_0^{(\psi, \phi, \Psi)}\Psi={f}_1^{(\psi, \phi, \Psi)},&\\
\label{equation for Psi}
\Delta \Psi-c_0\Psi-c_1\der_1\psi={f}_2^{(T,\psi, \phi, \Psi)},&\\
\label{equation for phi}
  -\Delta \phi= {f}_3^{(T,\psi, \phi, \Psi)},&\\
    \label{equation for T}
 {\bf m}^{(\psi, \phi, \Psi)}\cdot \nabla T=0.
    \end{align}

\item[(ii)]{\emph{Boundary conditions for $\psi$}}
\begin{equation}\label{BC for psi}
  \psi(0, x_2)=\int_{-1}^{x_2}w_{\rm en}(t)\,dt\quad\tx{on $\Gamen$},\quad \der_2 \psi=0\quad\tx{on $\Gamw$}.
\end{equation}

\item[(iii)]{\emph{Boundary conditions for $\phi$}}
\begin{equation}\label{BC for phi}
  \der_1\phi=0\quad\tx{on $\Gamen$},\quad \phi=0\quad\tx{on $\der\Om_L\setminus \Gamen$}.
\end{equation}

\item[(iv)]{\emph{Boundary conditions for $\Psi$}}
\begin{equation}\label{BC for Psi}
  \der_1\Psi=E_{\rm en}-E_0\quad\tx{on $\Gamen$},\quad \der_2\Psi=0\quad\tx{on $\Gamw$},\quad \Psi=0\quad\tx{on $\Gamex$}.
\end{equation}

\item[(v)]{\emph{Boundary condition for $T$}}
\begin{equation}\label{BC for T}
T=S_{\rm en}-S_0\quad\tx{on $\Gamen$}.
\end{equation}

\end{itemize}

If the boundary value problem \eqref{equation for psi}--\eqref{BC for T} has a solution $(\psi, \phi, \Psi, T)$ that has a sufficiently small $C^2$-norm in $\ol{\Om_L}$, then $(\vphi, \phi, \Phi, S):=(\bar{\vphi}, 0, \bar{\Phi}, S_0)+(\psi, \phi, \Psi, T)$ satisfies all the conditions \eqref{smallness condition-1}--\eqref{smallness condition-3}, thus it becomes a solution of Problem \ref{problem-HD}. Therefore, we shall prove Theorem \ref{theorem-HD} by solving \eqref{equation for psi}--\eqref{BC for T}. This is a strategy used in the works \cite{BDXX, BDX3, BDX} and many others. The new and most important feature of this paper is that we seek for a solution $(\psi, \phi, \Psi, T)$ to \eqref{equation for psi}--\eqref{equation for T}, in which the equation \eqref{equation for psi}, as a second order differential equation for $\psi$, changes its type from being elliptic to being hyperbolic continuously with the degeneracy occurring on an interface of codimension 1, that is called {\emph{a sonic interface}}. We shall address more analytic properties of the equation \eqref{equation for psi} in Lemma \ref{lemma on L_1} after the iteration sets and related linear boundary value problems are defined.
\medskip

\begin{definition}
\label{definition: iteration sets}
For sufficiently small positive constants $r_1$, $r_2$, and $r_3$ to be fixed later, we define sets $\iterT(r_1)$, $\iterV(r_2)$ and $\iterP(r_3)$ as follows:
\begin{equation*}
\begin{split}
&\iterT{(r_1)}:=\left\{\til{T}\in H^4(\Om_L)\;\middle\vert\;
\begin{split}&\|\til{T}\|_{H^4(\Om_L)}\le r_1,\\
&\der_2^k\til{T}=0\quad\tx{on $\Gamw$ for $k=1,3$}\end{split}\right\},\\
&\iterV{(r_2)}:=\left\{\til{\phi}\in H^5(\Om_L)\;\middle\vert\;
\begin{split}&\|\til{\phi}\|_{H^5(\Om_L)}\le r_2,\\
& \der_2^{k-1}\til{\phi}=0\quad\tx{on $\Gamw$ for $k=1,3,5$}\end{split}\right\},\\
&\iterP{(r_3)}:=\left\{(\tpsi, \tPsi)\in H^4(\Om_L)\times H^4(\Om_L)\;\middle\vert\;
\begin{split}&\|\tpsi\|_{H^4(\Om_L)}+ \|\tPsi\|_{H^4(\Om_L)}\le r_3,\\
&\der_2^{k}\tpsi=\der_2^k\tPsi=0\quad\tx{on $\Gamw$ for $k=1,3$}\end{split}\right\}.
\end{split}
\end{equation*}
The sets $\iterT(r_1)$ and $\iterV(r_2)$ represent approximated entropy perturbations, and approximated vorticities, respectively. And, the set $\iterP(r_3)$ is a collection of approximated potential perturbations for electric force and velocity.

\end{definition}

\begin{definition}
\label{definition:approx coeff and fs}
Fix $\til{T}\in \iterT{(r_1)}$ and $P=(\tphi, \tpsi, \tPsi)\in \iterV{(r_2)}\times \iterP{(r_3)}$.
\quad\\
(1) Define a bilinear differential operator $\mfrak{L}_1^P(\cdot,\cdot)$ associated with $P$ by
\begin{equation*}
  \mfrak{L}_1^P(\psi, \Psi):=\sum_{i,j=1}^2 a_{ij}^P\der_{ij}\psi+a^P\der_1\psi+b_1^P\der_1\Psi+b_0^P\Psi
\end{equation*}
for the coefficients $a_{ij}^P$, $a^P$, $b_1^P$ and $b_0^P$ given by Definition \ref{definition:coefficients-nonlinear}.
\quad\\
(2) For the coefficients ${c}_0$ and ${c}_1$ given in Definition \ref{definition:coefficients-nonlinear}, define another bilinear differential operator $\mfrak{L}_2(\cdot,\cdot)$ by
\begin{equation*}
      \mfrak{L}_2(\psi, \Psi):=\Delta \Psi-{c}_0\Psi-{c}_1\der_1\psi.
    \end{equation*}
    Note that $\mfrak{L}_2$ is fixed independently of $\til{T}\in \iterT_{r_1}$ and $P=(\tphi, \tpsi, \tPsi)\in \iterV{(r_2)}\times \iterP{(r_3)}$.

\end{definition}

\begin{problem}
\label{LBVP1 for iteration}
Fix $\tilT\in \iterT(r_1)$ and $P\in \iterV(r_2)\times \iterP(r_3)$.

Find $\phi\in H^5(\Om_L)$ that solves
\begin{equation}
\label{lbvp for phi}
  \begin{cases}
  -\Delta\phi=f_0^{(\tilT, P)}\quad&\tx{in $\Om_L$},\\
  \der_1\phi=0\quad&\tx{on $\Gamen$},\\
  \phi=0\quad&\tx{on $\Gamw\cup\Gamex$}.
  \end{cases}
\end{equation}

And, find $(\psi, \Psi)\in [H^4(\Om_L)]^2$ that solves
    \begin{equation}
    \label{lbvp main}
      \begin{split}
      \begin{cases}
      \mfrak{L}_1^P(\psi, \Psi)=f_1^P\quad&\tx{in $\Om_L$},\\
      \mfrak{L}_2(\psi, \Psi)=f_2^{(\tilT, P)}\quad&\tx{in $\Om_L$},
      \end{cases}\\
      \psi(0, x_2)=\int_{-1}^{x_2}w_{\rm en}(t)\,dt,\quad \der_1\Psi=E_{\rm en}-E_0\quad&\tx{on $\Gamen$},\\
      \der_2\psi=0,\quad \der_2\Psi=0\quad&\tx{on $\Gamw$},\\
      \Psi=0\quad&\tx{on $\Gamex$}.
      \end{split}
    \end{equation}

\end{problem}
Later, we shall fix constants $r_1$, $r_2$ and $r_3$ so that Problem \ref{LBVP1 for iteration} is well-posed for each $\tilT\in \iterT(r_1)$ and $P\in \iterV(r_2)\times \iterP(r_3)$.
\medskip

    In order to solve a single degenerately mixed type PDE, one can employ an idea from \cite[Chapter 1]{KZ}. But we should point out that the method developed in \cite{KZ} is applicable only if several technical conditions hold. To our surprise, all the technical conditions described in \cite[Chapter 1]{KZ} are satisfied by the equation \eqref{equation for psi} or its variations, which are derived from \eqref{new system with H-decomp2}(see Lemmas \ref{lemma-coefficient at bg} and \ref{corollary-coeff extension}), when the background flow is accelerating (see Lemma \ref{lemma-1d-full EP}). But, there are more difficulties to overcome. The linear system in \eqref{lbvp main} consists of two PDEs of different types, and they are weakly coupled by lower order derivative terms. So, in order to establish an a priori $H^1$-estimate of weak solutions to the system \eqref{lbvp main}, it was inevitable for us to add an assumption on the background solutions. Clearly, the condition \eqref{almost sonic condition1 full EP} holds if $L$ is sufficiently small. But the smallness of $L$ is not a necessary condition to guarantee the almost sonic condition. In Appendix \ref{appendix:nozzle length}, we show examples of the parameters $(\gam, J)$ for which $L$ can be large.
\medskip

Given $\tilT\in \iterT(r_1)$ and $P\in \iterV(r_2)\times \iterP(r_3)$, let $(\phi, \psi, \Psi)$ be a solution to Problem \ref{LBVP1 for iteration}, and define a vector field ${\bf m}(\Psi, \nabla\psi, \nabla^{\perp}\phi)$ by \eqref{definition: vector field m}. It is clear that if $(\phi, \psi, \Phi)=P$ holds in $\Om_L$, then the vector field ${\bf m}$ satisfies the equation
\begin{equation}
\label{div-free}
  \nabla\cdot {\bf m}(\Psi, \nabla\psi, \nabla^{\perp}\phi)=0\quad\tx{in $\Om_L$}.
\end{equation}
\begin{definition}
\label{definition: momentum density field} Fix $\tilT \in \iterT(r_1)$. Assume that the constants $r_1$, $r_2$, and $r_3$ are appropriately fixed so that Problem \ref{LBVP1 for iteration} is well-defined, and that it acquires a unique solution $(\phi, \psi, \Phi)$ for each $\tilT\in \iterT(r_1)$ and $P\in \iterV(r_2)\times \iterP(r_3)$. In addition, suppose that there exists a unique element $P\in \iterV(r_2)\times \iterP(r_3)$ that satisfies the equation $P=(\phi, \psi, \Phi)$ in $\Om_L$. For such an element $P$,  we call the vector field ${\bf m}(\Psi, \nabla\psi, \nabla^{\perp}\phi)$ by {\emph{the approximated momentum density field associated with $\tilT$}}.
\end{definition}

\begin{problem}
\label{problem-transport equation}
For a fixed $\tilT\in \iterT(r_1)$, let ${\bf m}(\Psi, \nabla\psi, \nabla^{\perp}\phi)$ be the {\emph{approximated momentum density field}} associated with $\tilT$ in the sense of Definition \ref{definition: momentum density field}. Solve the following boundary value problem for $T$:
\begin{equation*}
  \begin{split}
  {\bf m}(\Psi, \nabla\psi, \nabla^{\perp}\phi)\cdot \nabla T=0\quad&\tx{in $\Om_L$},\\
  T=S_{\rm en}-S_0\quad&\tx{on $\Gamen$}.
  \end{split}
\end{equation*}
\end{problem}

\begin{remark}
\label{remark-iteration}
\begin{itemize}
\item[(i)] Clearly, Problem \ref{problem-transport equation} is well defined if we prove that Problem \ref{LBVP1 for iteration} is well-posed for each $\tilT\in \iterT(r_1)$ and $P\in \iterV(r_2)\times \iterP(r_3)$, and show that the approximated momentum density field is well defined for each $\tilT\in \iterT(r_1)$.

\item[(ii)] Suppose that $T$ is a solution to Problem \ref{problem-transport equation}, and that it satisfies $T=\tilT$ in $\Om_L$. Let us define $(\vphi, \Phi, S)$ by
    \begin{equation*}
      (\vphi, \Phi, S):=(\bar{\vphi}, \bar{\Phi}, S_0)+(\psi, \Psi, T)\quad\tx{in $\Om_L$}.
    \end{equation*}
    Then $(\vphi, \Phi, \phi, S)$ solves Problem \ref{problem-HD}.
\end{itemize}
\end{remark}

\subsection{Approximated sonic interfaces}
\label{subsection: main issues}
As mentioned earlier, the biggest challenge in proving Theorem \ref{theorem-HD} is to establish the well-posedness of the boundary value problem \eqref{lbvp main}. For further discussion, we first need to understand analytic properties of the differential operator $\mfrak{L}_1^{P}$.

\begin{lemma}
\label{lemma on L_1}
Given constants $(\gam, \zeta_0, J, S_0, E_0)$, let $(\bar{\vphi}, \bar{\Phi})$ be the associated background solution to the system \eqref{new system with H-decomp1}--\eqref{new system with H-decomp5} in the sense of Definition \ref{definition of background solution-HD}. For any $L\in (0, l_{\rm max}]$, one can fix a constant $\bar{\delta}>0$ sufficiently small depending only on $(\gam, \zeta_0, J, S_0, E_0, L)$ so that if we fix the constants $r_1$, $r_2$, and $r_3$ to satisfy the inequality
\begin{equation}
\label{condition:r}
\max\{r_1,r_2, r_3\}\le 2\bar{\delta},
\end{equation}
then, for each $\tilT\in \iterT(r_1)$  and $P=(\tphi, \tpsi, \tPsi)\in
\iterV(r_2)\times \iterP(r_3)$, the coefficients $(a_{ij}^P, a^P, b_1^P, b_0^P)$ with $i,j=1,2$, and the functions $(f_0^{(\tilT, P)}, f_1^P, f_2^{(\tilT, P)})$ are well defined by Definition \ref{definition:approx coeff and fs}. Furthermore, they satisfy the following properties:

\begin{itemize}
\item[(a)] $\displaystyle{a_{ij}^P, a^P, b_1^P, b_0^P, f_0^{(\tilT, P)}, f_1^P, f_2^{(\tilT, P)}\in H^3(\Om_L)}$.
    \medskip
\item[(b)] $\displaystyle{a_{22}^P= 1}$ and $\displaystyle{a_{12}^P=a_{21}^P}$ in $\Om_L$.
    \medskip

\item[(c)] $\displaystyle{\der_2 (a_{11}^P, a^P, b_1^P, b_0^P)=0}$ and $\displaystyle{\der_2^ka_{12}^P=0}$ for $k=0,2$ on $\Gamw$.
   \medskip

\item[(d)] For
$$P_0=(0,0,0)\in \iterV(r_2)\times \iterP(r_3),$$
let us set $(\bar a_{11}, \bar a, \bar b_1, \bar b_0)$ as
    \begin{equation}
    \label{coefficient at bg}
      (\bar a_{11}, \bar a, \bar b_1, \bar b_0):=(a_{11}^{P_0}, a^{P_0}, b_1^{P_0}, b_0^{P_0}).
    \end{equation}
    Then it holds that
    \begin{equation}
\label{estimate-coefficient-difference}
\begin{split}
  &\|(a^P_{11}, a^P, b^P_1, b^P_0)-(\bar a_{11}, \bar a, \bar b_1, \bar b_0)\|_{H^3(\Om_L)}+\|a^P_{12}\|_{H^3(\Om_L)}\\
  &\le C \|(\tphi, \tpsi, \tPsi)\|_{H^4(\Om_L)}
\end{split}
\end{equation}
for a constant $C>0$ depending only on $(\gam, \zeta_0, J, S_, E_0)$.
\medskip

\item[(e)] There exists a constant $C>0$ depending only on $(\gam, \zeta_0, J, S_0, E_0)$ to satisfy the following estimates:
    \begin{align*}
    &\|f_0^{(\tilT, P)}\|_{H^3(\Om_L)}\le C\|\tilT\|_{H^4(\Om_L)},\\
    &\|f_1^{P}\|_{H^3(\Om_L)}\le C\left(\left(1+\|\tPsi\|_{H^4(\Om_L)}\right)\|\tphi\|_{H^5(\Om_L)}+
        \|(\tpsi, \tPsi)\|^2_{H^4(\Om_L)}
        \right),\\
        &\|f_2^{(\tilT. P)}\|_{H^3(\Om_L)}\le C\left(\|\tilT\|_{H^3(\Om_L)}
        +\|\tphi\|_{H^4(\Om_L)}
        +\|(\tpsi, \tPsi)\|^2_{H^4(\Om_L)}
        \right).
    \end{align*}
\medskip

\item[(f)] The functions $f_0^{(\tilT, P)}$, $f_1^P$ and $f_2^{(\tilT, P)}$ satisfy the following compatibility conditions on $\Gamw$:
    \begin{equation*}
      \der_2^{k} f_0^{(\tilT, P)}=0\,\,\tx{for $k=0,2$}, \quad
       \der_2f_1^P=0,\quad
       \der_2f_2^{(\tilT, P)}=0.
    \end{equation*}
    \medskip
\item[(g)] For the differential operator $\mfrak{L}_1^P(\cdot,\cdot)$ given by Definition \ref{definition:approx coeff and fs}, it is a second order differential operator with respect to the first component of a variable (which corresponds to $\psi$ in Definition \ref{definition:approx coeff and fs}(1)).
    \begin{itemize}
\item[($\tx{g}_1$)]
   The operator $\mfrak{L}_1^{P_0}$, as a second order differential operator with respect to its first component of an argument, is of {\emph{mixed type in $\Om_L$ with a degeneracy occurring on $\displaystyle{\Om_L\cap\{x_1=\ls\}}$}} for the constant $\ls$ given from Lemma \ref{lemma-1d-full EP}. More precisely,
     \begin{equation*}
       \tx{the operator $\mfrak L_1^{P_0}$ is}\,\,\begin{cases}
       \tx{elliptic}\quad&\mbox{for $x_1<\ls$},\\
       \tx{degenerate}\quad&\mbox{for $x_1=\ls$},\\
       \tx{hyperbolic}\quad&\mbox{for $x_1>\ls$}.
       \end{cases}
     \end{equation*}
\item[($\tx{g}_2$)] For each $P=(\tphi, \tpsi, \tPsi)\in
\iterV(r_2)\times \iterP(r_3)$, there exists a function  $\gs^P:[-1,1]\rightarrow (0,L)$ so that
\begin{equation*}
       \tx{the operator $\mfrak L_1^{P}$ is}\,\,\begin{cases}
       \tx{elliptic}\quad&\mbox{for $x_1<\gs^P(x_2)$},\\
       \tx{degenerate}\quad&\mbox{for $x_1=\gs^P(x_2)$},\\
       \tx{hyperbolic}\quad&\mbox{$x_1>\gs^P(x_2)$}.
       \end{cases}
     \end{equation*}
In addition, the function $\gs^P$ satisfies the estimate
    \begin{equation}
    \label{estimate of gs}
      \|\gs^P-\ls\|_{C^1([-1,1])}+\|\gs^P-\ls\|_{H^2((-1,1))}\le C  \|(\tphi, \tpsi, \tPsi)\|_{H^4(\Om_L)}
    \end{equation}
for a constant $C>0$ depending only on $(\gam, \zeta_0, J, S_, E_0)$. Furthermore, the function $\gs^P$ satisfies the estimate
\begin{equation}
\label{boundes of gs}
   \frac{15}{16} \ls\le \gs^P(x_2)\le \ls+\frac{1}{16}\min\{\ls, L-\ls\} \quad\tx{for $|x_2|\le 1$}.
\end{equation}

\item[($\tx{g}_3$)] For $\lambda_0:=\bar{a}_{11}\left(\frac{5\ls}{8}\right)(>0)$, it holds that
\begin{equation*}
  \begin{pmatrix}
  a_{11}^P & a_{12}^P\\
  a_{12}^P & 1
  \end{pmatrix}\ge \frac{\lambda_0}{2}\quad\tx{in $\ol{\Om_L}\cap\left\{x_1\le \frac{7\ls}{8}\right\}$}.
\end{equation*}

\item[($\tx{g}_4$)] There exists a constant $\lambda_1>0$ depending only on $(\gam, \zeta_0, J, S_, E_0)$ such that
\begin{equation*}
a_{11}^P\le -\lambda_1\quad\tx{in $\ol{\Om_L}\cap\left\{x_1\ge L-\frac{L-\ls}{10}\right\}$. }
\end{equation*}
\end{itemize}

\end{itemize}

\begin{proof} {\textbf{Step 1.}}
By the generalized Sobolev inequality, one can fix a constant $\delta_1>0$ sufficiently small such that if the inequality
\begin{equation*}
  \max\{r_2, r_3\}\le 2\delta_1
\end{equation*}
holds, then for any $P=(\tphi, \tpsi, \tPsi)\in \iterV(r_2)\times \iterP(r_3)$, the coefficients $(a_{ij}^P, a^P, b_1^P, b_0^P)$ with $i,j=1,2$ and the function $f_1^P$ given by Definition \ref{definition:approx coeff and fs} are well defined. And, one can fix a constant $\delta_2>0$ sufficiently small such that if the inequality
\begin{equation*}
  \max\{r_1, r_2, r_3\}\le 2\delta_2
\end{equation*}
holds, then any $\tilT\in \iterT(r_1)$ satisfies \eqref{smallness condition-2}. Moreover, for any $P=(\tphi,\tpsi, \tPsi)\in \iterV(r_2)\times \iterP(r_3)$, the vector field ${\bf v}(\rx, \nabla\tpsi(\rx), \nabla^{\perp}\tphi(\rx))$ given by \eqref{definition of v} satisfies \eqref{smallness condition-3}.
So the functions $f_2^{(\tilT, P)}$ and $f_3^{(\tilT, P)}$ given by Definition \ref{definition:approx coeff and fs} are well defined. We set $\bar{\delta}$ as
\begin{equation}
\label{delta-choice1}
  \bar{\delta}:=\min\{\delta_1, \delta_2\}.
\end{equation}
Then all the properties stated in (a)--(f) can be directly checked.
\medskip

{\textbf{Step 2.}} A direct computation with using $\us$ given by \eqref{EP-1d-reduced} yields $$\bar a_{11}=1-\frac{\bar u_1^{\gam+1}}{\us^{\gam+1}},$$
so we have
\begin{equation*}
  \mfrak{L}_1^{P_0}(v,0)=\left(1-\frac{\bar u_1^{\gam+1}}{\us^{\gam+1}}\right)\der_{11}v+\der_{22}v+\bar a\der_1 v.
\end{equation*}
Then Lemma \ref{lemma-1d-full EP} directly implies the statement ($\tx{g}_1$).
\medskip

{\textbf{Step 3.}} Since $a_{12}^P=a_{21}^P$, we can write $\mfrak{L}_1^P(v,0)$ as
\begin{equation*}
  \mfrak{L}_1^P(v,0)=a_{11}^P\der_{11}v+2a_{12}^P\der_{12}v+\der_{22}v+a^{P}\der_1 v.
\end{equation*}
Due to Lemma \ref{lemma-1d-full EP}, the coefficient $\bar a_{11}$ satisfies
$$
\bar{a}_{11}(L)<0<\bar a_{11}(0),\quad\tx{and}\quad \bar{a}_{11}'=-\frac{(\gam+1)\bar u_1^{\gam}}{\us^{\gam+1}}\bar u_1'.$$

The principal coefficients of the operator $\mfrak{L}_1^P$ form a symmetric matrix $${\mathbb A}^{P}=\begin{pmatrix}a_{11}^{P}&a_{12}^{P}\\
a_{12}^{P}&1\end{pmatrix},$$
so all the eigenvalues of ${\mathbb  A}^{P}$ are real. Hence the operator $\mfrak{L}_1^P$ is hyperbolic if and only if $a^P_{11}-(a^P_{12})^2<0$, and elliptic if and only if $a^P_{11}-(a^P_{12})^2>0$.
By the estimate \eqref{estimate-coefficient-difference} and the generalized Sobolev inequality, it holds that
\begin{equation}
\label{estimate of C1 difference of a_{11}}
\|\det {\mathbb A}^{P}-\det {\mathbb A}^{P_0}\|_{C^1(\ol{\Om_L})}=\|\det {\mathbb A}^{P}-\bar a_{11}\|_{C^1(\ol{\Om_L})}
\le C\bar{\delta}
\end{equation}
for some constant $C>0$ fixed depending only on $(\gam, \zeta_0, J, S_0, E_0,L)$.
Therefore we can reduce the constant $\bar{\delta}>0$ from the one given in \eqref{delta-choice1} so that if the condition \eqref{condition:r} holds, then the following two properties hold:
\begin{equation*}
  \begin{split}
&\max_{\ol{\Gamex}}\det {\mathbb A}^{P}<0<\min_{\ol{\Gamen}}\det {\mathbb A}^{P},\\
\tx{and}\,\,&\max_{\ol{\Om_L}}\der_{x_1}\det {\mathbb A}^{P}\le
    -\frac 12 \min_{\ol{\Om_L}} \frac{(\gam+1)\bar u_1^{\gam+1}}{\us^{\gam+1}}\bar u'<0.
  \end{split}
\end{equation*}
Then the implicit function theorem yields a unique $C^1$-function $\gs^P:[-1,1]\rightarrow \R$ satisfying
\begin{equation*}
\det {\mathbb A}^{P}(x_1, x_2)
\begin{cases}
>0\quad&\mbox{for $x_1<\gs^P(x_2)$},\\
=0\quad&\mbox{for $x_{1}=\gs^P(x_2)$},\\
<0\quad&\mbox{for $x_1>\gs^P(x_2)$}.
\end{cases}
\end{equation*}
Note that
\begin{equation*}
\det {\mathbb A}^{P}(\gs^P(x_2), x_2)-\bar a_{11}(\ls)= 0,  \quad\tx{for $-1\le x_2\le 1$}.
\end{equation*}
which can be rewritten as
\begin{equation*}
 \bar a_{11}(\gs(x_1))-\bar a_{11}(\ls)=\bar a_{11}(\gs(x_2))-a_{11}(\gs(x_2),x_2)+a_{12}^2(\gs(x_2),x_2)\quad\tx{for $|x_1|\le 1$.}
\end{equation*}
By using this equation and the trace inequality, we can easily derive the estimate \eqref{estimate of gs}. And, we can directly check the estimate \eqref{boundes of gs} by using \eqref{estimate of gs} and the generalized Sobolev inequality. This proves the statement (${\tx g}_2$). Finally, the statements (${\tx g}_3$) and (${\tx g}_4$) can be verified by using \eqref{estimate-coefficient-difference}.

\end{proof}
\end{lemma}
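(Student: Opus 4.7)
The plan is to first secure a smallness constant $\bar\delta$ so that every nonlinear composition in Definition \ref{definition:approx coeff and fs} makes sense for $(\tilT,P)\in\iterT(r_1)\times\iterV(r_2)\times\iterP(r_3)$. Using the Sobolev embedding $H^3(\Om_L)\hookrightarrow C^{1,\alpha}(\ol{\Om_L})$ in two dimensions, I would bound $\|(\nabla\tpsi,\nabla^{\perp}\tphi,\tPsi)\|_{C^{1,\alpha}}$ by $C\bar\delta$ and $\|\tilT\|_{C^0}$ likewise, so that conditions \eqref{smallness condition-1}--\eqref{smallness condition-3} are met with room to spare. This guarantees that the smooth functions $\til a_{ij},\til a,\til b_0,\til b_1,\til\rho$ are well defined along the image, and the Moser/algebra estimates for products and smooth compositions in $H^3$ then immediately yield (a). Assertion (b) is a tautology from the normalization in \eqref{coefficients of L_1}, and (c) follows because the evenness/oddness about $\Gamw$ built into $\iterV(r_2)\times\iterP(r_3)$ is propagated by the smooth nonlinearities: each coefficient is an even function of $\der_2\tpsi,\der_1\tphi,\der_2\tPsi$ paired appropriately.

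For (d) and (e) I would Taylor expand $\til a_{ij},\til a,\til b_0,\til b_1$ around $(z,{\bf p},{\bf q})=(0,{\bf 0},{\bf 0})$. The zeroth order term gives $(\bar a_{11},\bar a,\bar b_1,\bar b_0)$ and the remainder is linear in $(\Psi,\nabla\psi,\nabla^{\perp}\phi)$ plus a smooth quadratic term; the $H^3$-algebra inequality then yields \eqref{estimate-coefficient-difference}. The bound on $f_0^{(\tilT,P)}$ is immediate from the definition. For $f_1^P$, I separate the purely-$\phi$ contribution (linear in second derivatives of $\tphi$, giving the factor $\|\tphi\|_{H^5}$) from the remaining terms, which vanish to second order at the background and so admit the quadratic bound in $\|(\tpsi,\tPsi)\|_{H^4}$. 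A similar Taylor expansion of $\til\rho$, after subtracting the linear terms $c_0\Psi+c_1\der_1\psi$ by construction, yields (e)$_3$. The compatibility conditions in (f) come by differentiating the formulas an even number of times in $x_2$ on $\Gamw$ and using the boundary-data evenness built into the iteration sets.

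The core analytic content sits in (g). The identity $\bar a_{11}=1-(\bar u_1/\us)^{\gam+1}$ follows from \eqref{coefficients of L_1} with $A_{22}$ evaluated at the background (using $\gam S_0\bar\rho^{\gam-1}=\us^{\gam+1}/\bar u_1^{\gam-1}$ via $\bar\rho\bar u_1=J$); combined with $\bar u_1'>0$ and $\bar u_1(\ls)=\us$ from Lemma \ref{lemma-1d-full EP}, this gives (g$_1$) and also the strict bound $\der_{x_1}\bar a_{11}<0$ on $[0,L]$. For (g$_2$), I consider $\det\mathbb{A}^P=a_{11}^P-(a_{12}^P)^2$; by (d) and the embedding $H^3\hookrightarrow C^1$ in 2D this differs from $\bar a_{11}$ in $C^1(\ol{\Om_L})$ by $O(\bar\delta)$, so after shrinking $\bar\delta$ further one has $\der_{x_1}\det\mathbb{A}^P<0$ uniformly, $\det\mathbb{A}^P>0$ on $\ol{\Gamen}$, and $\det\mathbb{A}^P<0$ on $\ol{\Gamex}$. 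The implicit function theorem then produces a unique $C^1$ curve $x_1=\gs^P(x_2)$ with the announced sign pattern.

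To obtain the estimates on $\gs^P$ I would rewrite the zero-level identity as $\bar a_{11}(\gs^P(x_2))-\bar a_{11}(\ls)=\bar a_{11}(\gs^P(x_2))-a_{11}^P(\gs^P(x_2),x_2)+(a_{12}^P)^2(\gs^P(x_2),x_2)$, divide through by the non-vanishing $\der_{x_1}\bar a_{11}$, and apply the mean value theorem for the $C^1$ bound and the trace inequality $\|\cdot\|_{H^{3/2}(\Gamw)}\le C\|\cdot\|_{H^2(\Om_L)}$ together with one differentiation in $x_2$ of the implicit equation for the $H^2$ bound on $\gs^P$. Estimate \eqref{boundes of gs} then comes from \eqref{estimate of gs} via Sobolev embedding after a final reduction of $\bar\delta$ depending on $L$. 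The uniform ellipticity (g$_3$) and uniform hyperbolicity (g$_4$) then reduce to saying that on the closed sets $\{x_1\le 7\ls/8\}$ and $\{x_1\ge L-(L-\ls)/10\}$ the background coefficient $\bar a_{11}$ is, respectively, bounded below and above away from zero by quantities depending only on $(\gam,\zeta_0,J,S_0,E_0)$; a final $C^0$-closeness argument with $\bar\delta$ absorbs the perturbation. The main obstacle I anticipate is choreographing the successive reductions of $\bar\delta$: each clause in (g) demands its own smallness, and all must be compatible with the $L$-dependent $C^1$-closeness of $\det\mathbb{A}^P$ to $\bar a_{11}$ needed to invoke the implicit function theorem globally in $x_2$.
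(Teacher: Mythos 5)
Your proposal follows essentially the same route as the paper: Sobolev embedding to secure well-definedness and the smallness conditions, direct computation and Moser/algebra estimates for (a)--(f), the explicit identity $\bar a_{11}=1-(\bar u_1/\us)^{\gamma+1}$ plus monotonicity of $\bar u_1$ for (g$_1$), $C^1$-closeness of $\det\mathbb{A}^P$ to $\bar a_{11}$ followed by the implicit function theorem for (g$_2$), and absorption of the perturbation for (g$_3$)--(g$_4$). The level-set identity you use to extract \eqref{estimate of gs} is precisely the one the paper writes down, so the argument is in agreement.
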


The main difficulty of this work is to establish the well-posedness of the boundary value problem \eqref{lbvp main} for
$(\psi, \Psi)$. According to the statement (g) of Lemma \ref{lemma on L_1}, the system
$$
\begin{cases}
      \mfrak{L}_1^P(\psi, \Psi)=f_1^P\\
      \mfrak{L}_2(\psi, \Psi)=f_2^{(\tilT, P)}
      \end{cases}$$
consists of a second order mixed-type equation with a degeneracy and a second order elliptic equation, which are weakly coupled together by lower order derivative terms. Note that the type of the equation $\mfrak{L}_1^P(\psi, \Psi)=f_1^P$ changes from being elliptic to being hyperbolic across an interface of codimension 1. To our best knowledge, there is no general theory that guarantees the well-posedness of a boundary value problem of this type.
      Our strategy to solve the boundary value problem \eqref{lbvp main} is as follows:
      \begin{itemize}
      \item[(1)] First, we shall apply the method developed in \cite[Chapter 1]{KZ} to handle the degeneracy of the operator $\mfrak{L}_1^P$ occurring on $x_1=\gs^P(x_2)$. Namely, we set up a new boundary value problem with a singular perturbation by introducing an auxiliary differential operator $\mfrak{L}_1^P(v,w)+\eps\der_{111}v$ for a constant $\eps>0$. The crucial step is to establish a priori $H^1$ estimate of weak solutions to an auxiliary boundary value problem that includes $$\mfrak{L}_1^P(v,w)+\eps\der_{111}v=f_1.$$
          More importantly, the a priori $H^1$ estimate is to be achieved uniformly with respect to the parameter $\eps>0$. Then the existence of a weak solution to the problem \eqref{lbvp main} is established by passing to the limit as $\eps$ tends to $0+$.
      \item[(2)] Once the existence of a weak solution to \eqref{lbvp main} is achieved, then we establish a priori $H^4$ estimate of the weak solution by applying bootstrap arguments.
      \end{itemize}

But there are still obstacles to overcome.
\smallskip

Our plan is to apply \cite[Theorems 1.1, 1.5 and 1.7]{KZ}. But these theorems can be applied only to a class of equations that satisfy several technical conditions. Nevertheless, thanks to the monotonicity property of $\bar u_1$ stated in Lemma \ref{lemma-1d-full EP}, we discover that the operator $\mfrak{L}_1^{P}(\psi,\Psi)$, by regarding as a second order linear differential operator for $\psi$, satisfies all the technical conditions required to apply \cite[Theorems 1.1, 1.5 and 1.7]{KZ} as long as the constants $r_2$ and $r_3$ in Definition \ref{definition:approx coeff and fs} are fixed properly.

But, the boundary value problem  \eqref{lbvp main} contains a weakly coupled PDE system. And, this makes the problem even more difficult. If the two operators $\mfrak{L}_1^P(\psi,\Psi)$ and $\mfrak{L}_2(\psi,\Psi)$ did not contain the coupling terms such as $b_1^P\der_1\Psi+b_0^P\Psi$ from $\mfrak{L}_1^P(\psi,\Psi)$ and $c_1\der_1\psi$ from $\mfrak{L}_2(\psi,\Psi)$, then we could have established a priori $H^1$ estimate of a weak solution right away by applying \cite[Theorems 1.1, 1.5 and 1.7]{KZ} for $\mfrak{L}_1^P$ and applying the standard elliptic theory for $\mfrak{L}_2$ separately. Unfortunately, this is not the case. The coefficients $b_1^P$, $b_0^P$ and $c_1$ are not negligible, moreover it is unclear whether there is any particular relation among the coefficients so that the coupling terms would disappear by cancellations in an energy estimate as in the study of subsonic flows(see \cite{BDX3, BDX}). In order to resolve this issue, we add the assumptions \eqref{almost sonic condition1 full EP}. In the next section, we explain how these assumptions are used to handle the coupling terms in a priori $H^1$ energy estimate.

\section{The key estimate}
\label{section:essential part}

\subsection{Preliminary}
\label{subsection-lbvp-preliminary}
Given constants $(\gam, \zeta_0, J, S_0, E_0)$ with
\begin{equation*}
  \gam>1,\quad \zeta_0>1,\quad J>0,\quad S_0>0,\quad E_0<0
\end{equation*}
let $(\bar{\vphi}, \bar{\Phi})$ be the associated background solution to the system \eqref{new system with H-decomp2}--\eqref{new system with H-decomp5} in the sense of Definition \ref{definition of background solution-HD}. For a fixed constant $L\in(0, l_{\rm max}]$, let $\bar{\delta}$ be given to satisfy Lemma \ref{lemma on L_1}. Assuming that the condition
\begin{equation}
\label{condition: r in sec4}
  \max\{r_1, r_2, r_3\}\le 2\bar{\delta}
\end{equation}
holds, fix
\begin{equation*}
\tilT\in \iterT(r_1)\quad\tx{and}\quad  P=(\tphi, \tpsi, \tPsi)\in \iterV(r_2)\times \iterP(r_3)
\end{equation*}
for $\iterT(r_1)$ and $\iterV(r_2)\times \iterP(r_3)$ given by Definition \ref{definition: iteration sets}.

In this section, we shall establish the well-posedness of the boundary value problem \eqref{lbvp main}.

\begin{problem}
\label{problem-lbvp for iteration}
Given functions $f_1\in H^3(\Om_L)$ and $f_2\in H^2(\Om_L)$ satisfying the compatibility conditions
\begin{equation}
\label{compatibility conditions for f1 and f2}
  \der_2f_1=0\quad\tx{and}\quad \der_2 f_2=0\quad \tx{on $\Gamw$},
\end{equation}
find $(v,w)$ that solves the following problem:
\begin{equation}
\label{lbvp-main general}
\begin{split}
\begin{cases}
\mfrak{L}_1^P(v, w)=f_1\quad&\tx{in $\Om_L$},\\
\mfrak L_2(v,w)=f_2\quad&\tx{in $\Om_L$},
\end{cases}\\
v=0,\quad \der_1 w=0\quad&\tx{on $\Gamen$},\\
\der_2v=0,\quad \der_2 w=0\quad&\tx{on $\Gamw$},\\
w=0\quad&\tx{on $\Gamex$}.
\end{split}
\end{equation}
\end{problem}

\begin{proposition}
\label{theorem-wp of lbvp for system with sm coeff}
Suppose that $(u_0, E_0)\in \Tac$ with $E_0<0$, which is equivalent to $0<u_0<\us$, and let $(\bar{\vphi}, 0, {\bar{\Phi}}, S_0)$ be the background solution associated with $(\gam, \zeta_0, J, S_0, E_0)$ in the sense of Definition \ref{definition of background solution-HD}. Then, one can fix two constants $\bJ$ and $\ubJ$ satisfying
\begin{equation*}
  0<\bJ<1<\ubJ<\infty
\end{equation*}
so that whenever $J\in(0, \bJ]\cup[\ubJ,\infty)$,
there exists a constant $d\in(0,1)$ such that if
$(E_0, L)$ are fixed to satisfy the condition \eqref{almost sonic condition1 full EP}, then Problem \ref{problem-lbvp for iteration} is well posed for any $P=(\tphi, \tpsi, \tPsi)\in \iterV(r_2)\times \iterP(r_3)$ provided that the constant $\bar{\delta}>0$ from Lemma \ref{lemma on L_1} is adjusted appropriately. In other words, one can reduce the constant $\bar{\delta}>0$ so that, under the condition of \eqref{condition: r in sec4}, for each $P=(\tphi, \tpsi, \tPsi)\in \iterV(r_2)\times \iterP(r_3)$, Problem \ref{problem-lbvp for iteration} acquires a unique solution $(v,w)\in H^4(\Om_L)\times H^4(\Om_L)$. Furthermore, the solution satisfies the estimate
\begin{equation}
\label{estimate of v and w}
      \|v\|_{H^4(\Om_L)}+\|w\|_{H^4(\Om_L)}
      \le C\left(\|f_1\|_{H^3(\Om_L)}
      +\|f_2\|_{H^2(\Om_L)}\right)
    \end{equation}
    for some constant $C>0$.
    \smallskip

    In the above, the parameters $\ubJ$ and $\bJ$ are fixed depending only on $(\gam, \zeta_0, S_0)$. And, the constants $d\in(0,1)$ and $\bar{\delta}$ are fixed depending only on $(\gam, \zeta_0, S_0,J)$ and $(\gam,\zeta_0, S_0, E_0, J, L)$, respectively. Finally, the estimate constant $C$ is fixed depending only on $(\gam,\zeta_0, S_0, E_0, J, L)$.

\end{proposition}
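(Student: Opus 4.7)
The plan is to follow the two-step strategy outlined after Lemma \ref{lemma on L_1}: regularize the degenerate mixed-type equation for $v$ by a singular perturbation $\eps \der_{111} v$, solve the resulting regularized coupled system for $(v^\eps, w^\eps)$ for each fixed $\eps > 0$, establish an a priori $H^1$ estimate uniform in $\eps$, pass to the limit $\eps \to 0+$, and finally bootstrap to $H^4$.

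First, for $\eps \in (0,1]$, I would introduce the perturbed system
\begin{equation*}
\mfrak L_1^P(v^\eps, w^\eps) + \eps \der_{111} v^\eps = f_1, \quad \mfrak L_2(v^\eps, w^\eps) = f_2 \quad \text{in } \Om_L,
\end{equation*}
augmenting the boundary conditions of \eqref{lbvp-main general} by $\der_{11} v^\eps = 0$ on $\Gamex$ to accommodate the extra order. Existence of $(v^\eps, w^\eps) \in H^4(\Om_L)^2$ for each fixed $\eps > 0$ can be obtained by combining \cite[Theorems 1.1 and 1.5]{KZ} (applicable to $\mfrak L_1^P$ thanks to Lemma \ref{lemma on L_1}(g) and the monotonicity of $\bar u_1$ from Lemma \ref{lemma-1d-full EP}, which guarantee the technical hypotheses of \cite{KZ}) with standard elliptic theory for the uniformly elliptic operator $\mfrak L_2$, arranged in a fixed-point iteration on the coupling terms.

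The crux, and what I expect to be the main obstacle, is establishing the $H^1$-norm bound of $(v^\eps, w^\eps)$ independent of $\eps$. My plan is to test $\mfrak L_1^P(v^\eps, w^\eps)$ against a Friedrichs-type multiplier $v^\eps + \alpha(x_1) \der_1 v^\eps$ with a monotone weight $\alpha$ chosen according to the recipe of \cite[Chapter 1]{KZ}, and simultaneously test $\mfrak L_2(v^\eps, w^\eps)$ against a multiple of $w^\eps$. The monotone weight produces a nonnegative principal quadratic form on each side of the sonic interface, with the degeneracy contributing harmlessly because $\bar u_1' > 0$; the $\eps \der_{111} v^\eps$ contribution is nonnegative after integration by parts. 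The genuine difficulty is the weak coupling: the lower-order terms $b_1^P \der_1 w^\eps$ and $b_0^P w^\eps$ from the first equation and $c_1 \der_1 v^\eps$ from the second generate cross integrals that do not cancel by any obvious algebraic identity, in contrast to the subsonic setting of \cite{BDX3, BDX}. This is precisely where the assumptions \eqref{almost sonic condition1 full EP} and $J \in (0, \bJ] \cup [\ubJ, \infty)$ enter: the almost-sonic condition makes $\bar a_{11}$ small and $\bar u_1$ close to $\us$ throughout $\Om_L$, while the extremal choice of $J$ rescales the ratio between the coupling coefficients $b_1^P, b_0^P, c_1$ and the diagonal coefficients of the principal parts, so that the cross terms can be absorbed by a Cauchy-Schwarz argument with small constants into the coercive principal forms. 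I anticipate that the thresholds $\bJ$, $\ubJ$ and the smallness constant $d$ will be determined quantitatively at this step.

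Once the uniform $H^1$ estimate is in hand, a weakly convergent subsequence of $(v^\eps, w^\eps)$ produces a weak solution $(v,w)$ of Problem \ref{problem-lbvp for iteration}; passing to the limit in the degenerate principal part is legitimate since the set of degeneracy is confined to the single curve $\{x_1 = \gs^P(x_2)\}$, and uniqueness of weak solutions follows from applying the same estimate to the difference of two solutions. To upgrade to $H^4$, I would alternate between two regularity steps: standard elliptic regularity for $\mfrak L_2$ yields $w \in H^{k+2}$ once $v \in H^{k+1}$ and $f_2 \in H^k$; and the regularity theory of \cite[Theorem 1.7]{KZ} for the degenerate mixed operator $\mfrak L_1^P$ yields $v \in H^{k+1}$ once the right-hand side, now including $-b_1^P \der_1 w - b_0^P w + f_1$, is in $H^k$. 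The boundary compatibility conditions of \eqref{compatibility conditions for f1 and f2} together with Lemma \ref{lemma on L_1}(c),(f) propagate the even reflection structure across $\Gamw$ and ensure regularity up to the boundary. Iterating this loop a finite number of times starting from the weak $H^1$ solution yields $(v,w) \in H^4(\Om_L) \times H^4(\Om_L)$ with the quantitative bound \eqref{estimate of v and w}.
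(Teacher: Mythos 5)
Your proposal reproduces the paper's overall architecture (singular perturbation $\eps\der_{111}v$, uniform $H^1$ bound, weak limit as $\eps\to0^+$, then bootstrap to $H^4$), but two of the ideas that actually make this architecture work for the full claimed parameter range are missing.

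\textbf{The weight function.} You propose a Friedrichs multiplier $v^{\eps}+\alpha(x_1)\der_1v^{\eps}$ with $\alpha$ ``chosen according to the recipe of \cite[Chapter 1]{KZ}.'' The KZ recipe is an exponential weight $e^{-\mu x_1}$, and the paper explicitly observes (at the end of \S\ref{subsection:coeff ext}) that with $G=e^{-\mu x_1}$ the coercivity estimate \eqref{lower bound of -(I1+I2)} can only be closed for sufficiently \emph{large} $J$, because the cross term $T_{\rm mix}$ then has the wrong scaling in $J$. The proposition claims well-posedness for $J\in(0,\bJ]\cup[\ubJ,\infty)$, i.e.\ for small $J$ as well. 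The paper's device is to take $G=\bar{\rho}^{\eta}(x_1)$ and fix $\eta=\tfrac{3\gam}{4}$ for $J\le\bJ$ and $\eta=\tfrac{\gam}{4}$ for $J\ge\ubJ$; after writing everything in the $(\kappa,J)$ variables, the sign of the exponent $\tfrac{2\eta-\gam}{\gam+1}$ in \eqref{alp limit} is what determines whether $\alp=-\alp_1-\alp_2$ stays positive in the near-sonic limit as $J\to0$ or as $J\to\infty$. You correctly anticipate that the thresholds $\bJ,\ubJ$ are determined by the cross-term absorption, but you do not supply the mechanism — the $J$-tuned power weight — that makes the absorption possible, and without it the $J\le\bJ$ half of the statement would fail.

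\textbf{The $H^4$ regularity.} You plan to cite \cite[Theorem 1.7]{KZ} directly for regularity of $v$ and then alternate with elliptic regularity for $w$. But the operator $\mfrak L_1^P(\cdot,0)$ is hyperbolic in a neighborhood of $\Gamex$, and in \eqref{lbvp-main general} no boundary condition is imposed on $v$ at $\Gamex$; the KZ theorems are not stated for this geometry and cannot be cited off the shelf to get a global $H^{k+1}$ bound up to the exit. The paper's workaround is the extension of coefficients to $\Om_{L_*}$ in Lemma \ref{corollary-coeff extension}: the extended operator $\extoperator^P$ is arranged to be uniformly elliptic near $x_1=L_*$, one solves the auxiliary problem \eqref{bvp-mixed type-extended} with the Neumann condition $\der_1V=0$ on $\Gamext$, and one then identifies $V=v$ in $\Om_L$ by a separate uniqueness argument (Lemma \ref{lemma:V vs v}) that relies on the sign conditions from Lemma \ref{lemma on L_1}($\mathrm{g}_3$)--($\mathrm{g}_4$). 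The sequence of $H^{m+1}$ estimates for $m=0,1,2,3$ is then driven by the crucial inequality of Lemma \ref{corollary-coeff extension}(f), $-2\alp^P-(2m-1)\der_1\alp_{11}^P-4|\der_2\alp_{12}^P|\ge\lambda_{L_*}/2$, which has to be engineered by the extension. This is a substantial piece of machinery, not a routine invocation of KZ, and its absence leaves the bootstrap in your plan without a foundation near $\Gamex$.

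As a smaller remark, for the existence of $(v^{\eps},w^{\eps})$ at each fixed $\eps$ you propose a fixed-point iteration on the coupling terms; the coupling coefficients $b_1^P,b_0^P,c_1$ are not small, so a naive contraction would not converge. The paper instead does a Galerkin discretization of the coupled system, a Fredholm-alternative argument to solve the finite-dimensional ODE boundary value problem, and then passes $m\to\infty$ using the uniform bounds — a safer route under the given hypotheses.
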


\begin{corollary}
\label{corollary:wp of mixed system in iteration}
Under the assumptions same as Proposition \ref{theorem-wp of lbvp for system with sm coeff}, for each
\begin{equation*}
\tilT\in \iterT(r_1)\quad\tx{and}\quad   P=(\tphi, \tpsi, \tPsi)\in \iterV(r_2)\times \iterP(r_3),
\end{equation*}
the boundary value problem \eqref{lbvp main} has a unique solution $(\psi, \Psi)\in [H^4(\Om_L)]^2$ that satisfies the estimate
\begin{equation}
\label{estimate-linear-pot}
\begin{split}
 & \|\psi\|_{H^4(\Om_L)}+\|\Psi\|_{H^4(\Om_L)}\\
 &\le C\left(\|\tilT\|_{H^3(\Om_L)}+\|\tphi\|_{H^4(\Om_L)}+\|(\tpsi, \tPsi)\|_{H^4(\Om_L)}^2+\mcl{P}(S_0, E_{\rm en}, \om_{\rm en})\right)
  \end{split}
\end{equation}
for some constant $C>0$ depending only on $(\gam,\zeta_0, S_0, E_0, J, L)$. In the above estimate, the term $\mcl{P}(S_0, E_{\rm en}, \om_{\rm en})$ is defined by \eqref{definition-perturbation of bd}.

\begin{proof}
For two functions $v$ and $w$ defined in $\ol{\Om_L}$, define $\psi$ and $\Psi$ by
\begin{equation*}
\begin{cases}
\psi(x_1,x_2):=v(x_1, x_2)+\int_0^{x_2}w_{\rm en}(t)\,dt,\\
\Psi(x_1, x_2):=w(x_1, x_2)+(x_1-L)(E_{\rm en}(x_2)-E_0).
\end{cases}
\end{equation*}
It can be directly checked that $(\psi, \Psi)$ solves \eqref{lbvp main} if and only if $(v,w)$ solves
\begin{equation}
\label{lbvp-mod}
\begin{split}
\begin{cases}
\mfrak{L}_1^P(v, w)=f_1^*\quad&\tx{in $\Om_L$},\\
\mfrak L_2(v,w)=f_{2}^*\quad&\tx{in $\Om_L$},
\end{cases}\\
v=0,\quad \der_1 w=0\quad&\tx{on $\Gamen$},\\
\der_2v=0,\quad \der_2 w=0\quad&\tx{on $\Gamw$},\\
w=0\quad&\tx{on $\Gamex$},
\end{split}
\end{equation}
for $f_1^*$ and $f_2^*$ given by
\begin{equation*}
  \begin{cases}
  f_1^*:=f_1^P-\mfrak{L}_1^P(\int_0^{x_2}w_{\rm en}(t)\,dt, (x_1-L)(E_{\rm en}(x_2)-E_0)),\\
  f_2^*:=f_2^{(\tilT, P)}-\mfrak{L}_2(\int_0^{x_2}w_{\rm en}(t)\,dt, (x_1-L)(E_{\rm en}(x_2)-E_0)).
  \end{cases}
\end{equation*}
If $w_{\rm en}$ and $E_{\rm en}$ satisfy Condition \ref{conditon:1}, then it directly follows from Lemma \ref{lemma on L_1}(f) that
\begin{equation*}
  \der_2f_1^*=\der_2f_2^*=0\quad\tx{on $\Gamw$}.
\end{equation*}
Therefore, Proposition \ref{theorem-wp of lbvp for system with sm coeff} yields a unique solution $(v,w)\in[H^4(\Om_L)]^2$ to \eqref{lbvp-mod}, so the boundary value problem \eqref{lbvp main} has a unique solution $(\psi, \Psi)\in [H^4(\Om_L)]^2$. And, the estimate \eqref{estimate-linear-pot} is a result directly following from the estimate \eqref{estimate of v and w} and Lemma \ref{lemma on L_1}(e).
\end{proof}
\end{corollary}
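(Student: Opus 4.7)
The strategy is to reduce \eqref{lbvp main}, which carries inhomogeneous data on $\Gamen$, to an instance of Problem \ref{problem-lbvp for iteration} so that Proposition \ref{theorem-wp of lbvp for system with sm coeff} applies directly. To accomplish this, I would explicitly lift the two nontrivial boundary data off $\Gamen$ into $\Om_L$ and subtract them from the unknowns.

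Concretely, set $\psi = v + \int_0^{x_2} w_{\rm en}(t)\,dt$ and $\Psi = w + (x_1-L)(E_{\rm en}(x_2)-E_0)$. The $x_2$-antiderivative of $w_{\rm en}$ realises the Dirichlet datum on $\Gamen$ while remaining $x_1$-independent, and the product $(x_1-L)(E_{\rm en}-E_0)$ produces exactly $E_{\rm en}-E_0$ upon differentiation in $x_1$ and vanishes on $\Gamex$. A direct substitution then shows that $(\psi,\Psi)$ solves \eqref{lbvp main} if and only if $(v,w)$ solves the homogeneous-data system of Problem \ref{problem-lbvp for iteration} with modified right-hand sides $f_1^\ast,\,f_2^\ast$ obtained by subtracting the action of $\mfrak{L}_1^P$ and $\mfrak{L}_2$ on the lift from $f_1^P$ and $f_2^{(\tilT,P)}$ respectively.

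The key verification to carry out before invoking Proposition \ref{theorem-wp of lbvp for system with sm coeff} is that $f_1^\ast$ and $f_2^\ast$ satisfy the compatibility conditions \eqref{compatibility conditions for f1 and f2} on $\Gamw$. For $f_1^P$ and $f_2^{(\tilT,P)}$ themselves this is part (f) of Lemma \ref{lemma on L_1}. For the correction coming from the lift, I would combine Condition \ref{conditon:1} (which forces $w_{\rm en}^{(k)}(\pm 1)=E_{\rm en}^{(k)}(\pm 1)=0$ for odd $k$) with part (c) of Lemma \ref{lemma on L_1} (which says $\der_2(a_{11}^P,a^P,b_1^P,b_0^P)=0$ and $a_{12}^P=0$ on $\Gamw$). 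The cross terms in $\mfrak{L}_1^P$ and $\mfrak{L}_2$ hitting the lift then have vanishing $\der_2$-trace on $\Gamw$; this is the one place I expect the bookkeeping to be slightly delicate and would carry out with care.

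With compatibility secured, Proposition \ref{theorem-wp of lbvp for system with sm coeff} yields a unique $(v,w)\in [H^4(\Om_L)]^2$ satisfying \eqref{estimate of v and w}. Undoing the lift produces a unique $(\psi,\Psi)\in[H^4(\Om_L)]^2$ solving \eqref{lbvp main}, and the estimate \eqref{estimate-linear-pot} follows by inserting the Lemma \ref{lemma on L_1}(e) bounds on $f_1^P$ and $f_2^{(\tilT,P)}$ into \eqref{estimate of v and w}, together with elementary $H^3(\Om_L)$ bounds for the lift and its $\mfrak{L}_1^P,\mfrak{L}_2$-images in terms of $\|w_{\rm en}\|_{C^5([-1,1])}$ and $\|E_{\rm en}-E_0\|_{C^4([-1,1])}$, both of which are controlled by $\mcl{P}(S_{\rm en},E_{\rm en},w_{\rm en})$.
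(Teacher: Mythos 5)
Your proposal is correct and follows essentially the same route as the paper: the identical lift $\psi = v + \int_0^{x_2} w_{\rm en}$, $\Psi = w + (x_1-L)(E_{\rm en}-E_0)$, the same reduction to Problem \ref{problem-lbvp for iteration}, the same compatibility check on $\Gamw$ via Condition \ref{conditon:1} and Lemma \ref{lemma on L_1}, and the same invocation of Proposition \ref{theorem-wp of lbvp for system with sm coeff} and Lemma \ref{lemma on L_1}(e) for the estimate. Your extra mention of Lemma \ref{lemma on L_1}(c) in checking the compatibility of $f_1^\ast, f_2^\ast$ is a sound (and slightly more explicit) version of what the paper compresses into a citation of part (f).
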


The main part of this paper is devoted to prove Proposition \ref{theorem-wp of lbvp for system with sm coeff}. Once this proposition is proved, Theorem \ref{theorem-HD} can be established by the standard iteration method.
\smallskip

First of all, we briefly explain the main strategies to prove Proposition \ref{theorem-wp of lbvp for system with sm coeff}.
\smallskip

{\emph{\textbf{Step 1:}}} We find sufficient conditions for the parameters $(J, L)$ and adjust the constant $\bar{\delta}$ from Lemma \ref{lemma on L_1} to establish a priori $H^1$-estimate of a smooth solution $(v, w)$ to \eqref{lbvp-main general}(see Proposition \ref{proposition-H1-apriori-estimate} in \S \ref{subsection: a priori H^1 estimate}). Once the a priori $H^1$-estimate is established, the uniqueness of a classical solution can easily follow. Once again, we emphasize that this is the most important and challenging step.
\smallskip

{\emph{\textbf{Step 2:}}} To prove the existence of a solution, we introduce an auxiliary boundary value problem with {\emph{a singular perturbation}} as follows:
\begin{equation}
\label{bvp-sing-pert-pre}
  \begin{split}
  &\begin{cases}
  \eps \der_{111}v+\mfrak L_1^{P}(v, w)=f_1\quad&\tx{in $\Om_L$},\\
  \mfrak L_2(v, w)=f_2\quad&\tx{in $\Om_L$},
  \end{cases}\\
  &\begin{cases}
  v=0,\quad \der_1v=0\quad&\tx{on $\Gamen$},\\
  \der_2 v=0\quad&\tx{on $\Gamw$},\\
 \der_{11}v=0\quad&\tx{on $\Gamex$},
  \end{cases}\\
  &\begin{cases}
  \der_1 w=0\quad&\tx{on $\Gamen$},\\
  \der_2w=0\quad&\tx{on $\Gamw$},\\
  w=0\quad&\tx{on $\Gamex$}.
  \end{cases}
  \end{split}
\end{equation}
By applying
\begin{itemize}
\item[-]the a priori $H^1$-estimate given in the previous step,
\item[-]the method of Galerkin's approximations,
\item[-]and the Fredholm alternative theorem,
\end{itemize}
we shall prove the following statements:
{\emph{
there exists a constant $\bar{\eps}>0$ sufficiently small such that if $\tilT\in \iterT(r_1)$ and $P\in \iterV(r_2)\times \iterP(r_3)$ are smooth in $\ol{\Om_L}$, then, for each $\eps\in(0, \bar{\eps}]$, the boundary value problem \eqref{bvp-sing-pert} has a unique weak solution $(v_{\eps},w_{\eps})$ in $[H^1(\Om_L)]^2$. Furthermore, the set $\{(v_{\eps},w_{\eps})\}_{\eps\in(0, \bar{\eps}]}$ is bounded in $[H^1(\Om_L)]^2$
(see Lemma \ref{lemma:wp of singular pert prob-main}).}}

Then, the weak compactness of the set $\{(v_{\eps},w_{\eps})\}$ in $H^1$ yields a weak solution to \eqref{lbvp-main general} by taking the limit as $\eps$ tends to $0+$(see Proposition \ref{proposition-wp of bvp with approx coeff}).
The precise definition of a weak solution shall be given in the next section.
\smallskip

{\emph{{\textbf{Step 3:}} }} For a weak solution to \eqref{lbvp-main general}, we establish a global $H^4$-estimate of $(v,w)$ by a bootstrap argument. The main challenge is to establish a priori estimates of higher order derivatives of $v$. According to Lemma \ref{lemma on L_1}(g), $\mfrak{L}_1^P(\cdot, w)$ is a second order linear elliptic-hyperbolic mixed type differential operator of $v$ with a degeneracy on the approximated sonic boundary $x_1=\gs^P(x_2)$. Therefore, there is no known general theory by which we can immediately establish a global $H^k$-estimate of $v$ in $\Om_L$. To overcome this difficulty, we employ the idea from \cite[Chapter 1]{KZ} to establish a global $H^4$-estimate of $(v,w)$ by a bootstrap argument and by using an auxiliary boundary value problem in an extended domain that contains $\Om_L$. It requires several technical procedures so we shall not mention any further details here. All the details are given in \S \ref{subsection:H4 estimate}.
\smallskip

{\emph{{\textbf{Step 4:}}}}
Finally, for any given $P\in \iterV(r_2)\times \iterP(r_3)$, $f_1\in H^3(\Om_L)$ and $f_2\in H^2(\Om_L)$, the existence of a solution $(v,w)$ to \eqref{lbvp-main general} can be established in three steps:
\begin{itemize}
\item[(1)] Approximate $P$ by smooth functions $\{P_n\}$ so that $P_n$ converges to $P$ in appropriately chosen norms.
\item[(2)] For each $n\in\mathbb{N}$, find a solution $(v_n, w_n)$ to the problem \eqref{lbvp-main general} with $P=P_n$ by the method of Galerkin's apprroximations and applying all the results obtained in the previous steps.
\item[(3)] Take the limit of $(v_n, w_n)$ in $H^4$ as $n$ tends to $\infty$ so that the limit $(v_{\infty}, w_{\infty})$ solves \eqref{lbvp-main general} for the originally fixed $P\in \iterV(r_2)\times \iterP(r_3).$
\end{itemize}

\subsection{A priori $H^1$-estimate}
\label{subsection: a priori H^1 estimate}
Throughout this section, let $P=(\tphi, \tpsi, \tPsi)\in \iterV(r_2)\times \iterP(r_3)$ be fixed unless otherwise specified.
\begin{notation}
(1) For simplicity, we denote $(a_{ij}^P, a^P, b_1^P, b_0^P)$, and its approximated sonic interface $\gs^P$(see Lemma \ref{lemma on L_1}(g)) by $(a_{ij}, a, b_1, b_0)$ and $\gs$, respectively.\\
(2) The terms $(a_{ij}, a, b_1, b_0)$ and $\gs$ for $P=(0,0,0)(=:P_0)$ are denoted by $(\bar a_{ij}, \bar a, \bar b_1, \bar b_0)$ and $\ls$(which is a constant), respectively(see Lemma \ref{lemma on L_1}(d) and (g)).
\end{notation}

\begin{proclaim}
(1) For the rest of the paper, we shall state an estimate constant to be fixed depending only on the data if it is fixed depending only on $(\gam, \zeta_0, S_0, E_0, J, L)$.\\
(2) Unless otherwise specified, any estimate constant appearing hereafter is presumed to be fixed depending only on the data.
\end{proclaim}

\begin{lemma}
\label{proposition-H1-apriori-estimate}
Suppose that $(v,w)$ is a smooth solution to the boundary value problem \eqref{lbvp-main general} associated with $P=(\tphi, \tpsi, \tPsi)\in \iterV(r_2)\times \iterP(r_3)$.
There exist two constants $\bJ$ and $\ubJ$ satisfying
\begin{equation*}
  0<\bJ<1<\ubJ<\infty
\end{equation*}
so that, for any $J\in (0, \bJ]\cup [\ubJ, \infty)$, one can fix a constant $d\in(0,1)$, and reduce the constant $\bar{\delta}>0$ further from the one given in Lemma \ref{lemma on L_1}  so that if
\begin{itemize}
\item[(i)]$(E_0, L)$ are fixed to satisfy the condition \eqref{almost sonic condition1 full EP},
\item[(ii)] and if
\begin{equation*}
  \max\{r_2, r_3\}\le 2\bar{\delta},
\end{equation*}
\end{itemize}
then $(v,w)$
satisfies the estimate
    \begin{equation}
    \label{A priori H1 estimate}
    \begin{split}
      &\|\der_{1}v\|_{L^2(\Gamen)}+\|Dv\|_{L^2(\Gamex)}+\|v\|_{H^1(\Om_L)}+\|w\|_{H^1(\Om_L)}\\
      &\le C\left(\|f_1\|_{L^2(\Om_L)}+\|f_2\|_{L^2(\Om_L)}\right)
      \end{split}
    \end{equation}
    for some constant $C>0$.

    In the above, the constants $\bJ$ and $\ubJ$ are fixed depending only on $(\gam, \zeta_0, S_0)$, and, for each $J\in (0, \bJ]\cup [\ubJ, \infty)$, the constant $d\in(0,1)$ is depending on $(\gam, \zeta_0, S_0,J)$. Finally, the constant $\bar{\delta}>0$ is adjusted depending only on the data (in the sense of Proclamation (1)).

\end{lemma}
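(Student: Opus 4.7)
The plan is to decouple the system into an elliptic estimate for $w$ in terms of $v$ and a Kuz'min-type degenerate mixed energy estimate for $v$, then close the combined bound by absorption using the dichotomy on $J$ and the almost sonic condition \eqref{almost sonic condition1 full EP}.

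First I would test the elliptic equation $\mfrak{L}_2(v,w) = f_2$ against $w$ itself. The coefficient $c_0 = 1/(\gam S_0 \bar\rho^{\gam-2})$ is uniformly positive on $\ol{\Om_L}$, the boundary conditions on $w$ make every trace contribution in $\int_{\Om_L}\Delta w\cdot w$ vanish, and $w=0$ on $\Gamex$ yields a Poincar\'e inequality, so Cauchy--Schwarz gives the decoupling bound
\begin{equation*}
\|w\|_{H^1(\Om_L)} \le C\bigl(\|f_2\|_{L^2(\Om_L)} + \|c_1\|_{\infty}\,\|\partial_1 v\|_{L^2(\Om_L)}\bigr).
\end{equation*}

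Next, for the mixed-type equation $\mfrak{L}_1^P(v,w) = f_1$, I would apply the multiplier method of \cite[Ch.~1]{KZ}, testing with $\partial_1 v$ augmented by an exponential weight $e^{-\mu x_1}$ or a linear correction $-\nu v$ chosen so that the resulting boundary contributions at $\Gamen$ and $\Gamex$ land on the positive side of the identity. The accelerating background provides the crucial monotonicity $\partial_1 \bar a_{11} = -(\gam+1)\bar u_1^{\gam}\bar u_1'/\us^{\gam+1} < 0$ on $[0,L]$ by Lemma \ref{lemma-1d-full EP}(i), which yields the dominant bulk coercivity $-\tfrac{1}{2}\int_{\Om_L}\partial_1 a_{11}(\partial_1 v)^2 > 0$, persistent under the perturbation from $\bar a_{11}$ to $a_{11}^P$ by Lemma \ref{lemma on L_1}(d) once $r_2, r_3$ are small. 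The strict signs $a_{11}(0,\cdot) > 0$ on $\Gamen$ and $a_{11}(L,\cdot) \le -\lambda_1$ on $\Gamex$ (Lemma \ref{lemma on L_1}(g$_3$),(g$_4$)), combined with $v=0$ (hence $\partial_2 v=0$) on $\Gamen$, $\partial_2 v=0$ on $\Gamw$, and $a_{12}=0$ on $\Gamw$ (Lemma \ref{lemma on L_1}(c)), produce the boundary traces $\|\partial_1 v\|_{L^2(\Gamen)}^2$, $\|\partial_1 v\|_{L^2(\Gamex)}^2$ and $\|\partial_2 v\|_{L^2(\Gamex)}^2$ directly from the multiplier identity; a complementary test against $v$ supplies $\|v\|_{L^2}$ and $\|\partial_2 v\|_{L^2(\Om_L)}$ through the coercive $\int|\partial_2 v|^2$ contribution and the Poincar\'e inequality coming from $v|_{\Gamen}=0$.

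The hard step is the coupling $\int (b_1\partial_1 w + b_0 w)(\text{multiplier of }v)$ appearing on the right side of the mixed-equation estimate. Bounding it by Cauchy--Schwarz and inserting the Step~1 bound for $w$ produces a contribution of the form $C(\|b_1\|_{\infty}+\|b_0\|_{\infty})\bigl(\|c_1\|_{\infty}\|\partial_1 v\|_{L^2}^2 + \|f_2\|_{L^2}\|\partial_1 v\|_{L^2}\bigr)$, and absorbing it into the bulk coercivity requires the product of coupling norms to be strictly smaller than the lower bound on $-\partial_1 \bar a_{11}$. Using the explicit formulas $b_1 = \bar u_1/\bar a_{22}$, $b_0 = (\gam-1)\bar u_1'/\bar a_{22}$, $c_1 = -\bar u_1/(\gam S_0 \bar\rho^{\gam-2})$ together with $\us = (\gam S_0 J^{\gam-1})^{1/(\gam+1)}$ and $\bar u_1 \approx \us$ under \eqref{almost sonic condition1 full EP}, the required inequality reduces to a single algebraic condition on $(\gam, \zeta_0, S_0, J)$ that fails for generic $J$ but holds when $J$ is either sufficiently small or sufficiently large; the resulting thresholds define $\bJ$ and $\ubJ$, and $d\in(0,1)$ is chosen so that the strict inequality persists under the almost sonic perturbation off the exact sonic point. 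A final shrinking of $\bar\delta$ from Lemma \ref{lemma on L_1} takes care of the $P$-dependent corrections of the coefficients, and Poincar\'e for $v$ converts the $\|\partial_1 v\|_{L^2}$ bound into $\|v\|_{H^1}$, completing \eqref{A priori H1 estimate}. I expect the identification of $(\bJ, \ubJ, d)$ and the verification of the absorption inequality to be the analytic heart of the argument: the coupling is not of divergence form and resists any algebraic cancellation, so the dichotomy on $J$ is structurally indispensable rather than a technical convenience.
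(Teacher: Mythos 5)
Your overall energy-method framework — testing the degenerate equation against a weighted $\partial_1 v$, testing the elliptic equation against $w$, and absorbing the coupling terms under the almost-sonic hypothesis — is the right skeleton, and you correctly identify that the absorption of the coupling is the analytic heart. But there is a genuine gap in the central step, namely the choice of multiplier weight.

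You propose testing with $e^{-\mu x_1}\partial_1 v$ (or a linear correction $-\nu v$) in the spirit of \cite[Ch.~1]{KZ}. The paper explicitly points out, in the discussion following Lemma \ref{corollary-coeff extension}, that with $G=e^{-\mu x_1}$ the absorption of $T_{\rm mix}$ works \emph{only for large $J$}. The mechanism that makes both $J\le\bJ$ \emph{and} $J\ge\ubJ$ work is the choice $G(x_1)=\bar\rho^{\eta}(x_1)$, with $\eta$ taken to be a \emph{free parameter adapted to the $J$-regime}: $\eta=\tfrac{3\gam}{4}$ (so $2\eta-\gam>0$) for small $J$, and $\eta=\tfrac{\gam}{4}$ (so $2\eta-\gam<0$) for large $J$. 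After the reparametrization $\kappa=\bar u_1/\us$, the coefficient $\alp$ in front of $(\partial_1 v)^2$ satisfies (in the sonic limit $\kappa_0,\kappa_L\to1$)
\begin{equation*}
\lim_{\kappa_0\to1-,\,\kappa_L\to1+}\alp(1)= h_0^{-\eta}J^{\frac{2-\gam+2\eta}{\gam+1}}\Bigl(\tfrac12 h_0^{-3/2}\sqrt{(\gam+1)(1-\zeta_0^{-1})}-\tfrac{2}{h_0^{2+\eta}}\,J^{\frac{2\eta-\gam}{\gam+1}}\Bigr),
\end{equation*}
and positivity requires $J^{(2\eta-\gam)/(\gam+1)}$ small; this is precisely what the sign of $2\eta-\gam$ controls. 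With a fixed exponential weight there is no such tunable exponent and only the large-$J$ half of the dichotomy survives. So your statement that the algebraic absorption inequality ``holds when $J$ is either sufficiently small or sufficiently large'' is not correct for the weight you chose — it is a feature purchased only by the $J$-adapted power weight.

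Two further omissions are worth flagging. First, you do not use the relation
\begin{equation*}
L=\sqrt{\tfrac{h_0^3}{2}}\,J^{\frac{\gam-2}{\gam+1}}\int_{\kappa_0}^{\kappa_L}\frac{d\kappa}{\kappa\,\mcl{H}(\kappa)},
\end{equation*}
which converts $L^2\beta^2$ into a term proportional to $\lambda(\kappa_0,\kappa_L)=\bigl(\int_{\kappa_0}^{\kappa_L}d\kappa/(\kappa\mcl{H})\bigr)^2$; this is exactly what makes the $\beta$-contribution vanish in the almost-sonic limit and is essential to closing the coercivity. Second, your decoupled scheme substitutes the $w$-bound $\|\partial_1 w\|\lesssim \|f_2\|+\|c_1\|_\infty\|\partial_1 v\|$ into the coupling; the paper instead sums the two testing identities $I_1+I_2$ so that the coupling term $\int G\,b_1\,\partial_1 w\,\partial_1 v$ is absorbed \emph{in kind} by $\tfrac34\int|\nabla w|^2$ from the elliptic part, which avoids the extra factor $\|c_1\|_\infty$ appearing multiplicatively in your version of the absorption condition and gives a cleaner threshold. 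These differences are not cosmetic: they change the exact coefficient $\alp_2$ that must be dominated by $-\alp_1$, and hence change which $(\gam,J,\eta)$ make the argument close.
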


\begin{proof}

{\textbf{Step 1.}}
For  $i,j=1,2$, let us set
\begin{equation}
\label{def of coeff pert}
(d a_{ij}, d a, d b_1, d b_0)
:=(a_{ij}, a, b_1, b_0)-(\bar a_{ij}, \bar a, \bar b_1, \bar b_0)\quad\tx{in $\Om_L$}.
\end{equation}
Next, let us set
\begin{equation*}
 \mfrak{L}_1:=\mfrak{L}_1^{P_0}\quad\tx{and}\quad
 d \mfrak{L}_1^{P}:=\mfrak{L}_1^{P}-\mfrak{L}_1.
\end{equation*}
Then we have
\begin{equation*}
 d \mfrak{L}_1^{P}(v,w)=d a_{11}\der_{11}v+2a_{12}\der_{12}v+d a\der_1 v+d b_1\der_1 w+d b_0w\quad\tx{in $\Om_L$}.
\end{equation*}
And, we rewrite the boundary value problem \eqref{lbvp-main general} as
\begin{equation*}
\begin{split}
\begin{cases}
\mfrak L_1(v, w)=f_1-d \mfrak{L}_1^{P}(v,w)=:F_1\quad&\tx{in $\Om_L$},\\
\mfrak L_2(v,w)=f_2\quad&\tx{in $\Om_L$},
\end{cases}\\
v=0,\quad \der_1 w=0\quad&\tx{on $\Gamen$},\\
\der_2v=0,\quad \der_2 w=0\quad&\tx{on $\Gamw$},\\
w=0\quad&\tx{on $\Gamex$}.
\end{split}
\end{equation*}

For a function $G=G(x_1)$ to be determined later, denote
\begin{equation}
\label{definition of I1 and I2}
 I_1:=\int_{\Om_L}  G\der_1v{\mfrak L}_1(v,w)\,d\rx,\quad
 I_2:=\int_{\Om_L} w{\mfrak L}_2(v,w)\,d\rx.
\end{equation}
Clearly, it holds that
\begin{equation}
\label{I1+I2 in H1 estimate}
  I_1+I_2=\int_{\Om_L} F_1G\der_1v+f_2w\,d\rx.
\end{equation}
By using Definition \ref{definition:coefficients-nonlinear}, and integrating by parts with using the boundary conditions for $(v,w)$, one can check that
\begin{equation*}
\begin{split}
  I_1=&\frac 12\int_{\Gamex}(\bar a_{11}(\der_1v)^2-(\der_2v)^2)G\,dx_2-\frac 12\int_{\Gam_0} \bar a_{11}(\der_1v)^2 G\,dx_2\\
  &+\int_{\Om_L}\left(\left(-\frac 12\bar a_{11}'+\bar a\right)G-\frac 12 \bar a_{11}G'\right) (\der_1 v)^2+G'\frac{(\der_2v)^2}{2}\,d\rx\\
  &+\int_{\Om_L} (\bar b_1\der_1v\der_1w+\bar b_0w\der_1v)G\,d\rx,\\
 \tx{and}\quad I_2
  =&\int_{\Om_L}\left(-|\nabla w|^2-\frac{\bar{\rho}}{\gam S_0\bar{\rho}^{\gam-1}} w^2\right)+\frac{J}{\gam S_0\bar{\rho}^{\gam-1}}w\der_1v \,d\rx.
  \end{split}
\end{equation*}
\smallskip

The key of the proof is to  choose the function $G$ as
\begin{equation}
\label{definition of G}
  G(x_1)=\bar{\rho}^{\eta}(x_1)
\end{equation}
for a constant $\eta>0$ to be determined later.
\smallskip

Define a function $\alp_1$ by
\begin{equation}
\label{main-coeff}
\begin{split}
\alp_1:=&  -\frac 12(\bar a_{11}G)'+\bar aG=\frac{\bar{\rho}'}{\bar{\rho}} \frac G2 \left((\gam-1+\eta)\frac{\bar u_1^{\gam+1}}{\us^{\gam+1}}+(2-\eta)\right).
  \end{split}
\end{equation}
We rearrange the terms in $-(I_1+I_2)$ as follows:
\begin{equation*}
-(I_1+I_2)=T_{\rm bd}+T_{\rm coer}+T_{\rm mix}
\end{equation*}
for
\begin{align}
\label{definition-Jbd}
T_{\rm bd}:=&\frac{G(0)}{2} \int_{\Gam_0}\bar a_{11}(\der_1v)^2 \,dx_2-\frac{G(L)}{2}\int_{\Gamex}(\bar a_{11}(\der_1v)^2-(\der_2v)^2)\,dx_2\\ 
\label{definition-Jp}
T_{\rm coer}:=&-\int_{\Om_L}\alp_1(\der_1v)^2
  +\eta \frac{\bar{\rho}'}{\bar{\rho}}G\frac{(\der_2 v)^2}{2}\,d\rx+
  \int_{\Om_L}|\nabla w|^2+\frac{\bar{\rho}}{\bar c^2} w^2\,d\rx\\
  \label{definition-Jc}
T_{\rm mix}:=&  \int_{\Om_L}\frac{(\gam-1)\bar u}{\gam S_0\bar{\rho}^{\gam-1}}\frac{\bar \rho'}{\bar\rho}G w\der_1v
-G\frac{\bar u}{\gam S_0\bar{\rho}^{\gam-1}}\der_1w\der_1v-\frac{J}{\gam S_0\bar{\rho}^{\gam-1}}\der_1v w\,d\rx.
\end{align}
Note that the proof of Lemma \ref{lemma on L_1}(see Step 3 in the proof) gives that
\begin{equation*}
  \bar a_{11}>0\,\,\tx{on $\Gamen$}\quad\tx{and}\quad \bar a_{11}<0\,\,\tx{on $\Gamex$},
\end{equation*}
and this implies that
 \begin{equation*}
  T_{\rm bd}\ge 0.
 \end{equation*}
In Step 5, we give an improved estimate for a lower bound of $T_{\rm bd}$, so that it can be used to handle the singular perturbation problem \eqref{bvp-sing-pert}.
\smallskip

In the following steps, we shall fix $\eta>0$ and find $(\bJ, \ubJ)$ with
\begin{equation*}
  0<\bJ<1<\ubJ<\infty
\end{equation*}
so that whenever $J\in(0, \bJ]\cup[\ubJ,\infty)$, there exists a constant $d\in(0,1)$ depending on $(\gam, \zeta_0, S_0,J)$ such that if
$(E_0, L)$ are fixed to satisfy the condition \eqref{almost sonic condition1 full EP}, then it holds that
\begin{equation*}
T_{\rm coer}+T_{\rm mix}\ge \lambda \int_{\Om_L} |Dv|^2+|Dw|^2\,d\rx
\end{equation*}
for some constant $\lambda>0$.
\medskip

{\textbf{Step 2.}} Define a function $\beta$ by
\begin{equation}
\label{definition of omega}
  \beta:=\frac{(\gam-1)\bar u_1}{\gam S_0\bar{\rho}^{\gam-1}}\frac{\bar \rho'}{\bar\rho}G-\frac{J}{\gam S_0\bar{\rho}^{\gam-1}},
\end{equation}
and rewrite $T_{\rm mix}$ as
\begin{equation*}
T_{\rm mix}=\int_{\Om_L}\beta w \der_1v-G\frac{\bar u_1}{\gam S_0\bar{\rho}^{\gam-1}}\der_1w\der_1v\,d\rx.
\end{equation*}
Owing to the boundary condition $w=0$ on $\Gamex$, Poincar\'e inequality gives
\begin{equation*}
\int_{\Om_L}w^2\,d\rx\le L^2 \int_{\Om_L}(\der_1w)^2\,d\rx.
\end{equation*}
By using this estimate and the Cauchy-Schwarz inequality, one has
\begin{equation}
\label{estimate-Tmix}
\begin{split}
|T_{\rm mix}|
  &\le \frac 14 \int_{\Om_L}(\der_1 w)^2\,d\rx+
  \int_{\Om_L}
  2\left(\left(G\frac{\bar u_1}{\gam S_0\bar{\rho}^{\gam-1}}\right)^2+L^2\beta^2\right)(\der_1 v)^2\,d\rx.
  \end{split}
\end{equation}
Define
\begin{equation}
\label{definition-alp2}
  \alp_2:=2\left(\left(G\frac{\bar u_1}{\gam S_0\bar{\rho}^{\gam-1}}\right)^2+L^2\beta^2\right).
\end{equation}
For $\alp_1$ given by \eqref{main-coeff}, let us set $\alp$ as
\begin{equation}
\label{definition of beta*}
\alp:=-\alp_1-\alp_2.
\end{equation}
Then it follows from \eqref{definition-Jp} and \eqref{estimate-Tmix} that
\begin{equation}
\label{inequality 1 of Jp+JC}
  T_{\rm coer}+T_{\rm mix}\ge
  \int_{\Om_L} \alp (\der_1 v)^2-\frac{\eta}{2}\frac{\bar \rho'}{\bar \rho}G (\der_2v)^2\,d\rx
  +\frac 34\int_{\Om_L}|\nabla w|^2+\frac{\bar{\rho}}{\gam S_0\bar{\rho}^{\gam-1}} w^2\,d\rx.
\end{equation}
As long as $\eta$ is fixed as a positive constant, it follows from Lemma \ref{lemma-1d-full EP} that
\begin{equation*}
  \min_{\ol{\Om_L}}\frac{\eta}{2}\frac{\bar u_1'}{\bar u_1}G>0.
\end{equation*}
\medskip

{\textbf{Step 3.}} Note that $(\bar u_1, \bar E)(x_1)$ satisfies
\begin{equation*}
\bar{E}^2=2H(\bar u_1)\quad\tx{for $x_1\in (0, l_{\rm max}]$}
\end{equation*}
where the function $H$ given by \eqref{definition of H}. Define
\begin{equation}\label{definition of kappa}
  \kappa:=\frac{\bar u_1}{\us}\quad E(\kappa):=\bar{E}(\bar u_1),\quad \tx{and}\quad \mcl{F}(\kappa):=  \int_1^{\kappa}
\left(1-\frac{t}{\zeta_0}\right)
\left(1-\frac{1}{t^{\gam+1}}\right)
\,
dt.
\end{equation}
Then, the function $E(\kappa)$ satisfies
\begin{equation}
\label{definition of F}
\begin{split}
  E^2(\kappa)=2\us J\mcl{F}(\kappa).
\end{split}
\end{equation}
By the definition of $\Tac$(see \eqref{definition of T and Tpm}), it holds that
\begin{equation*}
  (\kappa-1)E(\kappa)\ge 0,
\end{equation*}
so we obtain that
\begin{equation}
\label{expression of E in tau}
   E(\kappa)=\begin{cases}
  -\sqrt{2\us J\mcl{F}(\kappa)}&\quad\mbox{for $\kappa<1$},\\
  \sqrt{2\us J\mcl{F}(\kappa)}&\quad\mbox{for $\kappa\ge 1$}.
  \end{cases}
\end{equation}
Set $\mcl{H}(\kappa)$ as
\begin{equation}
\label{definition of mcl H}
    \mcl{H}(\kappa):=
  \left|\frac{ \kappa^{\gam-1}\sqrt{\mcl{F}(\kappa)}}{\kappa^{\gam+1}-1}\right|.
\end{equation}
Substituting \eqref{expression of E in tau} into the differential equation for $\bar u_1$ given in \eqref{EP-1d-reduced} yields
\begin{equation}
\label{ode of rho in kappa}
\begin{split}
  \frac{\bar u'_1}{\bar u_1}
   &=\frac{\sqrt{2\us J}}{\us^2}\mcl{H}(\kappa),\quad\tx{and}\quad
   \frac{\bar{\rho}'}{\bar{\rho}}=
    -\frac{\sqrt{2\us J}}{\us^2}
  \mcl{H}(\kappa).
 \end{split}
\end{equation}

By using the definition of $\us$ given in \eqref{EP-1d-reduced}, one can express $\us$ as
\begin{equation}
\label{definition of us}
\us=h_0J^{\frac{\gam-1}{\gam+1}}
\quad\tx{with $h_0:=(\gam S_0)^{\frac{1}{\gam+1}}$}.
\end{equation}
Substitute this expression into \eqref{ode of rho in kappa} to get
\begin{equation}
\label{ode for rho in kappa}
    \frac{\bar{\rho}'}{\bar{\rho}}=-\sqrt 2 h_0^{-\frac 32} J^{\frac{2-\gam}{\gam+1}}\mcl{H}(\kappa).
\end{equation}
Then, the function $\alp_1$(see \eqref{main-coeff}) can be expressed as
\begin{equation}
\label{alp1-Jkappa-expre}
\begin{split}
  -\alp_1= \sqrt 2 h_0^{-\frac 32} J^{\frac{2-\gam}{\gam+1}} \mcl{H}(\kappa) \frac G2
  \left((\gam-1+\eta)\kappa^{\gam+1}+2-\eta\right).
  \end{split}
\end{equation}

Using \eqref{definition of kappa}, \eqref{definition of us} and the expression $\displaystyle{\bar{\rho}=\frac{J}{\bar u_1}}$, one can explicitly check that
\begin{equation*}
\begin{split}
  &\left(G\frac{\bar u_1}{\gam S_0\bar{\rho}^{\gam-1}}\right)^2=\frac{G^2\kappa^{2\gam}
  J^{\frac{-2(\gam-1)}{\gam+1}}}{h_0^2},\\
  \tx{and}\quad &\beta=-\sqrt 2 (\gam-1)h_0^{-\frac 12}\kappa^{\gam}\mcl{H}(\kappa)J^{\frac{3-2\gam}{\gam+1}}G-
h_0^{-2}\kappa^{\gam-1}J^{\frac{3-\gam}{\gam+1}}.
  \end{split}
\end{equation*}

Since $\displaystyle{\min_{x_1\in[0, L]}\bar{u}'_1(x_1)>0}$ holds by Lemma \ref{lemma-1d-full EP}, the function $\bar u_1:[0, L]\rightarrow (0, \infty)$ is invertible. So we use the first equation in \eqref{ode of rho in kappa} to get
\begin{equation}
\label{L-computation-n}
  L=\int_{u_0}^{\bar u_1(L)}\frac{\bar u_1}{\bar u'_1}\frac{d\bar u_1}{\bar u_1}=\sqrt{\frac{h_0^3}{2}}
  J^{\frac{\gam-2}{\gam+1}}\int_{\kappa_0}^{\kappa_L}\frac{1}{\kappa \mcl{H}(\kappa)}\,d\kappa
\end{equation}
for
\begin{equation*}
 \kappa_0:=\frac{u_0}{\us},\quad \kappa_L:=\frac{\bar u_1(L)}{\us}.
\end{equation*}
Define
\begin{equation}
\label{definition of lambda}
\lambda(\kappa_0, \kappa_L):=\left(\int_{\kappa_0}^{\kappa_L} \frac{1}{\tau\mcl{H}(\kappa)}\,d\kappa\right)^2.
\end{equation}
The straightforward computation gives
\begin{equation*}
  (L\beta)^2=\frac{1}{2h_0}\kappa^{2(\gam-1)}
  \lambda(\kappa_0, \kappa_L)J^{\frac{-2(\gam-1)}{\gam+1}}
  \left(\sqrt 2(\gam-1)h_0^{\frac 32} \kappa\mcl{H}(\kappa)G+J^{\frac{\gam}{\gam+1}}\right)^2.
\end{equation*}
In terms of $(J,\kappa)$, $\alp_2$ can be expressed as
\begin{equation}
\label{alp2-Jkappa-expre}
\begin{split}
  {\alp}_2=&\frac{2}{h_0^2}\kappa^{2\gam} J^{\frac{-2(\gam-1)}{\gam+1}}G^2\\
  &+
  \frac{1}{h_0}\kappa^{2(\gam-1)}\lambda(\kappa_0, \kappa_L)J^{\frac{-2(\gam-1)}{\gam+1}}
  \left(\sqrt 2(\gam-1)h_0^{\frac 32} \kappa\mcl{H}(\kappa)G+J^{\frac{\gam}{\gam+1}}\right)^2.
  \end{split}
\end{equation}

Define
\begin{equation*}
G_*:=J^{\frac{2-\gam}{\gam+1}}G.
\end{equation*}
By using \eqref{definition of G} and \eqref{definition of us}, the term $G_*$ can be expressed in terms of $(J, \kappa)$ as
\begin{equation*}
  G_*=\kappa^{-\eta}h_0^{-\eta}J^{\frac{2-\gam+2\eta}{\gam+1}}.
\end{equation*}
By using \eqref{alp1-Jkappa-expre}, \eqref{alp2-Jkappa-expre} and the expression right in the above, we represent $-\alp_1$ and $\alp_2$ as
\begin{equation*}
\begin{split}
-\alp_1&=\frac{\sqrt 2}{2}h_0^{-\frac 32} G_*\mcl{H}(\kappa),\\
  \alp_2&=\frac{2}{h_0^{2+\eta}}\kappa^{2\gam-\eta}J^{\frac{-\gam+2\eta}{\gam+1}}G_*+
  \frac{1}{h_0}\kappa^{2(\gam-1)}\lambda(\kappa_0, \kappa_L)J^{\frac{2}{\gam+1}}
  \left(\sqrt{2}(\gam-1)h_0^{\frac 32-\eta}\kappa^{1-\eta}\mcl{H}(\kappa)J^{\frac{2\eta-\gam}{\gam+1}}+1\right)^2.
\end{split}
\end{equation*}
Then the function $\alp$ given by \eqref{definition of beta*} can be expressed as
\begin{equation}
\label{new expression of alp}
  \alp(\kappa;\kappa_0, \kappa_L, J)=\om_1(\kappa; J, \eta)G_*-\om_2(\kappa;\kappa_0, \kappa_L, J, \eta)
\end{equation}
for
\begin{equation}
\label{expression of alp}
  \begin{split}
  &\om_1(\kappa;J, \eta):=\frac{\sqrt 2}{2}h_0^{-\frac 32}\mcl{H}(\kappa)((\gam-1)\kappa^{\gam+1}+\eta(\kappa^{\gam+1}-1)+2)-
  \frac{2}{h_0^{2+\eta}}\kappa^{2\gam-\eta}J^{\frac{2\eta-\gam}{\gam+1}},\\
  &\om_2(\kappa;\kappa_0, \kappa_L, J, \eta):=\frac{1}{h_0}\kappa^{2(\gam-1)}\lambda(\kappa_0, \kappa_L)J^{\frac{2}{\gam+1}}
  \left(\sqrt 2(\gam-1)h_0^{\frac 32-\eta}\kappa^{1-\eta}\mcl{H}(\kappa)J^{\frac{2\eta-\gam}{\gam+1}}+1\right)^2.
  \end{split}
\end{equation}

\medskip

{\textbf{Step 4.}}
From \eqref{definition of F}, it can be easily checked that $\mcl{F}(1)=0$ and $\mcl{F}'(1)=0$. By applying L$'$h\^{o}pital's rule, one can directly check that
\begin{equation}
\label{H at 1}
 \mcl{H}(1)= \lim_{\kappa\to 1} \mcl{H}(\tau)=\lim_{\kappa\to 1}\frac{\sqrt{\frac 12 \mcl{F}''(1)(\kappa-1)^2}}{\kappa^{\gam+1}-1}=\frac{\sqrt{\frac 12 \mcl{F}''(1)}}{\gam+1}=\frac{\sqrt{1-\frac{1}{\zeta_0}}}{\sqrt{2(\gam+1)}}.
 \end{equation}
 This result, combined with \eqref{condition for zeta0} implies that $\mcl{H}(1)>0$. Therefore, $\mcl{H}(\kappa)$, given by \eqref{definition of mcl H}, satisfies that
\begin{equation}\label{positivity of mcl H}
  \mcl{H}(\kappa)>0\quad\tx{for all $\kappa>0$.}
\end{equation}
It follows from \eqref{definition of lambda} that
\begin{equation*}
\lim_{{\kappa_0\to 1-}\atop{\kappa_L\to 1+}}\lambda(\kappa_0, \kappa_L)=0.
\end{equation*}
Hence we obtain the following important result:
\begin{equation}
\label{alp limit}
  \lim_{{\kappa_0\to 1-}\atop{\kappa_L\to 1+}}{\alp}(1 ;\kappa_0, \kappa_L, J,\eta)=
  h_0^{-\eta}J^\frac{2-\gam+2\eta}{\gam+1}
  \left(\frac 12h_0^{-\frac 32}\sqrt{1-
  \frac{1}{\zeta_0}}\sqrt{\gam+1}
  -\frac{2}{h_0^{2+\eta}}J^{\frac{2\eta-\gam}{\gam+1}}\right).
\end{equation}

Now, we consider two cases:\\
\phantom{a}\quad (Case 1)\,\,$\eta>0$ is fixed to satisfy $2\eta-\gam>0$,\\
\phantom{a}\quad (Case 2)\,\,$\eta>0$ is fixed to satisfy $2\eta-\gam<0$.
\medskip

 Back to \eqref{definition of G}, fix the constant $\eta$ as $\eta=\frac 34\gam$ which implies $2\eta-\gam>0$. This corresponds to (Case 1). Then, one can fix a constant $\bar{J}>0$ sufficiently small depending only on $(\gam, \zeta_0, S_0)$ so that whenever the background momentum density $J$ satisfies the inequality
\begin{equation*}
  0<J\le \bar J,
\end{equation*}
we obtain from \eqref{alp limit} that
\begin{equation*}
 \lim_{{\kappa_0\to 1-}\atop{\kappa_L\to 1+}}{\alp}(1 ;\kappa_0, \kappa_L, J, \frac 34 \gam)\ge \frac 14 h_0^{-\frac 34(\gam+2)}J^{\frac{4+\gam}{2(\gam+1)}}\sqrt{(\gam+1)\left(1-\frac{1}{\zeta_0}\right)}.
\end{equation*}
Note that ${\alp}(\kappa; \kappa_0, \kappa_L, J, \frac 34 \gam)$ is continuous with respect to $(\kappa, \kappa_0, \kappa_L)$. Therefore, one can fix a small constant $d$ with $d\in (0,1)$ depending only on $(\gam, \zeta_0, S_0, J)$ so that if the inequality
\begin{equation*}
  1-d\le\kappa_0< 1< \kappa_L \le 1+d
\end{equation*}
holds,
then we have
\begin{equation}
\label{positive lower bound 1 of alp}
   \min_{\kappa\in[\kappa_0, \kappa_L]}{\alp}(\kappa ;\kappa_0, \kappa_L, J, \frac 34\gam)\ge \frac 18 h_0^{-\frac 34(\gam+2)}J^{\frac{4+\gam}{2(\gam+1)}}\sqrt{(\gam+1)\left(1-\frac{1}{\zeta_0}\right)}.
\end{equation}
\medskip

 Next, fix $\eta$ as $\eta=\frac{\gam}{4}$ which implies $2\eta-\gam<0$. This corresponds to (Case 2).  Then, one can fix a constant $\underline{J}>1$ sufficiently large depending only on $(\gam, \zeta_0, S_0)$ so that whenever the background momentum density $J$ satisfies the inequality
\begin{equation*}
  J\ge \underbar{J},
\end{equation*}
then we obtain that
\begin{equation*}
  \lim_{{\kappa_0\to 1-}\atop{\kappa_L\to 1+}}{\alp}(1 ;\kappa_0, \kappa_L, J, \frac{\gam}{4})\ge \frac 14 h_0^{-\frac 14(\gam+6)}
  J^{\frac{4-\gam}{\gam+1}}
  \sqrt{(\gam+1)\left(1-\frac{1}{\zeta_0}\right)}>0.
\end{equation*}
Therefore, one can fix a small constant $d$ with $d\in(0,1)$ depending only on $(\gam, \zeta_0, S_0, J)$ so that if the inequality
\begin{equation*}
  1-d\le\kappa_0< 1< \kappa_L \le 1+d
\end{equation*}
holds, then we have
\begin{equation}
\label{positive lower bound 2 of alp}
 \min_{\kappa\in[\kappa_0, \kappa_L]} {\alp}(\kappa ;\kappa_0, \kappa_L,J, \frac{\gam}{4})\ge \frac 18 h_0^{-\frac 14(\gam+6)}
  J^{\frac{4-\gam}{\gam+1}}
  \sqrt{(\gam+1)\left(1-\frac{1}{\zeta_0}\right)}.
\end{equation}

\medskip

{\textbf{Step 5.}} Back to \eqref{inequality 1 of Jp+JC}, we shall use \eqref{definition of kappa}, \eqref{definition of us} and \eqref{ode for rho in kappa} to express the coefficients $-\frac{\eta}{2}\frac{\bar{\rho}'}{\bar{\rho}}G$ and $\frac{\bar{\rho}}{\gam S_0 \bar{\rho}^{\gam-1}}$ given on the right-hand side in terms of $(\kappa, J)$ to get
\begin{equation*}
\begin{split}
  T_{\rm coer}+T_{\rm mix}\ge&
  \int_{\Om_L} \alp (\der_1 v)^2+
  \frac{\eta}{\sqrt 2}
 h_0^{-(\frac 32+\eta)}\kappa^{-\eta}\mcl{H}(\kappa)
  J^{\frac{2+2\eta-\gam}{\gam+1}} (\der_2v)^2\,d\rx\\
  &+\frac 34\int_{\Om_L}|\nabla w|^2+\frac{1}{h_0^3}\kappa^{\gam-2}J^{\frac{2(2-\gam)}{\gam+1}} w^2\,d\rx.
  \end{split}
\end{equation*}
For the constant $\eta$ fixed as
\begin{equation}
\label{choice of eta}
  \eta=\begin{cases}
  \frac{3\gam}{4}\quad&\mbox{for $J\le \bJ$},\\
  \frac{\gam}{4}\quad&\mbox{for $J\ge \ubJ$},
  \end{cases}
\end{equation}
define
\begin{equation*}
  \lambda_0:=\min_{ \kappa\in[\kappa_0, \kappa_L]} \left\{{\alp}(\kappa ;\kappa_0, \kappa_L, J), \,\, \frac{\eta}{\sqrt 2}
 h_0^{-(\frac 32+\eta)}\kappa^{-\eta}\mcl{H}(\kappa)
  J^{\frac{2+2\eta-\gam}{\gam+1}},\,\, \frac{1}{h_0^3}\kappa^{\gam-2}J^{\frac{2(2-\gam)}{\gam+1}},\,\,
  \frac 34 \right\}.
\end{equation*}
By \eqref{positivity of mcl H}, \eqref{positive lower bound 1 of alp} and \eqref{positive lower bound 2 of alp}, it is clear that $\lambda_0$ is positive, and we obtain that
\begin{equation}
\label{estimate of Jp+Jc}
T_{\rm coer}+T_{\rm mix}\ge \lambda_0\int_{\Om_L}|Dv|^2+|Dw|^2+w^2\,d\rx.
\end{equation}
Note that the choice of $\lambda_0$ depends only on $(\gam, \zeta_0, S_0, J)$.
\medskip

\quad\\
In treating the boundary value problem \eqref{bvp-sing-pert-pre} with a singular perturbation, it is important to have a coercivity of $T_{\rm bd}$, given by \eqref{definition-Jbd}, as a functional of $v$.
By rewriting $G(0)$, $G(L)$, $\bar{a}_{11}(0)$ and $\bar{a}_{11}(L)$ in terms of $(\kappa_0, \kappa_L, J)$, we can express $T_{\rm bd}$ as
\begin{equation*}
  T_{\rm bd}=\frac{ h_0^{-\eta}J^{\frac{2\eta}{\gam+1}}}{2}
  \left(\kappa_0^{-\eta}(1-\kappa_0^{\gam+1})\int_{\Gamen} (\der_1 v)^2\,dx_2
  +\kappa_L^{-\eta}\int_{\Gamex}(\kappa_L^{\gam+1}-1) (\der_1 v)^2+ (\der_2 v)^2\,dx_2\right).
\end{equation*}
Since we have $\kappa_0<1<\kappa_L$, one can fix a constant $\lambda_{\rm bd}>0$ satisfying
\begin{equation}\label{estimate of Jbd}
 T_{\rm bd}\ge \lambda_{\rm bd}\left(\int_{\Gamen} (\der_1 v)^2\,dx_2+
\int_{\Gamex} |Dv|^2\,dx_2\right).
\end{equation}

Finally, we combine \eqref{estimate of Jp+Jc} and \eqref{estimate of Jbd} to obtain that
\begin{equation}
\label{lower bound of -(I1+I2)}
\tx{LHS of \eqref{I1+I2 in H1 estimate}}
  \le -\lambda_0\int_{\Om_L}|Dv|^2+|Dw|^2+w^2\,d\rx
  -\lambda_{\rm bd}
  \left(\int_{\Gamen}(\der_1v)^2\,dx_2
  +\int_{\Gamex}|Dv|^2\,dx_2\right).
\end{equation}

\medskip
{\textbf{Step 6.}} By using the definition of $a_{12}$ given by \eqref{coefficients of L_1} and the compatibility condition $\der_2\tpsi=0$ on $\Gamw$, it can be directly checked that
\begin{equation}
\label{compatibility condition of a12}
  a_{12}=0\quad\tx{on $\Gamw$}.
\end{equation}
Then we integrate by parts and use \eqref{compatibility condition of a12} to directly get
\begin{equation*}
\begin{split}
&\int_{\Om_L} d\mfrak{L}_1^{P}(v,w) G\der_1 v\,d\rx\\
&=\left(\int_{\Gamex}-\int_{\Gamen}\right)Gd a_{11}\frac{(\der_1 v)^2}{2}\,dx_2
-\int_{\Om_L}\left(\frac 12 \der_1(G\,da_{11})+G(\der_2a_{12}-d a)\right)(\der_1 v)^2\,d\rx\\
&\phantom{==}
+\int_{\Om_L} G(d b_1\der_1 w+d b_0 w)\der_1 v\,d\rx.
\end{split}
\end{equation*}
If the condition \eqref{condition: r in sec4} holds, then Lemma \ref{lemma on L_1}(d) combined with the generalized Sobolev inequality yields
\begin{equation}
\label{estimate of delta coefficients}
  \|(d a_{11}, a_{12}, d a, d b_1, \delta b_0)\|_{C^1(\ol{\Om_L})}\le C(r_2+r_3)
\end{equation}
for some constant $C>0$. So we obtain that
\begin{equation}
\begin{split}
\label{weak estimate of approx problem remainder}
  &\left|\int_{\Om_L} d\mfrak{L}_1^P(v,w) G\der_1 v\,d\rx\right|\\
  &\le C_*(r_2+r_3)\left(\|\der_1v\|^2_{L^2(\Gamen)}+\|\der_1v\|^2_{L^2(\Gamex)}+\|\der_1 v\|^2_{L^2(\Om_L)}\right)
  \end{split}
\end{equation}
for some constant $C_*>0$.

Notice that the function $G$ is already fixed in the previous steps, and we know that the maximum of $|G|$ over $\ol{\Om_L}$ depends only on $(\gam, \zeta_0, S_0, J, E_0)$. So the estimate constant $C_*$ can be fixed depending only on $(\gam, \zeta_0, S_0, J, E_0,L)$.

Therefore, we can estimate the right-hand side of \eqref{I1+I2 in H1 estimate} as
\begin{equation*}
\begin{split}
  &|{\tx{RHS of \eqref{I1+I2 in H1 estimate}}}|\\
  &\le
  C_*\bar{\delta}\left(\|\der_1v\|^2_{L^2(\Gamen)}+\|\der_1v\|^2_{L^2(\Gamex)}+\|\der_1 v\|^2_{L^2(\Om_L)}\right)+\int_{\Om_L}|f_1 G\der_1 v+f_2 w|\,d\rx.
  \end{split}
\end{equation*}
Finally, the proof of Lemma \ref{proposition-H1-apriori-estimate} can be completed by reducing $\bar{\delta}>0$ and applying the Cauchy-Schwarz inequality.

\end{proof}

\begin{remark}
\label{remark: nozzle length L}
In Lemma \ref{proposition-H1-apriori-estimate}, the essential condition to achieve an a priori $H^1$-estimate of $(v,w)$ is that the function $\kappa(=\frac{\bar u_1}{\us})$ needs to be close to 1(see \eqref{almost sonic condition1 full EP}). This condition is satisfied if $\kappa_0$ is fixed in $[1-d,1)$ for a sufficiently small constant $d>0$, and if the nozzle length $L$ is fixed to be sufficiently small. But, we should point out that this condition does not necessarily imply that the nozzle length $L$ is small. In Appendix \ref{appendix:nozzle length}, we  give examples of $(\gam, J)$ for which the nozzle length $L$ can be large.

\end{remark}

\section{The unique existence of a strong solution to Problem \ref{problem-lbvp for iteration}}
\label{section:singular perturbation}

\subsection{Extensions of coefficients}
\label{subsection:coeff ext}

Given constants $\gam>1$, $\zeta_0>1$, $J>0$, $S_0>0$ and $E_0<0$, the constant $l_{\rm max}$, given in Lemma \ref{lemma-1d-full EP}, represents the maximal length of the nozzle $\Om_L$ through which the associated smooth transonic solution $(\bar u_1, \bar E)$ to \eqref{1d-ivp} has a positive acceleration of the flow speed. This monotonicity property is the key for the following lemma, which plays crucial roles in this and the next sections.
\begin{lemma}
\label{lemma-coefficient at bg}
For each $L\in(0, l_{\rm max})$, there exists a constant $\lambda_L>0$ so that the coefficients $\bar a_{11}$ and $\bar a$ given by \eqref{coefficient at bg} satisfy the following properties:

\begin{itemize}
\item[(a)] $\displaystyle{-\der_{1}\bar a_{11}\ge \lambda_L}$ in $\ol{\Om_L}$
\item[(b)] $\displaystyle{-\bar a\ge \lambda_L}$ in $\ol{\Om_L}$
\item[(c)] $\displaystyle{-2\bar a-(2m-1)\der_{1}\bar a_{11}\ge \lambda_L}$ in $\ol{\Om_L}$ for $m=0, 1, 2, 3$
\item[(d)] For each $L\in(0, l_{\rm max})$, define
\begin{equation*}
\begin{split}
 &\mfrak{a}_1(L):= \min_{\ol{\Om_L}}(-\der_{1}\bar a_{11}),\quad
 \mfrak{a}_2(L):=\min_{\ol{\Om_L}}(-\bar a),\\
 &\mfrak{c}_m(L):=\min_{\ol{\Om_L}}(-2\bar a-(2m-1)\der_{1}\bar a_{11})\quad\tx{for $m=0,1,2,3$}.
 \end{split}
\end{equation*}
Then, we have
    $$
    \lim_{L\to l_{\rm max}-}\mfrak{a}_1(L)=
    \lim_{L\to l_{\rm max}-}\mfrak{a}_2(L)=
    \lim_{L\to l_{\rm max}-}\mfrak{c}_m(L)=0
    \quad\tx{for $m=0,1,2,3$}.
    $$
\end{itemize}
\begin{proof}
First of all, a direct computation gives
\begin{equation*}
  -\der_1\bar a_{11}=(\gam+1)\frac{\bar u_1^{\gam}}{\us^{\gam+1}}\bar u_1'.
\end{equation*}
In \eqref{EP-1d-reduced}, we rewrite the equation
$\displaystyle{\bar u_1'=\frac{\bar E \bar u_1^{\gam}}{\bar u_1^{\gam+1}-\us^{\gam+1}}}$
as
$$\bar E=(\bar u_1^{\gam+1}-\us^{\gam+1})\frac{\bar u_1'}{\bar u_1^{\gam}}.$$
Hence one can rewrite $\bar a$ as
\begin{equation*}
  -\bar a=\frac{(\gam \bar u_1^{\gam+1}+\us^{\gam+1})}{\us^{\gam+1}}\cdot \frac{\bar u_1'}{\bar u_1}.
\end{equation*}
Then the statements (a) and (b) easily follow from the Lemma \ref{lemma-1d-full EP}. Therefore, the inequalities stated in (c) easily follow for $m=1,2,3$.

Using the expressions of $(\der_1 \bar a_{11}, \bar a)$ given in the above, we get
\begin{equation*}
  -2\bar a+\der_1\bar a_{11}=\left((\gam-1)\left(\frac{\bar u_1}{\us}\right)^{\gam+1}+2\right)\frac{\bar u_1'}{\bar u_1}.
\end{equation*}
Therefore, the inequality for $m=0$ stated in (c) directly follows from Lemma \ref{lemma-1d-full EP}. Finally, the statement (d) follows from the fact  $\bar u_1'=0$ at $x_1=l_{\rm max}$ as stated in Lemma \ref{lemma-1d-full EP}.
\end{proof}
\end{lemma}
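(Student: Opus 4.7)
My plan is to reduce everything to explicit algebraic expressions in $\bar u_1(x_1)$ and $\bar u_1'(x_1)$, so that items (a), (b) and (c) become statements about strict positivity of smooth functions on the compact interval $[0,L]\subset[0,\lmax)$, and item (d) becomes a direct consequence of $\bar u_1'(x_1)\to 0$ as $x_1\to \lmax-$, guaranteed by Lemma \ref{lemma-1d-full EP}(ii).

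First I would compute the restrictions of $\bar a_{11}$ and $\bar a$ to the background in closed form. Using Definition \ref{definition:coefficints-iter} at $P_0$ (so $\nabla\bar{\vphi}=\bar u_1\,{\bf e}_1$ and $\Psi=0$) together with the Bernoulli relation $\bar\Phi-\tfrac12\bar u_1^2=\frac{\gam S_0\bar\rho^{\gam-1}}{\gam-1}$ and the identities $\us^{\gam+1}=\gam S_0 J^{\gam-1}$, $\bar\rho\bar u_1=J$, one finds $A_{22}(\rx,0,{\bf 0},{\bf 0})=\us^{\gam+1}/\bar u_1^{\gam-1}$, and hence
\[
\bar a_{11}=1-\frac{\bar u_1^{\gam+1}}{\us^{\gam+1}},\qquad -\der_1\bar a_{11}=(\gam+1)\frac{\bar u_1^{\gam}}{\us^{\gam+1}}\bar u_1'.
\]
Next I would solve the first ODE of \eqref{EP-1d-reduced} for $\bar E$ in terms of $\bar u_1'$, substitute into the defining formula of $\bar a$ in \eqref{coefficients of L_1}, and simplify to obtain
\[
-\bar a=\frac{\gam\bar u_1^{\gam+1}+\us^{\gam+1}}{\us^{\gam+1}}\cdot\frac{\bar u_1'}{\bar u_1}.
\]

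With these explicit expressions, (a) and (b) will be immediate: the right-hand sides are continuous strictly positive functions of $x_1$ on $[0,\lmax)$, positivity coming from $\bar u_1>0$ and $\bar u_1'>0$ on $[0,\lmax)$ by Lemma \ref{lemma-1d-full EP}(i). Compactness of $[0,L]$ then yields the positive minimum defining $\lambda_L$. For (c), combining the two formulas above gives
\[
-2\bar a-(2m-1)\der_1\bar a_{11}=\frac{\bar u_1'}{\bar u_1\,\us^{\gam+1}}\Bigl[\bigl(2\gam+(2m-1)(\gam+1)\bigr)\bar u_1^{\gam+1}+2\us^{\gam+1}\Bigr].
\]
For $m=0,1,2,3$ the coefficient of $\bar u_1^{\gam+1}$ equals $(\gam-1),(3\gam+1),(5\gam+3),(7\gam+5)$, all strictly positive since $\gam>1$; the bracket is therefore strictly positive on $[0,\lmax)$ and the same compactness argument produces a uniform $\lambda_L>0$.

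Finally, (d) is straightforward: each of the three quantities $-\der_1\bar a_{11}$, $-\bar a$, $-2\bar a-(2m-1)\der_1\bar a_{11}$ carries $\bar u_1'$ as an overall factor, and by Lemma \ref{lemma-1d-full EP}(ii) the value of $\bar u_1'(x_1)$ (and hence of each of these functions) tends to $0$ as $x_1\to\lmax-$. Since $\mfrak{a}_1(L)$, $\mfrak{a}_2(L)$, $\mfrak{c}_m(L)$ are defined as minima over $[0,L]$, each is bounded above by the corresponding value at $x_1=L$, which vanishes in the limit $L\to\lmax-$. I do not foresee any genuine obstacle; the only algebraic subtlety is the $m=0$ case of (c), where the bracket coefficient degenerates to $(\gam-1)$ and positivity relies precisely on the standing hypothesis $\gam>1$ of the paper.
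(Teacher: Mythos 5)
Your proposal is correct and follows essentially the same path as the paper's proof: the same explicit formulas $\bar a_{11}=1-\bar u_1^{\gam+1}/\us^{\gam+1}$, $-\der_1\bar a_{11}=(\gam+1)\bar u_1^{\gam}\bar u_1'/\us^{\gam+1}$, and $-\bar a=(\gam\bar u_1^{\gam+1}+\us^{\gam+1})\bar u_1'/(\us^{\gam+1}\bar u_1)$, positivity via Lemma \ref{lemma-1d-full EP}(i) plus compactness of $[0,L]$, and the vanishing of $\bar u_1'$ at $l_{\rm max}$ for part (d). The only presentational difference is that you derive a single unified identity for $-2\bar a-(2m-1)\der_1\bar a_{11}$ covering all $m=0,1,2,3$, whereas the paper deduces $m=1,2,3$ directly as a positive combination of (a) and (b) and treats only the $m=0$ case by an explicit formula (equivalent to yours at $m=0$).
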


In this section, we assume that the condition \eqref{condition: r in sec4} continues to hold so that, for each $P=(\tphi, \tpsi, \tPsi)\in \iterV(r_2)\times \iterP(r_3)$, the coefficients $(a_{ij}^P, a^P, b_1^P, b_0^P)$ given by Definition \ref{definition:approx coeff and fs} satisfy Lemma \ref{lemma on L_1}. Note that $a_{12}^P=a_{21}^P$ and $a_{22}^P=1$ in $\ol{\Om_L}$.

Let us set
\begin{equation}
\label{definition: L*}
 L_*:=\min\left\{\frac{L+l_{\rm max}}{2}, \frac 32 L\right\}.
\end{equation}
For global higher order derivative estimates of $(v,w)$ in $\Om_L$, we shall introduce an \emph{extension} of the differential operator $\mfrak{L}_1^P$ onto $\Om_{L_*}$ so that the extended operator is strictly elliptic near the boundary $\der\Om_{L_*}\cap\{x_1=L_*\}$ with respect to the first component $v$.

\begin{lemma}\label{corollary-coeff extension}
For any given constant $\om_0>0$, there exits a constant $\delta_1\in(0, \bar{\delta}]$ depending only on the data and $\om_0$ so that if
\begin{equation}\label{condition for r-coeff ext}
  \max\{r_2, r_3\}\le 2\delta_1,
\end{equation}
then, for each $P\in \iterV(r_1)\times \iterP(r_3)$,
one can define an extension $(\alp_{11}^P, \alp_{12}^P, \alp^P)$ of $(a_{11}^P, a_{12}^P, a^P)$ onto $\Om_{L_*}$
with satisfying the following properties:
\begin{itemize}
\item[(a)] $\displaystyle{(\alp_{11}^P, \alp_{12}^P, \alp^P)=(a_{11}^P, a_{12}^P, a^P)}$ in $\Om_L$
\item[(b)] $\displaystyle{\der_2\alp_{11}^P=\der_2 \alp^P=0}$ on $\Gamw$
\item[(c)] $\displaystyle{\alp_{12}^P=\der_2^2\alp_{12}^P=0}$ on $\Gamw$
\item[(d)] There exists a constant $C>0$ such that
    \begin{equation*}
      \|(\alp_{11}^P,\alp_{12}^P, \alp^P)-(\bar{\alp}_{11},0, \bar{\alp})\|_{H^3(\Om_{L_*})}\le C\|(\tphi, \tpsi, \tPsi)\|_{H^4(\Om_L)}
    \end{equation*}
    for
    \begin{equation*}
     (\bar{\alp}_{11}, \bar{\alp}):=(\alp_{11}^{P_0}, \alp^{P_0})\quad\tx{with $P_0=(0,0,0)$}.
    \end{equation*}
\item[(e)] On the boundary $\der\Om_{L_*}\cap\{x_1=L_*\}$, it holds that
\begin{equation*}
\alp_{11}^P\ge \om_0,\quad\tx{and}\quad
\begin{pmatrix}
\alp_{11}^P &\alp_{12}^P\\
\alp_{12}^P&1
\end{pmatrix}\ge \om_0\mathbb{I}_2.
\end{equation*}

\item[(f)]
For the constant $\lambda_{L_*}>0$ from Lemma \ref{lemma-coefficient at bg}(c) corresponding to $L_*$, it holds that
\begin{equation}
\label{essential condi for H-k est}
 -2\alp^P-(2m-1)\der_{1}\alp_{11}^P-4|\der_2 \alp_{12}^P|\ge \frac{\lambda_{L_*}}{2}\quad\tx{in $\ol{\Om_{{L_*}}}$}\,\,\,\tx{for $m=0, 1, 2, 3$.}
\end{equation}

\end{itemize}

\begin{proof}
{\textbf{Step 1.}} For simplicity, let us denote $(a_{11}^P, a_{12}^P, a^P)$ by $(a_{11}, a_{12}, a)$. By using \eqref{def of coeff pert}, we represent $(a_{11}, a)$ as
\begin{equation*}
  a_{11}=\bar a_{11}+da_{11}\quad\tx{and}\quad a=\bar a+da\quad\tx{in $\Om_L$.}
\end{equation*}
First of all, we define an extension of $(\bar a_{11}, \bar a)$ onto $\Om_{L_*}$.
\begin{lemma}
\label{lemma-coefficient extension}
Fix $L\in(0, l_{\rm max})$. For any given constant $\om_0>0$, one can define an extension $(\bar{\alp}_{11}, \bar{\alp})$ of $(\bar a_{11}, \bar a)$ onto $\Om_{L_*}$ to satisfy the following properties:
\begin{itemize}
\item[(i)] $(\bar{\alp}_{11}, \bar{\alp})=(\bar a_{11}, \bar a)$ in $\ol{\Om_L}$
\item[(ii)] $\bar{\alp}_{11},\,\, \bar{\alp}\in C^{\infty}([0,\infty)\times [-1,1])$
\item[(iii)] $\bar{\alp}_{11}=2\om_0>0$ in $[L_*,\infty)\times [-1,1]$
\item[(iv)] $-2\bar{\alp}-(2m-1)\der_{1}\bar{\alp}_{11}\ge \lambda_{L_*}$ in $[0,\infty)\times [-1,1]$ for $m=0, 1, 2, 3$ where $\lambda_{L_*}>0$ is the constant from Lemma \ref{lemma-coefficient at bg}(c) corresponding to $L_*$.
\end{itemize}

\begin{proof}
By Lemma \ref{lemma-coefficient at bg}(c), there exists a constant $\lambda_{L_*}>0$ such that
\begin{equation*}
-2\bar a-(2m-1)\der_1 \bar a_{11}\ge \lambda_{L_*}\quad\tx{in $\ol{\Om_{L_*}}$ for all $m=0,1,2,3$.}
\end{equation*}

Set as $r_0:=\frac 14 (L_*-L)$, and rewrite the above inequality as
\begin{equation*}
  -2\bar a-(2m-1)\der_1 \bar a_{11}\ge \lambda_{L_*}\quad\tx{in $\ol{\Om_{L+4r_0}}$ for all $m=0,1,2,3$}.
\end{equation*}

Fix a cut-off function $\chi^{(1)}\in C^{\infty}(\R)$ so that
\begin{equation*}
\chi^{(1)}(x_1, x_2)=\begin{cases}
1\quad&\mbox{for $x_1\le L+2r_0$}\\
0\quad&\mbox{for $x_1\ge L+3r_0$}
\end{cases}  \quad\tx{and}\quad (\chi^{(1)})'\le 0\,\,\tx{on $\R$}.
\end{equation*}
Given a constant $\om_0>0$, for $\rx=(x_1, x_2)\in [0, \infty)\times [-1,1]$, define $\bar{\alp}_{11}$ by
\begin{equation}
\label{extension of kappa}
  \bar{\alp}_{11}(\rx)=\bar a_{11}(\rx)\chi^{(1)}(x_1)+2\om_0(1-\chi^{(1)}(x_1)).
\end{equation}
Fix another cut-off function $\chi^{(2)}\in C^{\infty}(\R)$ so that
\begin{equation*}
\chi^{(2)}(x_1, x_2)=\begin{cases}
1\quad&\mbox{for $x_1\le L+r_0$}\\
0\quad&\mbox{for $x_1\ge L+2r_0$}
\end{cases}
\quad\tx{and}\quad  (\chi^{(2)})'\le 0\,\,\tx{on $\R$}.
\end{equation*}
For $\rx=(x_1, x_2)\in [0, \infty)\times [-1,1]$, define
\begin{equation*}
  \bar{\alp}(\rx)=\bar a(\rx)-(1-\chi^{(2)}(x_1))\alp_0
\end{equation*}
where $\alp_0>0$ is a constant to be specified later.
Then we have
\begin{equation*}
\begin{split}
  &-2\bar{\alp}-(2m-1)\der_{1}\bar{\alp}_{11}\\
  &=(-2\bar a-(2m-1)\der_{1}\bar a_{11})\chi^{(1)}-2\bar a(1-\chi^{(1)})+2\alp_0(1-\chi^{(2)})
  -(2m-1)(\bar a_{11}-2\om_0)(\chi^{(1)})'\\
  &\ge \lambda_{L_*}\chi^{(1)}+2\alp_0(1-\chi^{(2)})
  -(2m-1)(\bar a_{11}-2\om_0)(\chi^{(1)})'.
  \end{split}
\end{equation*}
From the definitions of $\chi^{(1)}$ and $\chi^{(2)}$, it follows that $\chi^{(1)}+(1-\chi^{(2)})\ge 1$ on $\R$. If $(\chi^{(1)})'(x_1)\neq 0$, then $x_1$ lies on the interval $[L+2r_0, L+3r_0]$ so we have $\displaystyle{(1-\chi^{(2)})(x_1)=0}$.  Moreover, there exists a constant $\beta_0>0$ depending only on $(\gam, \zeta_0, S_0, J, E_0, \om_0)$ so that
\begin{displaymath}
|(\bar a_{11}-2\om_0)(\chi^{(1)})'|\le \beta_0\quad \tx{on $\{x_1\ge 0\}$.}
\end{displaymath}
Hence we have
\begin{equation*}
\begin{split}
-2\bar{\alp}-(2m-1)\der_{1}\bar{\alp}_{11}
 &\ge  \lambda_{L_*}\chi^{(1)}+(2\alp_0-\beta_0)(1-\chi^{(2)})\\
 &\ge \min\{ \lambda_{L_*}, 2\alp_0-\beta_0\}
 \end{split}
\end{equation*}
for all $x_1\ge 0$. Choosing the constant $\alp_0>0$ to satisfy $$\min\{ \lambda_{L_*}, 2\alp_0-\beta_0\}=\lambda_{L_*},$$
the proof of Lemma \ref{lemma-coefficient extension} is completed.
\end{proof}
\end{lemma}

{\textbf{Step 2.}} For a function $\xi$ defined in $\ol{\Om_L}$, define its extension $\mcl{E}\xi$ onto $\ol{\Om_{L_*}}$ by
\begin{equation}
\label{extension onto Om L_*}
  \mcl{E}\xi(\rx)=
  \begin{cases}
  \xi(\rx)&\quad \tx{if $\rx=(x_1, x_2)\in\ol{\Om_L}$}\\
 \displaystyle{ \sum_{j=0}^3 c_j\xi\left(L+\frac{1}{2^j}(L-x_1), x_2\right)}&\quad\tx{if $\rx=(x_1,x_2)\in \ol{\Om_{L_*}}\setminus \ol{\Om_L}$}
  \end{cases}
\end{equation}
where $(c_j)_{j=0}^3\in \R^4$ satisfies
\begin{equation*}
  \sum_{j=0}^3 \left(-\frac{1}{2^j}\right)^kc_j=1\quad\tx{for $k=0, 1, 2, 3$}.
\end{equation*}
Clearly, $\mcl{E}\xi$ is an $H^4$-extension of $\xi$ on $\Om_{L_*}$, and the operator $\mcl{E}:H^4(\Om_L)\rightarrow H^4(\Om_{L_*})$ is linear and bounded. And, the norm of $\mcl{E}$ depends only on $L$. More importantly, the extension is given in $x_1$-direction only so that if $\xi$ satisfies the slip boundary condition $\der_2\xi=0$ on $\Gam_w$, then $\mcl{E}\xi$ satisfies the slip boundary condition
\begin{equation*}
  \der_2(\mcl{E}\xi)=0\quad\tx{on $\der\Om_{L_*}\cap\{|x_2|=1\}$}.
\end{equation*}
For $P=(\tphi, \tpsi, \tPsi)\in \iterV(r_2)\times\iterP(r_3)$, let us set $\mcl{E}P:=(\mcl{E}\tphi, \mcl{E}\tpsi, \mcl{E}\tPsi)$. And, define
\begin{equation*}
  (d\alp_{11}^P, \alp_{12}^P, d\alp^P):=({a}_{11}^{\mcl{E}P}, {a}_{12}^{\mcl{E}P}, {a}^{\mcl{E}P})-(\bar a_{11}, 0, \bar a)\quad\tx{in $\ol{\Om_{L_*}}$}
\end{equation*}
where the term $({a}_{11}^{\mcl{E}P}, {a}_{12}^{\mcl{E}P}, {a}^{\mcl{E}P})$ is given by Definition \ref{definition:coefficients-nonlinear}. Finally, define
\begin{equation}
\label{definition of extended coefficient}
  \alp_{11}^P:=\bar{\alp}_{11}+d\alp_{11}^P,\quad
  \alp_{12}^P:=\mfrak{a}_{12}^{\mcl{E}P},\quad
  \alp^P:=\bar{\alp}+d\alp^P\quad\tx{in $\ol{\Om_{L_*}}$}.
\end{equation}
Then we can complete the proof of Lemma \ref{corollary-coeff extension} by applying Lemma \ref{lemma-coefficient extension} and the generalized Sobolev inequality.

\end{proof}

\end{lemma}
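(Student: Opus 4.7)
The plan is to build the extension in two stages, decoupling the background part from the $P$-dependent perturbation. First I would extend the background coefficients $(\bar a_{11}, \bar a)$ onto $\ol{\Om_{L_*}}$ so that $\bar{\alp}_{11}$ becomes the constant $2\om_0$ past $x_1 = L_*$ while the structural inequality of Lemma \ref{lemma-coefficient at bg}(c) is preserved. Then I would extend the perturbation $(a_{ij}^P - \bar a_{ij}, a^P - \bar a)$ through a reflection operator acting purely in the $x_1$-direction, so that every $x_2$-derivative boundary condition on $\Gamw$ is automatically inherited by the extension.

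For the background stage, I would fix $r_0 := (L_* - L)/4$ and choose two smooth cut-offs $\chi^{(1)}, \chi^{(2)}$ on $\R$ with $\chi^{(j)} = 1$ below its transition interval and $\chi^{(j)} = 0$ above, arranging the transitions so that $\chi^{(2)}$ switches off in $[L + r_0, L + 2r_0]$ and $\chi^{(1)}$ switches off later in $[L + 2r_0, L + 3r_0]$. This ensures that $(1-\chi^{(2)}) \equiv 1$ on the support of $(\chi^{(1)})'$ and that $\chi^{(1)} + (1 - \chi^{(2)}) \ge 1$ everywhere. Setting
\begin{equation*}
\bar{\alp}_{11} := \bar{a}_{11}\chi^{(1)} + 2\om_0(1-\chi^{(1)}), \qquad \bar{\alp} := \bar a - \alp_0(1-\chi^{(2)})
\end{equation*}
for a positive constant $\alp_0$ to be chosen, a direct expansion yields
\begin{equation*}
-2\bar{\alp} - (2m-1)\der_1 \bar{\alp}_{11} \ge \lambda_{L_*}\chi^{(1)} + (2\alp_0 - \beta_0)(1 - \chi^{(2)}),
\end{equation*}
where $\beta_0$ bounds the nuisance term $|(2m-1)(\bar{a}_{11} - 2\om_0)(\chi^{(1)})'|$ uniformly in $m \in \{0,1,2,3\}$; choosing $\alp_0$ large enough so that $2\alp_0 - \beta_0 \ge \lambda_{L_*}$ then gives the required lower bound globally on $\ol{\Om_{L_*}}$.

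For the perturbation stage, I would use the reflection-type extension
\begin{equation*}
\mcl{E}\xi(x_1, x_2) := \sum_{j=0}^3 c_j\, \xi\!\left(L + 2^{-j}(L - x_1),\, x_2\right), \qquad (x_1, x_2) \in \ol{\Om_{L_*}} \setminus \ol{\Om_L},
\end{equation*}
where $(c_j)_{j=0}^3$ solves the Vandermonde-type system $\sum_j (-2^{-j})^k c_j = 1$ for $k = 0,1,2,3$, ensuring $C^3$-matching across $x_1 = L$ and boundedness $\mcl{E}: H^4(\Om_L) \to H^4(\Om_{L_*})$. Because $\mcl{E}$ acts only in the $x_1$-variable, it commutes with every pure $x_2$-derivative, so the boundary conditions defining $\iterV(r_2)\times\iterP(r_3)$ are inherited by $\mcl{E}P := (\mcl{E}\tphi, \mcl{E}\tpsi, \mcl{E}\tPsi)$. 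Substituting $\mcl{E}P$ into Definition \ref{definition:coefficients-nonlinear} produces $H^3$-perturbations $d\alp_{11}^P := a_{11}^{\mcl{E}P} - \bar a_{11}$, $\alp_{12}^P := a_{12}^{\mcl{E}P}$, and $d\alp^P := a^{\mcl{E}P} - \bar a$ on $\ol{\Om_{L_*}}$; setting $\alp_{11}^P := \bar{\alp}_{11} + d\alp_{11}^P$ and $\alp^P := \bar{\alp} + d\alp^P$ closes the construction, with property (a) automatic, and the smallness estimate (d) inherited from \eqref{estimate-coefficient-difference} and the boundedness of $\mcl{E}$.

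The main obstacle will be verifying property (f) after adding the $P$-perturbation and absorbing the extra $-4|\der_2 \alp_{12}^P|$ term. Since the background construction yields the margin $\lambda_{L_*}$, I would absorb these corrections by shrinking $\delta_1 \in (0, \bar\delta]$ so that, via the generalized Sobolev inequality applied to the $H^3$-smallness of the perturbations, $|d\alp^P| + |\der_1 d\alp_{11}^P| + |\der_2 \alp_{12}^P|$ stays below $\lambda_{L_*}/16$ throughout $\ol{\Om_{L_*}}$. The compatibility conditions (b) and (c) on $\Gamw$ would then be checked by tracing the boundary behaviour through Definition \ref{definition:coefficints-iter}: the factor ${\bf v}\cdot{\bf e}_2 = \der_2 \tpsi - \der_1 \tphi$ appearing in $A_{12}$ vanishes on $\Gamw$ since $\der_2 \tpsi = 0$ and $\tphi \equiv 0$ there (forcing $\der_1 \tphi = 0$), and the vanishing of $\der_2^2 \alp_{12}^P$ on $\Gamw$ follows by expanding $\der_2^2 A_{12}$ and invoking the higher-order slip conditions $\der_2^3 \tpsi = \der_2^2 \tphi = 0$ that are built into the definition of $\iterV(r_2) \times \iterP(r_3)$.
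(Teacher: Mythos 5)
Your proposal follows essentially the same two-stage construction as the paper's proof: the background coefficients are extended via the two cut-offs $\chi^{(1)},\chi^{(2)}$ with $\alp_0$ chosen large enough to keep the margin $\lambda_{L_*}$, and the perturbation is extended through the $x_1$-only reflection operator $\mcl{E}$ so that wall compatibility conditions are preserved. The extra detail you supply in the final paragraph—absorbing $-4|\der_2\alp_{12}^P|$ by shrinking $\delta_1$ via the generalized Sobolev inequality, and tracing the $\Gamw$-compatibility of $\alp_{12}^P$ through $\mathbf{v}\cdot\mathbf{e}_2=\der_2\tpsi-\der_1\tphi$—is correct and just makes explicit what the paper compresses into its closing sentence.
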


\newcommand \itersetent{\mcl{I}_{\rm ent}(r_1)}
\newcommand \itersetdelta{\mcl{I}_{\rm vor}(2\bar{\delta})\times \mcl{I}_{\rm pot}(2\bar{\delta})}
As we shall see later, the property (f) stated in Lemma \ref{corollary-coeff extension} is the most important ingredient to establish a priori estimates of the solutions $(v,w)$ to the boundary value problem \eqref{lbvp-main general} in $\Om_L$. More precisely, the inequality \eqref{essential condi for H-k est} for $m=0,1,2,3$ is the essential condition to achieve a priori $H^{m+1}$-estimate of $v$ in $\Om_L$.

Fix $P\in\iterseta$. For the differential operator $\mfrak{L}_1^P$ given by Definition \ref{definition:approx coeff and fs}, define
\begin{equation*}
  \mcl{L}^P v:=\mfrak{L}_1^P(v,0)=a_{11}^P\der_{11}v+2a_{12}^P\der_{12}v+\der_{22}v+a^P\der_1 v\quad\tx{in $\Om_L$.}
\end{equation*}
Given a function $f\in L^2(\Om_L)$, suppose that $V\in H^2(\Om_L)$ solves the problem:
\begin{equation*}
  \begin{cases}
\mcl{L}^P V=f\quad\tx{in $\Om_L$},\\
V=0\quad\tx{on $\Gamen$},\quad \der_2V=0\quad\tx{on $\Gamw$}.
  \end{cases}
\end{equation*}
Then we can apply \cite[Theorem 1.1, Chapter 1]{KZ} and the property (f)(with $m=0$) stated in Lemma \ref{corollary-coeff extension} with minor adjustments to obtain the estimate
\begin{equation}
\label{simple H1 estimate of v from single eqn}
  \|V\|_{H^1(\Om_L)}+\|\der_1 V\|_{L^2(\Gamen\cup\Gamex)}\le C\|f\|_{L^2(\Om_L)}.
\end{equation}
Note that there is no restriction on the background momentum density $J$ and the nozzle length $L$ required for this estimate. Simply, the positive acceleration of the background solution stated in Lemma \ref{lemma-1d-full EP} naturally yields the property Lemma \ref{corollary-coeff extension}(f). It is a surprising discovery that the operator $\mcl{L}^P$ satisfies the technical conditions given in \cite[Theorem 1.1 in Chapter 1]{KZ}.

Now we explain how Lemma \ref{corollary-coeff extension}(f) yields the estimate \eqref{simple H1 estimate of v from single eqn} in details. According to the proof of \cite[Theorem 1.1 in Chapter 1]{KZ}, we should start with
\begin{equation*}
\int_{\Om_L} e^{-\mu x_1}\der_1 V\mcl{L}^P V\,d\rx=\int_{\Om_L} e^{-\mu x_1}f\der_1 V\,d\rx
\end{equation*}
for a constant $\mu>0$ to be determined. By integrating the left-hand side by parts, and applying Lemma \ref{lemma on L_1}(h), Lemma \ref{corollary-coeff extension}(b) and (f)(with $m=0$), one can directly check that if the constant $\mu$ is fixed sufficiently small, then it holds that
\begin{equation*}
  \int_{\Om_L} e^{-\mu x_1}\der_1 V\mcl{L}^P V\,d\rx\le -\lambda
  \left(\int_{\Om_L}|DV|^2\,d\rx+\int_{\Gamen\cup\Gamex}(\der_1V)^2\,dx_1\right)
\end{equation*}
for some constant $\lambda>0$.

Note that, however, if we fix $G$ as $G=e^{-\mu x_1}$ in the proof of Proposition \ref{proposition-H1-apriori-estimate}, then we can get the result of \eqref{lower bound of -(I1+I2)}, only for a sufficiently large momentum density $J$ due to the coupling term $T_{\rm mix}$, given by \eqref{definition-Jc}. By choosing the energy weight function $G$ as $G=\bar{\rho}^{\eta}$ with a carefully chosen $\eta>0$, we can establish the main result of this paper for a bigger class of parameters.

\subsection{The well-posedness of the boundary value problem \eqref{lbvp-main general}}
\label{section:lbvp singular perturbation}
For $P=(\tphi, \tpsi, \tPsi)$, we assume that $P$ is fixed in $\iterseta$ throughout \S \ref{section:lbvp singular perturbation}. For simplicity, let us denote all the coefficients $(a_{ij}^P, a^P, b_1^P, b_0^P)$ of $\mfrak{L}_1^P$ by $(a_{ij}, a, b_1, b_0)$.

\subsubsection{Singular perturbation problems and related lemmas}
As explained in \S \ref{subsection-lbvp-preliminary}, we shall establish the existence of a solution to \eqref{lbvp-main general} by solving the following problem.
\begin{problem}
\label{problem:singular perturbation}
Let a constant $\eps>0$ be fixed sufficiently small. Given functions $f_1\in H^3(\Om_L)$ and $f_2\in H^2(\Om_L)$ with satisfying the compatibility conditions \eqref{compatibility conditions for f1 and f2}, find $(v,w)$ that solves the following problem:
\begin{equation}
\label{bvp-sing-pert}
  \begin{split}
  &\begin{cases}
  \eps \der_{111}v+\mfrak{L}_1^P(v, w)=f_1\quad&\tx{in $\Om_L$},\\
  \mfrak{L}_2(v, w)=f_2\quad&\tx{in $\Om_L$},
  \end{cases}\\
  &\begin{cases}
  v=0,\quad \der_1v=0\quad&\tx{on $\Gamen$},\\
  \der_2 v=0\quad&\tx{on $\Gamw$},\\
 \der_{11}v=0\quad&\tx{on $\Gamex$},
  \end{cases}\\
  &\begin{cases}
  \der_1 w=0\quad&\tx{on $\Gamen$},\\
  \der_2w=0\quad&\tx{on $\Gamw$},\\
  w=0\quad&\tx{on $\Gamex$}.
  \end{cases}
  \end{split}
\end{equation}
\end{problem}

\begin{remark}
Since there is a third order differential equation in Problem \ref{problem:singular perturbation}, we add two boundary conditions $\der_1v=0$ on $\Gamen$, and $\der_{11}v=0$ on $\Gamex$ to establish the well-posedness.
\end{remark}

First of all, we prove several technical lemmas.
Throughout Lemmas \ref{lemma:wp of singular pert prob-main}--\ref{viscous-estimate-intermediate1}, $(v,w)$ is assumed to be a smooth solution to Problem \ref{problem:singular perturbation}.

\begin{lemma}
\label{lemma:wp of singular pert prob-main}
Assume that the background solution $(\bar u_1, \bar E)$ and the nozzle length $L$ are fixed to satisfy all the conditions stated in Lemma \ref{proposition-H1-apriori-estimate}. And, assume that the condition \eqref{condition for r-coeff ext} holds.
Then, one can fix  a constant $\bar{\eps}>0$, and further reduce the constant $\delta_1>0$ from \eqref{condition for r-coeff ext} depending only on the data so that whenever the inequality $0<\eps<\bar{\eps}$ and the condition \eqref{condition for r-coeff ext} hold, if $(v,w)$ is a smooth solution to Problem \ref{problem:singular perturbation}, then it satisfies
\begin{equation}
\label{a priori estimate1 of vm and wm}
\begin{split}
  &\sqrt{\eps}\|\der_{11}v\|_{L^2(\Om_L)}
  +\|\der_1v\|_{L^2(\Gamen)}+\|Dv\|_{L^2(\Gamex)}
  +\|v\|_{H^1(\Om_L)}+\|w\|_{H^1(\Om_L)}
  \\
  &\le C\left(\|f_1\|_{L^2(\Om_L)}+\|f_2\|_{L^2(\Om_L)}\right).
  \end{split}
\end{equation}

\end{lemma}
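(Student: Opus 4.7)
I would adapt the energy identity from the proof of Lemma \ref{proposition-H1-apriori-estimate} to the viscous problem \eqref{bvp-sing-pert}. Using the same weight $G(x_1)=\bar{\rho}^{\eta}$ and the same choice of $\eta$ as in \eqref{choice of eta}, multiply the first equation of \eqref{bvp-sing-pert} by $G\der_{1}v$, multiply the second by $w$, integrate over $\Om_L$ and add. With $I_1,I_2$ as in \eqref{definition of I1 and I2} and the splitting $\mfrak{L}_1^{P}=\mfrak{L}_1+d\mfrak{L}_1^{P}$, this yields
\begin{equation*}
I_1+I_2+\eps\int_{\Om_L} G\,\der_{1}v\,\der_{111}v\,d\rx
=\int_{\Om_L}\bigl(f_1 G\der_{1}v+f_2 w\bigr)\,d\rx
-\int_{\Om_L} G\der_{1}v\,d\mfrak{L}_1^{P}(v,w)\,d\rx,
\end{equation*}
so that the entire coercivity bound \eqref{lower bound of -(I1+I2)} already obtained for $-(I_1+I_2)$, together with the perturbation estimate \eqref{weak estimate of approx problem remainder} controlling the last term on the right, remains available unchanged. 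It only remains to treat the new viscous term.

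The core new computation is two successive integrations by parts in $x_1$. The boundary conditions $\der_{1}v=0$ on $\Gamen$ and $\der_{11}v=0$ on $\Gamex$ imposed in \eqref{bvp-sing-pert} are designed precisely so that the first integration by parts produces no boundary contribution, and the second leaves only a $\Gamex$-trace; the result is
\begin{equation*}
\eps\int_{\Om_L} G\,\der_{1}v\,\der_{111}v\,d\rx
=-\eps\int_{\Om_L} G(\der_{11}v)^2\,d\rx
-\frac{\eps}{2}\int_{\Gamex} G'(\der_{1}v)^2\,dx_2
+\frac{\eps}{2}\int_{\Om_L} G''(\der_{1}v)^2\,d\rx.
\end{equation*}
Here $G\ge c_G>0$ on $[0,L]$ for a constant $c_G$ depending only on the data, and $G'$, $G''$ are bounded on $[0,L]$ by the data alone. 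Substituting this identity into the previous one and invoking \eqref{lower bound of -(I1+I2)} and \eqref{weak estimate of approx problem remainder}, one arrives at an inequality of the form
\begin{equation*}
\lambda_0\|(v,w)\|_{H^1(\Om_L)}^2+\lambda_{\rm bd}\|Dv\|_{L^2(\Gamex)}^2+\eps\,c_G\|\der_{11}v\|_{L^2(\Om_L)}^2\le \tx{RHS}(f_1,f_2)+C\eps\bigl(\|\der_{1}v\|_{L^2(\Om_L)}^2+\|\der_{1}v\|_{L^2(\Gamex)}^2\bigr)+C_*\bar{\delta}\,\tx{(trace terms)},
\end{equation*}
where the two $O(\eps)$ remainders on the right originate from the $G''$ interior integral and from the $\Gamex$-boundary term involving $G'$.

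To close the argument, I would reduce $\bar{\eps}$ so that $C\bar{\eps}\le\frac{1}{2}\min\{\lambda_0,\lambda_{\rm bd}\}$, which absorbs both $O(\eps)$ remainders into the coercive left-hand side; the $C_*\bar{\delta}$-remainder is absorbed by further reducing $\delta_1$, exactly as at the end of the proof of Lemma \ref{proposition-H1-apriori-estimate}. Applying the Cauchy--Schwarz inequality to the $f_1$ and $f_2$ contributions then yields \eqref{a priori estimate1 of vm and wm}, noting that the trace $\|\der_{1}v\|_{L^2(\Gamen)}$ appearing there is automatically zero by the new boundary condition $\der_{1}v=0$ on $\Gamen$. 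The main point to verify is that $\bar{\eps}$ and $\delta_1$ can be chosen \emph{independently of} $\eps$, so that \eqref{a priori estimate1 of vm and wm} is truly uniform in $\eps\in(0,\bar{\eps}]$; this is automatic because $\lambda_0$, $\lambda_{\rm bd}$, $G$, $G'$, and $G''$ all depend only on the background flow and the data, not on $\eps$.
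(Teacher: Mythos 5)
Your proposal reproduces the paper's argument step for step: the same weight $G=\bar\rho^{\eta}$, the same splitting of the energy identity into $I_1+I_2$ plus the viscous integral, the same double integration by parts in $x_1$ (yielding exactly the identity \eqref{eqn-s2} once the boundary conditions $\der_1 v=0$ on $\Gamen$ and $\der_{11}v=0$ on $\Gamex$ kill the unwanted traces), and the same absorption of the $O(\eps)$ and $O(r_2+r_3)$ remainders by shrinking $\bar\eps$ and $\delta_1$. The plan is correct and essentially the same as the paper's proof.
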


\begin{lemma}
\label{lemma-a priori H2 estimate of wm}
Under the same assumptions as Lemma \ref{lemma:wp of singular pert prob-main}, it holds that
\begin{equation}
  \|w\|_{H^2(\Om_L)}\le C(\|f_1\|_{L^2(\Om_L)}+\|f_2\|_{L^2(\Om_L)})
\end{equation}
for some constant $C>0$ depending only on the data.
\end{lemma}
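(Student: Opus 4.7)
\textbf{Proof plan for Lemma \ref{lemma-a priori H2 estimate of wm}.} The second equation of \eqref{bvp-sing-pert} reads $\Delta w = f_2 + c_0 w + c_1 \der_1 v$ in $\Om_L$, with mixed boundary conditions $\der_1 w = 0$ on $\Gamen$, $\der_2 w = 0$ on $\Gamw$, and $w=0$ on $\Gamex$. The plan is to treat this as a Poisson problem for $w$ with a controlled right-hand side and then invoke standard $H^2$ elliptic regularity for mixed boundary data on the rectangle. Concretely, I would set $g := f_2 + c_0 w + c_1 \der_1 v$. The coefficients $c_0, c_1$ are smooth and depend only on the background solution, so they are bounded on $\ol{\Om_L}$. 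Combining this with the $H^1$ estimate \eqref{a priori estimate1 of vm and wm} from Lemma \ref{lemma:wp of singular pert prob-main} immediately gives
\begin{equation*}
\|g\|_{L^2(\Om_L)} \le C\left(\|f_1\|_{L^2(\Om_L)} + \|f_2\|_{L^2(\Om_L)}\right).
\end{equation*}

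Next, I would establish $H^2$ regularity for the BVP $\Delta w = g$ in $\Om_L$ with the above mixed boundary conditions. Since $\Om_L$ is a rectangle and all the boundary arcs of $\der\Om_L$ meet at right angles, the natural approach is reflection: even reflection of $w$ across each wall $\{x_2 = \pm 1\}$ (consistent with $\der_2 w = 0$), even reflection across $\{x_1 = 0\}$ (consistent with $\der_1 w = 0$), and odd reflection across $\{x_1 = L\}$ (consistent with $w = 0$). These reflections yield an $H^1$ extension $\wtil w$ of $w$ onto a neighborhood of $\ol{\Om_L}$ that still satisfies $\Delta \wtil w = \wtil g$ in the distributional sense with $\|\wtil g\|_{L^2} \le C\|g\|_{L^2}$. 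The key compatibility check is at the corners $(L, \pm 1)$, where Dirichlet meets Neumann; the combined odd-in-$x_1$, even-in-$x_2$ reflection is self-consistent because $w(L, \pm 1) = 0$ by the Dirichlet condition on $\Gamex$. Standard interior $H^2$ estimates for $\Delta$ applied on a subdomain of the extended region containing $\ol{\Om_L}$ then give
\begin{equation*}
\|w\|_{H^2(\Om_L)} \le \|\wtil w\|_{H^2(\Om_L)} \le C\left(\|\wtil g\|_{L^2} + \|\wtil w\|_{L^2}\right) \le C\left(\|g\|_{L^2(\Om_L)} + \|w\|_{L^2(\Om_L)}\right).
\end{equation*}

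Combining this estimate with the control on $\|g\|_{L^2}$ above and the $H^1$ bound on $w$ from Lemma \ref{lemma:wp of singular pert prob-main} yields the desired conclusion. The main technical point is simply the corner reflection argument, which is classical for rectangles and right-angle mixed BC; alternatively, one can prove the tangential regularity $\der_{22} w, \der_{12} w \in L^2$ by the standard Nirenberg difference-quotient argument in $x_2$ (which is compatible with the Neumann BC on $\Gamw$ via even reflection), then extract $\der_{11} w = g - \der_{22} w \in L^2$ directly from the equation. Either route avoids any dependence on $\eps$, so the $H^2$ estimate is uniform in the singular perturbation parameter.
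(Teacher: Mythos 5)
Your proposal is correct and follows essentially the same route as the paper: both rewrite $\mfrak{L}_2(v,w)=f_2$ as a Poisson-type equation for $w$ with an $L^2$-controlled right-hand side (the paper keeps $c_0 w$ on the left and cites \cite[Theorems 8.8, 8.12]{GilbargTrudinger} plus the method of reflections, while you move it to the right, which is immaterial since $c_0$ is bounded and $\|w\|_{L^2}$ is already controlled by Lemma \ref{lemma:wp of singular pert prob-main}), and both obtain the global $H^2$ bound by reflecting across the straight boundary pieces of the rectangle. Your more explicit treatment of the Dirichlet/Neumann corner compatibility and the alternate difference-quotient route are fine and match the standard argument the paper invokes.
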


\begin{lemma}
\label{lemma for pre H2 estimate of vm, part 1}
Let $\ls$ be the constant given from Lemma \ref{lemma-1d-full EP}. Under the same assumptions as Lemma \ref{lemma:wp of singular pert prob-main}, for any given constant $t\in(0, \frac{\ls}{8}]$,
there exists a constant $C_t>0$ depending only on the data and $t$ to satisfy
\begin{equation}
\label{viscous-estimate-intermediate1}
  \|D\der_1 v\|_{L^2(\Om_L\cap \{2t<x_1<\frac{l_s}{2}-2t\})}
  \le C_t\left(\|f_1\|_{L^2(\Om_L)}+\|f_2\|_{L^2(\Om_L)}\right).
\end{equation}

\end{lemma}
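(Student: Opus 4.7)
The interval constraint $t \le \ls/8$ places any cut-off in $x_1$ supported in $(t, \ls/2 - t)$ inside $\{x_1 \le 5\ls/8\} \subset \{x_1 \le 7\ls/8\}$, where Lemma~\ref{lemma on L_1}$(\tx{g}_3)$ gives uniform positive definiteness of the principal matrix of $\mfrak{L}_1^P$ with constant $\lambda_0/2$ depending only on the data. On this sub-region the first equation of \eqref{bvp-sing-pert} is a uniformly elliptic second-order equation for $v$, perturbed by the singular term $\eps\der_{111}v$ and coupled to $w$ through lower-order terms. The plan is a classical interior energy argument: differentiate the equation once in $x_1$, test against $\chi^2 \der_1 v$ with $\chi \in C^\infty_c(\R)$ a cut-off in $x_1$ equal to $1$ on $[2t, \ls/2 - 2t]$ and supported in $(t, \ls/2 - t)$, and then close using the a priori estimates of Lemmas~\ref{lemma:wp of singular pert prob-main} and \ref{lemma-a priori H2 estimate of wm}.

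\textbf{The energy identity.} Setting $V := \der_1 v$ and differentiating the first line of \eqref{bvp-sing-pert} in $x_1$ gives
\begin{equation*}
\eps\der_{111}V + a_{11}^P \der_{11}V + 2a_{12}^P \der_{12}V + \der_{22}V + [\der_1 a_{11}^P + a^P]\der_1 V + 2(\der_1 a_{12}^P)\der_2 V + (\der_1 a^P)V = \der_1 f_1 - \der_1(b_1^P \der_1 w + b_0^P w).
\end{equation*}
Testing with $\chi^2 V$ over $\Om_L$, boundary contributions on $\Gamen \cup \Gamex$ vanish because $\chi$ is compactly supported away from these, while along $\Gamw$ one uses $\der_2 V = \der_1(\der_2 v) = 0$, $a_{12}^P = 0$ (Lemma~\ref{lemma on L_1}(c)), and $\der_2 w = 0$. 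Standard integration by parts turns the principal part into a coercive form $\int \chi^2[a_{11}^P(\der_1 V)^2 + (\der_2 V)^2]\,d\rx$. The third-order term is treated by two integrations by parts in $x_1$, giving
\begin{equation*}
\int \chi^2 V\, \eps \der_{111}V\, d\rx = -\eps \int \der_{11}(\chi\chi')V^2 \,d\rx + 3\eps \int \chi\chi'(\der_1 V)^2 \,d\rx,
\end{equation*}
which involves only $V$ and $\der_1V$ but never $\der_{111}v$. The source is similarly recast as $\int \chi^2 V \der_1 f_1 \,d\rx = -\int (2\chi\chi' V + \chi^2 \der_1 V)\,f_1 \,d\rx$, requiring only $f_1 \in L^2$.

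\textbf{Closure and main obstacle.} Cauchy--Schwarz with small parameters bounds the source integral, the mixed term $\int 2\chi^2 a_{12}^P V \der_{12}V$ (absorbed using the smallness $\|a_{12}^P\|_{L^\infty} \le C\delta_1$ from Lemma~\ref{lemma on L_1}(d) and the Sobolev embedding), and the coupling $-\int \chi^2 V \der_1(b_1^P \der_1 w + b_0^P w)$. After absorbing small multiples of $\int\chi^2[(\der_1V)^2 + (\der_2 V)^2]\,d\rx$ into the coercive left-hand side, the remainder is controlled by $C\|V\|_{L^2(\Om_L)}^2 + C\|w\|_{H^2(\Om_L)}^2 + C\|f_1\|_{L^2(\Om_L)}^2 + \eps C_t \|\der_{11}v\|_{L^2(\Om_L)}^2$. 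The first two summands are dominated by $C(\|f_1\|_{L^2} + \|f_2\|_{L^2})^2$ via Lemmas~\ref{lemma:wp of singular pert prob-main} and \ref{lemma-a priori H2 estimate of wm}, and the $\eps$ remainder likewise via $(\sqrt{\eps}\|\der_{11}v\|_{L^2})^2 \le C(\|f_1\|_{L^2}+\|f_2\|_{L^2})^2$ from Lemma~\ref{lemma:wp of singular pert prob-main}. Since $\chi \equiv 1$ on $\{2t < x_1 < \ls/2 - 2t\}$, this yields \eqref{viscous-estimate-intermediate1}. The hardest point is handling $\eps\der_{111}V$ without a uniform $\eps$-weighted bound on $\der_{111}v$; the double integration by parts in $x_1$ above is what reduces it to quantities already controlled by the $\sqrt{\eps}$-weighted $H^2$ estimate of Lemma~\ref{lemma:wp of singular pert prob-main}, and that manoeuvre is the decisive step that makes the argument work uniformly in $\eps$.
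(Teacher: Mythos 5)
Your proposal is correct and follows essentially the same route as the paper: a localized energy estimate in the uniformly elliptic region $\{x_1 \le 7\ls/8\}$ (Lemma~\ref{lemma on L_1}($\tx{g}_3$)), using a cut-off in $x_1$ compactly supported in $(t, \ls/2-t)$ so that all boundary terms on $\Gamen\cup\Gamex$ drop and the $\Gamw$ ones vanish via $\der_2v=0$ and $a_{12}^P=0$, and closing with the $\sqrt\eps$-weighted $H^2$ bound of Lemma~\ref{lemma:wp of singular pert prob-main} and the $H^2$ estimate for $w$ from Lemma~\ref{lemma-a priori H2 estimate of wm}. The only difference is cosmetic: the paper multiplies the undifferentiated equation $\mcl M_\eps v=F_1$ by $\chi^2\der_{11}v$, which produces the coercive bilinear form $\int\chi^2\bigl[a_{11}^P(\der_{11}v)^2+2a_{12}^P\der_{11}v\der_{12}v+(\der_{12}v)^2\bigr]$ directly and requires only a single integration by parts on the $\eps\der_{111}v$ term, whereas you differentiate the equation once and test against $\chi^2\der_1 v$, which gives the same coercive form after an extra integration by parts and requires the double integration by parts on the $\eps$ term and a back-integration to avoid $\der_1 f_1$; the two identities differ only by the lower-order integral $\int 2\chi\chi'\der_1v\,\mcl M_\eps v\,d\rx$, which is controlled by the same lemmas, so the arguments are equivalent.
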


\begin{lemma}
\label{lemma for pre H2 estimate of vm, part 2}
One can reduce the constant $\delta_1>0$ further from the one given in Lemma \ref{corollary-coeff extension} with depending only on the data so that if the inequality $0<\eps\le \bar{\eps}$ and the condition \eqref{condition for r-coeff ext} hold, then it holds that
\begin{equation}
\label{viscous-estimate-intermediate2}
\begin{split}
  &\sqrt{\eps} \|\der_{111}v\|_{L^2(\Om_L\cap\{x_1>\frac{l_s}{4}\})}
  +\|\der_{12}v\|_{L^2(\Gamex)}
  +\|D\der_1v\|_{L^2(\Om_L\cap\{x_1>\frac{\ls}{4}\})}
  \\
  &\le C\left(\|f_1\|_{H^1(\Om_L)}+\|f_2\|_{L^2(\Om_L)}\right).
\end{split}
\end{equation}
\end{lemma}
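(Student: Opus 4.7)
The plan is to differentiate the first equation of \eqref{bvp-sing-pert} in $x_1$, thereby producing an equation for $V := \der_1 v$ of the form $\eps \der_{111} V + \mathfrak{L}_1^P(V, W) = \mathcal{F}$ with $W := \der_1 w$ and an $L^2$-forcing $\mathcal{F}$ involving $\der_1 f_1$, derivatives of the coefficients paired against second derivatives of $v$, and lower-order terms in $w$. To localize away from the Dirichlet boundary at $\Gamen$, I would introduce a smooth cutoff $\chi = \chi(x_1)$ with $\chi \equiv 0$ on $[0, \ls/8]$, $\chi \equiv 1$ on $[\ls/4, L]$ and $\chi' \ge 0$, and test the differentiated equation against $\chi^2 \der_1 V = \chi^2 \der_{11} v$.

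Integrating over $\Om_L$ and performing integration by parts, all $\Gamex$-boundary terms associated with the principal $\eps$- and $a_{11}$-contributions vanish because $\der_1 V = \der_{11} v = 0$ there, while the lateral traces on $\Gamw$ disappear owing to $a_{12} = 0$ (Lemma \ref{lemma on L_1}(c)) and $\der_{12} v = 0$ (forced by $\der_2 v = 0$). The resulting identity produces the principal non-negative quantities
\begin{equation*}
\eps \int_{\Om_L} \chi^2 (\der_{111} v)^2 \, d\rx + \int_{\Om_L} \chi^2 \Bigl(-a + \frac{1}{2}\der_1 a_{11} + \der_2 a_{12}\Bigr)(\der_{11} v)^2 \, d\rx + \frac{1}{2}\int_{\Gamex}(\der_{12} v)^2 \, dx_2,
\end{equation*}
in which the middle integrand is bounded below by $\lambda_{L_*}/4$ uniformly on $\Om_L$ by Lemma \ref{corollary-coeff extension}(f) applied with $m = 0$, after further shrinking $\delta_1$ to absorb the $C^0$-perturbation of $(a_{ij}, a)$ from their background extensions. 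The cross-term $\eps \int \chi \chi' \der_{11} v\, \der_{111} v \, d\rx$ and the commutator $\int \chi \chi' (\der_{12} v)^2 \, d\rx$ are supported in the elliptic strip $(\ls/8, \ls/4)$, where Lemma \ref{lemma for pre H2 estimate of vm, part 1} with $t = \ls/16$ supplies a uniform $\|D\der_1 v\|_{L^2}$-bound; the coupling contributions through $b_1 W$, $b_0 W$ and $w$ are controlled by Cauchy-Schwarz together with $\|w\|_{H^2} \le C(\|f_1\|_{L^2} + \|f_2\|_{L^2})$ from Lemma \ref{lemma-a priori H2 estimate of wm}.

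This argument yields $\sqrt{\eps} \|\der_{111} v\|_{L^2(\{x_1 > \ls/4\})}$, $\|\der_{11} v\|_{L^2(\{x_1 > \ls/4\})}$ and $\|\der_{12} v\|_{L^2(\Gamex)}$. To complete \eqref{viscous-estimate-intermediate2} with the interior $\|\der_{12} v\|_{L^2(\{x_1 > \ls/4\})}$-bound, I would run an analogous argument after differentiating the viscous equation in $x_2$ instead of $x_1$ and testing against $\chi^2 V_2 = \chi^2 \der_2 v$, where $V_2$ now satisfies the Dirichlet condition $V_2|_{\Gamw} = 0$ as well as $\der_{11} V_2|_{\Gamex} = \der_2(\der_{11} v)|_{\Gamex} = 0$; these compatible traces cancel the boundary terms analogously, and the required coercivity is supplied by Lemma \ref{corollary-coeff extension}(f) with $m = 1$.

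The main obstacle, in analogy with the $H^1$-level energy identity of Lemma \ref{proposition-H1-apriori-estimate}, is the coupling through $b_1, b_0$ in $\mathfrak{L}_1^P$ and $c_1 \der_1 v$ in $\mathfrak{L}_2$, which has no obvious cancellation. At this higher regularity level, however, the $H^2$-bound for $w$ from Lemma \ref{lemma-a priori H2 estimate of wm} effectively decouples the system, so that the $W$- and $w$-terms enter only as lower-order forcings and do not compete with the coercivity furnished by Lemma \ref{corollary-coeff extension}(f); consequently no strengthening of the almost-sonic condition on $(J, L)$ from Proposition \ref{proposition-H1-apriori-estimate} is needed at this step.
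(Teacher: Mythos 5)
Your Step 1 is essentially the paper's, but carries a sign error and stops short of a closed estimate. Differentiating the viscous equation in $x_1$ and pairing against $\chi^2\der_{11}v$ is equivalent (up to commutator terms carried by $\chi\chi'$ and lower-order) to the paper's pairing of the undifferentiated equation against $\zeta^2\der_{111}v$; however, if you redo the integration by parts the quantity that emerges is $\eps\int\chi^2(\der_{111}v)^2 + \int\chi^2\bigl(-a-\tfrac12\der_1 a_{11}+\der_2 a_{12}\bigr)(\der_{11}v)^2 + \tfrac12\int_{\Gamex}(\der_{12}v)^2$, with $-\tfrac12\der_1 a_{11}$, not $+\tfrac12\der_1 a_{11}$; the relevant case of Lemma \ref{corollary-coeff extension}(f) is therefore $m=1$, not $m=0$. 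That is fixable. The serious gap is that this identity, in combination with the estimates of the commutator pieces, leaves a term of the form $C\delta_1\int\chi^2(\der_{12}v)^2$ on the unfavorable side (it is produced by the $\der_1 a_{12}\,\der_{11}v\,\der_{12}v$ contribution and its relatives), and nothing in your Step 1 controls $\int\chi^2(\der_{12}v)^2$: the boundary integral $\int_{\Gamex}(\der_{12}v)^2$ is not an interior bound, so the estimate does not close by itself.

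The missing ingredient is the paper's second pass: one re-tests the \emph{undifferentiated} viscous equation against $\zeta^2\der_{11}v$, rewrites $a_{11}\der_{11}v = \mu\der_{11}v + (a_{11}-\mu)\der_{11}v$ for a small fixed constant $\mu>0$ for which the matrix $\begin{pmatrix}\mu & a_{12}\\ a_{12} & 1\end{pmatrix}\ge\frac{\mu}{2}\mathbb{I}_2$ everywhere (possible since $\|a_{12}\|_{C^0}$ is small), pushes $(\mu-a_{11})\der_{11}v$ to the right-hand side, and extracts $\frac{\mu}{2}\int\zeta^2|D\der_1v|^2$ from the coercive $\mu$-elliptic part after two integrations by parts; the indefinite $(\mu-a_{11})(\der_{11}v)^2\zeta^2$ term on the right is exactly what Step 1 already controls. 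This is what gives the interior $\|\der_{12}v\|_{L^2(\{x_1>\ls/4\})}$ and allows the dangerous $\delta_1\int(\der_{12}v)^2\zeta^2$ to be absorbed after shrinking $\delta_1$. Your substitute — differentiating in $x_2$ and pairing against $\chi^2\der_2 v$ — does not produce a sign-definite quantity in $\der_{12}v$: pairing against the first derivative $\der_2 v$ yields a mismatch of orders, and even the natural correction (pairing against $\chi^2\der_{12}v$) generates a boundary term $\tfrac12\int_{\Gamex}a_{11}(\der_{12}v)^2$ with $a_{11}<0$ on $\Gamex$, which has the wrong sign and defeats the argument. Your final paragraph about decoupling via the $H^2$-bound for $w$ from Lemma \ref{lemma-a priori H2 estimate of wm}, and the observation that the almost-sonic hypothesis on $(J,L)$ is not re-invoked at this step, are both correct.
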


Once Lemmas \ref{lemma:wp of singular pert prob-main}--\ref{lemma for pre H2 estimate of vm, part 2} are proved, then we apply these lemmas and the method of Galerkin's approximations to prove the unique existence of a {\emph{weak solution}} to the boundary value problem \eqref{lbvp-main general}.

\begin{definition}
\label{definition of weak solution}
For $v,w,V,W\in H^1(\Om_L)$, define two bilinear operators $\mcl{B}_1^P$ and $\mcl{B}_2$ by
\begin{align*}
\mcl{B}_1^P[(v,w),V]:=&
\int_{\Om_L}-\left(a_{11}\der_1v\der_1V+a_{12}(\der_1v\der_2V
+\der_2v\der_1V)+\der_2v\der_2V+a\der_1vV\right)\\
&\phantom{aa}-(\der_1a_{11}\der_1v+\der_2a_{12}\der_1v+\der_1a_{12}\der_2 v)V+(b_1\der_1 w+b_0w)V\,d\rx,\\
\mcl{B}_2[(v,w),W]:=&-\int_{\Om_L} \nabla w\cdot \nabla W+(\frac{1}{\gam S_0\bar{\rho}^{\gam-1}}w-\frac{\bar u_1}{\gam S_0\bar{\rho}^{\gam-1}}\der_1 v)W\,d\rx.
\end{align*}

It is said that $(v,w)\in H^1(\Om_L)\times H^1(\Om_L)$ is {\emph{a weak solution}} to the boundary value problem \eqref{lbvp-main general} if it satisfies
\begin{equation}
\label{weak-viscous-eqns}
    \begin{cases}
 \mcl{B}_1^P[(v,w),V]=\int_{\Om_L} f_1 V\,d\rx,\\
\mcl{B}_2[(v,w),W]=\int_{\Om_L} f_2 W\,d\rx
\end{cases}
\end{equation}
  for any test functions $V, W\in C^{\infty}(\ol{\Om_L})$ with $V$ vanishing near $\Gamen\cup \Gamex$, and $W$ vanishing near $\Gamex$.
\end{definition}

\begin{proposition}
\label{proposition-wp of bvp with approx coeff}
Assume that the background solution $(\bar u_1, \bar E)$ and the nozzle length $L$ are fixed to satisfy all the conditions stated in Lemma \ref{proposition-H1-apriori-estimate}. For the constant $\delta_1>0$ given in Lemma \ref{lemma for pre H2 estimate of vm, part 2}, suppose that the condition
\begin{equation}\label{condition for r-final}
  \max\{r_2, r_3\}\le \delta_1
\end{equation}
holds.
Then, for any given $P=(\tphi, \tpsi, \tPsi)\in \iterseta$, the associated linear boundary value problem \eqref{lbvp-main general} has a unique weak solution $(v,w)\in H^1(\Om_L)\times H^2(\Om_L)$ in the sense of Definition \ref{definition of weak solution}. And, the weak solution satisfies the estimate
\begin{equation}
\label{estimate 1 of v and w}
\|v\|_{H^1(\Om_L)}+\|w\|_{H^2(\Om_L)}\le C\left(\|f_1\|_{L^2(\Om_L)}+\|f_2\|_{H^1(\Om_L)}\right)
\end{equation}
for a constant $C>0$ fixed depending only on the data.
Furthermore, for any constant $r\in(0, L)$, one can fix a constant $C_r>0$ depending only on the data and $r$ to satisfy the estimate
\begin{equation}
\label{estimate 2 of v and w}
\begin{split}
&\|v\|_{H^2(\Om_L\cap\{x_1<L-r\})}
+\|w\|_{H^3(\Om_L\cap\{x_1<L-r\})}\\
&\le C_r\left(\|f_1\|_{H^1(\Om_L)}+\|f_2\|_{H^1(\Om_L)}\right).
\end{split}
\end{equation}
Therefore, $(v,w)$ is a strong solution to the boundary value problem \eqref{lbvp-main general}.
\end{proposition}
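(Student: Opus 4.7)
The plan is to construct the weak solution to \eqref{lbvp-main general} as the weak $H^1 \times H^2$-limit, as $\eps \to 0+$, of strong solutions $(v_\eps, w_\eps)$ to the singularly perturbed Problem~\ref{problem:singular perturbation}; the uniform-in-$\eps$ estimates of Lemmas~\ref{lemma:wp of singular pert prob-main}--\ref{lemma for pre H2 estimate of vm, part 2} are tailored precisely so that this limit identifies the correct weak form.

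First, for fixed $\eps \in (0, \bar{\eps}]$ I would construct $(v_\eps, w_\eps)$ via Galerkin's method. I would pick a countable basis of smooth functions on $\Om_L$ compatible with the boundary conditions in~\eqref{bvp-sing-pert} (for instance, tensor products of one-dimensional eigenfunctions for $-\der_{11}$ on $(0,L)$ with the clamped--natural conditions $v = \der_1 v = 0$ at $x_1=0$ and $\der_{11} v = 0$ at $x_1 = L$ in the first factor, Neumann conditions in the $x_2$-factor, and similarly for $w$). The finite-dimensional Galerkin problems are solvable by the Fredholm alternative once a uniform (in the Galerkin dimension $N$) energy estimate is available; because $(v_\eps^N, w_\eps^N)$ is automatically smooth, I can mimic the proofs of Lemmas~\ref{lemma:wp of singular pert prob-main} and~\ref{lemma-a priori H2 estimate of wm} at the Galerkin level to obtain a uniform-in-$N$ bound. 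Passing to the Galerkin limit and bootstrapping using the fact that the $v_\eps$-equation is third-order non-degenerate in $x_1$ for $\eps > 0$ yields smoothness of $(v_\eps, w_\eps)$ sufficient to invoke Lemmas~\ref{lemma:wp of singular pert prob-main}--\ref{lemma for pre H2 estimate of vm, part 2} \emph{at the $\eps$-level}.

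Second, the uniform bounds
\[
\|v_\eps\|_{H^1(\Om_L)} + \|w_\eps\|_{H^2(\Om_L)} + \sqrt{\eps}\,\|\der_{11} v_\eps\|_{L^2(\Om_L)} \le C\bigl(\|f_1\|_{L^2} + \|f_2\|_{L^2}\bigr)
\]
let me extract a subsequence converging weakly in $H^1 \times H^2$ and strongly in $L^2 \times H^1$. To pass to the limit in Definition~\ref{definition of weak solution}, the only term requiring care is $\eps \int \der_{111} v_\eps \, V \, d\rx$; integrating by parts once gives $-\eps \int \der_{11} v_\eps \, \der_1 V \, d\rx$ (the boundary contributions vanish thanks to the $\eps$-boundary conditions in~\eqref{bvp-sing-pert} and the support assumptions on $V$ near $\Gamen \cup \Gamex$), and this is bounded by $\sqrt{\eps}\cdot(\sqrt{\eps}\,\|\der_{11} v_\eps\|_{L^2})\,\|\der_1 V\|_{L^2} = O(\sqrt{\eps}) \to 0$. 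All other terms are linear with fixed coefficients and pass to the limit by weak convergence. Weak lower semicontinuity of norms then gives~\eqref{estimate 1 of v and w}, and uniqueness follows by applying Lemma~\ref{proposition-H1-apriori-estimate} to the difference of two weak solutions after a standard mollification in $x_1$ to justify the test.

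Third, the local estimate~\eqref{estimate 2 of v and w} is obtained by combining Lemmas~\ref{lemma for pre H2 estimate of vm, part 1} and~\ref{lemma for pre H2 estimate of vm, part 2} at the $\eps$-level: the first delivers a uniform-in-$\eps$ $H^2$-bound of $v_\eps$ on $\{2t < x_1 < \ls/2 - 2t\}$ for arbitrarily small $t$, while the second provides a uniform $H^2$-bound on $\{x_1 > \ls/4\}$ up to $\Gamex$; a finite cover of $\{x_1 < L - r\}$ by sets of these two types, summing of local bounds and passage to the weak $H^2$-limit give $v \in H^2(\Om_L \cap \{x_1 < L - r\})$. Once $\der_1 v \in H^1_{\mathrm{loc}}$, the equation $\mfrak{L}_2(v, w) = f_2$ is strictly elliptic with $H^1$ right-hand side, and interior elliptic regularity promotes $w$ to $H^3_{\mathrm{loc}}$. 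The main obstacle is the passage $\eps \to 0+$: because the limiting operator $\mfrak{L}_1^P$ is elliptic--hyperbolic with a codimension-one degeneracy, no direct compactness yields the weak solution---the argument works only because the singular-perturbation term lives on the top-order derivative and its contribution is $O(\sqrt{\eps})$ after one integration by parts, which in turn relies crucially on the uniform estimate $\sqrt{\eps}\,\|\der_{11} v_\eps\|_{L^2} \le C$ already recorded in Lemma~\ref{lemma:wp of singular pert prob-main}.
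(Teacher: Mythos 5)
The overall strategy---construct $(v_\eps,w_\eps)$ for the singular perturbation problem via Galerkin, use the $\eps$-uniform estimates of Lemmas \ref{lemma:wp of singular pert prob-main}--\ref{lemma for pre H2 estimate of vm, part 2} to pass to the limit $\eps\to 0+$, and bootstrap with local elliptic regularity---is the same as the paper's. Your treatment of the singular-perturbation term is correct (one integration by parts gives an $O(\sqrt\eps)$ bound instead of the paper's $O(\eps)$ after two; both suffice), and the limiting argument in Step two is sound.

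There is, however, a genuine gap in the Galerkin construction. You propose a tensor basis in both $x_1$ and $x_2$, using ``one-dimensional eigenfunctions for $-\der_{11}$ on $(0,L)$ with the clamped--natural conditions $v=\der_1 v=0$ at $x_1=0$ and $\der_{11}v=0$ at $x_1=L$.'' That is three boundary conditions on a second-order operator---the eigenvalue problem is over-determined and has no nontrivial solutions, so there is no such basis; the three conditions reflect the third-order character of $\eps\der_{111}+\mfrak{L}_1^P$ in $x_1$ and would require a (non-self-adjoint) third-order eigenvalue problem. More fundamentally, even if one could cook up a 2D basis, the key a priori estimate of Lemma \ref{lemma:wp of singular pert prob-main} is obtained by testing the equation against $G(x_1)\der_1 v$ for the carefully chosen weight $G=\bar\rho^\eta$. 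For Galerkin approximations this test is legitimate only if $G\der_1 v_m^N$ lies in the Galerkin trial space. The paper's semi-discrete scheme---expanding only in $x_2$ via the Neumann eigenfunctions $\{\cos k\pi x_2\}$ and solving the resulting ODE system in $x_1$ exactly---is designed precisely so that $G(x_1)\der_1 v_m=G(x_1)\sum\vartheta_j'(x_1)\eta_j(x_2)$ remains in the $x_2$-span for each $x_1$, and so the energy identity of Lemma \ref{lemma:wp of singular pert prob-main} carries over verbatim. With a full 2D basis, $G\xi_j'(x_1)$ is in general not a finite linear combination of the $x_1$-basis functions $\{\xi_l\}_{l\le N}$, so the Galerkin orthogonality does not justify the test, and the uniform-in-$N$ estimate cannot be ``mimicked.'' This is why the paper goes to the trouble of Step 5: the semi-discrete scheme requires $P,f_1,f_2$ to be smooth in $x_1$ (for the Fredholm argument on the compact integral operator $\mfrak K$), and one must first mollify $P$ in $x_1$ and then pass to a second limit $\tau_n\to 0$---a step absent from your outline. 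Separately, your local $H^2$ estimate for $v$ in Step three only covers $D\der_1 v$ from Lemmas \ref{lemma for pre H2 estimate of vm, part 1}--\ref{lemma for pre H2 estimate of vm, part 2}; $\der_{22}v$ is recovered from the equation (using that $a_{22}^P\equiv 1$) or from the paper's artificial-ellipticity trick with $\mathbb{A}^{(\beta_0)}$, which your sketch omits.
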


\subsubsection{Proofs of Lemmas \ref{lemma:wp of singular pert prob-main}--\ref{lemma for pre H2 estimate of vm, part 2}}

\begin{proof}[Proof of Lemma \ref{lemma:wp of singular pert prob-main}]
Fix a constant $\eps>0$, and suppose that $(v,w)$ is a smooth solution to Problem \ref{problem:singular perturbation}. For a function $G=G(x_1)$, 
\begin{equation}
\label{eqn-s1}
  \eps\int_{\Om_L} G\der_{111}v\der_1 v\,d\rx+\mfrak{J}(v,w)=\int_{\Om_L} Gf_1\der_1 v+f_2 w\,d\rx
\end{equation}
for
\begin{equation*}
  \mfrak{J}(v,w)=\int_{\Om_L}G\der_1 v\mfrak{L}_1(v,w)+w\mfrak{L}_2(v,w)\,d\rx.
\end{equation*}
Integrating by parts twice with using the boundary conditions $\der_1 v=0$ on $\Gamen$ and $\der_{11}v=0$ on $\Gamex$ for $v$ in \eqref{bvp-sing-pert} yields
\begin{equation*}
  \int_{\Om_L}G\der_{111}v\der_1 v\,d\rx
  =-\int_{\Gamex}\frac{G'}{2}(\der_1 v)^2\,dx_2+\int_{\Om_L} \frac{G''}{2}(\der_1 v)^2-G(\der_{11}v)^2\,d\rx.
\end{equation*}
If we fix $G(x_1)$ as \eqref{definition of G} for $\eta$ given by \eqref{choice of eta}, then one has
\begin{equation}
\label{eqn-s2}
  \eps\int_{\Om_L} G\der_{111}v\der_1 v\,d\rx\le -k_0\eps \int_{\Om_L}(\der_{11}v)^2\,d\rx+k_1\eps\int_{\Gamex}(\der_1 v)^2\,d\rx+k_2\eps\int_{\Om_L}(\der_1 v)^2\,d\rx
\end{equation}
for
\begin{equation*}
  k_0=\inf_{\Om_L}G,\quad k_1=\frac 12\sup_{\Om_L}|G'|,\quad k_2=\frac 12\sup_{\Om_L}|G''|.
\end{equation*}
Note that the constants $k_0$, $k_1$ and $k_2$ are fixed depending only on the data. In particular, $k_0$ is a strictly positive constant.

Next, we use the estimates \eqref{lower bound of -(I1+I2)} and \eqref{weak estimate of approx problem remainder}, given in the proof of Lemma \ref{proposition-H1-apriori-estimate}, to get
\begin{equation}
\label{eqn-s3}
\begin{split}
  \mfrak{J}(v,w)\le
  &-k_3\int_{\Om_L}|Dv|^2+|Dw|^2+w^2\,d\rx-k_4\left(\int_{\Gamen} (\der_1 v)^2\,dx_2+\int_{\Gamex} |Dv|^2\,dx_2\right)
  \end{split}
  \end{equation}
  for
  \begin{equation*}
    k_3=\lambda_0 -C_*(r_2+r_3),\quad k_4=\lambda_{\rm bd} -C_*(r_2+r_3).
  \end{equation*}
  Then the estimate \eqref{a priori estimate1 of vm and wm} follows from \eqref{eqn-s1}--\eqref{eqn-s3} and the Cauchy-Schwarz inequality.

  \end{proof}

\begin{proof}[Proof of Lemma \ref{lemma-a priori H2 estimate of wm}]

According to Definition \ref{definition:approx coeff and fs}, the equation $\mfrak{L}_2(v,w)=f_2$ can be rewritten as
\begin{equation*}
  \Delta w-{c}_0w=f_2+{c}_1\der_1 v\quad\tx{in $\Om_L$}.
\end{equation*}
Note that the coefficients ${c}_0$ and ${c}_1$ are smooth and independent of $x_2$.
By Lemma \ref{lemma:wp of singular pert prob-main}, one has
\begin{equation*}
  \|f_2+{c}_1\der_1 v\|_{L^2(\Om_L)}\le C\left(\|f_1\|_{L^2(\Om_L)}+\|f_2\|_{L^2(\Om_L)}\right).
\end{equation*}
Hence one can easily prove Lemma \ref{lemma-a priori H2 estimate of wm} by applying \cite[Theorems 8.8 and 8.12]{GilbargTrudinger} and the method of reflections.

\end{proof}

Rewrite the equation $\displaystyle{\eps\der_{111}v+\mfrak{L}_1^P(v,w)=f_1}$ stated in \eqref{bvp-sing-pert} as
\begin{equation}
\label{sing pert eqn}
  \mcl{M}_{\eps}v=F_1\quad\tx{in $\Om_L$}
\end{equation}
for
\begin{equation}
\label{definition-bootstrap}
  \begin{split}
  &\mcl{M}_{\eps}v:=\eps\der_{111}v+a_{11}\der_{11}v+2a_{12}\der_{12}v+\der_{22}v+a\der_1 v,\\
\tx{and}\quad  &F_1:=f_1-b_1\der_1w-b_0w.
  \end{split}
\end{equation}

\begin{proof}[Proof of Lemma \ref{lemma for pre H2 estimate of vm, part 1}] In this proof, the ellipticity of the operator $\mcl{L}_1^P$ in $\Om_L\cap\{x_1\le\frac{\ls}{2}\}$ plays an important role.

For a fixed constant $t\in(0, \frac{l_s}{8})$, define a cut-off function $\chi\in C^{\infty}(\R)$ satisfying the following properties:
\begin{equation}
\label{property of cut-off function in singular perturb.}
  \chi(x_1)=\begin{cases}
  1\quad&\mbox{for $x_1\in [2t, \frac{\ls}{2}-2t]$},\\
  0\quad&\mbox{for $x_1\in (-\infty, t]\cup[\frac{\ls}{2}-t, \infty)$},
  \end{cases}\quad 0\le \chi\le 1\,\,\tx{on $\R$}.
\end{equation}
Such a function $\chi$ can be fixed so that, for each $k\in \mathbb{N}$, it holds that
\begin{equation*}
\|\chi\|_{C^k(\R)}\le C_k\left(1+\frac{1}{t^k}\right)
\end{equation*}
for some constant $C_k>0$ depending only on $k$.
Note that
\begin{equation}
\label{integral for vm with viscosity1}
  \int_{\Om_L} \chi^2\der_{11}v \mcl{M}_{\eps}v\,d\rx=\int_{\Om_L} F_1\chi^2\der_{11}v\,d\rx.
\end{equation}
By integrating by parts  with using the boundary conditions
$\der_2 v=0$ on $\Gamw$, and $\chi=0$ on $\Gamen\cup\Gamex$, we obtain that
\begin{equation*}
I_1+I_2=\int_{\Om_L} F_1\chi^2\der_{11}v\,d\rx
\end{equation*}
for
\begin{equation*}
  \begin{split}
  I_1&:=\int_{\Om_L} \left(a_{11}(\der_{11}v)^2+2a_{12}\der_{11}v\der_{12}v+(\der_{12}v)^2\right)\chi^2\,d\rx,\\
  I_2&:=\int_{\Om_L}-\eps\chi\chi' (\der_{11}v)^2+a\chi^2\der_1 v\der_{11}v-2\chi\chi'\der_2v\der_{12}v  \,d\rx.
  \end{split}
\end{equation*}
It follows from the statement (${\tx g}_2$) of Lemma  \ref{lemma on L_1} that there exists a constant $\kappa_0>0$ to satisfy
\begin{equation*}
  I_1\ge \kappa_0\int_{\Om_L}|D\der_1 v|^2\chi^2\,d\rx.
\end{equation*}
By applying Lemma \ref{lemma:wp of singular pert prob-main}, the term $I_2$ can be estiamtes as
\begin{equation*}
  |I_2|\le \frac{\kappa_0}{4}\int_{\Om_L} |D\der_1 v|^2\chi^2\,d\rx+C_t\left(\|f_1\|_{L^2(\Om_L)}+\|f_2\|_{L^2(\Om_L)}\right)^2
\end{equation*}
for some constant $C_t>0$ fixed depending only on the data and $t$. So we obtain
\begin{equation*}
{\tx{LHS of \eqref{integral for vm with viscosity1}}}\ge
\frac 34\kappa_0\int_{\Om_L} |D\der_1 v|^2\chi^2\,d\rx-C_t\left(\|f_1\|_{L^2(\Om_L)}+\|f_2\|_{L^2(\Om_L)}\right)^2.
\end{equation*}
Applying Lemma \ref{lemma:wp of singular pert prob-main} also yields
\begin{equation*}
  \left|\int_{\Om_L} F_{1}\der_{11}v \chi^2\, d\rx\right|\le \frac{\kappa_0}{4}\int_{\Om_L} |D\der_1 v|^2\chi^2\,d\rx+C\left(\|f_1\|_{L^2(\Om_L)}+\|f_2\|_{L^2(\Om_L)}\right)^2.
\end{equation*}
Finally, the estimate \eqref{viscous-estimate-intermediate1} is obtained by combining the two previous estimates.
\end{proof}

\begin{proof}[Proof of Lemma \ref{lemma for pre H2 estimate of vm, part 2}]
{\textbf{Step 1.}} Let us fix a smooth cut-off function $\zeta(x_1)$ satisfying the following properties:
\begin{equation*}
 \zeta(x_1)=\begin{cases}
  0\quad&\mbox{for}\quad x_1\le \frac{l_s}{8},\\
  1\quad&\mbox{for}\quad x_1\ge \frac{\ls}{4},
  \end{cases}\quad
  0\le \zeta\le 1,\quad
  \tx{and} \quad \zeta'\ge 0\quad\tx{on $\R$}.
\end{equation*}
It is clear that, for each $k\in \mathbb{N}$, there exists a constant $C_k>0$ depending only on $k$ to satisfy
\begin{equation*}
\|\zeta\|_{C^k(\R)}\le C_k\left(1+\frac{1}{\ls^k}\right).
\end{equation*}
Multiplying the both sides of \eqref{sing pert eqn} by $\zeta^2 \der_{111}v$, we have
\begin{equation}
\label{estimate-H2-viscous}
  \int_{\Om_L} \mcl{M}_{\eps}v
  \der_{111}v\,d\rx
  =\int_{\Om_L} F_{1}\zeta^2 \der_{111}v \,d\rx.
\end{equation}
By an integration by parts with using the conditions
\begin{equation*}
  \zeta=0\,\,\tx{on $\Gamen$},\quad \der_{11}v=0\,\,\tx{on $\Gamex$},\quad a_{12}=0\,\,\tx{on $\Gamw$},
\end{equation*}
we can directly check that
\begin{equation}
\label{estimate3}
{\tx{LHS of \eqref{estimate-H2-viscous}}}
=I_1+I_2+I_3
\end{equation}
for
\begin{align*}
&I_1=\int_{\Om_L}\eps (\der_{111}v)^2\zeta^2
-\left(\frac 12\der_1a_{11}+a-2\der_2 a_{12}\right)(\der_{11}v)^2\zeta^2\,d\rx+\int_{\Gamex} \frac{(\der_{12}v)^2}{2}\,dx_2,\\
&I_2=-\int_{\Om_L}(2\der_1 a_{12}\der_{11}v\der_{12}v+\der_1 a\der_1v\der_{11}v)\zeta^2\,d\rx,\\
&I_3=-\int_{\Om_L}\left(a_{11}(\der_{11}v)^2+4a_{12}\der_{11}v\der_{12}v+
(\der_{12}v)^2+2\der_{11}v\der_{22}v+2a\der_1v\der_{11}v\right)\zeta\zeta'\,d\rx.
\end{align*}
From Lemma \ref{corollary-coeff extension}(f) with $m=1$, it follows that
\begin{equation}
\label{estimate4}
  I_1\ge \int_{\Om_L}\left(\eps (\der_{111}v)^2+\frac{\lambda_{L_*}}{4} (\der_{11}v)^2\right)\zeta^2 \,d\rx+\int_{\Gamex} \frac{(\der_{12}v)^2}{2}\,dx_2.
\end{equation}
By applying Lemma \ref{corollary-coeff extension}(d), we can estimate $I_2$ as
\begin{equation*}
  |I_2|\le C_*\int_{\Om_L}\left((r_2+r_3+\epsilon)(\der_{11}v)^2+(r_2+r_3)(\der_{12}v)^2
  +\frac{1}{\epsilon}(\der_1 v)^2\right)\zeta^2\,d\rx
\end{equation*}
for any constant $\epsilon>0$. Since $\displaystyle{{\rm spt}\zeta'\subset \left[\frac{\ls}{8}, \frac{\ls}{4}\right]}$, we can apply Lemma \ref{lemma for pre H2 estimate of vm, part 1} to estimate $I_3$ as
\begin{equation}
\label{estimate5}
  |I_3|\le C\left(\|f_1\|_{L^2(\Om_L)}+\|f_2\|_{L^2(\Om_L)}\right)^2.
\end{equation}
If two constants $\delta_1$ and $\epsilon$ are chosen to satisfy
\begin{equation}
\label{condition for delta-1}
  C_*\max\{\delta_1,\epsilon\}\le \frac{\lambda_{L_*}}{16},
\end{equation}
and the condition \eqref{condition for r-coeff ext} holds, then we get
\begin{equation}
\label{final estimate in step 2}
\begin{split}
  {\tx{LHS of \eqref{estimate-H2-viscous}}}
  \ge &
  \int_{\Om_L}\left(\eps (\der_{111}v)^2+\frac{\lambda_{L_*}}{16} (\der_{11}v)^2-C_*\delta_1(\der_{12}v)^2\right)\zeta^2\,d\rx\\
  &+\int_{\Gamex} \frac{(\der_{12}v)^2}{2}\,dx_2-C\left(\|f_1\|_{L^2(\Om_L)}+\|f_2\|_{L^2(\Om_L)}\right)^2.
\end{split}
\end{equation}

In order to estimate the right-hand side of \eqref{estimate-H2-viscous}, we integrate by parts with using the conditions $\zeta=0$ on $\Gamen$, and $\der_{11}v=0$ on $\Gamex$. Then, we get
\begin{equation*}
 \tx{RHS of \eqref{estimate-H2-viscous}}=-\int_{\Om_L} \der_1F_1\der_{11}v\zeta^2+2F_1\der_{11}v\zeta\zeta'\,d\rx.
\end{equation*}
So we can apply Lemmas \ref{lemma-a priori H2 estimate of wm} and \ref{lemma for pre H2 estimate of vm, part 1} to obtain the estimate
\begin{equation*}
  |{\tx{RHS of \eqref{estimate-H2-viscous}}}|\le \frac{\lambda_{L_*}}{32} \int_{\Om_L}(\der_{11}v)^2\zeta^2\,d\rx
  +C\left(\|f_1\|_{H^1(\Om_L)}+\|f_2\|_{L^2(\Om_L)}\right)^2.
\end{equation*}
And, we combine this estimate with \eqref{final estimate in step 2} to get
\begin{equation}
\label{pre-H2 estimate}
\begin{split}
  &\int_{\Om_L}\left(\eps (\der_{111}v)^2+\frac{\lambda_{L_*}}{32} (\der_{11}v)^2\right)\zeta^2\,d\rx+\int_{\Gamex} \frac{(\der_{12}v)^2}{2}\,dx_2\\
&  \le
  C_*\delta_1\int_{\Om_L}(\der_{12}v)^2\zeta^2\,d\rx+
  C\left(\|f_1\|_{H^1(\Om_L)}+\|f_2\|_{L^2(\Om_L)}\right)^2.
  \end{split}
\end{equation}

{\textbf{Step 2.}} Since $\|a_{12}\|_{C^0(\ol{\Om_L})}$ is small by Lemma \ref{corollary-coeff extension}(d), we can fix a constant $\mu>0$ so that the matrix $\displaystyle{{\mathbb{A}}:=\begin{pmatrix}\mu&a_{12}\\ a_{12}&1\end{pmatrix}}$ satisfies
\begin{equation}
\label{positivity of A}
  \mathbb{A}\ge \frac{\mu}{2}\mathbb{I}_2\quad\tx{in $\ol{\Om_L}$.}
\end{equation}
Rewrite
$\displaystyle{
  \int_{\Om_L} \zeta^2\der_{11}v \mcl{M}_{\eps}v\,d\rx=\int_{\Om_L} F_1\zeta^2\der_{11}v\,d\rx}$
as
\begin{equation}
\label{integral-for-visous-sol}
\begin{split}
 & \int_{\Om_L} (\eps \der_{111}v+
 \mu\der_{11}v+2a_{12}\der_{12}v+\der_{22}v+a\der_1 v)\der_{11}v\zeta^2\,d\rx\\
& =\int_{\Om_L} (F_1+(\mu-a_{11})\der_{11}v)\der_{11}v\zeta^2\,d\rx.
 \end{split}
\end{equation}
Integrating by parts twice gives
\begin{equation*}
\tx{LHS of \eqref{integral-for-visous-sol}}=J_1+J_2,
\end{equation*}
for
\begin{equation*}
  \begin{split}
  J_1&=\int_{\Om_L}\left(\mu(\der_{11}v)^2
  +2a_{12}\der_{12}v\der_{11}v+(\der_{12}v)^2\right)\zeta^2\,d\rx,\\
  J_2&=\int_{\Om_L} \left(2\der_2 v\der_{12}v-\eps (\der_{11}v)^2\right)\zeta\zeta'\,d\rx-\int_{\Gamex} \der_2 v\der_{12}v\,dx_2.
  \end{split}
\end{equation*}
Then, it directly follows from \eqref{positivity of A} that
\begin{equation*}
  J_1\ge \frac{\mu}{2}\int_{\Om_L}|D\der_1 v|^2\zeta^2\,d\rx.
\end{equation*}
By Lemma \ref{lemma:wp of singular pert prob-main} and the estimate \eqref{pre-H2 estimate}, we have
\begin{align*}
&|J_2|\le C\left(\|f_1\|_{L^2(\Om_L)}+\|f_2\|_{L^2(\Om_L)}\right)^2,\\
\tx{and}\quad&|{\tx{RHS of \eqref{integral-for-visous-sol}}}|\le C\left(\delta_1\int_{\Om_L}(\der_{12}v)^2\zeta^2\,d\rx
  +\left(\|f_1\|_{L^2(\Om_L)}+\|f_2\|_{L^2(\Om_L)}\right)^2\right).
\end{align*}
Now we combine all the estimates given in the above to get
\begin{equation*}
 \frac{\mu}{2}\int_{\Om_L}|D\der_1 v|^2\zeta^2\,d\rx\le  \hat{C}\left(\delta_1\int_{\Om_L}(\der_{12}v)^2\zeta^2\,d\rx
  +\left(\|f_1\|_{L^2(\Om_L)}+\|f_2\|_{L^2(\Om_L)}\right)^2\right)
\end{equation*}
for some constant $\hat{C}>0$ fixed depending only on the data.
Reducing $\delta_1$ to satisfy \eqref{condition for delta-1} and the inequality
$$
\hat C\delta_1\le \frac{\mu}{4}
$$
yields the estimate \eqref{viscous-estimate-intermediate2}. Hence the proof of the lemma is completed.
\end{proof}

\subsubsection{Proof of Proposition \ref{proposition-wp of bvp with approx coeff}}
The main idea is to find a solution  $\{(v^{(\eps)}, w^{(\eps)})\}_{\eps>0}$ to Problem \ref{problem:singular perturbation} for any small $\eps>0$, then take the limit as $\eps$ tends to $0+$. Most importantly, we show that the limit is a weak solution to the boundary value problem \eqref{lbvp-main general} in the sense of Definition \ref{definition of weak solution}.
\medskip

{\textbf{Step 1.}} Fix $\eps\in(0,\bar{\eps}]$ and $P=(\tphi, \tpsi, \tPsi)\in \iterseta$. And, suppose that two constants $r_2$ and $r_3$ satisfy the condition \eqref{condition for r-coeff ext} for the constant $\delta_1$ given from Lemma \ref{lemma for pre H2 estimate of vm, part 2}. In addition, let us assume that
\begin{equation}\label{partial smoothness}
(\tphi, \tpsi, \tPsi, f_1, f_2)(\cdot, x_2)\in [C^{\infty}([0,L])]^5\quad\tx{for all $x_2\in[-1,1]$}.
\end{equation}
\smallskip

{\textbf{Step 2.}}{\emph{Galerkin's approximations}}: Define
\begin{equation*}
  \Gam:=\{x_2\in \R: |x_2|<1\},
\end{equation*}
and $\langle \cdot, \cdot\rangle$ to be the standard scalar product in $L^2(\Gam)$, that is,
\begin{equation*}
  \langle \xi, \eta \rangle:=\int_{\Gam} \xi(x_2)\eta(x_2)\,dx_2.
\end{equation*}
Next, consider an eigenvalue problem:
\begin{equation}
\label{evp-for galerkin}
  -\eta''=\lambda\eta\,\,\tx{on $\Gam$}, \quad
  \eta'=0\,\,\tx{on $\der\Gam=\{\pm 1\}$}.
\end{equation}
Let $\mfrak{E}:=\{\eta_k\}_{k=0}^{\infty}$ be the set of all eigenfunctions of \eqref{evp-for galerkin}. One can take the set $\mfrak{E}$ with satisfying the following properties:
\begin{itemize}
\item[(i)] For each $k=0,1,2,\cdots$, let $\lambda_k(\ge 0)$ be the eigenvalue associated with $\eta_k$. Then, it holds that
    \begin{equation*}
      0=\lambda_0<\lambda_1<\lambda_2<\cdots \rightarrow \infty.
    \end{equation*}
\item[(ii)] The set $\mfrak{E}$ forms the orthonormal basis of $L^2(\Gam)$.
\item[(iii)] The set $\mfrak{E}$ forms an orthogonal basis of $H^m(\Gam)$ for each of $m=1$ and 2.
\end{itemize}
In fact, $\mfrak{E}=\{\cos k\pi x_2:k=0,1,2\cdots\}$ is such a set.

Fix $m\in \mathbb{N}$. As an $m$-dimensional approximation of a solution $(v,w)$ to Problem \ref{problem:singular perturbation}, let us set
\begin{equation}
\label{definition of vm and wm}
\begin{split}
 v_m(x_1, x_2):=\sum_{j=0}^m \vartheta_j(x_1) \eta_j(x_2),\quad
 w_m(x_1, x_2):=\sum_{j=0}^m \Theta_j(x_1)\eta_j(x_2)
  \end{split}
\end{equation}
for $\rx=(x_1, x_2)\in \Om_L$.
 We shall determine $\vartheta_j$ and $\Theta_j$ for $j=0,1,\cdots, m$ so that $(v_m, w_m)$ solves the following problem:
\begin{equation}
\label{galerkin-eqns}
\begin{cases}
  \langle \eps \der_{111}v_m+\mfrak{L}_1^P(v_m, w_m), \eta_k\rangle=\langle f_1, \eta_k \rangle\\
  \langle \mfrak{L}_2(v_m, w_m), \eta_k\rangle=\langle f_2, \eta_k\rangle
  \end{cases}\quad\tx{for $0<x_1<L$}
\end{equation}
for all $k=0,1,...,m$,
\begin{equation}
\label{galerkin-bcs}
\begin{split}
  &\begin{cases}
  v_m=0, \quad \der_1 v_m=0\quad&\tx{on $\Gamen$},\\
\der_{11}v_m=0\quad&\tx{on $\Gamex$},
  \end{cases}\\
  &\begin{cases}
  \der_1 w_m=0\quad&\tx{on $\Gamen$},\\
  w_m=0\quad&\tx{on $\Gamex$}.
  \end{cases}
  \end{split}
\end{equation}

\begin{lemma}\label{lemma-wp of Galerking approx}
Assume that the background solution $(\bar u_1, \bar E)$ and the nozzle length $L$ are fixed to satisfy all the conditions stated in Lemma \ref{proposition-H1-apriori-estimate}. For the constant $\delta_1>0$ given in Lemma \ref{lemma for pre H2 estimate of vm, part 2}, suppose that the condition \eqref{condition for r-coeff ext} holds. And, assume that $\eps\in(0, \bar{\eps}]$ for the constant $\bar{\eps}>0$ given from Lemma \ref{lemma:wp of singular pert prob-main}. For a fixed $P=(\tphi, \tpsi, \tPsi)\in \iterseta$, assume that the condition \eqref{partial smoothness} is satisfied. Then, for each $m\in \mathbb{N}$, the problem of \eqref{galerkin-eqns} and \eqref{galerkin-bcs} has a unique smooth solution $(v_m, w_m)$ in the form of \eqref{definition of vm and wm}. Moreover, there exists a constant $C>0$ depending only on the data, and for each $t\in(0, \frac{\ls}{8}]$, there exists a constant $C_t>0$ depending only on the data and $t$ so that the following estimates hold:
\begin{equation}
\label{galerkin estimate-1}
\begin{split}
  &\sqrt{\eps}\|\der_{11}v_m\|_{L^2(\Om_L)}
  +\|\der_1v_m\|_{L^2(\Gamen)}+\|Dv_m\|_{L^2(\Gamex)}
  +\|v_m\|_{H^1(\Om_L)}
  \\
  &\le C\left(\|f_1\|_{L^2(\Om_L)}+\|f_2\|_{L^2(\Om_L)}\right),
  \end{split}
\end{equation}

\begin{equation}
\label{galerkin estimate-2}
\begin{split}
  \|w_m\|_{H^2(\Om_L)}\le C\left(\|f_1\|_{L^2(\Om_L)}+\|f_2\|_{L^2(\Om_L)}\right),
  \end{split}
\end{equation}

\begin{equation}
\label{galerkin estimate-3}
  \|D\der_1 v_m\|_{L^2(\Om_L\cap \{2t<x_1<\frac{l_s}{2}-2t\})}
  \le C_t\left(\|f_1\|_{L^2(\Om_L)}+\|f_2\|_{L^2(\Om_L)}\right),
\end{equation}

\begin{equation}
\label{galerkin estimate-4}
\begin{split}
  &\sqrt{\eps} \|\der_{111}v_m\|_{L^2(\Om_L\cap\{x_1>\frac{l_s}{4}\})}
  +\|\der_{12}v_m\|_{L^2(\Gamex)}
  +\|D\der_1v_m\|_{L^2(\Om_L\cap\{x_1>\frac{\ls}{4}\})}
  \\
  &\le C\left(\|f_1\|_{H^1(\Om_L)}+\|f_2\|_{L^2(\Om_L)}\right).
\end{split}
\end{equation}

\end{lemma}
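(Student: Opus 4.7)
The plan is to view \eqref{galerkin-eqns}--\eqref{galerkin-bcs} as a linear two-point boundary value problem for the coefficient vector $(\vartheta_0,\dots,\vartheta_m,\Theta_0,\dots,\Theta_m)$ on $[0,L]$, and to derive existence, uniqueness, and smoothness via the Fredholm alternative for linear ODE systems, using Galerkin analogues of the a priori estimates of Lemmas \ref{lemma:wp of singular pert prob-main}--\ref{lemma for pre H2 estimate of vm, part 2} to eliminate the homogeneous kernel.

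First I would substitute \eqref{definition of vm and wm} into \eqref{galerkin-eqns} and use the $L^2(\Gam)$-orthonormality of $\mfrak{E}$ to obtain a coupled system in which each $\vartheta_k$ satisfies a third-order ODE (from the contribution of $\eps\der_{111}v+\mfrak{L}_1^P$) and each $\Theta_k$ a second-order ODE (from $\mfrak{L}_2$), coupled through smooth matrices built from the Fourier coefficients of $(a^P_{ij},a^P,b^P_1,b^P_0,c_0,c_1)$. The boundary data \eqref{galerkin-bcs} translate into $\vartheta_k(0)=\vartheta_k'(0)=\vartheta_k''(L)=0$ and $\Theta_k'(0)=\Theta_k(L)=0$ for each $k$, which is exactly the right count of $5(m+1)$ scalar conditions for the associated $5(m+1)$-dimensional first-order linear system. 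Standard ODE theory then reduces solvability, uniqueness, and smoothness to triviality of the homogeneous solution space.

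The key point making the energy estimates transfer from the original problem to the Galerkin approximation is that $\der_1 v_m=\sum_{k\le m}\vartheta_k'\eta_k$ and $w_m$ themselves lie in $\mathrm{span}\{\eta_k\}_{k\le m}$. Multiplying the $k$-th equation of \eqref{galerkin-eqns} by $G(x_1)\vartheta_k'(x_1)$ respectively by $\Theta_k(x_1)$ and summing over $k$ gives the integral identities
\begin{equation*}
\int_{\Om_L} G\der_1 v_m\bigl(\eps\der_{111}v_m+\mfrak{L}_1^P(v_m,w_m)\bigr)\,d\rx=\int_{\Om_L} Gf_1\der_1 v_m\,d\rx,
\end{equation*}
\begin{equation*}
\int_{\Om_L} w_m\mfrak{L}_2(v_m,w_m)\,d\rx=\int_{\Om_L} f_2 w_m\,d\rx,
\end{equation*}
since the tails of the Fourier expansions of $f_j$ are annihilated when paired with $\der_1 v_m$ or $w_m$. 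From here the proofs of Lemma \ref{proposition-H1-apriori-estimate}, Lemma \ref{lemma:wp of singular pert prob-main}, and Lemma \ref{lemma-a priori H2 estimate of wm} transcribe without change to yield \eqref{galerkin estimate-1} and \eqref{galerkin estimate-2}: the slip conditions $\der_2 v_m=\der_2 w_m=0$ on $\Gamw$ are automatic from $\eta_k'(\pm 1)=0$, the $x_1$-endpoint conditions in \eqref{galerkin-bcs} are exactly those exploited in the integration by parts performed in those proofs, and the coefficient identities of Lemmas \ref{lemma on L_1}, \ref{lemma-coefficient at bg}, \ref{corollary-coeff extension} are pointwise statements untouched by the projection. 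The cut-off multipliers $\chi^2\der_{11}v_m$ and $\zeta^2\der_{111}v_m$ used in Lemmas \ref{lemma for pre H2 estimate of vm, part 1}--\ref{lemma for pre H2 estimate of vm, part 2} depend on $x_1$ only through $\chi,\zeta$, so they remain in the Galerkin subspace and the same manipulations yield \eqref{galerkin estimate-3}--\eqref{galerkin estimate-4}.

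The main step to be executed carefully --- and the only obstacle worth flagging --- is the bookkeeping that every integration by parts appearing in Lemmas \ref{lemma:wp of singular pert prob-main}--\ref{lemma for pre H2 estimate of vm, part 2} produces either a boundary term killed by the $x_1$-endpoint and slip conditions for $(v_m,w_m)$, or an interior term already controlled by lower-order Galerkin estimates. Once this is verified, uniqueness is immediate from \eqref{galerkin estimate-1} applied with $f_1=f_2=0$, and the Fredholm alternative for the linear ODE BVP of the first step delivers existence of a smooth solution for every smooth right-hand side; the asserted estimates \eqref{galerkin estimate-1}--\eqref{galerkin estimate-4} then automatically hold for the unique Galerkin solution.
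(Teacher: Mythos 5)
Your proposal is correct and follows essentially the same route as the paper: first establish the Galerkin analogues of the $H^1$ and singular-perturbation estimates by using test functions ($G\der_1 v_m$, $w_m$, $\chi^2\der_{11}v_m$, $\zeta^2\der_{111}v_m$) that already lie in the Galerkin subspace, so the multiplier arguments of Lemmas \ref{lemma:wp of singular pert prob-main}--\ref{lemma for pre H2 estimate of vm, part 2} transfer verbatim; then reformulate \eqref{galerkin-eqns}--\eqref{galerkin-bcs} as a first-order linear ODE system on $[0,L]$ with the two-point boundary conditions encoded via a projection (the $5(m+1)$ components split cleanly between endpoints), and invoke the Fredholm alternative for the resulting compact integral operator, with injectivity supplied by \eqref{galerkin estimate-1} at $f_1=f_2=0$. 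This is precisely the structure of Steps 1--2 in the paper's proof.
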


This lemma can be proved by applying the Fredholm alternative theorem, the Arzel\`{a}-Ascoli theorem and Lemmas \ref{lemma:wp of singular pert prob-main}--\ref{lemma for pre H2 estimate of vm, part 2}. We provide a detailed proof after we complete the proof of Proposition \ref{proposition-wp of bvp with approx coeff}.
\medskip

\medskip

{\textbf{Step 3.}} Given two test functions $V\in C^{\infty}(\ol{\Om_L})$ and $W\in C^{\infty}(\ol{\Om_L})$ with $V$ vanishing near $\Gamen\cup\Gamex$ and $W$ vanishing near $\Gamex$, define
 \begin{equation}
 \begin{split}
      &V_m(x_1, x_2):=\sum_{j=0}^m \langle V(x_1,\cdot),\eta_j\rangle \eta_j(x_2),\quad
      W_m(x_1, x_2):=\sum_{j=0}^m \langle W(x_1,\cdot),\eta_j\rangle \eta_j(x_2).
       \end{split}
    \end{equation}
By integrating by parts, one can directly check that
 \begin{equation*}
    \begin{cases}
    \begin{split}
\int_{\Om_L}&\eps\der_1v_m\der_{11}V_m\,d\rx+\mcl{B}_1^P[(v_m, w_m), V_m]
=\int_{\Om_L} f_1 V_m\,d\rx\end{split},\\
\begin{split}
\mcl{B}_2[(v_m, w_m), W_m]=\int_{\Om_L} f_2 W_m\,d\rx
\end{split}
\end{cases}
    \end{equation*}
for $\mcl{B}_1^P$ and $\mcl{B}_2$ given in Definition \ref{definition of weak solution}. Since the sequence $\{(v_m, w_m)\}$ satisfies the uniform estimates \eqref{galerkin estimate-1}--\eqref{galerkin estimate-4}, it has a subsequence, that we shall still denote as $\{(v_m, w_m)\}$, and there exists $(v^{\eps},w^{\eps})\in H^1(\Om_L)\cap H^2(\Om_L)$ so that
\begin{itemize}
\item[-] $(v_m, w_m)$ converges to $(v^{\eps},w^{\eps})$ weakly in $H^1(\Om_L)\times H^2(\Om_L)$,
\item[-] and that $\der_{11}v_m$ converges to $\der_{11}v^{\eps}$ weakly in $L^2(\Om_L)$.
\end{itemize}
Then, the weak limit $(v^{\eps},w^{\eps})$ satisfies that
 \begin{equation*}
    \begin{cases}
    \begin{split}
\int_{\Om_L}&\eps\der_1v^{\eps}\der_{11}V\,d\rx+\mcl{B}_1^P[(v^{\eps},w^{\eps}), V]
=\int_{\Om_L} f_1 V\,d\rx\end{split},\\
\begin{split}
\mcl{B}_2[(v^{\eps},w^{\eps}), W]=\int_{\Om_L} f_2 W\,d\rx.
\end{split}
\end{cases}
    \end{equation*}
Moreover, it follows from \eqref{galerkin estimate-1}--\eqref{galerkin estimate-4} that $(v^{\eps},w^{\eps})$ satisfies all the estimates \eqref{a priori estimate1 of vm and wm}--\eqref{viscous-estimate-intermediate2}, in which the estimate constants $C$ are fixed independent of $\eps\in(0, \bar{\eps}]$. Therefore, we can take a sequence $\{\eps_n\}$ so that
\begin{itemize}
\item[-] $\displaystyle{\eps_n\rightarrow 0\,\,\tx{as $n\to \infty$}}$,
\item[-] $(v^{\eps_n}, w^{\eps_n})$ weakly converges to $(v,w)$ for some $(v,w)\in H^1(\Om_L)\times H^2(\Om_L)$,
\item[-] and $D\der_1v^{\eps_n}$ weakly converges to $D\der_1v$ in $L^2_{\rm loc}(\Om_L)$.
\end{itemize}
Then it is clear that $(v,w)$ is a weak solution to \eqref{lbvp-main general} in the sense of Definition \ref{definition of weak solution}.
Moreover, it directly follows from the uniform estimates \eqref{galerkin estimate-1} and \eqref{galerkin estimate-2} of $\{(v^{\eps}, w^{\eps})\}_{\eps\in(0, \bar{\eps}]}$ that $(v,w)$ satisfies the estimate \eqref{estimate 1 of v and w}.
\medskip

{\textbf{Step 4.}} Regarding $v$ as a weak solution to
\begin{equation}
\label{bvp for v-near the entrance}
  \begin{cases}
  \sum_{i,j=1}^2\der_j(a_{ij}\der_iv)=F_1&\quad\tx{in $\Om_L\cap\{x_1<\gs^P(x_2)\}$}\,\,\tx{(see)}\\
  v=0&\quad\tx{on $\Gamen$}\\
  \der_2 v=0&\quad\tx{on $\Gamw$}
  \end{cases}
\end{equation}
where $\gs^P$ is given in Lemma \ref{lemma on L_1}(g), and $F_1$ is given by
\begin{equation*}
F_1:=f_1-b_1\der_1w-b_0w+(\der_1 a_{11}+\der_2 a_{12}+a)\der_1v+\der_1 a_{12}\der_2 v,
\end{equation*}
one has
\begin{equation}
\label{H2-estimate of weak limit 1}
  \|v\|_{H^2(\Om_L\cap\{x_1<\frac{\ls}{2}\})}\le C(\|f_1\|_{L^2(\Om_L)}+\|f_2\|_{L^2(\Om_L)})
\end{equation}
because the equation stated in \eqref{bvp for v-near the entrance} is strictly elliptic in $\Om_L\cap\{x_1<\frac 34 \ls\}$ owing to Lemma \ref{lemma on L_1}($g_2$).
\smallskip

It follows from the uniform estimates \eqref{galerkin estimate-3} and \eqref{galerkin estimate-4} of $\{v^{\eps}\}_{\eps\in(0, \bar{\eps}]}$ and the weak convergence property stated in the previous step that
\begin{equation}
\label{estimate for v-order 2}
  \|D\der_1 v\|_{L^2(\Om_L\cap\{x_1>\frac{\ls}{8}\})}\le C\left(\|f_1\|_{H^1(\Om_L)}+\|f_2\|_{L^2(\Om_L)}\right).
\end{equation}
Since the coefficients $\{a_{ij}\}$ of the operator $\mfrak{L}_1^P$ are bounded in $C^0(\ol{\Om_L})$ due to Lemma \ref{lemma on L_1}(see the estimate \eqref{estimate-coefficient-difference}), we can fix a constant $\beta_0>0$ depending only on the data to satisfy
\begin{equation*}
\mathbb{A}^{(\beta_0)}:=  \begin{pmatrix}
  a_{11}+\beta_0&a_{12}\\
  a_{12}&1
  \end{pmatrix}\ge \frac{\min\{\beta_0, 1\}}{2}\mathbb{I}_2\quad \tx{in $\ol{\Om_L}$}.
\end{equation*}
Now, we rewrite $\mcl{B}_1^P[(v,w), V]=\int_{\Om_L}f_1 V\,d\rx$ in \eqref{weak-viscous-eqns} as
\begin{equation*}
 \begin{split}
&-\int_{\Om_L}
(a_{11}+\beta_0)\der_1v\der_1V+a_{12}(\der_1v\der_2V
+\der_2v\der_1V)+\der_2v\der_2V\,d\rx\\
&=\int_{\Om_L} (F_1+\beta_0\der_{11}v)V\,d\rx.
\end{split}
\end{equation*}
Using the uniform positivity of the matrix $\mathbb{A}^{(\beta_0)}$ and the estimate \eqref{estimate for v-order 2}, we can apply a standard elliptic estimate result(see \cite[Theorem 8.10]{GilbargTrudinger}) to conclude that, for any constant $r\in(0, L)$, there exists a constant $C_r>0$ fixed depending only on the data and $r$ such that
\begin{equation}
\label{H2 estimate of v-away from the exit}
  \|v\|_{H^2(\Om_L\cap\{\frac{\ls}{4}<x_1<L-r\})}\le C_r\left(\|f_1\|_{H^1(\Om_L)}+\|f_2\|_{L^2(\Om_L)}\right)).
\end{equation}
\smallskip

Since $w$ is in $H^2(\Om_L)$, it can be regarded as a strong solution to
\begin{equation}
\label{bvp for w}
\begin{split}
  &\Delta w=F_2\quad\tx{in $\Om_L$},\\
  &\der_1 w=0\,\,\tx{on $\Gamen$},\quad \der_2 w=0\,\,\tx{on $\Gamw$},\quad w=0\,\,\tx{on $\Gamex$}
  \end{split}
\end{equation}
for $F_2$ given by
\begin{equation*}
  F_2:=f_2+{c}_0 w+{c}_1\der_1 v.
\end{equation*}
From \eqref{H2-estimate of weak limit 1} and \eqref{H2 estimate of v-away from the exit}, it follows that, for any constant $r\in(0, L)$, there exists a constant $C_r>0$ fixed depending only on the data and $r$ such that
\begin{equation*}
\|F_2\|_{H^1(\Om_L\cap\{x_1<L-\frac r2\})}\le C_r(\|f_1\|_{H^1(\Om_L)}+\|f_2\|_{H^1(\Om_L)}).
\end{equation*}
Then we can establish a priori $H^3$-estimate of $w$ away from $\Gamex$ by applying \cite[Theorem 8.10]{GilbargTrudinger}, thus the estimate \eqref{estimate 2 of v and w} is established.

\medskip

{\textbf{Step 5.}} In Steps 1--4, we have proved Proposition \ref{proposition-wp of bvp with approx coeff} for $P\in \iterseta$, $f_1\in H^3(\Om_L)$ and $f_2\in H^2(\Om_L)$ satisfying the assumption \eqref{partial smoothness}. We now explain how to extend the proof for all $P\in \iterseta$, $f_1\in H^3(\Om_L)$ and $f_2\in H^2(\Om_L)$.
\smallskip

Set
\begin{equation*}
  \Om_L^{\rm ext}:=(-\frac L4, \frac{5L}{4})\times (-1,1).
\end{equation*}
Let $\mcl{E}:H^4(\Om_L)\rightarrow H^4(\Om_L^{\rm ext})$ be an operator such that, for any given  function $u\in H^4(\Om_L)$, $\mcl{E}u$ satisfies the following properties:
\begin{itemize}
\item[-] $\mcl{E}u=u$ in $\Om_L$
\item[-] $\der_{x_2}^k\mcl{E}u=0$ on $\Gamw$ if $\der_{x_2}^ku=0$ on $\Gamw$ for $k\in \mathbb{N}$.
\end{itemize}
One can directly construct such an operator $\mcl{E}$ so that $\mcl{E}$ is bounded and linear (e.g.see \cite[Appendix A]{bae2023supersonic} ) with the norm $\|\mcl{E}\|$ being bounded depending only on $L$.

Let $\chi:\R\rightarrow \R$ be a smooth function that satisfies the following conditions:
\begin{itemize}
\item[-] $\chi(x_1)\ge 0$ for all $x_1\in \R$,
\item[-] $\chi(x_1)=\chi(-x_1)$ for all $x_1\in \R$,
\item[-] ${\rm spt}\chi\subset [-1,1]$,
\item[-] $\int_{\R}\chi(x_1)\,dx_1=1$.
\end{itemize}
For a constant $\tau>0$, let us define $\chi^{(\tau)}:\R\rightarrow \R$ by
\begin{equation*}
  \chi^{(\tau)}(x_1):=\frac{1}{\tau}\chi\left(\frac{x_1}{\tau}\right).
\end{equation*}
Then $\{u^{(\tau)}:=(\mcl{E}u)*\chi^{(\tau)}\}_{\tau\in(0, \frac{L}{10}]}$ yields a partially smooth (with respect to $x_1$) approximation of $u$, and it converges to $u$ in $H^4(\Om_L)$ as $\tau$ tends to $0+$.
\smallskip

Let us fix $P=(\tphi, \tpsi, \tPsi)\in \iterseta$ for $(r_2, r_3)$ satisfying the condition \eqref{condition for r-final}.
Given a sequence $\{\tau_n:n\in \mathbb{N}\}\subset (0, \frac{L}{10}]$, let us define $P_n$ by
\begin{equation*}
  P_n:=(\tphi^{(\tau_n)}, \tpsi^{(\tau_n)}, \tPsi^{(\tau_n)}),\,\,(f_{1,n}, f_{2,n}):=(f_1^{(\tau_n)}, f_2^{(\tau_n)}).
\end{equation*}
In particular, take a sequence $\{\tau_n:n\in \mathbb{N}\}$ so that the following properties hold:
\begin{itemize}
\item[-] $P_n\in \iterV(2r_2)\times \iterP(2r_3)$ for any $ n\in \mathbb{N}$,
\item[-] $\displaystyle{\lim_{n\to \infty}\tau_n=0,\,\,\tx{thus it holds that }}$
$$
\lim_{n\to\infty}\|P_n-P\|_{H^4(\Om_L)}=\lim_{n\to\infty}\|f_{1,n}-f_1\|_{H^3(\Om_L)}=\lim_{n\to\infty}\|f_{2,n}-f_2\|_{H^2(\Om_L)}=0.
$$
\end{itemize}

For each $n\in \mathbb{N}$, we repeat Steps 1--4 for $(P_n, f_{1,n}, f_{2,n})$ to get a sequence $\{(v_n, w_n)\}$ so that
\begin{itemize}
\item[-] $(v_n, w_n)$ is the weak solution to the boundary value problem \eqref{lbvp-main general} associated with $P_n$,
\item[-] $(v_n, w_n)$ satisfies the estimates \eqref{estimate 1 of v and w} and \eqref{estimate 2 of v and w}.
\end{itemize}
Then there exist a subsequence $\{(v_{n_j}, w_{n_j})\}$ and $(v,w)\in H^1(\Om_L)\times H^2(\Om_L)$ that satisfy the following properties:
\begin{itemize}
\item[-] $v\in H^1(\Om_L)\cap H^2_{\rm loc}(\Om_L)$ and $w\in H^2(\Om_L)\cap H^3_{\rm loc}(\Om_L)$,
\item[-] the subsequence $\{(v_{n_j}, w_{n_j})\}$ weakly converges to $(v,w)$ in $H^1(\Om_L)\times H^2(\Om_L)$,
\item[-] the sequence $\{v_{n_j}\}$ weakly converges to $v$ in $H^2(\Om_L\cap\{x_1<L-r\})$ for any $r\in(0,L)$,
\item[-] the sequence $\{w_{n_j}\}$ weakly converges to $w$ in $H^3(\Om_L\cap\{x_1<L-r\})$ for any $r\in(0,L)$.
\end{itemize}
Therefore, it follows that $(v,w)$ is a weak solution to the boundary value problem \eqref{lbvp-main general} associated with $P\in \iterseta$, and that it satisfies the estimates \eqref{estimate 1 of v and w} and \eqref{estimate 2 of v and w}. This completes the proof of Proposition \ref{proposition-wp of bvp with approx coeff}.
 \hfill \qed

\begin{proof}[Proof of Lemma \ref{lemma-wp of Galerking approx}]

 Let us fix $\eps\in(0, \bar{\eps}]$ and $m\in \mathbb{N}$.
 \medskip

{\textbf{Step 1.}}The estimates stated in \eqref{galerkin estimate-1}--\eqref{galerkin estimate-4} can be achieved by minor modifications of the proofs of Lemmas \ref{lemma:wp of singular pert prob-main}--\ref{lemma for pre H2 estimate of vm, part 2}, so we skip to prove them. For details, readers can refer to \cite[Appendix A]{BDXX}.
\medskip

{\textbf{Step 2.}} Fix $m\in \mathbb{N}$. Let us define ${\bf X}_j:[0,L]\rightarrow \R^{m+1}$ for $j=1,\cdots, 5$ by
\begin{equation*}
  \begin{split}
  &{\bf X}_1:=(\vartheta_0, \cdots, \vartheta_m),\quad
  {\bf X}_2:={\bf X}'_1,\quad
  {\bf X}_3:={\bf X}''_1,\\
  &{\bf X}_4:=( \Theta_0,\cdots,\Theta_m),\quad
  {\bf X}_5:={\bf X}'_4.
  \end{split}
\end{equation*}
Set ${\bf X}: [0, L]\rightarrow \R^{5(m+1)\times 1}$ as
$\displaystyle{
  {\bf X}:=({\bf X}_1,  {\bf X}_2,{\bf X}_3,
 {\bf X}_4,
 {\bf X}_5)^T.}$

For each $k$, we rewrite the first equation in \eqref{galerkin-eqns} as
\begin{equation*}
  \vartheta_k'''=\frac{1}{\eps}\left(-\langle\mfrak{L}_1^P(v_m, w_m), \eta_k\rangle +\langle f_1, \eta_k \rangle\right).
\end{equation*}
This implies that ${\bf X}_3$ satisfies
\begin{equation*}
  {\bf X}_3'=\frac{1}{\eps}\left(\sum_{n=1}^5\mathbb{C}_n^P{\bf X}_n+{\bf F}_3\right)
\end{equation*}
for $\mathbb{C}^P_n:[0,L]\rightarrow \R^{(m+1)\times (m+1)}$ determined depending on the coefficients of $\mfrak{L}_1^P$. Then it is clear that \eqref{galerkin-eqns} yields a linear ODE system for ${\bf X}$ in the following form:
\begin{equation*}
  {\bf X}'=\mathbb{A}^P_{\eps}{\bf X}+{\bf F}_{\eps}
\end{equation*}
for $\mathbb{A}^P_{\eps}:[0, L]\rightarrow \R^{5(m+1)\times 5(m+1)}$ and ${\bf F}_{\eps}:[0, L]\rightarrow \R^{5(m+1)}$.

Next, we define a projection mapping $\Pi:\R^{5(m+1)\times 1}\rightarrow \R^{5(m+1)\times 1}$ by
\begin{equation*}
  \Pi{\bf X}:=({\bf X}_1, {\bf X}_2, {\bf 0}, {\bf 0}, {\bf X}_5)^T.
\end{equation*}
Then one can directly check that ${\bf X}$ yields a solution $(v_m, w_m)$ to the problem \eqref{galerkin-eqns}--\eqref{galerkin-bcs} if and only if ${\bf X}$ solves the integral equation
\begin{equation}
\label{X-eqn}
  {\bf X}(x_1)=\Pi\int_0^{x_1}(\mathbb{A}^P_{\eps}{\bf X}+{\bf F}_{\eps})(t)\,dt
  +({\rm Id}-\Pi)\int_L^{x_1}(\mathbb{A}^P_{\eps}{\bf X}+{\bf F}_{\eps})(t)\,dt.
\end{equation}
Define a linear operator $\mfrak{K}: C^1([0, L];\R^{5(m+1)\times 1})\rightarrow C^1([0, L];\R^{5(m+1)\times 1})$ by
\begin{equation*}
  \mfrak{K}{\bf X}(x_1):=\Pi\int_0^{x_1}\mathbb{A}^P_{\eps}{\bf X}(t)\,dt
  +({\rm Id}-\Pi)\int_L^{x_1}\mathbb{A}^P_{\eps}{\bf X}(t)\,dt
\end{equation*}
so that we rewrite \eqref{X-eqn} as
\begin{equation*}
  ({\rm Id}-\mfrak K){\bf X}=\Pi\int_0^{x_1}{{\bf F}_{\eps}}(t)\,dt
  +({\rm Id}-\Pi)\int_L^{x_1}{{\bf F}_{\eps}}(t)\,dt.
\end{equation*}
Due to the assumption \eqref{partial smoothness}, it follows that the operator $\mfrak{K}$ is compact, and that if ${\bf X}\in C^1([0, L];\R^{5(m+1)\times 1})$ solves \eqref{X-eqn}, then it is smooth for all $x_1\in[0,L]$.

Therefore we can apply the estimate \eqref{galerkin estimate-1} and the Fredholm alternative theorem to conclude that the problem of \eqref{galerkin-eqns}--\eqref{galerkin-bcs} has a unique smooth solution $(v_m, w_m)$. This completes the proof.
\end{proof}

\section{Higher order derivative estimates}
\label{section:higher order der est}

Throughout this section, we fix $P=(\tphi, \tpsi, \tPsi)\in \iterseta$ for $(r_2, r_3)$ satisfying the condition \eqref{condition for r-final}. By Proposition \ref{proposition-wp of bvp with approx coeff}, the boundary value problem \eqref{lbvp-main general} associated with $P$ has a unique strong solution $(v,w)$ that satisfies the estimates \eqref{estimate 1 of v and w} and \eqref{estimate 2 of v and w}. In this section, we establish a global $H^4$ estimate of $(v,w)$ by employing the idea developed in the proof of \cite[Theorems 1.5 and 1.8]{KZ} and Lemma \ref{corollary-coeff extension}. Once the $H^4$-estimate of $(v,w)$ is achieved, it naturally follows that $(v,w)$ is a $C^2$-classical solution.

\newcommand \extoperator{\mcl{L}_*}
\newcommand \Gamext{\Gam_{L_*}}
\newcommand \Gamwext{\Gamw^*}
\newcommand \Omext{\Om_{L_*}}

\subsection{Global $H^2$-estimate of $v$}\label{subsection:global 1}
Let us fix a positive constant $\om_0$ as
\begin{equation*}
  \om_0:=2\bar{a}_{11}(0).
\end{equation*}
For the constant $L_*$ given by \eqref{definition: L*}, let us set
\begin{equation*}
\Om_{L_*}:=(0,L_*)\times (-1,1).
\end{equation*}
In $\Om_{L_*}$, let the coefficient functions $(\alp_{11}^P, \alp_{12}^P,\alp)$ be given by \eqref{definition of extended coefficient} so that they satisfy all the properties stated in Lemma \ref{corollary-coeff extension} with the constant $\om_0$ given as in the above. Next, we define a linear differential operator $\extoperator^P$ by
    \begin{equation*}
      \extoperator^P V:=\alp_{11}^P\der_{11}V+\alp_{12}^P\der_{12}V+\der_{22}V+\alp^P\der_1V
      \quad\tx{in $\Om_{L_*}$}.
    \end{equation*}
Given a function $F:\Omext\rightarrow \R$, we introduce an auxiliary boundary value problem:
\begin{equation}
\label{bvp-mixed type-extended}
  \begin{cases}
\extoperator^P V=F&\quad \tx{in $\Om_{L_*}$},\\
  V=0&\quad\tx{on $\Gamen$},\\
  \der_2 V=0&\quad\tx{on $\Gamwext:=\der \Om_{L_*}\cap \{|x_2|=1\}$},\\
  \der_1 V=0&\quad\tx{on $\Gamext:=\der\Om_{L_*}\cap\{x_1=L_*\}$}.
  \end{cases}
\end{equation}
\begin{definition}
[A weak solution to \eqref{bvp-mixed type-extended}]
\label{definition of weak solution-V problem}
For two functions $V, \xi\in H^1(\Om_{L_*})$, define a bilinear operator
\begin{align*}
\mcl{B}_{*}^P[V,\xi]:=
\int_{\Om_{L_*}}&-(\alp_{11}\der_1V\der_1\xi+\alp_{12}(\der_1V\der_2\xi
+\der_2V\der_1\xi)+\der_2V\der_2\xi+\alp\der_1V\xi)\\
&-(\der_1\alp_{11}\der_1V+\der_2\alp_{12}\der_1V+\der_1\alp_{12}\der_2 V)\xi\,d\rx.
\end{align*}
A function $V\in H^1(\Om_L)$ is said to be {\emph{a weak solution}} to the boundary value problem \eqref{bvp-mixed type-extended} if it satisfies
\begin{equation*}
\mcl{B}_*^P[V,\xi]=\int_{\Om_L} F \xi\,d\rx
\end{equation*}
  for any test function $\xi\in C^{\infty}(\ol{\Om_L})$ vanishing near $\Gamen \cup \Gamext$.
\end{definition}

Next, we set up a singular perturbation problem for a sufficiently small constant $\eps>0$:
\begin{equation}
\label{bvp-sing-pert-extended}
  \begin{cases}
 \eps \der_{111}V+\extoperator^P V=F&\quad \tx{in $\Om_{L_*}$},\\
  V=0,\quad \der_1 V=0&\quad\tx{on $\Gamen$},\\
  \der_2 V=0&\quad\tx{on $\Gamwext$},\\
  \der_1 V=0&\quad\tx{on $\Gamext$}.
  \end{cases}
\end{equation}

\begin{lemma}
\label{lemma:aux bvp for V-global H2 est}
 For any given $F\in L^2(\Omext)$, the boundary value problem \eqref{bvp-mixed type-extended} has a unique weak solution. And, the solution satisfies the estimate
\begin{equation}
\label{H1 estimate of V}
  \|V\|_{H^1(\Omext)}\le C\|F\|_{L^2(\Omext)}
\end{equation}
for some constant $C>0$ fixed depending only on the data. Furthermore, one can fix a constant $\delta_2\in(0, \delta_1]$ (for the constant $\delta_1$ from \eqref{condition for r-final}) depending only on the data so that if $P\in \iterseta$ for two positive constants $r_2$ and $r_3$ satisfying
\begin{equation*}
  r_2+r_3\le \delta_2,
\end{equation*}
and if $F\in H^1(\Om_{L_*})$, then the weak solution $V$ satisfies the estimate
\begin{equation}
\label{H2 estimate of V}
  \|V\|_{H^2(\Omext)}\le C\|F\|_{H^1(\Omext)},
\end{equation}
that is, $V$ is the strong solution to \eqref{bvp-mixed type-extended}.
\begin{proof}
{\textbf{Step 1.}}
The proof of this lemma is much simpler than the one of  Proposition \ref{proposition-wp of bvp with approx coeff}.
This lemma can be proved by employing the idea of \cite[Theorem 1.1, Chapter 1]{KZ}. Given a constant $\eps>0$, if $V_{\eps}$ is a smooth solution to \eqref{bvp-mixed type-extended}, then it satisfies
\begin{equation}
\label{ext problem integral}
  \int_{\Omext} (\eps \der_{111}V_{\eps}+\extoperator^P V_{\eps})e^{-\mu x_1}\der_1 V_{\eps}\,d\rx=\int_{\Omext}e^{-\mu x_1}F \der_1 V_{\eps}\,d\rx
\end{equation}
for any constant $\mu>0$.
\begin{itemize}
\item[-] Integrating the left-hand side of \eqref{ext problem integral} by parts,
\item[-]applying Lemma \ref{corollary-coeff extension}(f) for $m=0$,
\item[-]and applying the Cauchy-Schwarz inequality,
\end{itemize}
it can be directly checked that
\begin{equation}
\label{estimate V coerc}
\begin{split}
  &{\tx{LHS of \eqref{ext problem integral}}}\\
  &\le \int_{\Omext} e^{-\mu x_1} \left(\left(-\eps+\frac{\eps^2}{2}\right)(\der_{11} V_{\eps})^2-\left(\frac{\lambda_{L_*}}{2}-C\mu\right)\frac{(\der_1 V_{\eps})^2}{2}-\mu\frac{(\der_2 V_{\eps})^2}{2}\right)\,d\rx\\
  &\phantom{\le}- e^{-\mu L_*}\int_{\Gam_{L_*}}\frac{(\der_2 V_{\eps})^2}{2}\,dx_2
  \end{split}
\end{equation}
for the constant $\lambda_{L_*}>0$ from Corollary \ref{corollary-coeff extension}(f). Fix the constant $\mu$ as $\displaystyle{\mu=\frac{\lambda_{L_*}}{4C}}$  so that if $\eps\in(0,1]$, then it directly follows from \eqref{estimate V coerc} that
\begin{equation*}
\begin{split}
  &{\tx{LHS of \eqref{ext problem integral}}}\\
  &\le -\int_{\Omext} e^{-\mu x_1} \left(\frac{\eps}{2}(\der_{11} V_{\eps})^2+\frac{\lambda_{L_*}}{4}|DV_{\eps}|^2\right)\,d\rx-e^{-\mu L_*}\int_{\Gam_{L_*}}\frac{(\der_1 V_{\eps})^2}{2}\,dx_2.
  \end{split}
\end{equation*}
By using this estimate, we can easily derive from \eqref{ext problem integral} that
\begin{equation}
\label{a priori estimate1 of Vm for new sing pert}
  \sqrt{\eps}\|\der_{11}V_{\eps}\|_{L^2(\Omext)}
  +\|V_{\eps}\|_{H^1(\Omext)}+\|\der_1V_{\eps}\|_{L^2(\Gamext)}\le C\|F\|_{L^2(\Omext)}.
\end{equation}
Then we can prove the unique existence of a weak solution to \eqref{bvp-mixed type-extended} along with the estimate \eqref{H1 estimate of V} by following the idea in the proof of Proposition \ref{proposition-wp of bvp with approx coeff}.
\begin{note}
According to the estimate \eqref{estimate V coerc}, for any $\eps\in(0,1]$, the singular perturbation problem \eqref{bvp-sing-pert-extended} has a unique solution in $H^1(\Om_L)$ for any $P\in \iterseta$. Here, we assume that the constants $r_2$ and $r_3$ satisfy the condition \eqref{condition for r-final}.
\end{note}
\medskip

{\textbf{Step 2.}}
Note that $\extoperator^P$ is uniformly elliptic in $\Omext\cap \{x_1<\frac 34 \ls\}$ due to Lemma \ref{corollary-coeff extension}(a) and Lemma \ref{lemma on L_1}(g). Also, it follows from Lemma \ref{corollary-coeff extension}(see the statements (d) and (e)) that there exists a constant $\Le\in(L, L_*)$ so that the principal coefficients $(\alp_{ij}^P)$ of $\extoperator^P$ satisfy
    \begin{equation}
    \label{uniform elliticity-ext domain}
      \alp_{11}^P\ge \frac{\om_0}{2},\quad\tx{and}\quad
      \begin{pmatrix}
\alp_{11}^P &\alp_{12}^P\\
\alp_{12}^P&1
\end{pmatrix}\ge \frac{\om_0}{2}\mathbb{I}_2\quad\tx{in $\Omext\cap\{x_1\ge \Le\}$}.
    \end{equation}
Denote
 \begin{equation*}
      l_0:=\frac{L_*-\Le}{20}.
    \end{equation*}
In addition to the uniform ellipticity of $\extoperator^P$ in $\Omext\cap\{x_1\le \frac 34 \ls\,\,\tx{or}\,\, x_1\ge \Le\}$, it follows from Lemma \ref{corollary-coeff extension}(b) that the compatibility condition $\alp_{12}=0$ holds on $\Gamw$. Then we can apply the method of reflection, \cite[Theorem 8.12]{GilbargTrudinger} and the estimate \eqref{H1 estimate of V} to establish the estimate
\begin{equation}
\label{H2-estimate of aux problem 1}
\|V\|_{H^2(\Omext\cap\{x_1<\frac{\ls}{2}\,\,\tx{or}\,\,x_1> \Le+2l_0\})}\le C\|F\|_{L^2(\Om_{L_*})}.
\end{equation}
\medskip

{\textbf{Step 3.}} We now suppose that $F\in H^1(\Omext)$. Let us fix $\eps\in(0,1]$, and let $V_{\eps}$ be a smooth solution to \eqref{bvp-sing-pert-extended}.
\medskip

 As stated in Step 2, the operator $\extoperator^P$ is uniformly elliptic in $\Omext\cap\{x_1\le \frac 34 \ls\,\,\tx{or}\,\, x_1\ge \Le\}$ with the ellipticity constant being bounded below by a positive constant fixed depending only on the data. In addition, the compatibility condition $\alp_{12}=0$ holds on $\Gamw$ (see Lemma \ref{corollary-coeff extension}(b)). So we can follow the proof of Lemma \ref{lemma for pre H2 estimate of vm, part 1} to obtain the following lemma:
\begin{lemma}\label{lemma:aux sing perturb problem-H2-pre1}
For a constant $r>0$ satisfying
\begin{equation*}
  r\le \min \left\{\frac{\ls}{8}, l_0 \right\},
\end{equation*}
let us set a domain $\mcl{D}_r$ as
\begin{equation*}
\mcl{D}_r:=\Omext\cap\left\{2r<x_1<\frac{\ls}{2}-2r\right\}\cup \left\{\Le+2r<x_1<L_*-2r\right\}.
\end{equation*}
Then, there exists a constant $C_r>0$ depending only on the data and $r$ (but independent of $\eps$) so that the following estimate holds:
\begin{equation}
\label{pre H2 estimate-sing pert-ext1}
  \|D\der_1 V_{\eps}\|_{L^2(\mcl{D}_r)}
\le C_{r}\|F\|_{L^2(\Omext)}.
\end{equation}

\end{lemma}

Let $\chi\in C^{\infty}_0(\R)$ be a smooth cut-off function that satisfies the following properties:
\begin{equation*}
\begin{split}
 &\chi(x_1)=\begin{cases}0\quad&\mbox{for $x_1\le \frac{\ls}{40}$},\\
 1\quad&\mbox{for $\frac{\ls}{32}\le x_1 \le \Le+18l_0$},\\
 0\quad&\mbox{for $x_1\ge \Le+19l_0$}(=L_*-l_0)
 \end{cases}\quad \tx{and}\quad  0\le  \chi \le 1\quad\tx{on $\R$}.
 \end{split}
\end{equation*}
Such a function can be fixed to satisfies the estimate
$$
\left|\frac{d^k\chi}{dx_1^k}\right|\le C_k\left(1+\frac{1}{\ls^k}+\frac{1}{l_0^k}\right)
$$
with a constant $C_k>0$ depending only on $k$, for each $k\in \mathbb{N}$.

Next, we use the idea from the proof of Lemma \ref{lemma for pre H2 estimate of vm, part 2}, and we apply Lemma \ref{corollary-coeff extension}(f) with $m=1$ and Lemma \ref{lemma:aux sing perturb problem-H2-pre1} to derive from
\begin{equation*}
  \int_{\Omext}(\eps \der_{111}V_{\eps}+\extoperator^P V_{\eps})\chi^2\der_{111}V_{\eps}\,d\rx
  =\int_{\Omext} F\chi^2\der_{111}V_{\eps}\,d\rx
\end{equation*}
that
\begin{equation*}
  \int_{\Omext}\left(\eps(\der_{111}V_{\eps})^2
  +|D\der_{1}V_\eps|^2\right)\chi^2\,d\rx \le C\|F\|^2_{H^1(\Omext)}
\end{equation*}
provided that the constant $r_2+r_3$ is appropriately small depending on the data. Then the following lemma is obtained.
\begin{lemma}
\label{lemma for pre H2 estimate of Vm-ext sing pert}
One can fix a constant $\delta_2\in(0, \delta_1]$ (for the constant $\delta_1$ from \eqref{condition for r-final}) depending only on the data so that if $P\in \iterseta$ for two positive constants $r_2$ and $r_3$ satisfying
\begin{equation*}
  r_2+r_3\le \delta_2,
\end{equation*}
and if $F\in H^1(\Om_{L_*})$, then any smooth solution $V_{\eps}$ to \eqref{bvp-sing-pert-extended} satisfies the estimate
\begin{equation*}
  \sqrt{\eps}\|\der_{111}V_{\eps}\|_{L^2(\Omext^{(1)})}
  +\|D\der_1V_{\eps}\|_{L^2(\Omext^{(1)})}\le C\|F\|_{H^1(\Omext)}
\end{equation*}
for $\Omext^{(1)}:=\Omext\cap \{\frac{\ls}{32}<x_1<\Le+18l_0\}$.
\end{lemma}
By using Lemma \ref{lemma for pre H2 estimate of Vm-ext sing pert}, we can follow the argument given from Step 4 in the proof of Proposition \ref{proposition-wp of bvp with approx coeff} to show that the weak solution $V$ to \eqref{bvp-mixed type-extended} satisfies the estimate
\begin{equation*}
  \|V\|_{H^2(\Omext\cap\{\frac{\ls}{8}<x_1<\Le+8l_0\})}\le C\|F\|_{H^1(\Omext)}.
\end{equation*}
By combining this estimate with \eqref{H2-estimate of aux problem 1}, we finally obtain the estimate \eqref{H2 estimate of V}. This completes the proof of Lemma \ref{lemma:aux bvp for V-global H2 est}.

\end{proof}
\end{lemma}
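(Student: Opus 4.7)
The plan is to treat this scalar mixed-type BVP by a singular perturbation argument modeled on the approach used for Problem \ref{problem:singular perturbation}, but substantially simpler since no coupling to an equation for $w$ is present. For each small $\eps > 0$ I would consider the regularized problem \eqref{bvp-sing-pert-extended} obtained by adding the third-order term $\eps\der_{111}V_\eps$ together with the extra boundary condition $\der_1 V_\eps = 0$ on $\Gamen$. The main task is to establish $\eps$-uniform estimates for $V_\eps$ in $H^1(\Omext)$ in general and in $H^2(\Omext)$ when $F\in H^1(\Omext)$, and then to pass to the limit $\eps \to 0^+$. Uniqueness of the weak solution will follow immediately from the $H^1$ estimate applied to the difference of two solutions, so the whole argument is driven by the $\eps$-uniform a priori bounds.

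For the $H^1$ estimate I would test the regularized equation against $e^{-\mu x_1}\der_1 V_\eps$ for a small $\mu > 0$ and integrate by parts. The conditions $\der_1 V_\eps = 0$ on $\Gamen \cup \Gamext$ annihilate the boundary contributions coming from $\eps\der_{111}V_\eps$, while $V_\eps = 0$ on $\Gamen$ forces $\der_2 V_\eps = 0$ there and removes the entrance contribution from $\der_{22}V_\eps \cdot \der_1 V_\eps$; the remaining boundary term $-\tfrac{1}{2}e^{-\mu L_*}\int_{\Gamext}(\der_2 V_\eps)^2\,dx_2$ sits on the good side. In the interior the principal coefficient of $(\der_1 V_\eps)^2$ reduces to $\tfrac{1}{2}(-2\alp^P + \der_1\alp_{11}^P - 4|\der_2\alp_{12}^P|)$ modulo an $\mathrm{O}(\mu)$ contribution of indeterminate sign, and Lemma \ref{corollary-coeff extension}(f) with $m=0$ yields a lower bound of $\lambda_{L_*}/4$ once $\mu$ is chosen small relative to $\lambda_{L_*}$. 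This produces the uniform bound
\[
\sqrt{\eps}\,\|\der_{11}V_\eps\|_{L^2(\Omext)} + \|V_\eps\|_{H^1(\Omext)} + \|\der_1 V_\eps\|_{L^2(\Gamext)} \le C\|F\|_{L^2(\Omext)}.
\]
The existence of $V_\eps$ at each level $\eps > 0$ is a standard Galerkin/Fredholm construction analogous to Lemma \ref{lemma-wp of Galerking approx}, and weak compactness as $\eps \to 0^+$ then delivers a weak solution in the sense of Definition \ref{definition of weak solution-V problem} satisfying \eqref{H1 estimate of V}.

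To upgrade to $H^2$ when $F \in H^1(\Omext)$, I would partition $\Omext$ into three zones. On the strictly subsonic strip $\{x_1 < 3\ls/4\}$ the principal part is uniformly elliptic by Lemma \ref{lemma on L_1}(g$_2$), and on $\{x_1 > \Le\}$ it is uniformly elliptic by Lemma \ref{corollary-coeff extension}(e); the compatibility condition $\alp_{12}^P = 0$ on $\Gamwext$ permits even reflection in $x_2$, so \cite[Theorem 8.12]{GilbargTrudinger} gives local $H^2$ bounds controlled by $\|F\|_{L^2(\Omext)}$ in both zones. The hard part is the intermediate degenerate zone containing the sonic curve $x_1 = \gs^P(x_2)$. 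Here I would return to the $\eps > 0$ level with a cutoff $\chi(x_1)$ supported away from $\Gamen$ and $\Gamext$ and test against $\chi^2 \der_{111}V_\eps$; two integrations by parts produce the coercive term $-\tfrac{1}{2}(-2\alp^P - \der_1\alp_{11}^P)(\der_{11}V_\eps)^2\chi^2$, which is bounded below by $\tfrac{\lambda_{L_*}}{4}\chi^2(\der_{11}V_\eps)^2$ via Lemma \ref{corollary-coeff extension}(f) with $m=1$. The mixed derivative $\der_{12}V_\eps$ is then recovered by the strict-ellipticity device of Step 2 in the proof of Lemma \ref{lemma for pre H2 estimate of vm, part 2} — adding a small multiple of $\der_{11}V_\eps$ to make the $2\times 2$ principal block uniformly positive and absorbing the defect — and $\der_{22}V_\eps$ is recovered directly from the equation. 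The main obstacle throughout is calibrating the cutoff, absorption and perturbation thresholds compatibly: the contributions from $d\alp_{11}^P$, $\alp_{12}^P$ and $d\alp^P$ must be absorbed into the good terms, which is precisely what forces the smallness hypothesis $r_2 + r_3 \le \delta_2$. Once every estimate is $\eps$-independent, passing to the limit and patching the three zones deliver \eqref{H2 estimate of V} and show that $V$ is in fact the strong solution.
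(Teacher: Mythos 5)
Your proposal mirrors the paper's proof step for step: the same singular perturbation $\eps\der_{111}V_\eps$, the same multiplier $e^{-\mu x_1}\der_1 V_\eps$ with Lemma \ref{corollary-coeff extension}(f) at $m=0$ for the $\eps$-uniform $H^1$ bound, the same three-zone decomposition using reflection and \cite[Theorem 8.12]{GilbargTrudinger} in the elliptic strips, and the same multiplier $\chi^2\der_{111}V_\eps$ with Lemma \ref{corollary-coeff extension}(f) at $m=1$ together with the absorption device from Lemma \ref{lemma for pre H2 estimate of vm, part 2} in the degenerate strip. The only slip is a sign: the coercive coefficient produced by the integrations by parts is $\tfrac12\bigl(-2\alp^P-\der_1\alp_{11}^P\bigr)+2\der_2\alp_{12}^P$ (not $-\tfrac12\bigl(-2\alp^P-\der_1\alp_{11}^P\bigr)$), which is what Lemma \ref{corollary-coeff extension}(f) bounds below by $\lambda_{L_*}/4$.
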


Now, we return to the boundary value problem \eqref{lbvp-main general}. By Proposition \ref{proposition-wp of bvp with approx coeff}, it has a unique strong solution $(v,w)\in [H^1(\Om_L)\cap H^2_{\rm loc}(\Om_L)]\times H^2(\Om_L)$. Note that
\begin{equation*}
  (a_{11}^P, a_{12}^P, a^P)=(\alp_{11}^P, \alp_{12}^P, \alp^P) \quad\tx{in $\Om_L$}.
\end{equation*}
So we can regard $v$ as a strong solution to
\begin{equation*}
  \begin{cases}
  \mcl{L}_*^P v= F_1 \quad&\mbox{in $\Om_L$},\\
  v=0\quad&\mbox{on $\Gamen$},\quad
  \der_2 v=0\quad \mbox{on $\Gamw$}
  \end{cases}
\end{equation*}
for $F_1$ given by \eqref{definition-bootstrap}. Note that it follows from \eqref{estimate 1 of v and w} and \eqref{definition-bootstrap} that
\begin{equation*}
  \|F_1\|_{H^1(\Om_L)}\le C(\|f_1\|_{H^1(\Om_L)}+\|f_2\|_{H^1(\Om_L)}).
\end{equation*}
\begin{lemma}\label{lemma:V vs v}
Given a function $F\in H^1(\Omext)$, suppose that
\begin{equation*}
  F=F_1\quad\tx{in $\Om_{L}$}.
\end{equation*}
For the constant $\delta_2\in(0, \delta_1]$ from Lemma \ref{lemma:aux bvp for V-global H2 est}, suppose that $P\in \iterseta$ for two positive constants $r_2$ and $r_3$ satisfying
\begin{equation}
\label{condition for r-auxiliary}
  r_2+r_3\le \delta_2.
\end{equation}
If $V\in H^2(\Omext)$ is a strong solution to \eqref{bvp-mixed type-extended}, then it holds that
\begin{equation*}
  V=v\quad\tx{in $\Om_L$}.
\end{equation*}

\begin{proof}
Fix a small constant $t>0$, and set $z:=v-V$ in $\Om_L\cap\{x_1<L-t\}$. Then, $z\in H^2(\Om_L\cap\{x_1<L-t\})$ satisfies
\begin{equation*}
  \begin{split}
  \extoperator^P z=0\quad&\tx{in $\Om_L\cap\{x_1<L-t\}$},\\
  z=0\quad&\tx{on $\Gamen$},\\
  \der_2z=0\quad&\tx{on $
  \Gamw\cap \{x_1<L-t\}$}.
  \end{split}
\end{equation*}
So we can integrate by parts to get
\begin{equation}
\label{estimate of xi}
\begin{split}
0&=
\int_{\Om_L\cap\{x_1<L-t\}}
\extoperator^P z\der_1 z\,d\rx=T_1+T_2
\end{split}
\end{equation}
for
\begin{equation*}
  \begin{split}
T_1:=&\int_{\Om_L\cap\{x_1<L-t\}}\left(-\der_1\alp_{11}^P+2\alp^P-2\der_2\alp^P_{12}\right)
\frac{(\der_1 z)^2}{2}\,d\rx-\int_{\{(L-t, x_2):|x_2|<1\}}\frac{(\der_2 z)^2}{2}\,dx_2,\\
T_2:=&\left(\int_{\{(L-t, x_2):|x_2|<1\}}-\int_{\Gamen}\right)\alp_{11}^P\frac{(\der_1 z)^2}{2}\,dx_2.
\end{split}
\end{equation*}
First of all, it follows from Lemma \ref{corollary-coeff extension}(f) that $T_1\le 0$ holds. From the statements ($\tx{g}_3$) and ($\tx{g}_4$) of Lemma \ref{lemma on L_1}, it directly follows that $T_2\le 0$ holds for $0<t<\frac{L-\ls}{20}$. So we obtain that
\begin{equation*}
\der_1 z=0\quad\tx{in $\Om_L$}.
\end{equation*}
Then we conclude that $z\equiv 0$ in $\Om_L$ owing to the boundary condition $z=0$ on $\Gamen$.

\end{proof}

\end{lemma}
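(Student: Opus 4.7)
The plan is to show that $z := v - V$ vanishes identically in $\Om_L$ by a direct energy argument analogous to Step 1 in the proof of Lemma \ref{proposition-H1-apriori-estimate}, exploiting the boundary-sign structure of the extended operator $\mcl{L}_*^P$ rather than any global positivity. Since the coefficients satisfy $(\alp_{ij}^P,\alp^P) = (a_{ij}^P, a^P)$ on $\Om_L$ (Lemma \ref{corollary-coeff extension}(a)) and $F = F_1$ on $\Om_L$, both $v$ and $V$ solve $\mcl{L}_*^P(\cdot) = F_1$ in $\Om_L$. Hence the difference $z$ lies in $H^2_{\mathrm{loc}}(\Om_L)$ and solves the homogeneous problem $\mcl{L}_*^P z = 0$ in $\Om_L$, with $z = 0$ on $\Gamen$ and $\der_2 z = 0$ on $\Gamw$.

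Because $v$ is only in $H^2_{\mathrm{loc}}(\Om_L)$ (no strong trace available on $\Gamex$), I would cut the domain off at $x_1 = L-t$ for a small $t > 0$, test the equation $\mcl{L}_*^P z = 0$ against $\der_1 z$, and integrate over $\Om_L \cap \{x_1 < L-t\}$. Integration by parts, together with $\der_2 z = 0$ on $\Gamw$ and the compatibility condition $\alp_{12}^P = 0$ on $\Gamw$ from Lemma \ref{corollary-coeff extension}(c), produces an interior term whose integrand is $\tfrac12\bigl(-\der_1 \alp_{11}^P + 2\alp^P - 2\der_2 \alp_{12}^P\bigr)(\der_1 z)^2$ together with a boundary quadratic form in $\der_2 z$ on $\{x_1 = L-t\}$ and a quadratic form in $\der_1 z$ on $\Gamen \cup \{x_1 = L-t\}$ with coefficient $\tfrac12 \alp_{11}^P$.

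The key point is that every one of these terms has a favorable sign. The interior integrand is nonpositive by Lemma \ref{corollary-coeff extension}(f) with $m=0$ (this is exactly the inequality $-2\alp^P + \der_1 \alp_{11}^P + 4|\der_2 \alp_{12}^P| \le -\lambda_{L_*}/2$, which is why the lemma is formulated that way). For the boundary pieces, Lemma \ref{lemma on L_1}$(g_3)$ gives $\alp_{11}^P > 0$ on $\Gamen$, so $-\int_{\Gamen} \tfrac12\alp_{11}^P (\der_1 z)^2\, dx_2 \le 0$; on the other hand, Lemma \ref{lemma on L_1}$(g_4)$ gives $\alp_{11}^P \le -\lambda_1 < 0$ for $x_1$ sufficiently close to $L$ (in particular for $L-t$ with $t$ small), so $\int_{\{x_1 = L-t\}} \tfrac12 \alp_{11}^P (\der_1 z)^2\, dx_2 \le 0$ as well; and the $(\der_2 z)^2$-boundary term comes with the correct sign directly from $\der_{22}$. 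Summing these contributions gives $0 = \int \mcl{L}_*^P z \cdot \der_1 z \le 0$, with every piece nonpositive, forcing $\der_1 z \equiv 0$ in $\Om_L \cap \{x_1 < L-t\}$.

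The main obstacle is simply organizing the integration by parts so that the moving right boundary at $x_1 = L-t$ lies in the hyperbolic region of $\mcl{L}_*^P$; this is where the extension of coefficients and condition \eqref{almost sonic condition1 full EP} play a role, since Lemma \ref{lemma on L_1}$(g_4)$ requires $x_1 \ge L - (L-\ls)/10$. Once $\der_1 z \equiv 0$ in $\Om_L \cap \{x_1 < L-t\}$, the boundary condition $z = 0$ on $\Gamen$ yields $z \equiv 0$ on that subdomain, and letting $t \to 0^+$ gives $V = v$ in $\Om_L$ as desired.
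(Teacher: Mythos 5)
Your proposal follows the paper's proof essentially step by step: cut the domain at $x_1 = L-t$, test $\mcl{L}_*^P z = 0$ against $\der_1 z$, integrate by parts using $\der_2 z = 0$ and $\alp_{12}^P = 0$ on $\Gamw$, control the interior term by Lemma \ref{corollary-coeff extension}(f) with $m=0$, and control the two boundary quadratic forms in $\der_1 z$ via Lemma \ref{lemma on L_1}($g_3$) on $\Gamen$ and ($g_4$) on $\{x_1 = L-t\}$. This is exactly the argument in the paper, and the conclusion is correct. Two small slips are worth flagging, though neither affects the strategy: the inequality you write down from Lemma \ref{corollary-coeff extension}(f) has the signs of $\alp^P$ and $\der_1\alp_{11}^P$ reversed (the lemma states $-2\alp^P + \der_1\alp_{11}^P - 4|\der_2\alp_{12}^P| \ge \lambda_{L_*}/2$, equivalently $2\alp^P - \der_1\alp_{11}^P + 4|\der_2\alp_{12}^P| \le -\lambda_{L_*}/2$, which still yields nonpositivity of the interior integrand as you claim); and condition \eqref{almost sonic condition1 full EP} is not actually what makes $(g_4)$ apply here --- all that is needed is the standing hypothesis that $\bar u_1(L) > \us$ so that $\{x_1 = L-t\}$ lies in the supersonic region for $t$ small, and the closeness-to-sonic bound $1+d$ is irrelevant to this uniqueness argument.
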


\begin{corollary}\label{corollarly:global H2 estimate of v}
Suppose that $P\in \iterseta$ for two constants $r_2$ and $r_3$ satisfying the condition \eqref{condition for r-auxiliary}. Let $(v,w)$ be a strong solution to \eqref{lbvp-main general}, which is obtained by Proposition \ref{proposition-wp of bvp with approx coeff}. Then, one has
\begin{equation}
\label{estimate-H2 for v H3 for w-global}
  \|v\|_{H^2(\Om_L)}+\|w\|_{H^3(\Om_L)}\le C\left(\|f_1\|_{H^1(\Om_L)}+\|f_2\|_{H^1(\Om_L)}\right).
\end{equation}
\begin{proof}
For the function $F_1$ given by \eqref{definition-bootstrap}, let us define a function $F$ by
\begin{equation}
\label{extension of F}
  F:=\mcl{E}F_1\quad\tx{in $\Om_{L_*}$}
\end{equation}
for the extension operator $\mcl{E}$ given by \eqref{extension onto Om L_*}. Then, for each $k=1,2,3$, the function $F$ satisfies the estimate
\begin{equation}
\label{estimate of F}
  \|F\|_{H^k(\Om_{L_*})}\le C_k\left(\|f_1\|_{H^k(\Om_L)}+\|w\|_{H^{k+1}(\Om_L)}\right)
\end{equation}
for some constant $C_k>0$ fixed depending only on the data and $k$, as long as the norm $\|w\|_{H^{k+1}(\Om_L)}$ is finite. Then, owing to the global $H^2$-estimate of a solution to the auxiliary boundary value problem \eqref{bvp-mixed type-extended} and Lemma \ref{lemma:V vs v}, we immediately obtain the estimate
\begin{equation*}
  \|v\|_{H^2(\Om_L)}\le C\left(\|f_1\|_{H^1(\Om_L)}+\|f_2\|_{H^1(\Om_L)}\right).
\end{equation*}
Then the estimate \eqref{estimate-H2 for v H3 for w-global} is obtained by applying a bootstrap argument and a standard elliptic estimate result. Further details can be given by an analogy of Step 4 in the proof of Proposition \ref{proposition-wp of bvp with approx coeff}.

\end{proof}
\end{corollary}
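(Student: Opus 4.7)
The plan is to reduce the desired global $H^2$-estimate of $v$ to the already proven global $H^2$-estimate for the auxiliary boundary value problem \eqref{bvp-mixed type-extended} on the larger domain $\Om_{L_*}$, and then use the elliptic equation for $w$ together with a bootstrap to upgrade the regularity of $w$.

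First, I would rewrite the equation for $v$ from \eqref{lbvp-main general} in the form $\mfrak{L}_1^P(v,w)=f_1$ and move the $w$-coupling terms to the right-hand side to obtain $\mcl{L}^P v = F_1$ in $\Om_L$, where $F_1$ is given by \eqref{definition-bootstrap}. By the already established strong solution theory (Proposition \ref{proposition-wp of bvp with approx coeff}), one has $w\in H^2(\Om_L)$, so $F_1\in H^1(\Om_L)$ with a quantitative bound $\|F_1\|_{H^1(\Om_L)}\le C(\|f_1\|_{H^1(\Om_L)}+\|w\|_{H^2(\Om_L)})\le C(\|f_1\|_{H^1(\Om_L)}+\|f_2\|_{H^1(\Om_L)})$ thanks to \eqref{estimate 1 of v and w}. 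Next, I would extend $F_1$ to an $H^1$-function $F:=\mcl{E}F_1$ on $\Om_{L_*}$ using the extension operator defined in \eqref{extension onto Om L_*}, which preserves the relevant slip-type compatibility conditions on $\Gamw$ and whose operator norm depends only on $L$.

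Now comes the key step: since the extended principal coefficients $(\alp_{11}^P,\alp_{12}^P,\alp^P)$ were constructed in Lemma \ref{corollary-coeff extension} to be strictly elliptic near $\Gamext$ and to agree with $(a_{11}^P,a_{12}^P,a^P)$ on $\Om_L$, Lemma \ref{lemma:aux bvp for V-global H2 est} provides a unique strong solution $V\in H^2(\Om_{L_*})$ to \eqref{bvp-mixed type-extended} with right-hand side $F$, together with the estimate $\|V\|_{H^2(\Om_{L_*})}\le C\|F\|_{H^1(\Om_{L_*})}$. By Lemma \ref{lemma:V vs v}, the restriction $V|_{\Om_L}$ coincides with our solution $v$ (this is where the acceleration of the background flow, via Lemma \ref{corollary-coeff extension}(f), is crucially used to ensure uniqueness through an integration-by-parts argument). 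Combining these yields the desired bound $\|v\|_{H^2(\Om_L)}\le C(\|f_1\|_{H^1(\Om_L)}+\|f_2\|_{H^1(\Om_L)})$.

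For the $H^3$-estimate of $w$, I would treat $w$ as a solution to the elliptic boundary value problem \eqref{bvp for w} with right-hand side $F_2 = f_2+c_0 w+c_1\der_1 v$. Since the coefficients $c_0,c_1$ are smooth and depend only on $x_1$, and since $v\in H^2(\Om_L)$ and $w\in H^2(\Om_L)$ by what we just proved, $F_2\in H^1(\Om_L)$ with norm controlled by $\|f_1\|_{H^1(\Om_L)}+\|f_2\|_{H^1(\Om_L)}$. The boundary conditions for $w$ are mixed Neumann/Dirichlet with the compatibility $\der_2 w=0$ on $\Gamw$, so the method of reflection across $\Gamw$ together with the standard global elliptic estimate \cite[Theorem 8.12]{GilbargTrudinger} yields $\|w\|_{H^3(\Om_L)}\le C\|F_2\|_{H^1(\Om_L)}$, finishing the proof. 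The main (conceptual) obstacle in this argument has already been overcome upstream—namely, defining the correct elliptic extension in Lemma \ref{corollary-coeff extension} and establishing the uniqueness link in Lemma \ref{lemma:V vs v}—so at this stage the work is essentially an assembly of these ingredients followed by a standard bootstrap for $w$.
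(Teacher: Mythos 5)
Your proof is correct and follows essentially the same path as the paper: extend $F_1$ by $\mcl{E}$ to $\Om_{L_*}$, invoke the global $H^2$-estimate of the auxiliary problem \eqref{bvp-mixed type-extended} from Lemma \ref{lemma:aux bvp for V-global H2 est} together with the identification $V=v$ on $\Om_L$ from Lemma \ref{lemma:V vs v}, and then bootstrap $w$ through the elliptic problem \eqref{bvp for w}. One small slip: to upgrade $w$ from $H^2$ to $H^3$ given $F_2\in H^1$ you need the higher-order boundary regularity result \cite[Theorem 8.13]{GilbargTrudinger} rather than Theorem 8.12, but this is a citation detail and does not affect the argument.
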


\subsection{Global $H^3$-estimate of $v$}
Let us fix $P\in \iterseta$ for $(r_2, r_3)$ satisfying the condition \eqref{condition for r-auxiliary} for the constant $\delta_2>0$ given from Lemma \ref{lemma for pre H2 estimate of Vm-ext sing pert}. And, let $(v,w)$ be a strong solution to \eqref{lbvp-main general}.

\begin{lemma}\label{lemma-global H3 estimate}
One can further reduce $\delta_2>0$ depending only on the data so that the following estimate holds:
\begin{equation}
\label{H3 estimate of v}
\|v\|_{H^3(\Om_L)}\le C\left(\|f_1\|_{H^2(\Om_L)}+\|f_2\|_{H^1(\Om_L)}\right).
\end{equation}

\begin{proof}{\textbf{Step 1.}}
For the function $F$ given by \eqref{extension of F}, let $V$ be a strong solution to \eqref{bvp-mixed type-extended}. From \eqref{estimate-H2 for v H3 for w-global} and \eqref{estimate of F}, it directly follows that
\begin{equation*}
  \|F\|_{H^2(\Omext)}\le C\left(\|f_1\|_{H^2(\Om_L)}+\|f_2\|_{H^1(\Om_L)}\right).
\end{equation*}
In this proof, we shall show that
\begin{equation*}
  \|V\|_{H^3(\Om_L)}\le C\|F\|_{H^2(\Omext)}
\end{equation*}
so that the estimate \eqref{H3 estimate of v} directly follows owing to Lemma \ref{lemma:V vs v}.

\quad\\
{\textbf{Step 2.}} By Lemma \ref{lemma on L_1}($\tx{g}_3$), Lemma \ref{corollary-coeff extension}(a) and \eqref{uniform elliticity-ext domain}, the differential operator $\extoperator^P$ is uniformly elliptic in $\Omext\cap\{x_1\le \frac{5\ls}{8}\,\,\tx{or}\,\,x_1\ge \Le\}$. So we can apply standard elliptic estimates (e.g. \cite[Theorems 8.10 and 8.13]{GilbargTrudinger}) and the method of reflections to establish the estimate
\begin{equation}
\label{H3 estimate of V from elliptic part}
  \|V\|_{H^3(\Omext\cap\left(\{x_1<\frac{\ls}{2}\}\cup\{\Le+2l_0<x_1<\Le+18l_0\}\right))}\le C\|F\|_{H^1(\Omext)}.
\end{equation}
According to \cite[Theorems 8.10 and 8.13]{GilbargTrudinger}, a priori $H^3$-estimate of $V$ can be achieved if the coefficients $\{\alp_{ij}^P\}$ and $\alp^P$ are in $C^{1,1}(\ol{\Omext})$. But one should note that this is a sufficient condition rather than a necessary condition. The estimate \eqref{H3 estimate of V from elliptic part} can be still obtained for $H^3$-coefficients(see Lemma \ref{corollary-coeff extension}(d)) by adjusting the proofs of \cite[Theorems 8.10 and 8.13]{GilbargTrudinger} with using an argument of quotient differences, the Sobolev inequality and the generalized H\"{o}lder inequality.

\quad\\
{\textbf{Step 3.}} In order to estimate the $H^3$-norm of $V$ across the approximated sonic boundary $x_1=\gs^P(x_2)$, we shall return to the singular perturbation problem \eqref{bvp-sing-pert-extended}, and employ the idea given in the proofs of Lemmas \ref{lemma for pre H2 estimate of vm, part 1} and \ref{lemma for pre H2 estimate of vm, part 2} to establish a result similar to \cite[Theorem 1.5]{KZ}.

Let us fix a smooth cut-off function $\zeta\in C^{\infty}_0(\R)$ such that
\begin{equation}
\label{definition-zeta}
\begin{split}
& {\zeta}(x_1)=\begin{cases}0\quad&\mbox{for $x_1\le \frac{\ls}{8}$},\\
 1\quad&\mbox{for $\frac{\ls}{4}\le x_1 \le \Le +14l_0$},\\
 0\quad&\mbox{for $x_1\ge \Le+16 l_0$},
 \end{cases}\quad \tx{and}\quad
 0\le  {\zeta}\le 1\quad\tx{on $\R$}.
 \end{split}
\end{equation}
Such a function $\zeta$ can be fixed with satisfying the estimate
\begin{equation*}
\left|\frac{d^k\zeta}{dx_1^k}\right|\le C_k\left(1+\frac{1}{\ls^k}+\frac{1}{l_0^k}\right)
\end{equation*}
for a constant $C_k>0$ depending only on $k$, for each $k\in \mathbb{N}$.

Given $\eps\in(0, 1]$, let $V_{\eps}$ be a smooth solution to \eqref{bvp-sing-pert-extended} with $F$ being given by \eqref{extension of F}. Then we have
\begin{equation}
\label{integral eqn for H3 estimate of Vm}
  \int_{\Omext} \der_1(\eps\der_{111}V_{\eps}+\extoperator^P V_{\eps})\zeta^2\,\der_1^4V_{\eps}\,d\rx=\int_{\Omext}\der_1 F\zeta^2 \der_1^4V_{\eps}  \,d\rx.
\end{equation}
For simplicity, let us set
\begin{equation}
\label{definition of W}
W:=\der_1^2 \Veps.
\end{equation}
Denote $\alp_{ij}^P$ and $\alp^{P}$ by $\alp_{ij}$ and $\alp$, respectively. Integrating by parts gives
\begin{equation*}
  \tx{LHS of \eqref{integral eqn for H3 estimate of Vm}}
  =\int_{\Omext} \eps (\der_{11}W)^2\zeta^2+(A_0+A_1) \zeta^2+ (A_2+A_3+A_4)(\zeta^2)'+A_5(\zeta^2)''\,d\rx
\end{equation*}
for
\begin{equation*}
  \begin{split}
  &A_0:=(-\frac 32\der_1 \alp_{11}-\alp+2\der_2\alp_{12})(\der_1W)^2,\\
  &A_1:=-(\der_{1}^2\alp_{11}W
  +2\der_{1}^2\alp_{12}\der_{12}\Veps
  +\der_{1}^2\alp\der_1\Veps
  +2\der_1\alp W+4\der_1\alp_{12}\der_{2}W)\der_1W,\\
&A_2:=-\left((\alp+\der_1\alp_{11})W
+\der_1\alp\der_1\Veps
+2\der_1\alp_{12}\der_{12}\Veps\right)W,\\
&A_3:=-\frac 12 \alp_{11}(\der_1W)^2-\frac 32 (\der_{2}W)^2,\\
&A_4:=\alp_{12}W\der_{2}W,\\
&A_5:=-\der_{12}\Veps\der_{2}W.
  \end{split}
\end{equation*}

\quad\\
{\textbf{Step 4.}}
Applying Lemma \ref{corollary-coeff extension}(f) with $m=2$, we have
\begin{equation}
\label{essential part 1}
  \int_{\Omext} A_0\zeta^2 \,d\rx
  \ge \frac{\lambda_{L_*}}{2}\int_{\Omext} (\der_1W)^2\zeta^2\,d\rx.
\end{equation}
By the definition \eqref{definition of extended coefficient} of $\alp_{11}$, we have
\begin{equation*}
\int_{\Omext}\der_{11}\alp_{11}W \der_1 W\zeta^2\,d\rx
= \int_{\Omext}((\bar{\alp}_{11})''+\der_{11}d\alp_{11})
W \der_1 W\zeta^2\,d\rx.
\end{equation*}
 Note that $\bar{\alp}_{11}$ is smooth in $\Omext$. Therefore, for any $t>0$, it holds that
\begin{equation*}
  |\int_{\Omext} (\bar{\alp}_{11})''\der_{11}\Veps\der_1W\zeta^2\,d\rx|\le \int_{\Om_{\hat{L}}} \left(\frac{C}{t}(\der_{11}\Veps)^2+t(\der_1W)^2\right)\zeta^2\,d\rx.
\end{equation*}
Applying the generalized H\"{o}lder inequality, the Sobolev inequality, the Poincar\'{e} inequality and Lemma \ref{corollary-coeff extension}(d), we obtain that
\begin{equation*}
\begin{split}
&\left|\int_{\Omext}\der_{11}(d \alp_{11})W \der_1W\zeta^2\,d\rx\right|\\
  &\le \|\der_{11}(d\alp_{11})\|_{L^4(\Omext)}\|\der_1W\zeta\|_{L^2(\Omext)}
  \|\zeta W\|_{L^4(\Omext)}\\
  &\le C(r_2+r_3) \|\der_1W\zeta\|_{L^2(\Omext)}\|\zeta W\|_{H^1(\Omext)}\\
  &\le C(r_2+r_3)\left(\int_{\Omext} (\der_1W)^2\zeta^2\,d\rx \right)^{\frac 12} \left(\int_{\Omext}W^2(\zeta^2+|{\zeta}'|^2)+\zeta^2|D(\der_{11}\Veps)|^2\,d\rx\right)^{\frac 12}\\
  &\le t\int_{\Omext} (\der_1W)^2\zeta^2\,d\rx +\frac{C(r_2+r_3)^2}{t}
\int_{\Omext}W^2(\zeta^2+|{\zeta}'|^2)
+\zeta^2|DW|^2\,d\rx
  \end{split}
\end{equation*}
for any $t>0$.
Similarly, we also have
\begin{equation*}
\begin{split}
&\left|\int_{\Omext} \der_{11}\alp_{12}\der_{12}\Veps \der_1W \zeta^2\,d\rx\right|\\
&  \le t\int_{\Omext}
(\der_1W)^2\zeta^2\,d\rx +\frac{C(r_2+r_3)^2}{t}
\int_{\Omext}(\zeta^2+|{\zeta}'|^2)(\der_{12}\Veps)^2
+|D(\der_{12}\Veps)|^2\zeta^2\,d\rx.
\end{split}
\end{equation*}

Solving the equation $\eps\der_{111}\Veps+\extoperator^P\Veps=F$ stated in \eqref{bvp-sing-pert-extended} for $\der_{22}\Veps$, and differentiating the result with respect to $x_1$, one has
\begin{equation*}
  \der_{122}\Veps=\der_1F-\eps\der_1^2 W
  -\der_1(\alp_{11}\der_{11}\Veps+2\alp_{12}\der_{12}\Veps+\alp\der_1 \Veps).
\end{equation*}
This expression yields
\begin{equation*}
\begin{split}
  &\int_{\Omext}(\der_{122}\Veps)^2\zeta^2\,d\rx\\
&\le C\int_{\Omext} (\der_1F)^2\zeta^2+\eps^2(\der_1^2W)^2\zeta^2
+\zeta^2\left((\der_1\Veps)^2+|D\der_1\Veps|^2+|DW|^2\right)\,d\rx.
  \end{split}
\end{equation*}
Combining all the previous four inequalities together yields that, for any $t>0$,
\begin{equation}
\label{essential part 4}
\begin{split}
  &\left|\int_{\Omext}(\der_{1}^2\alp_{11} W
  +2\der_{1}^2\alp_{12}\der_{12}\Veps)\der_1W\zeta^2\,d\rx\right|\\
  &\le t\int_{\Omext} (\der_1W)^2\zeta^2\,d\rx+ \frac{C{(r_2+r_3)}^2}{t}\int_{\Omext}\left(\eps^2(\der_1^2W)^2+|DW|^2\right)\zeta^2\,d\rx\\
  &\phantom{\le}+
  \frac{C{(r_2+r_3)}^2}{t}\int_{\Omext}((\der_1\Veps)^2+|D\der_1\Veps|^2+(\der_1F)^2)(\zeta^2+(\zeta')^2)\,d\rx.
  \end{split}
\end{equation}
Also, it can be directly checked that, for any $t>0$, we have
\begin{equation}
\label{essential part 5}
\begin{split}
&\left|\int_{\Omext} (\der_{11}\alp\der_1\Veps
  +2\der_1\alp W+4\der_1\alp_{12}\der_{2}W)\der_1 W\zeta^2\,d\rx\right|\\
&\le t\int_{\Omext}(\der_1W)^2\zeta^2\,d\rx+\frac Ct\|\der_1\Veps\zeta\|^2_{H^1({\Omext})}
+\frac{C{(r_2+r_3)}^2}{t}\int_{{\Omext}}(\der_2W)^2\zeta^2\,d\rx.
\end{split}
\end{equation}
We combine all the estimates \eqref{essential part 1}--\eqref{essential part 5} together, and choose the constant $t>0$ sufficiently small to get
\begin{equation*}
\begin{split}
  &\int_{\Omext} (A_0+A_1)\zeta^2\,d\rx\\
  &\ge \frac{\lambda_{L_*}}{4}\int_{\Omext} (\der_1W)^2\zeta^2\,d\rx
  -C{(r_2+r_3)}^2\int_{\Omext}\left(|DW|^2+\eps^2(\der_{11} W)^2\right)\zeta^2\,d\rx\\
  &\phantom{\ge \ge\ge} -C\int_{\Omext}(\zeta^2+|\zeta'|^2)\left((\der_1\Veps)^2+|D\der_1\Veps|^2\right)+(\der_1F)^2\,d\rx.
  \end{split}
\end{equation*}
For the domain $\Omext^{(1)}$ given in Lemma \ref{lemma for pre H2 estimate of Vm-ext sing pert}, it hold that $\displaystyle{{\rm spt}\, \zeta\subset \Omext^{(1)}}$ so we can apply Lemma \ref{lemma for pre H2 estimate of Vm-ext sing pert} to further estimate as
\begin{equation}
 \label{estimate of A0 part1}
\begin{split}
  \int_{\Omext} (A_0+A_1)\zeta^2\,d\rx\ge & \frac{\lambda_{L_*}}{4}\int_{\Omext} (\der_1W)^2\zeta^2\,d\rx-C\|F\|_{H^1(\Omext)}^2\\
  &-C{(r_2+r_3)}^2\int_{\Omext}\left(|DW|^2+\eps^2(\der_{11} W)^2\right)\zeta^2\,d\rx.
  \end{split}
\end{equation}

\quad\\
{\textbf{Step 5.}} Set
\begin{equation*}
\mcl{I}_1:=\int_{\Omext} A_2(\zeta^2)' \,d\rx,\quad
\mcl{I}_2:=  \int_{\Omext} (A_3+A_4) (\zeta^2)' +A_5(\zeta^2)''\,d\rx.
\end{equation*}
\\
Since $\displaystyle{{\rm spt}\, \zeta'\subset \Omext^{(1)}}$, we can apply Lemmas \ref{corollary-coeff extension}(d) and \ref{lemma for pre H2 estimate of Vm-ext sing pert} to get
\begin{equation}
\label{estimate-A2-final1}
|\mcl{I}_1| \le C\|F\|^2_{H^1(\Omext)}.
\end{equation}

\quad\\
Next, we shall adjust the proof of Lemma \ref{lemma for pre H2 estimate of vm, part 1} to estimate the term $\mcl{I}_2$.
Let us fix a smooth function $\xi$ with double bumps to satisfy the following properties:
\begin{equation*}
\begin{split}
\xi(x_1)=\begin{cases}0\quad&\mbox{for $x_1\le \frac{\ls}{10}$},\\
 1\quad&\mbox{for $\frac{\ls}{8}\le x_1 \le \frac{5\ls}{8}$},\\
 0\quad&\mbox{for $\frac{3\ls}{4}\le x_1\le \Le+8l_0$},\\
  1\quad&\mbox{for $\Le+12l_0\le x_1 \le \Le+16l_0$},\\
 0\quad&\mbox{for $x_1\ge \Le+17l_0$},
 \end{cases}\quad
 \tx{and}\quad
 0\le  \xi(x_1)\le 1\quad\tx{on $\R$}.
 \end{split}
\end{equation*}
One can fix $\xi\in C^{\infty}(\R)$ so that, for each $k\in \mathbb{N}$, it holds that
\begin{equation*}
\left|\frac{d^k\xi}{dx_1^k}\right|\le C_k\left(1+\frac{1}{\ls^k}+\frac{1}{l_0^k}\right)
\end{equation*}
with a constant $C_k>0$ depending only on $k$.
For such a function $\chi$, we use the following integral representation to estimate the term $\mcl{I}_2$:
\begin{equation}
\label{estiate-A2-1}
   \int_{\Omext} \der_1(\eps\der_{111}\Veps+\extoperator^P\Veps)\xi^2\der_{1}^3\Veps \,d\rx=\int_{\Omext}\der_1 F\xi^2\der_{1}^3\Veps\,d\rx.
\end{equation}

First of all, we integrate by parts to get
\begin{equation}
\label{estiate-A2-2}
\int_{\Omext}\der_1(\eps\der_{111}\Veps)\xi^2\der_{1}^3\Veps\,d\rx
  =-\int_{\Omext}\eps (\der_{111}\Veps)^2\xi\xi'\,d\rx.
\end{equation}
Since $\rm{spt}\,\xi'\subset \Omext^{(1)}$, we can apply Lemma
\ref{lemma for pre H2 estimate of Vm-ext sing pert} to obtain the estimate
\begin{equation*}
\label{estimate of sing pert term in elliptic domain}
  |\int_{\Omext}\der_1(\eps\der_{111}\Veps)\der_{1}^2\Veps\xi^2\,d\rx|\le C\|F\|^2_{H^1(\Omext)}.
\end{equation*}

Set
\begin{equation*}
  \begin{split}
\mcl{J}_1&:=\int_{\Omext} (\der_1\alp_{11}\der_1^2\Veps+2\der_1\alp_{12}\der_{12}\Veps
  +\der_1\alp\der_1\Veps)\xi^2\der_1^3\Veps\,d\rx\\
  \mcl{J}_2&:=\int_{\Omext} \extoperator^P(\der_1 \Veps)\xi^2\der_1^2\Veps\,d\rx.
  \end{split}
\end{equation*}
By using Lemma \ref{corollary-coeff extension}(d) and Lemma \ref{lemma for pre H2 estimate of Vm-ext sing pert}, for any $t>0$, one can directly check that
\begin{equation}
\label{estiate-A2-3}
  |\mcl{J}_1|\le t\int_{\Omext}(\der_{1} W)^2\xi^2\,d\rx+\frac Ct\|F\|^2_{H^1(\Omext)}.
\end{equation}

Continuing to use the definition \eqref{definition of W} of $W$, a direct computation yields that
\begin{equation*}
  \der_{22}(\der_1 \Veps)\xi^2\der_{1}^3\Veps=\der_2(\der_{12}\Veps\xi^2\der_{1}W)
  -\der_1(\der_{12} \Veps \xi^2\der_{2}W)+(\der_{2}W)^2\xi^2+2\der_{12}\Veps\der_{2}W\xi\xi'.
\end{equation*}
Integrating by parts with using this expression gives
\begin{equation*}
  \mcl{J}_2=\int_{\Omext} (\sum_{i,j=1}^2\alp_{ij}\der_iW\der_jW
+\alp\der_{11}\Veps\der_{1}W)\xi^2
  +2\der_{12}\Veps \der_2 W\xi\xi'\, d\rx
\end{equation*}
for $\alp_{22}\equiv 1$ in $\Omext$. It follows from Lemma \ref{lemma on L_1}($\tx{g}_3$), Lemma \ref{corollary-coeff extension}(a) and the estimates in \eqref{uniform elliticity-ext domain} that
\begin{equation*}
\begin{split}
\mcl{J}_2 \ge \int_{\Omext} (\lambda_1|DW|^2+\alp\der_{11}\Veps\der_{1}W)\xi^2
+2\der_{12}\Veps \der_2 W\xi\xi'\,d\rx
  \end{split}
\end{equation*}
for
$$\lambda_1=\min\{\lambda_0,\om_0\},$$
where the positive constants $\lambda_0$ and $\om_0$ from Lemma \ref{corollary-coeff extension}(a) and \eqref{uniform elliticity-ext domain}, respectively.
Then we apply Lemma \ref{lemma for pre H2 estimate of Vm-ext sing pert} to get
\begin{equation}
\label{estiate-A2-4}
 \mcl{J}_2\ge
  \frac{\lambda_1}{2}\int_{\Omext} |DW|^2\xi^2\,d\rx-C\|F\|^2_{H^1(\Omext)}.
\end{equation}
\quad\\

Let us set
\begin{equation*}
\Omext^{(2)}:=\Omext\cap \left\{(x_1,x_2):x_1\in \left(\frac{\ls}{8},\frac{5\ls}{8}\right)\cup (\Le+12l_0, \Le+16l_0)\right\}.
\end{equation*}
We combine all the estimates given in \eqref{estiate-A2-2}--\eqref{estiate-A2-4} with choosing  a constant $t>0$ sufficiently small in \eqref{estiate-A2-3} so that we can derive from \eqref{estiate-A2-1} that
\begin{equation}
\label{estimate-A2-final2}
\|DW\|_{L_2(\Omext^{(2)})} \le C\|F\|_{H^1(\Omext)}.
\end{equation}
Since $\rm spt \,\zeta'\cup \rm spt \,\zeta''\subset \Omext^{(2)}\subset \Omext^{(1)}$, we can apply Lemma \ref{lemma for pre H2 estimate of Vm-ext sing pert} and the estimate \eqref{estimate-A2-final2} to obtain the estimate
\begin{equation}
\label{estimate-A2 and A3}
 |\mcl{I}_2|\le  C\|F\|_{H^1(\Omext)}^2.
\end{equation}

\quad\\
{\textbf{Step 6.}} Back to \eqref{integral eqn for H3 estimate of Vm}, we combine the estimates \eqref{estimate of A0 part1}, \eqref{estimate-A2-final1} and \eqref{estimate-A2 and A3} to get
\begin{equation*}
\begin{split}
  {\tx{LHS of \eqref{integral eqn for H3 estimate of Vm}}}
  \ge
  &\int_{\Omext} \left(\eps(1-C\eps (r_2+r_3)^2)(\der_1^2W)^2
  +\left(\frac{\lambda_{L_*}}{4}-C(r_2+r_3)^2\right)(\der_{1}W)^2\right)\zeta^2\,d\rx\\
  &-C(r_2+r_3)^2\int_{\Omext}|\der_2 W|^2\zeta^2\,d\rx-C\|F\|_{H^1(\Omext)}^2.
\end{split}
\end{equation*}
Next, we integrate the right-hand side of \eqref{integral eqn for H3 estimate of Vm} by parts, and apply \eqref{estimate-A2-final2} to get the estimate
\begin{equation*}
  \begin{split}
  {|\tx{RHS of \eqref{integral eqn for H3 estimate of Vm}}|}
  \le t\int_{\Omext} (\der_{1}W)^2\zeta^2\,d\rx+\frac Ct\int_{\Omext} (\der_{11}F)^2\zeta^2\,d\rx+C\|F\|^2_{H^1(\Omext)}
  \end{split}
\end{equation*}
for any constant $t>0$.
From the two previous estimates, it follows that one can reduce the constant $\delta_2>0$ from the one given from Lemma \ref{lemma for pre H2 estimate of Vm-ext sing pert} so that if the inequality
$$r_2+r_3\le \delta_2$$
holds, then it follows that
\begin{equation}
\label{H3 estimate pre}
  \begin{split}
  &\int_{\Omext} \left(\frac{\eps}{2}(\der_1^2 W)^2
  +\frac{\lambda_{L_*}}{8}(\der_{1}W)^2\right)\zeta^2\,d\rx\\
  &\le C(r_2+r_3)^2\int_{\Omext} |\der_2 W|^2\zeta^2\,d\rx+C\|F\|_{H^2(\Omext)}^2.
  \end{split}
\end{equation}

In order to close the estimate \eqref{H3 estimate pre}, we shall adjust the arguments given in Step 2 in the proof of Lemma \ref{lemma for pre H2 estimate of vm, part 2}, and in Step (5-2) of this proof.

By Lemma \ref{corollary-coeff extension}(d), one can fix a constant $\beta>0$ depending only on the data so that the matrix $\mathbb{B}=\begin{pmatrix}\beta&\alp_{12}\\
\alp_{12}&1\end{pmatrix}$ satisfies
\begin{equation}\label{definition of beta}
\mathbb{B}\ge \frac{\beta}{2}\mathbb{I}_2\quad\tx{in $\ol{\Omext}$}.
\end{equation}
Next, we rewrite the equation $\displaystyle{\eps\der_{111}\Veps+\extoperator^P\Veps=F}$ as
\begin{equation*}
  \eps\der_{111}\Veps+\beta\der_{11}\Veps+2\alp_{12}\der_{12}\Veps+\der_{22}\Veps
  +\alp\der_1\Veps=F-(\alp_{11}-\beta)\der_{11}\Veps\quad\tx{in $\Omext$.}
\end{equation*}
By following the idea given in Step 5, with replacing the cut-off function $\xi$ by $\zeta$, we can derive the following estimate from the above equation:
\begin{equation}
\label{H3 estimate of Veps-2}
  \int_{\Omext}|DW|^2\zeta^2\le (\beta-\alp_{11})^2(\der_{1}W)^2\zeta^2\,d\rx+C\|F\|_{H^1(\Omext)}^2.
\end{equation}
We substitute this estimate into the right-hand side of \eqref{H3 estimate pre}, and further reduce the constant $\delta_2$ to obtain the estimate
\begin{equation*}
  \begin{split}
  \int_{\Omext} \frac{\eps}{2}(\der_1^2 W)^2\zeta^2
  +\frac{\lambda_{L_*}}{16}(\der_{1}W)^2\zeta^2\,d\rx
  \le C\|F\|_{H^2(\Omext)}^2.
  \end{split}
\end{equation*}
Then we use the estimate \eqref{H3 estimate of Veps-2} again to finally obtain the following result:
\begin{equation}
\label{H3 estimate of Veps}
  \sqrt{\eps}\|\der_{1}^2 W \zeta\|_{L^2(\Omext)}
  +\|DW \zeta\|_{L^2(\Omext)}\le C\|F\|_{H^2(\Omext)}.
\end{equation}

\quad\\
{\textbf{Step 7.}} Now we are ready to estimate the strong solution $V$ to \eqref{bvp-mixed type-extended}.
Even though the estimate \eqref{H3 estimate of Veps} is derived with assuming that $\Veps$ is smooth, it can be directly verified by the method of Galerkin's approximation that the estimate is valid for a $H^1$-weak solution $\Veps$ to \eqref{bvp-sing-pert-extended}. Therefore, we can pass to the limit (of a sequence) as $\eps$ tends to $0+$, and apply \eqref{estimate-H2 for v H3 for w-global}, \eqref{estimate of F}, and \eqref{H3 estimate of Veps} to get the estimate
\begin{equation}
\label{H3 estimate of V-1}
  \|D\der_{11}V\|_{L^2(\Omext\cap\{\frac{\ls}{4}<x_1<\Le+14l_0\})}\le C\left(\|f_1\|_{H^2(\Om_L)}+\|f_2\|_{H^1(\Om_L)}\right).
\end{equation}

For the constant $\beta>0$ given to satisfy \eqref{definition of beta}, we rewrite the equation $\extoperator^P V=F$ as
\begin{equation*}
  \beta\der_{11}V+2\alp_{12}\der_{12}V+\der_{22}V
  =F-\alp\der_1V-(\alp_{11}-\beta)\der_{11}V\quad\tx{in $\Omext$}.
\end{equation*}
By applying a standard elliptic estimate result with using \eqref{H3 estimate of V from elliptic part} and \eqref{H3 estimate of V-1}, we obtain the estimate
\begin{equation*}
  \|V\|_{H^3(\Omext\cap\{x_1<\Le+12l_0\})}\le C\left(\|f_1\|_{H^2(\Om_L)}+\|f_2\|_{H^1(\Om_L)}\right).
\end{equation*}
This estimate combined with Lemma \ref{lemma:V vs v} proves Lemma \ref{lemma-global H3 estimate}.

\end{proof}
\end{lemma}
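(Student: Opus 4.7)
The plan is to reduce the $H^3$-estimate of $v$ on $\Om_L$ to a corresponding estimate on $\Om_{L_*}$ for the strong solution $V$ of the extended auxiliary problem \eqref{bvp-mixed type-extended}, exploiting the identification $V=v$ in $\Om_L$ guaranteed by Lemma \ref{lemma:V vs v}. Concretely, I would first define $F:=\mcl{E}F_1$ by \eqref{extension of F}, noting that the global $H^2$-estimate of $v$ and $H^3$-estimate of $w$ from Corollary \ref{corollarly:global H2 estimate of v} yield $\|F\|_{H^2(\Omext)}\le C(\|f_1\|_{H^2(\Om_L)}+\|f_2\|_{H^1(\Om_L)})$. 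It therefore suffices to show $\|V\|_{H^3(\Omext\cap\{x_1<\Le+12 l_0\})}\le C\|F\|_{H^2(\Omext)}$.

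In the elliptic zones of $\extoperator^P$, namely near $\Gamen$ (by Lemma \ref{lemma on L_1}($\tx{g}_3$)) and near $\Gamext$ (by \eqref{uniform elliticity-ext domain}), I would invoke standard interior/boundary $H^3$-theory combined with the method of reflections across $\Gamwext$ made available by the compatibility condition $\alp_{12}^P=0$ on $\Gamwext$ from Lemma \ref{corollary-coeff extension}(c). Because the coefficients are only in $H^3(\Omext)$ (Lemma \ref{corollary-coeff extension}(d)), the classical $C^{1,1}$ hypothesis of \cite{GilbargTrudinger} needs to be replaced by difference-quotient arguments combined with the generalized Sobolev and H\"older inequalities; this is technical but routine. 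These estimates give control of $\|V\|_{H^3}$ on the set $\{x_1<\ls/2\}\cup\{\Le+2l_0<x_1<\Le+18 l_0\}$.

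The core obstacle is the $H^3$-estimate across the approximated sonic interface $\{x_1=\gs^P(x_2)\}$, where $\extoperator^P$ changes type. To handle it I would return to the singular perturbation \eqref{bvp-sing-pert-extended} and, for a smooth cut-off $\zeta$ as in \eqref{definition-zeta}, multiply the once-differentiated equation $\der_1(\eps\der_{111}V_\eps+\extoperator^P V_\eps)=\der_1 F$ by $\zeta^2\der_1^4 V_\eps$. Setting $W:=\der_1^2V_\eps$ and integrating by parts as in the proofs of Lemmas \ref{lemma:wp of singular pert prob-main}--\ref{lemma for pre H2 estimate of vm, part 2}, the decisive leading term becomes
\begin{equation*}
\int_{\Omext}\bigl(-\tfrac{3}{2}\der_1\alp_{11}^P-\alp^P+2\der_2\alp_{12}^P\bigr)(\der_1 W)^2\zeta^2\,d\rx,
\end{equation*}
and coercivity here is exactly what Lemma \ref{corollary-coeff extension}(f) with $m=2$ supplies (this is the whole point of the condition \eqref{essential condi for H-k est}). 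The remaining terms split into: (i) error terms involving $H^3$-norms of the perturbed coefficients, which are absorbed by choosing $\delta_2$ smaller and using the generalized H\"older inequality together with the Sobolev embedding $H^1\hookrightarrow L^4$ in 2D; (ii) boundary-layer terms supported on $\{\zeta'\neq 0\}\cup\{\zeta''\neq 0\}$, which lie in the elliptic region and are estimated via Lemma \ref{lemma for pre H2 estimate of Vm-ext sing pert}; (iii) the cross contribution from $\der_{122}V_\eps$, handled by solving for it from the PDE and bounding it by $\|DW\zeta\|_{L^2}$ plus lower order data.

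The final step is to close the estimate on $\|DW\zeta\|_{L^2}$. Since the above bound produces a term $C(r_2+r_3)^2\int|\der_2 W|^2\zeta^2\,d\rx$ on the right-hand side that is not controlled by $\der_1 W$ alone, I would rewrite $\extoperator^P V_\eps=F-\eps\der_{111}V_\eps$ with the uniformly positive principal part $\mathbb{B}=\bigl(\begin{smallmatrix}\beta&\alp_{12}\\ \alp_{12}&1\end{smallmatrix}\bigr)$ (cf.\ \eqref{definition of beta}) and repeat the $\zeta^2\der_1^3 V_\eps$ testing (as in Step 5 of Lemma \ref{lemma-global H3 estimate}) to obtain $\|DW\zeta\|_{L^2}^2\le C\|(\beta-\alp_{11}^P)\der_1 W\zeta\|_{L^2}^2+C\|F\|_{H^1}^2$. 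Substituting back and further shrinking $\delta_2$ absorbs the offending term and produces $\sqrt{\eps}\|\der_{11}W\zeta\|_{L^2}+\|DW\zeta\|_{L^2}\le C\|F\|_{H^2(\Omext)}$, uniformly in $\eps$. Passing to a weak limit of $\{V_\eps\}$, combining with the elliptic-region estimates, and applying Lemma \ref{lemma:V vs v} delivers \eqref{H3 estimate of v}. The main hurdle throughout is keeping all error terms controllable by $(r_2+r_3)^2$ small parameters at each differentiation level so that they can be absorbed, and verifying that the coercivity condition Lemma \ref{corollary-coeff extension}(f) with $m=2$ survives the perturbation from $P_0$ to $P$.
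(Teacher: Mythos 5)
Your proposal follows the paper's own proof essentially step by step: reduction to the extended problem for $V$ via Lemma \ref{lemma:V vs v}, elliptic estimates with reflections in the nondegenerate zones, differentiating the singular perturbation equation and testing with $\zeta^2\der_1^4 V_\eps$ to expose the coercive term governed by Lemma \ref{corollary-coeff extension}(f) with $m=2$, absorbing the $H^3$-coefficient error terms by shrinking $\delta_2$ and using the double-bump cut-off plus the $\mathbb{B}$ matrix to close the $\der_2 W$ term, and finally passing $\eps\to 0+$. This matches the paper's Steps 1--7 in both decomposition and key lemmas, with no substantive deviation.
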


\subsection{Proof of Proposition \ref{theorem-wp of lbvp for system with sm coeff}} \label{subsection:H4 estimate}

Assume that the background solution $(\bar u_1, \bar E)$ and the nozzle length $L$ are fixed to satisfy all the conditions stated in Lemma \ref{proposition-H1-apriori-estimate}. Let us fix $P\in \iterseta$ for $(r_2, r_3)$ satisfying the condition
\begin{equation*}
  r_2+r_3\le \delta_2
\end{equation*}
for the constant $\delta_2>0$ from Lemma \ref{lemma-global H3 estimate}. Then, it follows from Proposition \ref{proposition-wp of bvp with approx coeff}, Corollary \ref{corollarly:global H2 estimate of v} and Lemma \ref{lemma-global H3 estimate} that Problem \ref{problem-lbvp for iteration} has a unique strong solution $(v,w)$ that satisfies the following estimates:
\begin{equation*}
  \begin{split}
  \|w\|_{H^3(\Om_L)}&\le C\left(\|f_1\|_{H^1(\Om_L)}+\|f_2\|_{H^1(\Om_L)}\right),\\
  \|v\|_{H^3(\Om_L)}&\le C\left(\|f_1\|_{H^2(\Om_L)}+\|f_2\|_{H^1(\Om_L)}\right).
  \end{split}
\end{equation*}
Therefore, we get to prove Proposition \ref{theorem-wp of lbvp for system with sm coeff} if we verify the estimate \eqref{estimate of v and w}, the global $H^4$-estimate of $(v,w)$.

\quad\\
{\textbf{Step 1.}}
Regarding $w$ as a solution to \eqref{bvp for w}, we apply the method of reflection and \cite[Theorem 8.13]{GilbargTrudinger} to achieve the estimate $$\|w\|_{H^4(\Om_L)}\le C\|F_2\|_{H^2(\Om_L)}.$$
And, we combine this estimate with Proposition \ref{proposition-wp of bvp with approx coeff} and Lemma \ref{lemma-global H3 estimate} to show that
\begin{equation*}
  \|w\|_{H^4(\Om_L)}\le C\left(\|f_1\|_{H^2(\Om_L)}+\|f_2\|_{H^2(\Om_L)}\right).
\end{equation*}
From \eqref{estimate of F} and the estimate given in the right above, it follows that
\begin{equation*}
  \|F\|_{H^3(\Omext)}\le C\left(\|f_1\|_{H^3(\Om_L)}+\|f_2\|_{H^2(\Om_L)}\right).
\end{equation*}
\quad\\
{\textbf{Step 2.}} The overall procedure to estimate $\|v\|_{H^4(\Om_L)}$ is similar to the proof of Lemma \ref{lemma-global H3 estimate}. So we give a brief description of how to establish an a priori estimate of $\|v\|_{H^4(\Om_L)}$ without details. Since we have $v=V$ in $\Om_L$ for the solution $V$ to the auxiliary boundary value problem \eqref{bvp-mixed type-extended} in the extended domain $\Omext$, it suffices to estimate $\|V\|_{H^4(\Om_L)}$ by taking the following steps:

\begin{itemize}
\item[(1)] As explained in Step 2 in the proof of Lemma \ref{lemma-global H3 estimate}, it follows from Lemma \ref{lemma on L_1}($\tx{g}_3$), Lemma \ref{corollary-coeff extension}(a) and \eqref{uniform elliticity-ext domain} that the differential operator $\extoperator^P$ is uniformly elliptic in $\Omext\cap\{x_1\le \frac{5\ls}{8}\,\,\tx{or}\,\,x_1\ge \Le\}$. So we can check that
\begin{equation}
\label{H4 estimate of V-pre}
  \|V\|_{H^4(\Omext\cap(\{x_1<\frac{\ls}{2}\}\cup\{\Le+2l_0<x_1<\Le+18l_0\}))}\le C\|F\|_{H^2(\Omext)}.
\end{equation}

\item[(2)] Next, we fix a smooth cut-off function $\til{\zeta}\in C^{\infty}(\R)$ such that
\begin{itemize}
\item[-]${\rm spt}\til{\zeta}\subset\{x_1:\zeta(x_1)=1\}$,
\item[-]and $\til{\zeta}(x_1)=1$ for $ \frac 38 \ls\le x_1\le\Le+10 l_0$
\end{itemize}
for the cut-off function $\zeta$ given in Step 3 in the proof of Lemma \ref{lemma-global H3 estimate}, with satisfying the condition \eqref{definition-zeta}.
\end{itemize}
Similarly to \eqref{integral eqn for H3 estimate of Vm}, we suppose that $\Veps$ is a smooth solution to \eqref{bvp-sing-pert-extended} for $\eps\in(0,1]$. Then, we have
\begin{equation}
\label{integral eqn for H4 estimate}
  \int_{\Omext} \der_1^2(\eps \der_{111}\Veps+\extoperator^P\Veps)\til{\zeta}^2\der_1^5\Veps \,d\rx=\int_{\Omext}\der_{1}^2F \til{\zeta}^2\der_1^5\Veps \,d\rx.
\end{equation}
By
\begin{itemize}
\item[-] applying Lemma \ref{corollary-coeff extension}(f) with $m=3$,
\item[-] adjusting the arguments given in Steps 3--6 in the proof of Lemma \ref{lemma-global H3 estimate},
\item[-] and using the estimate \eqref{H3 estimate of Veps},
\end{itemize}
we can show from \eqref{integral eqn for H4 estimate} that one can fix a constant $\delta_3\in(0, \delta_2]$ depending only on the data so that if $(r_2, r_3)$ satisfy the condition
\begin{equation*}
  r_2+r_3\le \delta_3,
\end{equation*}
then, $\Veps$ satisfies the estimate
\begin{equation*}
  \|D\der_1^3 \Veps \til{\zeta}\|_{L^2(\Omext)}\le C\|F\|_{H^3(\Omext)}.
\end{equation*}
Then, we pass to the limit as $\eps$ tends to $0+$, and apply \eqref{estimate of F} and Corollary \ref{corollarly:global H2 estimate of v} to establish the estimate
\begin{equation}
\label{H4 estimate of V}
  \|D\der_1^3 V \|_{L^2(\Omext\cap\{\frac 38\ls<x_1<\Le+10l_0\})}
  \le C\left(\|f_1\|_{H^3(\Om_L)}+\|f_2\|_{H^2(\Om_L)}\right)
\end{equation}
for the solution $V$ to the boundary value problem \eqref{bvp-mixed type-extended}. Then we can adjust the argument in Step 7 in the proof of Lemma \ref{lemma-global H3 estimate} and apply the estimates \eqref{H4 estimate of V-pre} and \eqref{H4 estimate of V} to show that
\begin{equation}
\label{H4 estimate of V-2}
  \|\der_1V\|_{H^3(\Omext\cap\{x_1<\Le+9l_0\})}\le C\left(\|f_1\|_{H^3(\Om_L)}+\|f_2\|_{H^2(\Om_L)}\right).
\end{equation}
Finally, we rewrite the equation $\extoperator^PV=F$ as
\begin{equation*}
  \der_{22}V=F-\left(\alp_{11}\der_{11}V+2\alp_{12}\der_{12}V+\alp\der_1V\right)\quad\tx{in $\Omext$}.
\end{equation*}
By using Lemma \ref{corollary-coeff extension}(d) and the estimate \eqref{H4 estimate of V-2}, we can directly derive from the above representation that
\begin{equation*}
  \|\der_2^4V\|_{L^2(\Omext\cap\{x_1<\Le+9l_0\})}\le
  C\left(\|f_1\|_{H^3(\Om_L)}+\|f_2\|_{H^2(\Om_L)}\right).
\end{equation*}
The proof is completed if we choose $ \bar{\delta}=\delta_3$.
\hfill \qedsymbol

\section{Proof of Theorem \ref{theorem-HD}}
\label{seciton:pf of main theorem}
Now we are ready to prove Theorem \ref{theorem-HD}, the main theorem of this paper.
\newcommand \itersetinrs {\iterseta}
\newcommand \itersetforvor{\iterT}
\newcommand \itersetforpot{\iterP}
\smallskip

\begin{proof}
Let the constant $\bar{\delta}>0$ be from Proposition \ref{theorem-wp of lbvp for system with sm coeff}. Suppose that the constants $r_1,r_2$ and $r_3$ from the iterations sets $\iterT(r_1)$ and $\iterseta$, given by Definition \ref{definition: iteration sets}, satisfy
\begin{equation}
\label{condition for r-0}
  \max\{r_1, r_2+r_3\}\le \bar{\delta}.
\end{equation}

\medskip

{\textbf{Step 1.}} {\emph{Claim 1: One can fix the constants $r_1$, $r_2$ and $r_3$ so that, for any given $\tilT\in \iterT(r_1)$ and $P=(\tpsi, \tPsi)\in \iterseta$, Problem \ref{LBVP1 for iteration}(see \S \ref{subsection:framework}) has a unique solution $(\phi, \psi, \Psi)\in \iterT(r_1)\times \iterseta$.}}
\smallskip

Note that Corollary \ref{corollary:wp of mixed system in iteration} directly follows from Proposition \ref{theorem-wp of lbvp for system with sm coeff}. Therefore, for any given $\tilT\in \iterT(r_1)$ and $P=(\tpsi, \tPsi)\in \iterseta$, the linear boundary value problem \eqref{lbvp main} associated with $(\tilT, P)$ has a unique solution $(\psi, \Psi)\in H^4(\Om_L)\times H^4(\Om_L)$, and it satisfies the estimate
\begin{equation}
       \label{estimate:system}
      \|\psi\|_{H^4(\Om_L)}+\|\Psi\|_{H^4(\Om_L)}
      \le C_1\left(r_1+r_2+r_3^2+\mcl{P}(S_0, E_{\rm en}, \om_{\rm en})\right)
    \end{equation}
    for $\mcl{P}(S_0, E_{\rm en}, \om_{\rm en})$ defined by \eqref{definition-perturbation of bd}.

Due to the $H^3$-estimate and the compatibility conditions for the function $f_0^{(\til T,P)}$ stated in Lemma \ref{lemma on L_1}(f), the elliptic boundary value problem \eqref{lbvp for phi} has a unique solution $\phi\in H^5(\Om_L)$ that satisfies the estimate
\begin{equation}
    \label{estimate:phi}
      \|\phi\|_{H^5(\Om_L)}\le C_2r_1.
    \end{equation}
In \eqref{estimate:system} and \eqref{estimate:phi}, the estimate constants $C_1$ and $C_2$ are fixed depending only on the data. For the rest of the proof, any estimate constant is regarded to be fixed depending only on the data unless otherwise specified.

Let us set
\begin{equation}
\label{definition of C_*}
  C_*:=\max\{C_1, C_2\}.
\end{equation}

\begin{itemize}
\item[(i)] If the constant $r_1$ satisfies the inequality
\begin{equation}
\label{condition for r-1}
r_1\le \frac{r_2}{C_*},
\end{equation}
then we have
\begin{equation*}
\|\phi\|_{H^5(\Om_L)}\le r_1.
\end{equation*}

\item[(ii)] If the constants $r_1$, $r_2$, $r_3$ and $\mcl{P}(S_0, E_{\rm en}, \om_{\rm en})$ satisfy
\begin{equation}
\label{condition for r-2}
\begin{split}
&C_*(r_1+r_2)\le \frac{r_3}{3},\quad C_*r_3\le \frac 13\quad\tx{and}\quad C_*\mcl{P}(S_0, E_{\rm en}, \om_{\rm en})\le \frac{r_3}{3},
\end{split}
\end{equation}
then we have
\begin{equation}
\label{potential estimate fixed pt}
\|(\psi, \Psi)\|_{H^4(\Om_L)}\le r_3.
\end{equation}
\end{itemize}

We rewrite the equation $\der_2\mfrak{L}_1^P(\psi, \Psi)=\der_2 f_1^P$ in $\Om_L$ as
\begin{equation*}
\der_{222}\psi=\der_2 f_1^P-\der_2\left(a_{11}^P+2a_{12}^P+a^P\der_1\psi+b_1^P\der_1\Psi+b_0^P\Psi\right) \quad\tx{in $\Om_L$}.
\end{equation*}
Owing to the trace inequality, this expression is valid up to the boundary $\der \Om_L$.
So we can directly check by using the properties (c) and (f) stated in Lemma \ref{lemma on L_1}, and  the slip boundary condition $\der_2\psi=0$ on $\Gamw$ that $\psi$ satisfies the compatibility condition
$$\der_2^3\psi=0\quad\tx{on $\Gamw$}.$$
In addition, one can similarly check that
\begin{equation*}
  \der_2^k\Psi=0\,\,\tx{on $\Gamw$ for $k=1,3$},\quad\tx{and}\quad
  \der_2^{m-1}\phi=0\,\, \tx{on $\Gamw$ for $m=1,3,5$}.
\end{equation*}
Therefore if we fix $(r_1, r_2, r_3)$ to satisfy the conditions \eqref{condition for r-1} and \eqref{condition for r-2}, then we can conclude that
\begin{equation*}
(\phi, \psi, \Psi)\in \iterT(r_1)\times \iterseta.
\end{equation*}
This verifies Claim 1.
\medskip

{\textbf{Step 2.}} For a fixed $\tilT\in \iterT(r_1)$, let us consider a nonlinear system for $(\phi, \psi, \Psi)$
\begin{equation}
\label{nlsystem-2nd order-o}
\begin{cases}
-\Delta \phi=f_0^{(\tilT, (\phi, \psi, \Psi))}\quad &\tx{in $\Om_L$},\\
\mfrak{L}_1^{(\phi, \psi, \Psi)}(\psi, \Psi)
=f_1^{(\phi, \psi, \Psi)}\quad &\tx{in $\Om_L$},\\
\mfrak{L}_2(\psi, \Psi)=f_2^{(\tilT, (\phi, \psi, \Psi))}\quad &\tx{in $\Om_L$},
\end{cases}
\end{equation}
with the boundary conditions
\begin{equation}
\label{BCs:nlsystem-2nd order-o}
\begin{split}
\quad \der_1\phi=0,\quad \psi(0, x_2)=\int_{-1}^{x_2}\om_{\rm en}(t)\,dt,\quad \der_1\Psi=E_{\rm en}-E_0\quad&\tx{on $\Gamen$},\\
\phi=0,\quad \der_2\psi=0,\quad \der_2\Psi=0\quad&\tx{on $\Gamw$,}\\
\phi=0,\quad \Psi=0\quad&\tx{on $\Gamex$}.
\end{split}
\end{equation}
This problem is obtained as an approximated nonlinear boundary value problem for $(\phi, \psi, \Psi)$ by replacing $T$ with $\tilT\in \iterT(r_1)$ in \eqref{equation for psi}--\eqref{equation for phi}.
And, by applying the Schauder fixed point theorem, we can show that, for each $\tilT\in \iterT(r_1)$, there exists at least one solution $(\phi, \psi, \Psi)\in \iterseta$ to the associated nonlinear problem of \eqref{nlsystem-2nd order-o} and \eqref{BCs:nlsystem-2nd order-o}.
Further details can be given by following the proof of \cite[Theorems 1.6 and 1.7]{BDXX}, so we skip details. The only difference is that we should apply Proposition \ref{theorem-wp of lbvp for system with sm coeff} to guarantee a continuity of an iteration mapping with respect to an appropriately fixed norm.
\medskip

{\textbf{Step 3.}} Given $\tilT\in \mcl{I}_{\rm ent}(r_1)$, let $(\phi^{(1)}, \psi^{(1)}, \Psi^{(1)})$ and $(\phi^{(2)}, \psi^{(2)}, \Psi^{(2)})$ be two solutions to the problem \eqref{nlsystem-2nd order-o}--\eqref{BCs:nlsystem-2nd order-o}. And, suppose that both solutions are contained in $\iterseta$. And, let us set
\begin{equation}
\label{diff of sol-1}
 (u,v,w):=(\phi_1, \psi_1, \Psi_1)- (\phi_2, \psi_2, \Psi_2)\quad\tx{in $\Om_L$},
\end{equation}
and
\begin{equation*}
d_1:=\|u\|_{H^2(\Om_L)},\quad d_2:=\|(v,w)\|_{H^1(\Om_L)},\quad d:=d_1+d_2.
\end{equation*}
Then we have
\begin{equation}
\label{nlsystem-2nd order}
\begin{cases}
-\Delta u=F_0\quad &\tx{in $\Om_L$},\\
\mfrak{L}_1^{ (\phi_1, \psi_1, \Psi_1)}(v,w)
=F_1\quad &\tx{in $\Om_L$},\\
\mfrak{L}_2(v,w)=F_2\quad &\tx{in $\Om_L$},
\end{cases}
\end{equation}
with the boundary conditions
\begin{equation}
\label{BCs:nlsystem-2nd order-u}
\begin{split}
\der_1u=0,\quad v(0, x_2)=0,\quad \der_1w=0\quad&\tx{on $\Gamen$},\\
u=0,\quad \der_2 v=0,\quad \der_2 w=0\quad&\tx{on $\Gamw$},\\
u=0,\quad w=0\quad&\tx{on $\Gamex$},
\end{split}
\end{equation}
for
\begin{equation*}
  \begin{split}
  F_0&:=f_0^{(\tilT, (\phi_1, \psi_1, \Psi_1))}-f_0^{(\tilT, (\phi_2, \psi_2, \Psi_2))},\\
  F_1&:=f_1^{ (\phi_1, \psi_1, \Psi_1)}-f_1^{ (\phi_2, \psi_2, \Psi_2)}+(\mfrak{L}_1^{(\phi_2, \psi_2, \Psi_2)}-\mfrak{L}_1^{(\phi_1, \psi_1, \Psi_1)})(\psi_2, \Psi_2),\\
  F_2&:=f_2^{(\tilT, (\phi_1, \psi_1, \Psi_1))}-f_2^{(\tilT, (\phi_2, \psi_2, \Psi_2))}.
  \end{split}
\end{equation*}
By following the argument in Step 1 from \cite[Proof of Theorem 1.7]{BDXX} together with Lemma \ref{proposition-H1-apriori-estimate}, we can show that
\begin{align}
\label{contraction-1}
 & d_1\le C_{\sharp}r_1(d_1+d_2)\\
 \label{contraction-new}
 &  d_2\le C_{\sharp}\left(d_1+(r_1+r_2+r_3+\mcl{P}(S_0, E_{\rm en}, \om_{\rm en}))d_2\right)
\end{align}
for some constant $C_{\sharp}>0$.
The estimate \eqref{contraction-1} is easily obtained by investigating the linear elliptic boundary value problem for $u$, stated as a part of the problem \eqref{nlsystem-2nd order}--\eqref{BCs:nlsystem-2nd order-u}. The estimate \eqref{contraction-new} is obtained by applying Lemma  \ref{proposition-H1-apriori-estimate} to the boundary value problem for $(v,w)$ stated in \eqref{nlsystem-2nd order}--\eqref{BCs:nlsystem-2nd order-u}.

If the constant $r_1$ satisfies the inequality
\begin{equation}
\label{condition for r-3}
  r_1\le \frac{1}{2C_{\sharp}},
\end{equation}
then the estimate \eqref{contraction-1} immediately yields that
\begin{equation}\label{contraction-2}
  d_1\le 2C_{\sharp}r_1d_2.
\end{equation}
We combine \eqref{contraction-2} with \eqref{contraction-new}
to get
\begin{equation}\label{contraction-3}
  d_2\le C_{\sharp}\left((2C_{\sharp}+1)r_1+r_2+r_3+\mcl{P}(S_0, E_{\rm en}, \om_{\rm en})\right)d_2.
\end{equation}
Therefore, if $(r_1, r_2, r_3)$ are fixed sufficiently small and if $\mcl{P}(S_0, E_{\rm en}, \om_{\rm en})$ is sufficiently small to satisfy the condition
\begin{equation}
\label{condition for r-4}
  C_{\sharp}\left((2C_{\sharp}+1)r_1+r_2+r_3+\mcl{P}(S_0, E_{\rm en}, \om_{\rm en})\right)<1,
\end{equation}
then we obtain that $d_1=d_2=0$, and this implies that
\begin{equation*}
  (\phi_1, \psi_1, \Psi_1)=(\phi_2, \psi_2, \Psi_2)\quad\tx{in $\ol{\Om_L}$}.
\end{equation*}
\medskip

{\textbf{Step 4.}} Hereafter, we assume that all the conditions \eqref{condition for r-0}, \eqref{condition for r-1}, \eqref{condition for r-2}, \eqref{condition for r-3} and \eqref{condition for r-4} hold.

For a fixed $\tilT\in \iterT(r_1)$, let ${\bf m}(\Psi, \nabla\psi, \nabla^{\perp}\phi)$ be the momentum density field associated with $\tilT$ in the sense of Definition \ref{definition: momentum density field}. By the result obtained in the previous three steps, the vector field ${\bf m}(\Psi, \nabla\psi, \nabla^{\perp}\phi)$ is well defined in $\Om_L$. Note that it is divergence-free, that is, it satisfies the equation \eqref{div-free}. Next, we prove the well-posedness of Problem \ref{problem-transport equation}(see \S \ref{subsection:framework}).

\begin{lemma}
\label{lemma:wp for ivp, entropy}
Assuming that the conditions \eqref{condition for r-0}, \eqref{condition for r-1} and \eqref{condition for r-2} hold, one can fix a constant $\bar{r}_3>0$ depending only on the data so that if the inequality
\begin{equation}
\label{condition for r-6}
0<r_3\le \bar{r}_3
\end{equation}
holds, then, for each $\tilT\in \iterT_{\rm ent}(r_1)$, Problem \ref{problem-transport equation} has a unique solution $T\in H^4(\Om_L)$ that satisfies
\begin{equation}
\label{estimate of T}
  \|T\|_{H^4(\Om_L)}\le C_{\flat}\|S_{\rm en}-S_0\|_{C^4([-1,1])}
\end{equation}
for some constant $C_{\flat}>0$ fixed depending only on the data.
\end{lemma}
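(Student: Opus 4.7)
The plan is to solve the transport problem by the method of characteristics, using that the constructions of Steps 1--3 produce a divergence-free momentum field $\mathbf{m}(\Psi, \nabla\psi, \nabla^{\perp}\phi)$ whose first component is uniformly positive and whose second component vanishes on $\Gamw$. Writing $\mathbf{v} = \bar u_1 \mathbf{e}_1 + \nabla\psi + \nabla^{\perp}\phi$ and invoking \eqref{potential estimate fixed pt} together with the generalized Sobolev inequality, I would first choose $\bar r_3$ small (depending only on the data) so that $\mathbf{m}\cdot\mathbf{e}_1 \ge J/2$ in $\ol{\Om_L}$. Next, the slip conditions $\der_2\psi = 0$ on $\Gamw$ and $\phi = 0$ on $\Gamw$ (the latter giving $\der_1\phi=0$ on $\Gamw$) yield $\mathbf{m}\cdot\mathbf{e}_2 = 0$ on $\Gamw$, so the characteristics through $\Gamen$ stay in $\ol{\Om_L}$ and foliate it injectively.

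Second, I would introduce the characteristic flow $X_2(x_1;\alpha)$ as the solution of $\dot X_2 = (m_2/m_1)(x_1, X_2)$ with $X_2(0;\alpha) = \alpha$ for $\alpha\in[-1,1]$. Since $m_2/m_1 \in H^3(\Om_L) \hookrightarrow C^1(\ol{\Om_L})$, the map $(x_1,\alpha)\mapsto(x_1, X_2(x_1;\alpha))$ is a $C^1$ diffeomorphism of $[0,L]\times[-1,1]$ onto $\ol{\Om_L}$, with $C^1$ inverse $(x_1,x_2)\mapsto(x_1,\alpha(x_1,x_2))$. The unique classical solution is then
\begin{equation*}
T(x_1,x_2) := (S_{\rm en}-S_0)(\alpha(x_1,x_2)),
\end{equation*}
and uniqueness is immediate from constancy along characteristics.

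Third, for the $H^4$ estimate I would bootstrap from the equivalent eikonal-type equation $\mathbf{m}\cdot\nabla\alpha = 0$ in $\Om_L$ with $\alpha = x_2$ on $\Gamen$. Solving for $\der_1\alpha = -(m_2/m_1)\der_2\alpha$ and repeatedly differentiating, one bounds $\|\alpha - x_2\|_{H^4(\Om_L)}$ in terms of $\|\mathbf{m} - J\mathbf{e}_1\|_{H^3(\Om_L)}$, which by Lemma \ref{lemma on L_1}(e) (applied to the density factor in the definition of $\mathbf{m}$) and \eqref{potential estimate fixed pt} is bounded by a universal constant. Composing with $S_{\rm en}-S_0 \in C^4([-1,1])$ then yields \eqref{estimate of T}. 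The compatibility conditions $\der_2^k T = 0$ on $\Gamw$ for $k=1,3$, needed so that $T\in \iterT(r_1)$ in the next outer iteration, follow by tangentially differentiating the transport equation on $\Gamw$ and using that $m_2$, $\der_2 m_1$, $\der_2^2 m_2$ vanish there (inherited from the built-in compatibility conditions in $\iterseta$) together with the corresponding conditions on $S_{\rm en}$ from Condition \ref{conditon:1}.

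The main technical obstacle I anticipate is the bootstrap from the $H^3$ regularity of $\mathbf{m}$ to $H^4$ regularity of $T$ while simultaneously propagating the wall compatibility conditions cleanly; the key structural input that makes this work is the vanishing of $m_2$ and of its even tangential derivatives on $\Gamw$, which is precisely what the compatibility structure of $\iterseta$ (Definition \ref{definition: iteration sets}) was designed to deliver. Once this step is carried out, closing the global fixed-point argument of the preceding steps with the map $\tilT\mapsto T$ becomes routine, in analogy with the entropy-transport arguments in \cite{BDX3, BDX}.
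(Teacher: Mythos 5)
Your approach is essentially the paper's: label characteristics of $\mathbf{m}\cdot\nabla$ by their intercept on $\Gamen$ (the paper uses a stream function $\vartheta$ and a Lagrangian map $\mathscr{L}$ with $\vartheta(0,\mathscr{L}(\rx))=\vartheta(\rx)$, which is the same foliation), set $T = (S_{\rm en}-S_0)\circ\mathscr{L}$, and verify the wall compatibility conditions from the compatibility structure built into the iteration sets. Up to that point you are aligned with the proof.

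The gap is in your third step, where you assert that "repeatedly differentiating" $\der_1\alpha = -(m_2/m_1)\der_2\alpha$ bounds $\|\alpha - x_2\|_{H^4(\Om_L)}$ in terms of $\|\mathbf{m}-J\mathbf{e}_1\|_{H^3(\Om_L)}$, and you attribute the remaining difficulty to propagating the wall compatibility. That is not where the hard step is. With $\mathbf{m}$ only $H^3(\Om_L)$, the Lagrangian map $\mathscr{L}$ is not generally $H^4$: when you compute $D^4\mathscr{L}$ via the chain rule (the paper's terms $\mathscr{D}_1,\dots,\mathscr{D}_5$), the term $\mathscr{D}_5 = -\der_{x_2}^4\vartheta(0,\mathscr{L})(D\mathscr{L})^4$ involves $\der_{x_2}^3 m_1$ on the trace $\{x_1=0\}$. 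Global $H^3$ regularity of $m_1$ only gives $m_1(0,\cdot)\in H^{5/2}((-1,1))$, so $\der_{x_2}^3 m_1(0,\cdot)\in H^{-1/2}$, not $L^2$, and your claimed bound fails. Equivalently, in your transport formulation, $\der_2^4\alpha$ couples to $\der_2^4(m_2/m_1)$, which is not $L^2$ if the drift is only $H^3$. The paper resolves this by a separate Claim (estimate \eqref{estimate for m-2}): in $\Om_L\cap\{x_1<\ls/2\}$ the governing potential equation is uniformly elliptic (Lemma \ref{lemma on L_1}$(\mathrm{g}_3)$), so $\psi\in H^5$ there and hence $\mathbf{m}\in H^4$ near $\Gamen$. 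The trace theorem then puts $\der_{x_2}^3 m_1(0,\cdot)$ in $H^{1/2}\subset L^2$, and a Fubini/change-of-variables argument (using the uniform lower bound $\der_2\vartheta\ge J/2$) closes the $L^2$ bound on $\mathscr{D}_5$. This improved elliptic regularity near the entrance — not the $\Gamw$ compatibility structure — is the "one significant difference" flagged at the outset of the paper's proof, and it is the piece missing from your argument.
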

This lemma is an analogy of \cite[Lemma 3.5]{BDXX} except that there is one significant difference in its proof.

\begin{proof}
First of all, we fix a constant $\bar{r}_3$ to satisfy
\begin{equation*}
  \bar{r}_3\le \min\left\{1, \frac{C_*}{C_{\sharp}(2C_{\sharp}+3C_*+3)}\right\}
\end{equation*}
so that whenever the inequality $0<r_3\le \bar{r}_3$ holds, the conditions \eqref{condition for r-3} and \eqref{condition for r-4} automatically hold by \eqref{condition for r-0}, \eqref{condition for r-1} and \eqref{condition for r-2}.
\smallskip

For a fixed $\tilT\in \iterT(r_1)$, let ${\bf m}(\Psi, \nabla\psi, \nabla^{\perp}\phi)$ be the approximated momentum density field associated with $\tilT$ in the sense of Definition \ref{definition: momentum density field}. Since the solution $(\phi,\psi, \Psi)$ to the boundary value problem \eqref{nlsystem-2nd order-o}--\eqref{BCs:nlsystem-2nd order-o} lies in the iteration set $\itersetinrs$, by using the first inequality stated in \eqref{condition for r-2}, one can directly check the estimate
\begin{equation}
\label{estimate for m-1}
  \|{\bf m}(\Psi, \nabla\psi, \nabla^{\perp}\phi)-{\bf m}(0, {\bf 0}, {\bf 0})\|_{H^3(\Om_L)}\le Cr_3.
\end{equation}

Note that ${\bf m}(0, {\bf 0}, {\bf 0})=J\hat{\bf e}_1$. Next, we shall improve the regularity of ${\bf m}(\Psi, \nabla\psi, \nabla^{\perp}\phi)$ near the entrance boundary $\Gamen$.
\smallskip

{\emph{Claim: In $\Om_L\cap\{x_1<\frac{\ls}{2}\}$(see Lemma \ref{lemma-1d-full EP} for the definition of the constant $\ls$), the following estimate holds for some constant $C>0$ fixed depending only on the data:
\begin{equation}
\label{estimate for m-2}
\begin{split}
  &\|{\bf m}(\Psi, \nabla\psi, \nabla^{\perp}\phi)-{\bf m}(0, {\bf 0}, {\bf 0})\|_{H^4(\Om_L\cap\{x_1<\frac{\ls}{2}\})} \le Cr_3.
  \end{split}
\end{equation}
}}

{\emph{Verification of the claim:}}
Define a function $\breve{\psi}$ by
\begin{equation*}
  \breve{\psi}(x_1,x_2):=\psi(x_1, x_2)-\int_{-1}^{x_2}\om_{\rm en}(t)\,dt.
\end{equation*}
Then it follows from the boundary condition $\psi(0, x_2)=\int_{-1}^{x_2}\om_{\rm en}(t)\,dt$ that
\begin{equation*}
\breve{\psi}=0\quad\tx{on $\Gamen$}.
\end{equation*}
Next, we rewrite the equation $\mfrak{L}_1^{(\phi,\psi, \Psi)}(\psi, \Psi)=f_1^{(\phi, \psi, \Psi)}$ in terms of $\breve{\psi}$ as
\begin{equation}
\label{equation for u}
  a_{11}\der_{11}\breve{\psi}+2a_{12}\der_{12}\breve{\psi}+\der_{22}\breve{\psi}=F\quad\tx{in $\Om_L$}
\end{equation}
for
\begin{equation*}
  \begin{split}
  &F:=f_1^{(\phi, \psi, \Psi)}-\mfrak{L}_1^{(\phi, \psi, \Psi)}\left(\int_{-1}^{x_2}\om_{\rm en}(t)\,dt, \Psi\right)-a^{(\phi, \psi, \Psi)}\der_1 \breve{\psi},\\
  &(a_{11}, a_{12}):=(a_{11}^{(\phi, \psi, \Psi)}, a_{12}^{(\phi, \psi, \Psi)}).
  \end{split}
\end{equation*}
By using the compatibility conditions of $\om_{\rm en}$ stated in Condition \ref{conditon:1} and Lemma \ref{lemma on L_1}, we can directly check the following properties:
\begin{equation*}
\begin{split}
&\der_2 \breve{\psi}=0\quad\tx{on $\Gamw$},\\
&\der_2 a_{11}=0,\,\,a_{12}=\der_2^2a_{12}=0,\,\,\der_2 F=0\quad\tx{on $\Gamw$}.
\end{split}
\end{equation*}
Most importantly, it follows from Lemma \ref{lemma on L_1}($\tx{g}_3$) that
the equation \eqref{equation for u} is uniformly elliptic in $\Om_L\cap\{x_1<\frac 34 \ls\}$. Then we can use the estimates given in Lemma \ref{lemma on L_1}(e) and the estimate \eqref{potential estimate fixed pt} to show that
\begin{equation*}
  \|\psi\|_{H^5(\Om_L\cap\{x_1<\frac{\ls}{2}\})}\le Cr_3.
\end{equation*}
This together with \eqref{definition: vector field m} yields
\begin{equation}
\label{estimate for m-2}
  \|{\bf m}(\Psi, \nabla\psi, \nabla^{\perp}\phi)-{\bf m}(0, {\bf 0}, {\bf 0})\|_{H^4(\Om_L\cap\{x_1<\frac{\ls}{2}\})}
  \le Cr_3.
\end{equation}
The claim is verified.
\smallskip

\newcommand \vtheta{\vartheta}
For ${\bf m}(\rx):={\bf m}(\Psi(\rx), \nabla\psi(\rx), \nabla^{\perp}\phi(\rx))$, define a function $\vtheta:\ol{\Om_L}\rightarrow \R$ by
\begin{equation*}
  \vtheta(x_1, x_2):=\int_{-1}^{x_2} {\bf m}(x_1, t)\cdot \hat {\bf e}_1\,dt.
\end{equation*}
Then, one can directly check from \eqref{div-free} that
\begin{equation}
\label{derivative of vtheta}
  (\der_{x_1}, \der_{x_2})\vtheta=(-{\bf m}\cdot \hat {\bf e}_2, {\bf m}\cdot \hat {\bf e}_1)\quad\tx{in $\ol{\Om_L}$}.
\end{equation}
Owing to the estimate \eqref{estimate for m-1}, we can further reduce the constant $\bar{r}_3$ depending only on the data so that if $r_3\in (0,\bar{r}_3)$, then we have
    \begin{equation}
    \label{estimate of vtheta-2}
      \der_2\vtheta(\rx)\ge \frac J2\quad\tx{for all $\rx\in \ol{\Om_L}$}.
    \end{equation}
It follows from \eqref{derivative of vtheta} and the boundary conditions stated in \eqref{BCs:nlsystem-2nd order-o} that $\vartheta$ satisfies the boundary conditions
\begin{equation*}
\der_{x_1}\vartheta(x_1, \pm 1)=0\quad\tx{for all $0<x_1<L$. }
\end{equation*}
So we have
\begin{equation*}
  \vtheta(x_1,-1)=0\quad\tx{and}\quad \vartheta(x_1,1)=\vartheta(0,1)=:\bar{\vartheta}\quad\tx{for all $0\le x_1\le L$}.
\end{equation*}
And, it follows from the estimate \eqref{estimate for m-2} and the Morrey's inequality that
\begin{equation}
\label{estimate of vtheta-1}
  \|\vartheta-J(1+x_2)\|_{C^3(\ol{\Om_L\cap\{x_1<\frac{\ls}{3}\}})}\le
  Cr_3.
\end{equation}

Then we can define a {\emph{Lagrangian mapping}} $\mathscr{L}:\ol{\Om_L}\rightarrow [-1,1]$ by the implicit relation
\begin{equation}
\label{definition:vtheta}
  \vtheta(0, \mathscr{L}(\rx)):=\vtheta(\rx).
\end{equation}
Then
\begin{equation}
\label{expression of T}
  T(\rx):=(S_{\rm en}-S_0)\circ \mathscr{L}(\rx)
\end{equation}
solves Problem \ref{problem-transport equation}.

\newcommand \lmap{\mathscr{L}}
By direct calculations with using the definition of $\lmap$ given by \eqref{definition:vtheta}, and using the estimates  \eqref{estimate for m-1}, \eqref{estimate of vtheta-2} and \eqref{estimate of vtheta-1},
we can directly check that
\begin{equation*}
  \|\lmap\|_{H^3(\Om_L)}\le C.
\end{equation*}
Finally, let us consider the term $D^4\lmap(\rx)$. By a lengthy but straightforward computation, we get
\begin{equation*}
 D^4\lmap(\rx)=\frac{{\sum_{j=1}^5 \mathscr{D}_j}}{\der_{x_2}\vtheta(0, \lmap(\rx))}
\end{equation*}
for
\begin{equation*}
\begin{split}
 \mathscr{D}_1&=D^4\vtheta,\\
 \mathscr{D}_2&=-6\der_{x_2}^3\vtheta(0, \lmap)(D\lmap)^2D^2\lmap,\\
 \mathscr{D}_3&=-3\der_{x_2}^2\vtheta(0, \lmap)(D^2\lmap)^2,\\
 \mathscr{D}_4&=-4\der_{x_2}^2\vtheta(0, \lmap)D\lmap D^3\lmap,\\
 \mathscr{D}_5&=-\der_{x_2}^4\vtheta(0, \lmap)(D\lmap)^4.
 \end{split}
\end{equation*}
By using \eqref{derivative of vtheta} and applying the Sobolev inequality, we can directly estimate the terms $\mathscr{D}_j$ for $j=1,2,3,4$ as follows:
 \begin{equation*}
 \begin{split}
 &\|\mathscr{D}_1\|_{L^2(\Om_L)}\le C\|{\bf m}\|_{H^3(\Om_L)},\\
 &\|\mathscr{D}_j\|_{L^2(\Om_L)}\le C\|\vtheta\|_{C^3(\ol{\Om_L\cap\{x_1<\frac{\ls}{3}\}})}
 \|\lmap\|^3_{H^3(\Om_L)}\quad\tx{for $j=2,3,4$.}
 \end{split}
 \end{equation*}
And, applying the generalized Sobolev inequality gives
\begin{equation*}
  \|\mathscr{D}_5\|_{L^2(\Om_L)}
  \le C\|\lmap\|_{H^3(\Om_L)}^3\|\der_{x_2}^4\vtheta(0, \lmap)\|_{L^2(\Om_L)}.
\end{equation*}
For each $t\in (0, L)$, the function $\lmap(t, \cdot): [-1,1]\rightarrow [-1,1]$ is $C^1$-diffeomorphic. By \eqref{estimate for m-2} and \eqref{derivative of vtheta}, the function $\der_{x_2}^4\vtheta(0, \lmap(t,\cdot))$ belongs to $L^2((-1,1))$ due to the trace theorem. Then we apply the Fubini's theorem and use \eqref{estimate for m-2}--\eqref{estimate of vtheta-2} to get
\begin{equation*}
\begin{split}
  \|\der_{x_2}^4\vtheta(0, \lmap)\|_{L^2(\Om_L)}^2
  &=\int_{0}^L\int_{\Om_L\cap\{x_1=t\}}|\der_{x_2}^4\vtheta(0, \lmap(x_1,x_2))|^2\,dx_2dt\\
  &=\int_0^L\int_{\Om_L\cap\{x_1=t\}}\frac{|\der_{x_2}^4\vtheta(0, y)|^2}{\der_{x_2}\lmap(t, \lmap^{-1}(t,y))}\,dy\,dt\\
  &\le C
  \end{split}
\end{equation*}
where $\lmap^{-1}$ represents the inverse of $\lmap(t,\cdot)$ as a mapping of $x_2\in[-1,1]$. Then, the estimate \eqref{estimate of T} easily follows.

\end{proof}

{\textbf{Step 5.}}
In addition to the conditions \eqref{condition for r-0}, \eqref{condition for r-1}, \eqref{condition for r-2} and \eqref{condition for r-6}, if the inequality
\begin{equation}
\label{condition for r-7}
  \mcl{P}(S_{\rm en}, E_{\rm en}, \om_{\rm en})\le \frac{r_1}{2C_{\flat}}
\end{equation}
holds, then it follows from Lemma \ref{lemma:wp for ivp, entropy} that, for each $\tilT\in \iterT(r_1)$, the solution $T$ to Problem \ref{problem-transport equation} satisfies the estimate
\begin{equation*}
  \|T\|_{H^4(\Om_L)}\le \frac{r_1}{2}.
\end{equation*}
Furthermore, by using the representation \eqref{expression of T} and the compatibility conditions for $S_{\rm en}$ stated in Condition \ref{conditon:1}(i), it can be checked that the solution $T$ satisfies the compatibility conditions
\begin{equation*}
  \der_2^kT=0\quad\tx{on $\Gamw$ for $k=1,3$}.
\end{equation*}
This implies $T\in \iterT(r_1)$. Then we can apply the Schauder fixed point theorem and the uniqueness of a solution to Problem \ref{problem-transport equation} to conclude that the nonlinear boundary value problem of \eqref{equation for psi}--\eqref{BC for T} has at least one solution $(T, \phi, \psi, \Psi)\in \iterT(r_1)\times \iterV(r_2)\times \iterP(r_3)$ provided that $(r_1, r_2, r_3, \mcl{P}(S_{\rm en}, E_{\rm en}, \om_{\rm en}) )$ satisfy all the conditions \eqref{condition for r-0}, \eqref{condition for r-1}, \eqref{condition for r-2}, \eqref{condition for r-6} and \eqref{condition for r-7}. To verify this statement, one can simply repeat the argument in the proof of  \cite[Theorem 1.7]{BDXX}. For further details, see \S 3.2.2 in \cite{BDXX}.

\medskip

{\textbf{Step 6.}}{\emph{The existence of a solution to Problem \ref{problem-HD}}} : For the solution $(T,\phi, \psi, \Psi)$ to the boundary value problem of \eqref{equation for psi}--\eqref{BC for T}, let us set
\begin{equation*}
  (S, \vphi, \Phi):=(S_0, \bar{\vphi}, \bar{\Phi})+(T, \psi, \Psi)\quad\tx{in $\Om_L$}.
\end{equation*}
For the constant $\bar{r}_3$ from Lemma \ref{lemma:wp for ivp, entropy}, one can fix a constant $\hat{r}_3\in(0, \bar{r}_3]$ depending only on the data so that whenever
\begin{equation}\label{condition for r_3-equiv}
  r_3\le \hat{r}_3,
\end{equation}
we can combine the first inequality in \eqref{condition for r-2} with \eqref{condition for r_3-equiv} so that $(\vphi, \phi, \Phi, S)$ satisfy all the conditions \eqref{smallness condition-1}--\eqref{smallness condition-3}. Then it follows that $(\vphi, \phi, \Phi, S)$ is a solution to Problem \ref{problem-HD}.
\smallskip

Now, we fix three constants $r_1$, $r_2$ and $r_3$ as follows:
\begin{equation}
\label{choice of r's}
  \begin{split}
  r_3&=\frac{\mcl{P}(S_{\rm en}, E_{\rm en}, \om_{\rm en})}{\max\{\frac{1}{3C_*}, \frac{C_*+1}{12C_*C_{\flat}}\}}=:\kappa \mcl{P}(S_{\rm en}, E_{\rm en}, \om_{\rm en}),\\
  r_1&=\frac{r_3}{6C_*(2C_*+1)}=\frac{\kappa}{6C_*(2C_*+1)} \mcl{P}(S_{\rm en}, E_{\rm en}, \om_{\rm en})=:\mu_1\mcl{P}(S_{\rm en}, E_{\rm en}, \om_{\rm en}),\\
  r_2&=\frac{r_3}{3(2C_*+1)}=\frac{\kappa}{3(2C_*+1)} \mcl{P}(S_{\rm en}, E_{\rm en}, \om_{\rm en})=:\mu_2\mcl{P}(S_{\rm en}, E_{\rm en}, \om_{\rm en}),
  \end{split}
\end{equation}
for the constants $C_*$ and $C_{\flat}$ from \eqref{definition of C_*} and \eqref{estimate of T}, respectively. If the term $\mcl{P}(S_{\rm en}, E_{\rm en}, \om_{\rm en})$ satisfies the inequality
\begin{equation}
\label{condition for data perturbation-2}
\begin{split}
  &\mcl{P}(S_{\rm en}, E_{\rm en}, \om_{\rm en})\\
  &\le \min\left\{\frac{1}{3\kappa C_*}, \frac{\bar{\delta}}{\mu_1+\mu_2+\kappa}, \frac{1}{2C_{\sharp}\mu_1}, \frac{\hat{r}_3}{\kappa}, \frac{1}{2C_{\sharp}((2C_{\sharp}+1)\mu_1+\mu_2+\kappa+1)}\right\}=:\sigma_*
  \end{split}
\end{equation}
for the constants $\bar{\delta}$, $C_{\sharp}$ and $\hat{r}_3$ from \eqref{condition for r-0}, \eqref{contraction-1}(or \eqref{contraction-new}) and \eqref{condition for r_3-equiv}, respectively, then all the conditions \eqref{condition for r-0}, \eqref{condition for r-1}, \eqref{condition for r-2}, \eqref{condition for r-3}, \eqref{condition for r-4}, \eqref{condition for r-7} and \eqref{condition for r_3-equiv} hold. Since $(T, \phi, \psi, \Psi)$ is contained in the iteration set $\iterT(r_1)\times \iterV(r_2)\times \iterP(r_3)$, it follows from \eqref{choice of r's} that $(\vphi, \phi, \Phi, S)$ satisfies the estimate \eqref{solution estimate HD}.
\smallskip

 {\emph{The uniqueness of a solution}} :\\
Suppose that $(\vphi^{(1)}, \Phi^{(1)}, \phi^{(1)}, S^{(1)})$ and $(\vphi^{(2)}, \Phi^{(2)}, \phi^{(2)}, S^{(2)})$ are two solutions to Problem \ref{problem-HD}. In addition, suppose that they satisfy the estimate \eqref{solution estimate HD} with the term $\mcl{P}(S_{\rm en}, E_{\rm en}, \om_{\rm en}) $ satisfying the condition \eqref{condition for data perturbation-2}. For each $j=1$ and $2$, let us set
\begin{equation*}
  (\psi^{(j)}, \Psi^{(j)}, T^{(j)}):=
  (\vphi^{(j)}, \Phi^{(j)}, S^{(j)})-(\bar{\vphi}, \bar{\Phi}, S_0),\quad P^{(j)}:=(\phi^{(j)},\psi^{(j)}, \Psi^{(j)}).
\end{equation*}
Define
\begin{equation*}
  (\til u, \til v, \til w, \wtil{Y}):=(\phi^{(1)}, \vphi^{(1)}, \Phi^{(1)}, T^{(1)})-(\phi^{(2)}, \vphi^{(2)}, \Phi^{(2)}, T^{(2)}),
\end{equation*}
and
\begin{equation*}
\sigma:=\mcl{P}(S_{\rm en}, E_{\rm en}, \om_{\rm en}).
\end{equation*}
By adjusting the argument in Step 3, one can show that
\begin{align}
\label{estimate:final-0}
  &\|\til u\|_{H^2(\Om_L)}\le C\left(\|\wtil{Y}\|_{H^1(\Om_L)}+\sigma(\|\til u\|_{H^2(\Om_L)}+\|(\til v, \til w)\|_{H^1(\Om_L)})\right),\\
\label{estimate:final-1}
  &\|(\til v, \til w)\|_{H^1(\Om_L)}\le C\left(\|\til u\|_{H^2(\Om_L)}+\sigma\|(\til v, \til w) \|_{H^1(\Om_L)}\right).
\end{align}

By repeating the argument in the proof of \cite[Theorem 1.7]{BDXX} (see \S 3.2.2 in \cite{BDXX}), one can find a constant $\hat{\sigma}\in (0, \sigma_*]$ depending only on the data so that if $\sigma\le \hat{\sigma}$, then it holds that
\begin{equation}
\label{estimate:final-2}
  \|\wtil{Y}\|_{H^1(\Om_L)}\le C\sigma \left(\|\wtil{Y}\|_{H^1(\Om_L)}+\|(\til v, \til w)\|_{H^1(\Om_L)}+\|\til u\|_{H^2(\Om_L)}\right).
\end{equation}
Therefore, one can fix a constant $\bar{\sigma}\in(0,\hat{\sigma}]$ sufficiently small depending only on the data so if
$\sigma\le \bar{\sigma}$, then it follows from the estimates \eqref{estimate:final-0}--\eqref{estimate:final-2} that $$(\til u, \til v, \til w, \wtil{Y})=(0,0,0,0)\quad\tx{in $\Om_L$}.$$ 
This proves the uniqueness of a solution to Problem \ref{problem-HD}.

\medskip

{\textbf{Step 7.}}
For a solution $(\vphi, \Phi, \phi, S)$ to Problem \ref{problem-HD}, define its associated Mach number $M(\vphi, \Phi, \phi, S)$ by
\begin{equation*}
  M(\vphi, \Phi, \phi, S):=\frac{|\nabla\vphi+\nabla^{\perp}\phi|}{\sqrt{\gam S\varrho^{\gam-1}(S, \Phi, \nabla\vphi, \nabla^{\perp}\phi)}}
\end{equation*}
for $\varrho(S, \Phi, \nabla\vphi, \nabla^{\perp}\phi)$ given by \eqref{new system with H-decomp1}.

Note that $T(=S-S_0)\in \iterT(r_1)$ and $P:=(\phi,\psi, \Psi)(=(\phi, \vphi, \Phi)-(0, \bar{\vphi}, \bar{\Phi}))\in \iterV(r_2)\times \iterP(r_3)$. For $(a_{ij}^P)_{i,j=1,2}$ given by Definition \ref{definition:approx coeff and fs}, define a function $D:\Om_L\rightarrow \R$ by
\begin{equation*}
  D(\rx)=\det \begin{pmatrix}a_{11}^P(\rx) & a_{12}^P(\rx)\\ a_{12}^P(\rx) & 1 \end{pmatrix}\quad\tx{for $\rx=(x_1, x_2)\in \Om_L$}.
\end{equation*}
It follows from Lemma \ref{lemma on L_1}($\tx{g}_2$) that there exists a unique function $\gs^P:[-1,1]\rightarrow (0,L)$ satisfying that
\begin{equation}\label{signs of D}
  D(x_1, x_2)\begin{cases}
  >0\quad&\mbox{for $x_1<\gs^P(x_2)$},\\
  =0\quad&\mbox{for $x_1=\gs^P(x_2)$},\\
  <0\quad&\mbox{for $x_1>\gs^P(x_2)$}.
  \end{cases}
\end{equation}
By a direct computation with Definitions \ref{definition:coefficints-iter} and \ref{definition:coefficients-nonlinear}, it can be directly checked that
\begin{equation*}
  D(\rx)=\frac{\left(\gam S\varrho^{\gam-1}(S, \Phi, \nabla\vphi, \nabla^{\perp}\phi)\right)^2}{A_{22}^2(\rx, \Psi, \nabla\psi, \nabla^{\perp}\phi)}\left(1-M^2(\vphi, \Phi, \phi, S)\right).
\end{equation*}
Therefore, \eqref{signs of D} is equivalent to the following:
\begin{equation}\label{mag. of M}
M(\vphi, \Phi, \phi, S)\begin{cases}
  <1\quad&\mbox{for $x_1<\gs^P(x_2)$},\\
  =1\quad&\mbox{for $x_1=\gs^P(x_2)$},\\
  >1\quad&\mbox{for $x_1>\gs^P(x_2)$}.
  \end{cases}
\end{equation}
Let us set a function $\fsonic$ as
\begin{equation*}
  \fsonic(x_2):=\gs^P(x_2)\quad\tx{for $-1\le x_2\le 1$}.
\end{equation*}
Then, it follows from \eqref{estimate of gs}, \eqref{estimate:phi}, \eqref{potential estimate fixed pt} and \eqref{choice of r's} that the {\emph{sonic interface function}} $\fsonic$ satisfies the estimate \eqref{estimate of sonic boundary pt}. This completes the proof of Theorem \ref{theorem-HD}. \end{proof}

\appendix

\section{A further discussion on the nozzle length $L$}
\label{appendix:nozzle length}

Returning to the proof of Lemma \ref{proposition-H1-apriori-estimate}, we give examples of $(\gam, J)$ for which the nozzle length $L$ can be large while the condition \eqref{almost sonic condition1 full EP} holds.
\medskip

By \eqref{L-computation-n} and \eqref{definition of lambda}, one can express the nozzle length $L$ as
\begin{equation}
\label{new expression of L}
L:=\sqrt{\frac{h_0^3}{2}}J^{\frac{\gam-2}{\gam+1}}\sqrt{\lambda(\kappa_0,\kappa_L)}.
\end{equation}
Define
\begin{equation*}
  \mfrak{p}:=\frac{\sqrt 2}{2}h_0^{-\frac 32}\mcl{H}(\kappa)((\gam-1)\kappa^{\gam+1}+\eta(\kappa^{\gam+1}-1)+2)\kappa^{-\eta}h_0^{-\eta},
\end{equation*}
and
\begin{equation*}
  \hat{\alp}:=\frac{\alp}{J^{\frac{2-\gam+2\eta}{\gam+1}}\mfrak{p}}.
\end{equation*}
for $\alp$ given by \eqref{definition of beta*}.
\medskip

{\textbf{Example 1.}} Back to Step 4 in the proof of Lemma \ref{proposition-H1-apriori-estimate},
where the parameter $\eta$ in \eqref{definition of G} is fixed as $\eta=\frac 34\gam$, we take a closer look on how the parameter $\bJ$ is fixed.

By \eqref{new expression of alp} and \eqref{expression of alp}, we can roughly express $\hat{\alp}$ as
\begin{equation*}
  \hat{\alp}=
  1-\mu_1(\kappa,h_0,\gam)J^{\frac{\gam}{2(\gam+1)}}
  -\mu_2(\kappa,h_0,\gam)\lambda(\kappa_0, \kappa_L)J^{-\frac{\gam}{2(\gam+1)}}\left(J^{\frac{\gam}{2(\gam+1)}}+\mu_3(\kappa, h_0, \gam)\right)^2.
\end{equation*}
Next, we express $\lambda(\kappa_0, \kappa_L)$ as
\begin{equation}
\label{new expression of lambda}
\lambda(\kappa_0, \kappa_L)=\eps_0J^{\frac{\gam}{2(\gam+1)}}.
\end{equation}
Then we have
\begin{equation*}
  \hat{\alp}=1-\mu_1(\kappa,h_0,\gam)J^{\frac{\gam}{2(\gam+1)}}
  -\mu_2(\kappa,h_0,\gam)\eps_0\left(J^{\frac{\gam}{2(\gam+1)}}+\mu_3(\kappa, h_0, \gam)\right)^2.
\end{equation*}
Assume that
\begin{equation*}
  |\kappa-1|\le q_0
\end{equation*}
for some constant $q_0>0$. Then one can fix a constant $\bar{\mu}>0$ depending only on $(\gam, S_0, \zeta_0, q_0)$ to satisfy
\begin{equation*}
 \max_{{j=1,2,3,}\atop{|\kappa-1|\le q_0}}  |\mu_j(\kappa, h_0, \gamma)|\le \bar{\mu}
\end{equation*}
so we have
\begin{equation*}
  \hat{\alp}\ge 1-\bar{\mu}J^{\frac{\gam}{2(\gam+1)}}-
  \bar{\mu}\eps_0\left(J^{\frac{\gam}{2(\gam+1)}}+\bar{\mu}\right)^2.
\end{equation*}
Now we fix a constant $\bJ>0$ sufficiently small depending only on $(\gam, S_0, \zeta_0, q_0)$  so that, whenever the background momentum density $J$ satisfies $0<J\le \bJ$, it holds that
\begin{equation*}
  1-\bar{\mu}J^{\frac{\gam}{2(\gam+1)}}\ge \frac 23.
\end{equation*}
Then, we fix a constant $\eps_0>0$ sufficiently small depending only on $(\gam, S_0, \zeta_0, q_0)$ so that if $J\in(0, \bJ]$, then we finally obtain that
\begin{equation}
\label{lower bd of hat alp}
  \hat{\alp}\ge \frac 13,
\end{equation}
and this yields the result comparable to \eqref{positive lower bound 1 of alp}.

Substituting \eqref{new expression of lambda} into \eqref{new expression of L} yields
\begin{equation*}
  L=\sqrt{\frac{\eps_0 h_0^3}{2}}J^{\frac{5\gam-8}{4(\gam+1)}}.
\end{equation*}
This expression is valid for any $\gam>1$ as long as the inequality $0<J\le \bJ$ holds. Suppose that $1<\gam<\frac{8}{5}$. Note that the constant $\eps_0h_0^3$ is fixed independent of $J\in(0, \bJ]$. Therefore, the value of $L$ becomes large if $J$ is fixed sufficiently small.

\medskip

{\textbf{Example 2.}} Back to Step 4 in the proof of Lemma \ref{proposition-H1-apriori-estimate},
where the parameter $\eta$ in \eqref{definition of G} is fixed as $\eta=\frac{\gam}{4}$, repeat the argument given in Example 1 to get
\begin{equation*}
    \hat{\alp}=1-\mu_1(\kappa,h_0,\gam)J^{\frac{-\gam}{2(\gam+1)}}
  -\mu_2(\kappa,h_0,\gam)\eps_0\left(J^{\frac{-\gam}{2(\gam+1)}}+\mu_3(\kappa, h_0, \gam)\right)^2.
\end{equation*}
Then one can easily see that there exist a constant $\ubJ>1$ sufficiently large, and a small constant $\eps_0>0$ depending only on $(\gam, S_0, \zeta_0, q_0)$ so that, for any $J\in [\ubJ, \infty)$, if $\lambda(\kappa_0, \kappa_L)$ is expressed as
\begin{equation*}
  \lambda(\kappa_0, \kappa_L)=\eps_0J^{\frac{-\gam}{2(\gam+1)}},
\end{equation*}
then the estimate \eqref{lower bd of hat alp} holds true, thus the result comparable to \eqref{positive lower bound 2 of alp} is obtained. In this case, the nozzle length $L$ is expressed as
\begin{equation*}
  L=\sqrt{\frac{\eps_0 h_0^3}{2}}J^{\frac{3\gam-8}{4(\gam+1)}}.
\end{equation*}
So we conclude that the value of $L$ becomes large if $\gam>\frac{8}{3}$ and $J$ is fixed sufficiently large.

\vspace{.25in}
\noindent
{\bf Acknowledgements:}
The research of Myoungjean Bae was supported in part by  Samsung Science and Technology Foundation under Project Number SSTF-BA1502-51.
The research of Ben Duan was supported in part by NSFC No. 12271205 and No. 12171498. And, the research of Chunjing Xie was partially supported by NSFC grants 11971307, 12161141004, Fundamental Research Grants for Central Universities Natural Science Foundation of Shanghai 21ZR1433300, Program of Shanghai Academic Research Leader 22XD1421400 .

\bigskip
\bibliographystyle{acm}
\bibliography{References_EP_smooth_tr}

\end{document}